\theoremstyle{plain}
\newtheorem{thm}{Theorem}[section]
\newtheorem{prop}[thm]{Proposition}
\newtheorem{cor}[thm]{Corollary}
\newtheorem{lem}[thm]{Lemma}
\theoremstyle{definition}
\newtheorem{defn}[thm]{Definition}
\newtheorem{rem} [thm] {Remark}
\title{Local and global solutions on arcs for the Ericksen - Leslie problem in $\mathbb{R}^N$}
\date{}
\author{Daniele Barbera}
\author[1,2,3]{Vladimir Georgiev}
\affil[1]{Dipartimento di Matematica  \\ Universit\`a di Pisa \\ Largo B. Pontecorvo 5, 56100 Pisa, Italy}
\affil[2]{Faculty of Science and Engineering \\ Waseda University \\
 3-4-1, Okubo, Shinjuku-ku, Tokyo 169-8555 \\
Japan}
\affil[3]{Institute of Mathematics and Informatics, Bulgarian Academy of Sciences, Acad.
Georgi Bonchev Str., Block 8, 1113 Sofia, Bulgari}
\begin{document}
\maketitle
\thanks{
  D.B. and V.G. were partially supported by INDAM, GNAMPA group. V.G. is supported by the project PRIN  2020XB3EFL by the Italian Ministry of Universities and Research,  by   the Top Global University Project, Waseda University, by the University of Pisa, Project PRA 2018 49 and by Institute of Mathematics and Informatics, Bulgarian Academy of Sciences.
}

\begin{abstract}
    The work deals with the Ericksen-Leslie System for nematic liquid crystals on the space $\mathbb{R}^N$ with $N\ge 3$. In our work we suppose the initial condition $v_0$ stays on an arc connecting two fixed orthogonal vectors on the unit sphere. Thanks to this geometric assumption, we  prove through  energy a priori estimates the local existence and the global existence for small initial data of a solution
$$ u\in L^\infty((0,T);H^s(\mathbb{R}^N)),\quad \nabla u\in L^2((0,T);H^s(\mathbb{R}^N)), $$
$$ \nabla v\in L^\infty((0,T);H^s(\mathbb{R}^N)),\quad \nabla^2v\in L^2((0,T);H^s(\mathbb{R}^N)) $$
for $s>\frac{N}{2}-1$, asking low regularity assumptions  on $u_0$ and $v_0.$

\end{abstract}
{Key words: Liquid crystlas, Ericksen - Leslie, heat equation, Stokes equation, energy estimates}\\
{AMS Subject Classification 2010: 76A15, 35Q30, 35Q35}

\section{Introduction}

The aim of the paper is to study the  existence and dispersive properties of solutions to the  Ericksen-Leslie model:
\begin{equation}\label{EL.sys.}
    \left\{\begin{array}{ll}
       (\partial_t-\Delta)u+\nabla p+u\cdot \nabla u=-{\rm Div}\left(\nabla v\odot\nabla v\right)  & \mathbb{R}_+\times \mathbb{R}^N \\
       {\rm div}u=0 & \mathbb{R}_+\times\mathbb{R}^N \\
       (\partial_t-\Delta)v +u\cdot \nabla v = |\nabla v|^2v & \mathbb{R}_+\times\mathbb{R}^N \\
       |v|=1 & \mathbb{R}_+\times\mathbb{R}^N,
    \end{array}\right.
\end{equation}
with

$$ (\nabla w \odot \nabla z)_{j,k}=\partial_jw\cdot \partial_kz \quad \forall j,k=1,\ldots, N,\quad \forall w,z\colon\mathbb{R}^N\to\mathbb{R}^N, $$
$$ Div A=\sum_{j=1}^N\partial_j A^j\quad \forall A\colon\mathbb{R}^N\to\mathbb{R}^{N^2}. $$
The Ericksen-Leslie theory was introduced by Ericksen in \cite{E61} and later by Leslie in \cite{L79} in order to study the behaviour of nematic liquid crystals (for a more physical analysis see \cite{FP22}). The system \eqref{EL.sys.} was implemented by Lin in \cite{L89} as a simplified version of the model derived by Ericksen and Leslie. Here $u\colon\mathbb{R}_+\times \mathbb{R}^N\to\mathbb{R}^N$ is the velocity field of the fluid, $p\colon\mathbb{R}_+\times\mathbb{R}^N\to\mathbb{R}$ is the pressure term which arises from the divergence-free condition over $u$ and $v\colon\mathbb{R}_+\times\mathbb{R}^N\to \mathcal{S}^{N-1}$ is the orientation field of the liquid crystals particles. The system \eqref{EL.sys.} can be seen as a nonlinear coupling of the Navier-Stokes equation for incompressible fluids with the flow of harmonic maps in $S^{N-1}$. Precisely, the first equation of \eqref{EL.sys.} is the conservation of the linear momentum, while the third one is the conservation of the angular momentum (for a more detailed explanation see the appendix of \cite{L89}). On the other hand, the Ericksen-Leslie model is not the only model known for the study of liquid crystals: one of the most famous if the Beris-Edward model, also known as the $Q$-tensor model. As a classical reference, it can be seen \cite{PZ11} and \cite{PZ12}.

In \cite{LL96} and \cite{LL00} Lin-Liu, in order to avoid the condition $|v|=1$, used the Ginzburg-Landau approximation replacing the nonlinearity $|\nabla v|^2v$ with $\frac{1}{\varepsilon^2}(1-|v|^2)v$ with $\varepsilon>0$. They proved the existence of a global solution and a regularity result.  More recent results, based on  Ginzburg-Landau approximation, can be found in \cite{FT09}, \cite{DQ12} and \cite{DH22}.

For what concerns the system \eqref{EL.sys.} without approximations, we can cite the paper \cite{W11}. Here, Wang proved local and global smalldata existence of unique solution to the system \eqref{EL.sys.} for
$$ u_0\in \text{BMO}^{-1}(\mathbb{R}^N;\mathbb{R}^N), \quad d_0\in \text{BMO}(\mathbb{R}^N;\mathcal{S}^2). $$
From his result, it follows the local and global existence for $u_0\in L^N(\mathbb{R}^N;\mathbb{R}^N)$ and $v_0\in W^{1,N}(\mathbb{R}^N;\mathcal{S}^2)$ with
$$ \|u\|_{L^\infty(\mathbb{R}^N)}+\|\nabla v\|_{L^\infty(\mathbb{R}^N)}\lesssim t^{-\frac{1}{2}}\left(\|u_0\|_{L^N(\mathbb{R}^N)}+\|d_0\|_{W^{1,N}(\mathbb{R}^N)}\right). $$

The use of $BMO$ type spaces essentially uses the explicit representation of the heat kernel in $\mathbb{R}^n$. Our goal is to follow another idea that gives also approach for the case of  boundary value problem.

Finally, we recall the work of Liu-Xu in \cite{LX15} and later  the one of Liu-Wu-Xu-Zhang in \cite{XZ17}. In the first paper, Liu and Xu managed to prove global existence for \eqref{EL.sys.} with $u_0\in H^m(\mathbb{R}^3;\mathbb{R}^3)$ and $v_0\in H^{m+1}(\mathbb{R}^3;\mathcal{S}^2)$ for $m\ge 3$ and
$$ \|u_0\|_{L^2(\mathbb{R}^3)}+\|\nabla v_0\|_{L^2(\mathbb{R}^3)}\le \varepsilon\ll1. $$
Moreover, if $u_0,d_0\in L^1(\mathbb{R}^3;\mathbb{R}^3)$, then
$$ \|\nabla^lu\|_{L^2(\mathbb{R}^3)}+\|\nabla^{l+1}v\|_{L^2(\mathbb{R}^3)}\lesssim t^{-\frac{3}{4}-\frac{\ell}{2}}\quad \text{as}\:\:t\to+\infty\quad  \forall \ell=0,\ldots, m. $$
In the second paper, the authors proved the same (also for the compressible case) for $u_0\in H^2(\mathbb{R}^3;\mathbb{R}^3)$, $v_0\in H^3(\mathbb{R}^3;\mathcal{S}^2)$ and
$$ \|u_0\|_{H^2(\mathbb{R}^3)}+\|\nabla v_0\|_{H^2(\mathbb{R}^3)}\le \varepsilon\ll1. $$
Here again, if $u_0,v_0\in L^1(\mathbb{R}^3;\mathbb{R}^3)$, they get a decay result:
$$ \|(u,v)\|_{L^p(\mathbb{R}^3)}\lesssim t^{-\frac{3}{2}\left(1-\frac{1}{p}\right)}\quad \text{as}\:\:t\to+\infty\quad \forall p\in[2,6]; $$
$$ \|\nabla u\|_{H^1(\mathbb{R}^3)}+\|\nabla v\|_{H^2(\mathbb{R}^3)}\lesssim t^{-\frac{5}{4}}\quad \text{as}\:\:t\to+\infty. $$

The aim of the paper is to prove global existence and time decay working with small low regularity initial data in classical Sobolev spaces $H^s(\mathbb{R}^N)$ with $N \geq 3.$  For this reason we will follow another approach based on the following almost orthogonality invariance property, namely if we consider the scalar product of $v$ of \eqref{EL.sys.} and a fixed vector $\omega \in \mathcal{S}^{N-1}$
$$ \varphi_\omega(t,x) \coloneqq \left<v(t,x), \omega \right> $$
and observe that it satisfies
\begin{equation}\label{s.perp.}
    \left\{\begin{array}{ll}
    (\partial_t-\Delta)\varphi_\omega +u\cdot \nabla \varphi_\omega = |\nabla v|^2\varphi_\omega  & \mathbb{R}_+\times\mathbb{R}^N \\
    \varphi_\omega(0)=\left<v_0,\omega\right> & \mathbb{R}^N.
\end{array}\right.
\end{equation}
 Our idea comes from the following guess: if
the initial data are sufficiently regular and small in Sobolev spaces
and if $$ \left<v_0,\omega\right>=0,$$
then this orthogonality condition is preserved over the nonlinear flow. For this we can take to fixed vectors $$\eta\in \mathcal{S}^{N-1}, \ \omega\in \mathcal{S}^{N-1}$$ with $\omega\perp \eta$ and consider the arc connecting them on the unit sphere.
 We assume the initial data for $v$ are on this arc, i.e.
$$ v_0(x)= \cos d_0(x)\eta + \sin d_0(x)\omega \quad \text{for a.e.}\:\:x\in\mathbb{R}^N, $$
then we expect that the $v$ remains on the same arc, i.e. we have
\begin{equation}\label{ass.v}
    v(t,x)=\cos d(t,x)\eta + \sin d(t,x)\omega  \quad \text{for a.e.}\:\: (t,x)\in\mathbb{R}_+\times\mathbb{R}^N.
\end{equation}
In particular, we have the relations
\begin{equation}
    \begin{aligned}
   &  \partial_t v = -\partial_t d \sin(d) \eta   +  \partial_t d \cos(d) \omega , \\
    & \nabla_x v = -\nabla_x d \sin(d) \eta   + \nabla_x d \cos(d) \omega,  \\
     & |\nabla_x v|^2 =|\nabla_x d|^2, \\
     & \Delta v = -\Delta d \sin(d) \eta - |\nabla_x d|^2 \cos (d)\eta    + \Delta d \cos(d) \omega
     - |\nabla_x d|^2 \sin(d) \omega , \\
    \end{aligned}
\end{equation}
So, if we substitute the ansatz \eqref{ass.v} into \eqref{EL.sys.} and assume $d$ is sufficiently regular (for example  $ d \in  L^\infty, \nabla d \in H^s, s >N/2-1,$) we use the equation
$$ (\partial_t-\Delta)v +u\cdot \nabla v = |\nabla v|^2v $$
and obtain  two scalar  equations on the vectors $\eta$ and $\omega.$
\begin{equation}
    \left\{
    \begin{aligned}
   & -\partial_t d \sin(d) +\Delta d \sin(d) + |\nabla d|^2 \cos (d) - u\cdot \nabla_x d \sin(d) =  |\nabla d|^2 \cos(d), \\
   & \partial_t d \cos(d) - \Delta d \cos(d) + |\nabla d|^2 \sin(d) +  u\cdot \nabla_x d \cos(d) = |\nabla d|^2  \sin(d),
    \end{aligned}\right.
\end{equation}
which can be rewrited as
\begin{equation}
\left\{    \begin{aligned}
   & \sin(d)\left(-\partial_t d +\Delta d - u\cdot \nabla_x d \right) =0, \\
   & \cos(d)\left(\partial_t d  - \Delta d  +  u\cdot \nabla_x d\right) =0 .
    \end{aligned} \right.
\end{equation}
Hence, we arrive at the equation
\begin{equation}
    \partial_t d - \Delta d   +  u\cdot \nabla_x d  =0 .
\end{equation}
For what concerns the first equation of (\ref{EL.sys.}), we notice that
$$ (\nabla v\odot \nabla v)_{j,k}=\partial_jd\partial_k d=(\nabla d\otimes \nabla d)_{j,k} \quad \forall j,k=1,\ldots, N, $$
where
$$ (w\otimes z)_{j,k}\coloneqq w_jz_k\quad \forall w,z\colon\mathbb{R}^N\to\mathbb{R}^N. $$
Finally, (\ref{EL.sys.}) becomes
\begin{equation}\label{EL.red.sys.}
    \left\{\begin{array}{ll}
        (\partial_t-\Delta)u+\nabla p+u\cdot \nabla u=-{\rm Div}(\nabla d\otimes\nabla d) & \mathbb{R}_+\times \mathbb{R}^N \\
        {\rm div}u=0 & \mathbb{R}_+\times\mathbb{R}^N \\
        (\partial_t-\Delta)d+u\cdot \nabla d=0 & \mathbb{R}_+\times \mathbb{R}^N \\
        u(0)=u_0,\quad d(0)=d_0 & \mathbb{R}^N.
    \end{array}\right.
\end{equation}
Thanks to the previous considerations, in the first part of the paper we will focus on \eqref{EL.red.sys.}. In particular, we will study local and global existence for the reduced system \eqref{EL.red.sys.} with decay estimates as $t\to+\infty$. Let us recall the following spaces: let $s\ge 0$ and $q\in(1,\infty)$, then it is well-known (see for example \cite{HRS20}) the space
$$ H^s_q\left(\mathbb{R}^N\right)\coloneqq \overline{C^\infty_c(\mathbb{R}^N)}^{\|\cdot\|_{H^s_q}}, $$
where
$$ \|v\|_{H^s_q(\mathbb{R }^N)}=\|v\|_{H^{\lfloor s\rfloor}_q(\mathbb{R}^N)}+\sum_{|\alpha|=\lfloor s\rfloor}\left\|\left\|\frac{D^\alpha v(x+h)- D^\alpha v(x)}{|h|^{s-\lfloor s\rfloor}}\right\|_{L^2\left(\frac{dh}{|h|^N}\right)}\right\|_{L^q(\mathbb{R}^N)}. $$
Following \cite{G11} it is defined
$$ D^{1,2}\left(\mathbb{R}^N\right)\coloneqq \left\{w\in L^1_{loc}\left(\mathbb{R}^N\right)\:\big|\:\|\nabla w\|_{L^2(\mathbb{R}^N)}<+\infty\right\} $$
and
$$ \widehat{H}^{1}\left(\mathbb{R}^N\right)=\left\{[w]\:\big|\: w\in D^{1,2}\left(\mathbb{R}^N\right)\right\} $$
where $[w]$ stands for the equivalence class defined as follows:
$$ [w]\coloneqq\{z=w+c \mid c\in\mathbb{R}\}. $$
In \cite{OS12} it is proved that $\widehat{H}^1(\mathbb{R}^N)$ is a Banach space endowed with the norm
$$ \|[w]\|_{\widehat{H}^1(\mathbb{R}^N)}\coloneqq \|\nabla w\|_{L^2(\mathbb{R}^N)}, $$
and that
\begin{rem}\label{rem.Hom.Sob.}
In \cite{OS12} they prove that the space
$$ \widehat{\mathcal{D}}\left(\mathbb{R}^N\right)\coloneqq\left\{[\varphi]\:\big|\: \varphi\in C^\infty_c\left(\mathbb{R}^N\right)\right\} $$
is dense in $\widehat{H}^1(\mathbb{R}^N)$.
For this reason, we can prove that for any $[w]\in\widehat{H}^1(\mathbb{R}^N)$ there is a representative function $u\in [w]$ such that
$$ \|u\|_{L^\frac{2N}{N-2}(\mathbb{R}^N)} \lesssim \|\nabla w\|_{L^2(\mathbb{R}^N)}. $$
In fact, if $w_n$ is a sequence of test functions such that $[w_n]\to[w]$ in $\widehat{H}^1(\mathbb{R}^N)$, then
$$ \|w_n-w_m\|_{L^\frac{2N}{N-2}(\mathbb{R}^N)}\lesssim \|\nabla w_n-\nabla w_m\|_{L^2(\mathbb{R}^N)}\xrightarrow{n,m\to+\infty}0. $$
So we can find $w_\infty\in L^\frac{2N}{N-2}(\mathbb{R}^N)$ such that $w_n\to w_\infty$ in $L^\frac{2N}{N-2}(\mathbb{R}^N)$. On the other hand it is easy to see that $\nabla w_\infty=\nabla w$, so $w_\infty=w+c\in[w]$, which concludes the proof.
\end{rem}
It is also defined (see for example \cite{HRS20})
$$ J_2\left(\mathbb{R}^N\right)\coloneqq \left\{w\in L^2\left(\mathbb{R}^N;\mathbb{R}^N\right)\:\Big|\: \left<w,\nabla \varphi\right>=0\quad \forall [\varphi]\in \widehat{H}^{1}\left(\mathbb{R}^N\right)\right\}.
$$
Finally, we introduce the spaces we will use in  this paper:
\begin{defn}\hfill\\
 Let $s\ge 0$ and $T\in(0,\infty]$, then we define
 \begin{equation}\label{def.X}
    X^s_T\coloneqq \left\{v\in L^2_{loc}\left((0,T);H^{s+1}\left(\mathbb{R}^N;\mathbb{R}^N\right)\right)\:\Big|\:\|v\|_{X^s_T}<+\infty\right\}
 \end{equation}
 \begin{equation}\label{def.T}
   \Theta^s_T\coloneqq \left\{\theta\in L^\infty\left((0,T);L^\infty\left(\mathbb{R}^N\right)\cap \widehat{H}^1\left(\mathbb{R}^N\right)\right)\:\Big|\: \nabla \theta\in X^s_T\right\},
 \end{equation}
 where for every $t\in(0,T)$
 $$ \theta(t)\in L^\infty\left(\mathbb{R}^N\right)\cap\widehat{H}^1\left(\mathbb{R}^N\right)\:\Leftrightarrow\: \theta(t)\in L^\infty\left(\mathbb{R}^N\right),\quad [\theta(t)]\in \widehat{H}^1\left(\mathbb{R}^N\right) $$
 and
 $$ \|v\|_{X^s_T}\coloneqq \|\nabla v\|_{L^2((0,T);H^s(\mathbb{R}^N))}+\|v\|_{L^\infty((0,T);H^s(\mathbb{R}^N))}, $$
 $$ \|\theta\|_{\Theta^s_T}\coloneqq \|\theta\|_{L^\infty((0,T);L^\infty(\mathbb{R}^N))}+\|\nabla \theta\|_{X^s_T}. $$
 For $T=\infty$ we will write $X^s$ and $\Theta^s$.
\end{defn}

\begin{thm}\label{t.loc.ex.}\hfill\\
Let $N\ge 3$, $s>\frac{N}{2}-1$. For any $R>0$ there exists $T=T(R)>0$ so that for any initial data
$$ u_0\in J_2\left(\mathbb{R}^N\right)\cap H^s\left(\mathbb{R}^N;\mathbb{R}^N\right), \quad d_0\in L^\infty\left(\mathbb{R}^N\right),\quad \nabla d_0\in H^{s}\left(\mathbb{R}^N;\mathbb{R}^N\right) $$
and
$$ \|u_0\|_{H^s(\mathbb{R}^N)}+\|d_0\|_{L^\infty(\mathbb{R}^N)}+\|\nabla d_0\|_{H^{s}(\mathbb{R}^N)} \leq R$$
there exists a solution $(u,p,d)$, unique up to additive functions $\rho(t)$ on the pressure term $p$, for the reduced system \eqref{EL.red.sys.} in $(0,T)$ with
$$ u\in X^s_T,\quad d\in\Theta^s_T,\quad \nabla p\in L^2\left((0,T);H^s\left(\mathbb{R}^N;\mathbb{R}^N\right)\right),\quad p(t)\in\widehat{H}^1\left(\mathbb{R}^N\right)\quad \text{for a.e.}\:\:t\in(0,T),$$
where $X^{s}_T$ and $\Theta^s_T$ are defined in \eqref{def.X} and \eqref{def.T}, and
$$ \|u\|_{X^s_T}+\|\nabla p\|_{L^2((0,T);H^s(\mathbb{R}^N))}+\|d\|_{\Theta^s_T}\le C(N,s)\left[\|u_0\|_{H^s(\mathbb{R}^N)}+\|d_0\|_{L^\infty(\mathbb{R}^N)}+\|\nabla d_0\|_{H^{s}(\mathbb{R}^N)}\right]. $$
\end{thm}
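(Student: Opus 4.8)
The plan is to construct the solution by a Banach fixed point argument in a ball of $X^s_T\times\Theta^s_T$, with $T=T(R)$ chosen small at the end. Given a pair $(w,e)$ in such a ball with $e(0)=d_0$, I would define $\Phi(w,e)=(u,d)$ by letting $u$ solve the linear Stokes system
\[
(\partial_t-\Delta)u+\nabla p=-\,w\cdot\nabla w-{\rm Div}(\nabla e\otimes\nabla e),\qquad {\rm div}\,u=0,\qquad u(0)=u_0,
\]
and $d$ solve the linear transport--diffusion problem $(\partial_t-\Delta)d+w\cdot\nabla d=0$, $d(0)=d_0$. Since $w$ is divergence free, the maximum principle gives at once $\|d\|_{L^\infty_T L^\infty}\le\|d_0\|_{L^\infty}$, disposing of the $L^\infty$ component of $\|d\|_{\Theta^s_T}$; the remaining components of $\Phi(w,e)$ are controlled by linear parabolic estimates, and the whole nonlinear dependence on $(w,e)$ will be seen to carry a positive power of $T$, which is what makes the map contractive.

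The first step is the linear theory. Testing the Stokes equation with $\Lambda^{2s}u$, $\Lambda\coloneqq(1-\Delta)^{1/2}$, integrating the divergence--form right-hand side by parts, and absorbing $\|\nabla u\|_{L^2_T H^s}$ into the left side yields
\[
\|u\|_{X^s_T}\ \lesssim\ \|u_0\|_{H^s}+\|w\otimes w\|_{L^2_T H^s}+\|\nabla e\otimes\nabla e\|_{L^2_T H^s}.
\]
The pressure is recovered from $\nabla p=(I-\mathbb P)\big(w\cdot\nabla w+{\rm Div}(\nabla e\otimes\nabla e)\big)$, $\mathbb P$ the Leray projection, equivalently from $-\Delta p={\rm div\,div}(w\otimes w)+{\rm div\,Div}(\nabla e\otimes\nabla e)$; by Calder\'on--Zygmund theory (maximal regularity for the Stokes operator) $\|\nabla p\|_{L^2_T H^s}$ is controlled in the same way, and by Remark~\ref{rem.Hom.Sob.} the representative of $p(t)$ may be chosen in $\widehat H^1(\mathbb R^N)$, which also accounts for the stated uniqueness up to an additive $\rho(t)$. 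For the director field, differentiating its equation and testing $\nabla d$ with $\Lambda^{2s}\nabla d$ (only $\nabla d\in L^2$ is available, not $d$ itself), and using ${\rm div}\,w=0$ to annihilate the top-order transport term, gives
\[
\|\nabla d\|_{X^s_T}\ \lesssim\ \|\nabla d_0\|_{H^s}+\|w\cdot\nabla d\|_{L^2_T H^s}+(\text{commutator terms}).
\]

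The crux is to estimate these quadratic terms at the threshold regularity $s>\tfrac N2-1$, where $H^s$ fails to be an algebra. Here I would use three ingredients simultaneously: parabolic smoothing places $w$ and $\nabla e$ in $L^2_T H^{s+1}(\mathbb R^N)$; since $s+1>\tfrac N2$ one has $H^{s+1}\hookrightarrow L^\infty(\mathbb R^N)$; and since $s\ge\tfrac N2-1$ one has $H^s\hookrightarrow L^N(\mathbb R^N)$. Combining these with Kato--Ponce / Moser product and commutator inequalities (the commutators paired in the $L^N$--$L^{2N/(N-2)}$ H\"older duality) bounds every quadratic term by a product of $X^s_T$- and $\Theta^s_T$-norms in which at least one factor is measured in $L^2_T L^\infty$ or $L^2_T L^N$; interpolating the \emph{time} integrability between $L^\infty_T H^s$ and $L^2_T H^{s+1}$ (respectively between $L^\infty_T H^{s-1}$ and $L^2_T H^s$ for first derivatives of $u$) raises the exponent to some $r>2$ and thereby extracts a factor $T^\delta$ with $\delta=\delta(N,s)>0$. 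Schematically,
\[
\|w\otimes w\|_{L^2_T H^s}+\|w\cdot\nabla d\|_{L^2_T H^s}+\|\nabla e\otimes\nabla e\|_{L^2_T H^s}+\cdots\ \lesssim\ T^{\delta}\big(\|w\|_{X^s_T}+\|e\|_{\Theta^s_T}\big)\big(\|w\|_{X^s_T}+\|d\|_{\Theta^s_T}\big).
\]

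Combining the linear and nonlinear bounds, $\|\Phi(w,e)\|_{X^s_T\times\Theta^s_T}\le C_0\big(\|u_0\|_{H^s}+\|d_0\|_{L^\infty}+\|\nabla d_0\|_{H^s}\big)+C_1T^{\delta}\big(\|w\|_{X^s_T}+\|e\|_{\Theta^s_T}\big)^2$, so $\Phi$ maps the ball of radius $\rho\coloneqq 2C_0R$ into itself once $C_1T^{\delta}\rho\le\tfrac12$. Running the same estimates on the differences $\Phi(w_1,e_1)-\Phi(w_2,e_2)$ — where the quadratic terms become bilinear in the difference and the sum, again with the factor $T^\delta$ — shows $\Phi$ is a contraction on that ball (if convenient, one first proves the contraction in the weaker norm $X^0_T\times\Theta^0_T$ and then upgrades the regularity of the fixed point via the a priori estimates above). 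The resulting fixed point $(u,d)$, together with the pressure reconstructed as in the second step, is the desired solution, and the final estimate with a constant depending only on $N,s$ follows from the self-mapping bound on the ball. The main obstacle is the penultimate point: closing all the nonlinear estimates precisely at the critical Sobolev index $s>\tfrac N2-1$ while retaining a positive power of $T$, for which the parabolic gain of one derivative and the borderline embeddings $H^{s+1}\hookrightarrow L^\infty$ and $H^s\hookrightarrow L^N$ must be exploited in a coordinated way.
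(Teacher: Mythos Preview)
Your overall strategy---a Banach fixed point in $X^s_T\times\Theta^s_T$ with a positive power $T^\delta$ extracted from every quadratic term---is exactly the paper's, and your mechanism for producing $T^\delta$ (fractional Leibniz plus the borderline embeddings $H^{s+1}\hookrightarrow L^\infty$, $H^s\hookrightarrow L^N$, and time interpolation between $L^\infty_TH^s$ and $L^2_TH^{s+1}$) is equivalent to the paper's bilinear Lemma~\eqref{bil.e.1}. Three implementation choices differ, however. First, the paper \emph{fully} linearises the director equation: its map sends $(w,\theta)$ to the solution of $(\partial_t-\Delta)d=-w\cdot\nabla\theta$, so the Duhamel formula gives $d$ explicitly and every estimate is immediate; your choice $(\partial_t-\Delta)d+w\cdot\nabla d=0$ places $d$ on both sides and forces an absorption step (using the $T^\delta$ smallness) inside the very definition of $\Phi$ before the self-map bound can even be written down. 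Second, for $\|d\|_{L^\infty_TL^\infty}$ you invoke the maximum principle, which is clean for the self-map but does not directly yield a $T^\delta$-small bound on $\|d_1-d_2\|_{L^\infty_TL^\infty}$ in the contraction step (the difference equation has a source $-(w_1-w_2)\cdot\nabla d_2$); the paper instead bounds the Duhamel integral in $L^\infty$ via the heat-kernel $L^r\to L^\infty$ smoothing (estimate~\eqref{bil.e.2}), which works uniformly for both the self-map and the difference. Third, the paper never tests with $\Lambda^{2s}$ or handles commutators: it works entirely through the semigroup bounds of Lemma~\ref{l.sm.es.} and Corollary~\ref{c.lin.ex.}, so that the only nonlinear input is the product estimate~\eqref{bil.e.1}. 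Your energy/commutator route closes at the same threshold $s>\tfrac N2-1$, but the paper's Duhamel route is shorter and avoids the commutator bookkeeping altogether.
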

Later in the paper, we will prove the global existence for small initial date and the corresponding decay in time:
\begin{thm}\label{t.gl-dec.ex.}
  Let $N\ge 3$ and $s>\frac{N}{2}-1$. Then there exists $\varepsilon_0>0$ so that for any $\varepsilon \in (0,\varepsilon_0]$ and any initial data
$$ u_0\in J_2\left(\mathbb{R}^N\right)\cap  H^s\left(\mathbb{R}^N;\mathbb{R}^N\right), \quad  d_0\in L^\infty\left(\mathbb{R}^N\right), \quad \nabla d_0\in H^s\left(\mathbb{R}^N;\mathbb{R}^N\right), $$
such that
$$ \|u_0\|_{H^s(\mathbb{R}^N)}+\|d_0\|_{L^\infty(\mathbb{R}^N)}+\|\nabla d_0\|_{H^{s}(\mathbb{R}^N)}\le \varepsilon, $$
 there is a  solution $(u,p,d)$, unique up to additive functions $\rho(t)$ on the pressure term $p$, for the reduced system \eqref{EL.red.sys.} in $\mathbb{R}_+$ with
 $$ u\in X^s,\quad d\in\Theta^s,\quad \nabla p\in L^2\left(\mathbb{R}_+;H^s\left(\mathbb{R}^N;\mathbb{R}^N\right)\right),\quad p(t)\in\widehat{H}^1\left(\mathbb{R}^N\right)\quad \text{for a.e.}\:\:t>0, $$
where $X^{s}$ and $\Theta^s$ are defined in \eqref{def.X} and \eqref{def.T}, and
$$ \|u\|_{X^s}+\|\nabla p\|_{L^2(\mathbb{R}_+;H^s(\mathbb{R}^N))}+\|d\|_{\Theta^s}\le C(N,s)\varepsilon. $$
\end{thm}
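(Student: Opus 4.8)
The plan is to promote the local solution furnished by Theorem~\ref{t.loc.ex.} to a global one through a continuation argument fed by \emph{uniform in time} a priori energy estimates. Since $\|u_0\|_{H^s}+\|d_0\|_{L^\infty}+\|\nabla d_0\|_{H^s}\le\varepsilon$, we may take $\varepsilon\le\varepsilon_0\le R$, so Theorem~\ref{t.loc.ex.} produces a solution on a nontrivial interval; let $T^\ast\in(0,\infty]$ be its maximal time of existence. For $0<T<T^\ast$ set
$$\mathcal{E}(T)\coloneqq\|u\|_{X^s_T}+\|\nabla p\|_{L^2((0,T);H^s(\mathbb{R}^N))}+\|d\|_{\Theta^s_T},$$
which is finite, nondecreasing and continuous in $T$, with $\limsup_{T\to0^+}\mathcal{E}(T)\le C_0\varepsilon$. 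Everything reduces to the nonlinear a priori inequality
$$\mathcal{E}(T)\ \le\ C_0\varepsilon\ +\ C_1\bigl(\mathcal{E}(T)^2+\mathcal{E}(T)^3\bigr)\qquad(0<T<T^\ast).$$
Indeed, if $\varepsilon_0$ is fixed so that $C_0\varepsilon_0+C_1\bigl((2C_0\varepsilon_0)^2+(2C_0\varepsilon_0)^3\bigr)<2C_0\varepsilon_0$, then the set $\{T<T^\ast:\mathcal{E}(T)\le2C_0\varepsilon\}$ is nonempty, relatively open and relatively closed in $[0,T^\ast)$, hence equals $[0,T^\ast)$; the resulting bound on the norms precludes a finite-time singularity, so Theorem~\ref{t.loc.ex.} can be iterated to extend the solution, forcing $T^\ast=\infty$, and $T\to\infty$ yields $u\in X^s$, $d\in\Theta^s$, $\nabla p\in L^2(\mathbb{R}_+;H^s)$ together with the stated estimate.

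For the a priori inequality I would proceed as follows. Apply the Leray projection $\mathbb{P}$ onto $J_2(\mathbb{R}^N)$ to the first equation of~\eqref{EL.red.sys.} to remove the pressure, and run $H^s$ energy estimates on $u$ and on $\nabla d$ (the latter after differentiating $(\partial_t-\Delta)d+u\cdot\nabla d=0$, which recasts $d$ as the transport--diffusion unknown $\nabla d$ with source $-(\nabla u)^{\top}\nabla d$). With $\Lambda\coloneqq(1-\Delta)^{1/2}$, the parabolic structure gives schematically
$$\tfrac{d}{dt}\|u\|_{H^s}^2+c\|\nabla u\|_{H^s}^2\lesssim\bigl|\langle u\cdot\nabla u,\Lambda^{2s}u\rangle\bigr|+\bigl|\langle\mathrm{Div}(\nabla d\otimes\nabla d),\Lambda^{2s}u\rangle\bigr|$$
and an analogous inequality for $\|\nabla d\|_{H^s}^2+\|\nabla^2 d\|_{H^s}^2$ with right-hand side $|\langle\nabla(u\cdot\nabla d),\Lambda^{2s}\nabla d\rangle|$. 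The transport contributions are handled via the divergence-free cancellation $\langle u\cdot\nabla f,\Lambda^{2s}f\rangle=\langle[\Lambda^s,u\cdot\nabla]f,\Lambda^s f\rangle$ together with Kato--Ponce commutator and fractional Leibniz inequalities; the terms quadratic in $\nabla d$ and the cross term are treated by ordinary product estimates in $H^s$ (using also $u\cdot\nabla u=\mathrm{div}(u\otimes u)$ and integration by parts to put one derivative onto the dissipated factor). Integrating in time, one arranges every bilinear term so that the $L^\infty_tH^s$ (``energy'') norm is paired by Hölder in $t$ with an $L^2_t$ norm of one derivative more of the other factor; the subcritical gap $s>\tfrac N2-1$ is exactly what leaves such a derivative to spare, and — crucially — the dissipation norm $\|\nabla u\|_{L^2_tH^s}$ also contains $\|\nabla u\|_{L^2_tL^2}$, so interpolating the borderline $\dot H^{N/2}$ norm between $\dot H^{1}$ and $\dot H^{s+1}$ lets the time integration close with no positive power of $T$. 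The $L^\infty$ bound on $d$ is not an energy quantity: since $\mathrm{div}\,u=0$, the maximum principle for $(\partial_t-\Delta)d+u\cdot\nabla d=0$ gives $\|d(t)\|_{L^\infty}\le\|d_0\|_{L^\infty}$ for all $t$. Finally the pressure solves $-\Delta p=\mathrm{div}(u\cdot\nabla u)+\mathrm{div}\,\mathrm{Div}(\nabla d\otimes\nabla d)$, so $\|\nabla p\|_{L^2_tH^s}$ is dominated by the same nonlinear quantities; summing the contributions gives the displayed inequality.

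Uniqueness up to an additive $\rho(t)$ in $p$ (the indeterminacy of a representative in $\widehat{H}^1$) is inherited from Theorem~\ref{t.loc.ex.} by an energy estimate on the difference of two solutions over each compact subinterval. Concerning the decay, the global bound already contains the qualitative information that $\nabla u$ and $\nabla^2 d$ are square-integrable in time over all of $\mathbb{R}_+$; quantitative algebraic-in-$t$ rates can then be extracted from the Duhamel representations $u(t)=e^{t\Delta}u_0+\int_0^t e^{(t-\tau)\Delta}\mathbb{P}N_u(\tau)\,d\tau$ and $\nabla d(t)=e^{t\Delta}\nabla d_0+\int_0^t e^{(t-\tau)\Delta}N_d(\tau)\,d\tau$ using heat/Stokes smoothing and the smallness just established.

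The main obstacle is precisely the a priori estimate at the low-regularity threshold $s>\tfrac N2-1$: since $H^s$ does not embed into $L^\infty$ in general, one may not use $\|u\|_{L^\infty}$ or $\|\nabla u\|_{L^\infty}$, so each nonlinear term must be split so that the derivative gained from parabolic smoothing compensates and so that in the time integration the $L^\infty_tH^s$ norm never appears to a positive power of $T$ — the latter being what distinguishes a global from a merely local estimate. This is most delicate in the $d$-equation, where only $\nabla d$, not $d$, is measured in $H^s$, and in making the commutator and product estimates close sharply as $s\downarrow\tfrac N2-1$; the homogeneous-space bookkeeping for $p$ (and for $d$ via $\widehat{H}^1$) is a comparatively minor additional nuisance.
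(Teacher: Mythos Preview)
Your proposal is sound but follows a genuinely different route from the paper. The paper does \emph{not} use a continuation argument: it runs a direct contraction mapping on the full half-line, i.e.\ it defines $\phi(w,\theta)=(u,d)$ via the Duhamel formulas and shows $\phi$ is a contraction on a small ball of the global space $X^{s,\alpha}_k\times\Theta^{s,\alpha}_k$ (a decay-weighted enlargement of $X^s\times\Theta^s$). The linear input is the smoothing estimate of Corollary~\ref{c.lin.ex.} with $T=\infty$, and the nonlinear input is the single bilinear bound $\|zw\|_{L^2(\mathbb{R}_+;H^s)}\lesssim\|z\|_{X^s}\|w\|_{X^s}$ (obtained from the fractional Leibniz rule with the endpoint pair $p_1=\tfrac{2N}{N-2}$, $p_2=N$ and Sobolev embeddings), which is exactly the $T$-independent version of the local bilinear estimate. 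The $L^\infty$ control on $d$ is obtained not via the maximum principle but again from the Duhamel formula and a pointwise bilinear estimate on $\int_0^t e^{\Delta(t-\tau)}(w\cdot\nabla\theta)\,d\tau$.

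What each approach buys: your continuation/energy scheme is the classical one and has the advantage that the $L^\infty$ bound on $d$ comes for free from the maximum principle, and that the argument separates cleanly into ``local existence'' plus ``uniform a priori estimate''. The paper's contraction argument is shorter (no bootstrap, no blow-up criterion, no continuity-of-$\mathcal{E}(T)$ discussion) and, because it is run directly in the weighted space $X^{s,\alpha}_k$, yields the $t^{-N/4}$ decay simultaneously with existence rather than as a separate afterthought. Your sketch of the a priori inequality is correct in spirit; the only point worth flagging is that the Kato--Ponce commutator route at the endpoint $s>\tfrac N2-1$ requires exactly the same Sobolev pairings ($H^s\hookrightarrow L^N$, $H^{s+1}\hookrightarrow H^s_{2N/(N-2)}$) that the paper uses in its bilinear lemma, so the analytic content of the two proofs ultimately coincides.
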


One can pose the question how $d(t,x)$ behaves as $t \to \infty$? We have the following answer.

\begin{cor}
Let us suppose the assumptions of Theorem \ref{t.loc.ex.} are fulfilled and $(u,p,d)$ is the solution from the same theorem.
For any $k\in\mathbb{N}$ such that $s-k>\frac{N}{2}-1$, we have
    $$ \|u(t)\|_{W^{k,\infty}(\mathbb{R}^N)}+\|\nabla d(t)\|_{W^{k,\infty}(\mathbb{R}^N)}\le C(N,s) t^{-N/4}\quad \text{as}\:\:t\to+\infty. $$
\end{cor}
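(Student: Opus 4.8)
The plan is to combine a Duhamel (variation-of-constants) representation with the $L^2\to L^\infty$ smoothing of the heat semigroup and a bootstrap that exploits the global smallness of the solution. Because the statement concerns the limit $t\to+\infty$, the pertinent solution is global, so I would work in the small-data regime of Theorem~\ref{t.gl-dec.ex.}, using the uniform bound $\|u\|_{X^s}+\|d\|_{\Theta^s}\le C\varepsilon$; in particular $\|u(t)\|_{L^2}+\|\nabla d(t)\|_{L^2}\le C\varepsilon$ for all $t>0$. Everything will be phrased at the level of $\nabla d$ rather than $d$, since $d$ itself only belongs to $L^\infty$. Since the reduced system \eqref{EL.red.sys.} is parabolic, the solution is smooth for $t>0$, so all the $W^{k,\infty}$ norms that appear below are finite on $[1,\infty)$; one may even record a short-time smoothing bound $\|\nabla^{j}u(t)\|_{H^s}+\|\nabla^{j}\nabla d(t)\|_{H^s}\lesssim t^{-j/2}\varepsilon$ and use $H^{s+1}\hookrightarrow L^\infty$ after enough derivatives.

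For $t\ge 2$ I would split the interval at $t/2$ and write, using $\mathrm{div}\,u=0$ to put $u\cdot\nabla u=\mathrm{Div}(u\otimes u)$ and with $\mathbb P$ the Leray projection,
\[
\nabla^{k}u(t)=e^{\frac t2\Delta}\nabla^{k}u(t/2)-\int_{t/2}^{t}e^{(t-\tau)\Delta}\mathbb P\,\mathrm{Div}\Big(\nabla^{k}\big(u\otimes u+\nabla d\otimes\nabla d\big)\Big)(\tau)\,d\tau,
\]
\[
\nabla^{k}\nabla d(t)=e^{\frac t2\Delta}\nabla^{k}\nabla d(t/2)-\int_{t/2}^{t}\nabla e^{(t-\tau)\Delta}\Big(\nabla^{k}\big(u\cdot\nabla d\big)\Big)(\tau)\,d\tau,
\]
together with the semigroup estimates $\|\nabla^{j}e^{s\Delta}f\|_{L^\infty}\lesssim s^{-N/4-j/2}\|f\|_{L^2}$ and, for the Oseen and heat kernels, $\|e^{s\Delta}\mathbb P\,\mathrm{Div}\,g\|_{L^\infty}\lesssim s^{-1/2}\|g\|_{L^\infty}$ and $\|\nabla e^{s\Delta}g\|_{L^\infty}\lesssim s^{-1/2}\|g\|_{L^\infty}$.

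Then I would introduce, for $K$ with $s-K>\frac{N}{2}-1$, the functional
\[
M_K(t):=\sup_{2\le\tau\le t}\ \tau^{N/4}\sum_{j=0}^{K}\Big(\|\nabla^{j}u(\tau)\|_{L^\infty}+\|\nabla^{j}\nabla d(\tau)\|_{L^\infty}\Big),
\]
and estimate the two representations above. The linear terms contribute $\lesssim t^{-N/4-j/2}\big(\|u(t/2)\|_{L^2}+\|\nabla d(t/2)\|_{L^2}\big)\lesssim t^{-N/4}\varepsilon$. For the nonlinear terms I would expand $\nabla^{k}(u\otimes u)$ and $\nabla^{k}(u\cdot\nabla d)$ by Leibniz, keeping only the single derivative already carried by the Oseen/heat operator; since on $[t/2,t]$ every factor is evaluated at $\tau\sim t$, each resulting term is bounded by
\[
\int_{t/2}^{t}(t-\tau)^{-1/2}\,t^{-N/2}\big(\varepsilon+M_K(t)\big)^2\,d\tau\ \lesssim\ t^{1/2-N/2}\big(\varepsilon+M_K(t)\big)^2\ \lesssim\ t^{-N/4}\big(\varepsilon+M_K(t)\big)^2,
\]
the last step because $\tfrac12-\tfrac N2\le-\tfrac N4$ for $N\ge3$. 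Collecting everything gives $M_K(t)\le C\varepsilon+C\varepsilon M_K(t)+CM_K(t)^2$ (the middle term coming from the top-order Leibniz pieces, in which the lowest-order factor is small). As $M_K$ is finite at $\tau=2$ and continuous, a standard continuity argument yields $M_K(t)\le 2C\varepsilon$ for all $t\ge2$ once $\varepsilon$ is small, which is the claimed bound for $k=K$; an induction on $K$ absorbs the fact that the order-$k$ Leibniz expansion involves derivatives of $u$ and $\nabla d$ of all orders up to $k$.

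The main obstacle, and the place that needs care, is the non-integrable singularity $(t-\tau)^{-(k+1)/2}$ that appears if one naively pulls all $k+1$ derivatives through $e^{(t-\tau)\Delta}$: splitting the Duhamel integral at $t/2$, keeping exactly one derivative on the smoothing operator, and transferring the rest onto the nonlinearity — where both the a~priori smallness and the inductive decay are available — is precisely what forces the Duhamel contribution to decay at the rate $t^{-N/4}$. Two secondary technical points are that the argument must be run for $\nabla d$ rather than $d$ (using $d\in L^\infty$ and $\nabla d\in L^\infty_tL^2$, both contained in $d\in\Theta^s$), and that finiteness of the $W^{k,\infty}$ norms at the initial time of the bootstrap is supplied by parabolic smoothing for $t>0$.
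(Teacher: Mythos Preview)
Your approach is correct and genuinely different from the paper's. The paper does not prove the Corollary separately; instead it embeds the decay directly into the existence argument by running the contraction in the weighted space
\[
X^{s,\alpha}_k=\Big\{w\in X^s:\ \sup_{t>0}(t^\alpha+t^{N/4})\|w(t)\|_{W^{k,\infty}}<\infty\Big\},
\]
with $\alpha<\tfrac12$, and proves the needed bilinear bounds (Lemmas \ref{l.bil.dec.1}--\ref{l.bil.dec.2} and \eqref{bil.e.4}--\eqref{bil.e.5}) by splitting the Duhamel integral into the three pieces $I_1,I_2,I_3$ and mixing $L^2\to L^\infty$ and $L^p\to L^\infty$ semigroup bounds. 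Your route---first global existence in $X^s$, then an a~posteriori bootstrap on $M_K(t)$ using only the half--interval $[t/2,t]$---is more elementary for large $t$ but shifts all the short--time work into the initialization step. What the paper's weight $t^\alpha$ buys is a uniform treatment of $t\to 0^+$ and $t\to\infty$ in a single contraction.

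Two points deserve tightening. First, to close the continuity argument you need $M_K(2)\lesssim\varepsilon$, not merely $M_K(2)<\infty$; your paragraph says ``finiteness'' but your inequality $M_K\le C\varepsilon+C\varepsilon M_K+CM_K^2$ needs smallness at the starting time. One clean way to get it from what is already available in $X^s$: since $\|\nabla u\|_{L^2((0,\infty);H^s)}+\|\nabla^2 d\|_{L^2((0,\infty);H^s)}\lesssim\varepsilon$, there is $t_1\in[1,2]$ with $\|u(t_1)\|_{H^{s+1}}+\|\nabla d(t_1)\|_{H^{s+1}}\lesssim\varepsilon$, and $H^{s+1}\hookrightarrow L^\infty$ because $s+1>N/2$; iterating this gain of one derivative over finitely many subintervals furnishes the needed $W^{k,\infty}$ smallness at some fixed time. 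Second, the endpoint bound $\|e^{\sigma\Delta}\mathbb P\,\mathrm{Div}\,g\|_{L^\infty}\lesssim\sigma^{-1/2}\|g\|_{L^\infty}$ that you invoke is true on $\mathbb R^N$ (it follows from explicit pointwise bounds on the gradient of the Oseen tensor), but it is not contained in the paper's Lemma \ref{l.est.HS}, which excludes $r=q=\infty$ for the Stokes semigroup; you should either cite this Oseen-kernel estimate explicitly or replace it by an $L^r\to L^\infty$ bound with $r$ large, as the paper does in its $I_3$ piece.
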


Finally, we turn back to the Ericksen-Leslie system \eqref{EL.sys.}. Here we divide the results in two cases: when $s$ is an integer and when $s$ is fractional. This last case is harder, so we restricted ourselves just to the case $N=3$ and $s\in\left(\frac{1}{2},1\right)$.  We will use the notation
$$ \text{Span}\{\eta,\omega\}\coloneqq\left\{z=a_1\eta +a_2\omega\mid a_1,a_2\in\mathbb{R}\right\}\quad \forall \eta,\omega\in \mathbb{R}^N. $$
\begin{thm}\label{t.ex.int.}\hfill\\
Let $N\ge 3$,  $s\in\mathbb{N}$ with $s>\frac{N}{2}-1$, let
$$ u_0\in J_2\left(\mathbb{R}^N\right)\cap H^s\left(\mathbb{R}^N;\mathbb{R}^N\right),\quad v_0\colon \mathbb{R}^N\to \mathcal{S}^{N-1},\quad \nabla v_0\in H^s\left(\mathbb{R}^N;\mathbb{R}^{N^2}\right) $$
with $v_0\in \text{Span}\{\eta,\omega\}$, for some $\eta,\omega\in \mathcal{S}^{N-1}$ with $\eta\perp\omega$, then
\begin{itemize}
    \item For any $R>0$ such that
    $$ \|u_0\|_{H^s(\mathbb{R}^N)}+\|v_0-\eta\|_{L^\infty(\mathbb{R}^N)}+\|\nabla v_0\|_{H^s(\mathbb{R}^N)}\le R, $$
    there is $T=T(R)>0$ sufficiently small such that there exists a solution $(u,p,v)$, unique up to additive functions $\rho(t)$ on the pressure term $p$, for the Ericksen-Leslie system \eqref{EL.sys.} in $(0,T)$ with
    $$ u\in X^s_T,\quad v\in\Theta^s_T,\quad \nabla p\in L^2\left((0,T);H^s\left(\mathbb{R}^N;\mathbb{R}^N\right)\right),\quad p(t)\in\widehat{H}^1\left(\mathbb{R}^N\right)\quad \text{a.e.}\:\:t\in(0,T),$$
    where $X^s_T$ and $\Theta^s_T$ are defined in \eqref{def.X} and \eqref{def.T} and
    $$ \|u\|_{X^s_T}+\|\nabla p\|_{L^2((0,T);H^s(\mathbb{R}^N))}+\|\nabla v\|_{X^s_T}\le $$
    $$ \le C(N,s)\left[\|u_0\|_{H^s(\mathbb{R}^N)}+\|v_0-\eta\|_{L^\infty(\mathbb{R}^N)}+\|\nabla v_0\|_{H^{s}(\mathbb{R}^N)}\right]. $$
    \item There is $\varepsilon_0=\varepsilon_0(s,N)>0$ such that for any $\varepsilon\in[0,\varepsilon_0)$, if
    $$ \|u_0\|_{H^s(\mathbb{R}^N)}+\|v_0-\eta\|_{L^\infty(\mathbb{R}^N)}+\|\nabla v_0\|_{H^s(\mathbb{R}^N)}\le \varepsilon, $$
    then there is a  solution $(u,p,v)$, unique up to additive functions $\rho(t)$ on the pressure term $p$, for the Ericksen-Leslie system \eqref{EL.sys.} in $\mathbb{R}_+$ with
    $$ u\in X^s,\quad v\in\Theta^s,\quad \nabla p\in L^2\left(\mathbb{R}_+;H^s\left(\mathbb{R}^N;\mathbb{R}^N\right)\right),\quad p(t)\in\widehat{H}^1\left(\mathbb{R}^N\right)\quad \text{for a.e.}\:\:t>0, $$ and
    $$ \|u\|_{X^s}+\|\nabla p\|_{L^2(\mathbb{R}_+;H^s(\mathbb{R}^N))}+\|v-\eta\|_{\Theta^s}\le C(N,s)\varepsilon. $$ Moreover, for any $k\in\mathbb{N}$ such that $s-k>\frac{N}{2}-1$, it holds
    $$ \|u(t)\|_{W^{k,\infty}(\mathbb{R}^N)}+\|\nabla v(t)\|_{W^{k,\infty}(\mathbb{R}^N)}\le C(N,s,k)t^{-N/4}\quad \text{as}\:\:t\to+\infty. $$
\end{itemize}
\end{thm}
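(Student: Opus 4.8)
The plan is to transfer the results already obtained for the reduced system \eqref{EL.red.sys.} to the full system \eqref{EL.sys.} via the arc ansatz \eqref{ass.v}, run in both directions. The hypothesis $v_0\in\mathrm{Span}\{\eta,\omega\}$ together with $|v_0|=1$ forces $v_0$ to take values on the great circle through $\eta$ and $\omega$, and restricting (as the statement intends) to the arc joining them one writes $v_0=\cos d_0\,\eta+\sin d_0\,\omega$ with $d_0$ valued in $[0,\pi/2]$, so automatically $\|d_0\|_{L^\infty}\le\pi/2$. The map $v_0\mapsto d_0$ is $v_0$ composed with a fixed smooth function $F$ defined on a neighbourhood of the arc, obtained by patching the two charts $d=\arcsin\langle\,\cdot\,,\omega\rangle$ near $\eta$ and $d=\tfrac\pi2-\arcsin\langle\,\cdot\,,\eta\rangle$ near $\omega$ (neither of which is singular on the relevant part of the arc); thus $\nabla d_0=F'(v_0)\nabla v_0$. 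Since here $s$ is a positive integer, the classical Moser / Fa\`a di Bruno composition and product estimates — of the same nature as those underlying Theorems~\ref{t.loc.ex.} and \ref{t.gl-dec.ex.}, and valid precisely for $s>\tfrac N2-1$ — give $\nabla d_0\in H^s(\mathbb R^N;\mathbb R^N)$ with $\|d_0\|_{L^\infty}+\|\nabla d_0\|_{H^s}\le C(\|v_0-\eta\|_{L^\infty})\,(\|v_0-\eta\|_{L^\infty}+\|\nabla v_0\|_{H^s})$, the constant being bounded for small data (which is what produces the linear dependence in the second bullet).

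\paragraph{Existence, regularity, decay.}
Next I would feed $(u_0,d_0)$ into Theorem~\ref{t.loc.ex.} (resp. Theorem~\ref{t.gl-dec.ex.}) to obtain a solution $(u,p,d)\in X^s_T\times\{\nabla p\in L^2_tH^s\}\times\Theta^s_T$ (resp. on $\mathbb R_+$) of \eqref{EL.red.sys.}, with norm $\le C(N,s)$ times the data norm. Set $v:=\cos d\,\eta+\sin d\,\omega$. Then $|v|\equiv1$ since $\eta\perp\omega$, $|\eta|=|\omega|=1$, and the identities for $\partial_tv,\nabla v,\Delta v,|\nabla v|^2$ recorded in the introduction give $(\partial_t-\Delta)v+u\cdot\nabla v-|\nabla v|^2v=\big((\partial_t-\Delta)d+u\cdot\nabla d\big)\big(-\sin d\,\eta+\cos d\,\omega\big)$, which vanishes by the third equation of \eqref{EL.red.sys.} because $-\sin d\,\eta+\cos d\,\omega$ is a unit vector; likewise $(\nabla v\odot\nabla v)_{jk}=\partial_jd\,\partial_kd=(\nabla d\otimes\nabla d)_{jk}$, so $(u,p,v)$ solves \eqref{EL.sys.} with $v(0)=v_0$. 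For $v\in\Theta^s_T$: $v$ is bounded, $\nabla v=g(d)\nabla d$ with $g$ smooth and $|\nabla v|=|\nabla d|\in L^2$ so $[v(t)]\in\widehat H^1$, and applying the composition/product estimates to $g(d)\nabla d$ and to $\nabla v$ yields $\nabla v\in X^s_T$ with $\|\nabla v\|_{X^s_T}\le P(\|d\|_{\Theta^s_T})\|\nabla d\|_{X^s_T}$ for a polynomial $P$; combined with $\|v-\eta\|_{L^\infty}\le\|d\|_{L^\infty}$ and the reduced-system bound this gives the stated estimate ($C(N,s)$ in the small-data case, where $P(\|d\|_{\Theta^s})\lesssim1$; in the local case the constant may additionally depend on $R$). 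The decay assertion follows the same way: the Corollary gives $\|u(t)\|_{W^{k,\infty}}+\|\nabla d(t)\|_{W^{k,\infty}}\le Ct^{-N/4}$, and differentiating $\nabla v=g(d)\nabla d$ up to order $k$, every term keeps at least one derivative of $d$ (hence the $t^{-N/4}$ decay) while the remaining factors $\partial^\alpha(g(d))$, $|\alpha|\le k$, are bounded polynomially in $\|d\|_{W^{k,\infty}}$, hence uniformly, so $\|\nabla v(t)\|_{W^{k,\infty}}\le Ct^{-N/4}$.

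\paragraph{Uniqueness.}
To get uniqueness (modulo an additive $\rho(t)$ in $p$) I would show the arc constraint is propagated. Let $(u,p,v)$ be any solution of \eqref{EL.sys.} in $X^s_T\times\Theta^s_T$ with $v(0)=v_0$ on the arc, and let $\zeta$ run over an orthonormal basis of $\mathrm{Span}\{\eta,\omega\}^{\perp}$. Then $\varphi_\zeta:=\langle v,\zeta\rangle$ solves $(\partial_t-\Delta)\varphi_\zeta+u\cdot\nabla\varphi_\zeta=|\nabla v|^2\varphi_\zeta$ with $\varphi_\zeta(0)=0$, exactly as in \eqref{s.perp.}; the $L^2$ energy identity, using $\mathrm{div}\,u=0$, the Sobolev embeddings $\widehat H^1\hookrightarrow L^{2N/(N-2)}$ (Remark~\ref{rem.Hom.Sob.}) and $\dot H^{N/2-1}\hookrightarrow L^N$ to bound $\int|\nabla v|^2\varphi_\zeta^2\le\|\nabla v\|_{L^N}^2\|\varphi_\zeta\|_{L^{2N/(N-2)}}^2$, followed by a Gr\"onwall argument with $\int_0^T\|\nabla v(t)\|_{L^N}^2\,dt\le\int_0^T\|\nabla v(t)\|_{H^s}^2\,dt<\infty$, forces $\varphi_\zeta\equiv0$. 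Hence $v$ is valued in $\mathrm{Span}\{\eta,\omega\}\cap\mathcal S^{N-1}$, so $v=\cos d\,\eta+\sin d\,\omega$ for a continuous lift $d$ with $d(0)=d_0$; the maximum principle for $(\partial_t-\Delta)d+u\cdot\nabla d=0$ keeps $d(t)$ in $[\inf d_0,\sup d_0]\subseteq[0,\pi/2]$, so the lift stays on the arc, and reversing the computation above shows $(u,p,d)$ solves \eqref{EL.red.sys.} in $X^s_T\times\Theta^s_T$. Uniqueness for the reduced system (Theorems~\ref{t.loc.ex.}--\ref{t.gl-dec.ex.}) then determines $(u,d)$, hence $v$, and $p$ up to $\rho(t)$.

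\paragraph{Main obstacle.}
The delicate part is the pair of transfer steps: the composition estimate producing $d_0$ from $v_0$, and — more so — the estimate $\nabla v=g(d)\nabla d\in X^s_T$ with quantitative control by $\|d\|_{\Theta^s_T}$, where one must verify that the threshold $s>\tfrac N2-1$ is enough. For the $L^2_tH^s$ bound on $\nabla^2v$ one needs, just as for the term $\mathrm{Div}(\nabla d\otimes\nabla d)$ in \eqref{EL.red.sys.}, that $\nabla d\otimes\nabla d\in L^2_tH^{s+1}$, which follows from $\nabla d\in L^\infty_tH^s\cap L^2_t\dot H^{s+1}$ by a Gagliardo--Nirenberg estimate that barely closes at $s=\tfrac N2-1$. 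A secondary point, needed for the lift to be globally well defined with $\nabla d_0\in H^s$, is its behaviour at the endpoints of the arc, dealt with by the two-chart patching described above.
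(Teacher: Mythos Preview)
Your construction of the solution on the arc, the regularity transfer $d\in\Theta^s_T\Rightarrow v\in\Theta^s_T$, and the decay argument all follow the paper's route (Lemma~\ref{l.Hk.eq.} and Proposition~\ref{p.reg.int.}, applied after Theorems~\ref{t.loc.ex.} and~\ref{t.gl-dec.ex.}). The part that diverges from the paper, and where there is a genuine gap, is uniqueness.

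You argue uniqueness by propagating the arc constraint: for $\zeta\perp\mathrm{Span}\{\eta,\omega\}$ you test the scalar equation for $\varphi_\zeta=\langle v,\zeta\rangle$ in $L^2$, estimate $\int|\nabla v|^2\varphi_\zeta^2\le\|\nabla v\|_{L^N}^2\|\varphi_\zeta\|_{L^{2N/(N-2)}}^2\lesssim\|\nabla v\|_{L^N}^2\|\nabla\varphi_\zeta\|_{L^2}^2$, and invoke Gr\"onwall. Two problems. First, an arbitrary competing solution $v\in\Theta^s_T$ only gives $[\varphi_\zeta(t)]\in\widehat H^1$, i.e.\ a representative in $L^{2N/(N-2)}$ \emph{modulo a constant}; there is no reason $\varphi_\zeta(t)\in L^2(\mathbb R^N)$, so the $L^2$ energy identity you write is not justified. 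Second, even granting $\varphi_\zeta\in L^2$, your inequality reads $\tfrac12\tfrac{d}{dt}\|\varphi_\zeta\|_{L^2}^2+\|\nabla\varphi_\zeta\|_{L^2}^2\le C\|\nabla v\|_{L^N}^2\|\nabla\varphi_\zeta\|_{L^2}^2$: the right-hand side carries $\|\nabla\varphi_\zeta\|_{L^2}^2$, not $\|\varphi_\zeta\|_{L^2}^2$, so no Gr\"onwall applies; you would need $C\|\nabla v\|_{L^N}^2<1$ to absorb, which is exactly a smallness assumption and fails in the large-data local case. (A fix for the second point would be to use $\nabla v\in L^2_tL^\infty_x$, available since $\nabla v\in X^s_T$ and $s+1>\tfrac N2$, to get $\int|\nabla v|^2\varphi_\zeta^2\le\|\nabla v\|_{L^\infty}^2\|\varphi_\zeta\|_{L^2}^2$; but this still presupposes $\varphi_\zeta\in L^2$.)

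The paper does \emph{not} prove that an arbitrary $\Theta^s_T$-solution stays on the arc. Instead it proves uniqueness for the full Ericksen--Leslie system directly (Proposition~\ref{p.un.int.}) by a contraction argument in $X^s_{T_0}\times\Theta^s_{T_0}$ on a short subinterval whose length depends only on the $Y_T$-norms of the two solutions, and then iterates. The new ingredient is a trilinear estimate for the term $|\nabla v|^2 v$ (Lemma giving \eqref{stab.es.1} and \eqref{stab.es.2}), of the form $\|\nabla z\,\nabla w\,\theta\|_{L^2_tH^s}\lesssim T^\gamma\|\theta\|_{\Theta^s_T}\|\nabla z\|_{X^s_T}\|\nabla w\|_{X^s_T}$; here the $L^\infty$ bound on $\theta$ contained in $\Theta^s_T$ is exactly what allows the estimate to close for integer $s>\tfrac N2-1$. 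With uniqueness for the full system in hand, the arc-constructed solution is automatically \emph{the} solution, and no separate propagation-of-constraint step is needed.
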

\begin{thm}\label{t.ex.fr.}\hfill\\
Let $s\in\left(\frac{1}{2},1\right)$ and
$$ u_0\in J_2\left(\mathbb{R}^3\right)\cap H^s\left(\mathbb{R}^3;\mathbb{R}^3\right),\quad v_0\colon \mathbb{R}^3\to \mathcal{S}^2,\quad \nabla v_0\in H^s\left(\mathbb{R}^3;\mathbb{R}^{9}\right) $$
with $v_0\in\text{Span}\{\eta,\omega\}$ for $\eta,\omega\in \mathcal{S}^2$ such that $\eta\perp\omega$, then there is $\varepsilon_0=\varepsilon_0(s)>0$ such that for any $\varepsilon\in[0,\varepsilon_0)$, if
$$ \|u_0\|_{H^s(\mathbb{R}^3)}+\|v_0-\eta\|_{L^\infty(\mathbb{R}^3)}+\|\nabla v_0\|_{H^s(\mathbb{R}^3)}\le \varepsilon, $$
there is a  solution $(u,p,v)$, unique up to additive functions $\rho(t)$ on the pressure term $p$, for the Ericksen-Leslie system \eqref{EL.sys.} in $\mathbb{R}_+$ with
 $$ u\in X^s,\quad v\in\Theta^s,\quad \nabla p\in L^2\left(\mathbb{R}_+;H^s\left(\mathbb{R}^3;\mathbb{R}^3\right)\right),\quad p(t)\in\widehat{H}^1\left(\mathbb{R}^3\right)\quad \text{for a.e.}\:\:t>0, $$
 where $X^s$ and $\Theta^s$ are defined in \eqref{def.X} and \eqref{def.T} and
    $$ \|u\|_{X^s}+\|\nabla p\|_{L^2(\mathbb{R}_+;H^s(\mathbb{R}^3))}+\|v-\eta\|_{\Theta^s}\le C_s\varepsilon. $$
 Moreover
    $$ \|u(t)\|_{L^\infty(\mathbb{R}^3)}+\|\nabla v(t)\|_{L^\infty(\mathbb{R}^3)}\le C_s t^{-3/4}\quad \text{as}\:\:t\to+\infty. $$
\end{thm}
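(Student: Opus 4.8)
The plan is to deduce Theorem~\ref{t.ex.fr.} from the global theory already proved for the reduced system \eqref{EL.red.sys.} (Theorem~\ref{t.gl-dec.ex.} and the subsequent Corollary), by the same reduction used for Theorem~\ref{t.ex.int.}: the arc ansatz \eqref{ass.v} turns \eqref{EL.sys.} into \eqref{EL.red.sys.}, so it suffices to solve the latter for a suitable scalar datum $d_0$ and then reconstruct $v=\cos d\,\eta+\sin d\,\omega$. The only point that is genuinely new for non-integer $s$ is the passage between the $v$-norms and the $d$-norms, which for $s\in(\tfrac12,1)$ must be handled with fractional calculus rather than the Leibniz rule.

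First I would introduce the arc coordinate of the datum. Since $v_0\in\mathrm{Span}\{\eta,\omega\}$, $|v_0|=1$ and $\|v_0-\eta\|_{L^\infty}\le\varepsilon$ is small, the component $\langle v_0,\eta\rangle$ is uniformly close to $1$, so $d_0:=\arctan(\langle v_0,\omega\rangle/\langle v_0,\eta\rangle)$ is well defined, stays in a small neighbourhood of $0$ (no winding), satisfies $v_0=\cos d_0\,\eta+\sin d_0\,\omega$ and $\|d_0\|_{L^\infty}\lesssim\|v_0-\eta\|_{L^\infty}$. The map $v_0\mapsto d_0$ is smooth near $\eta$, so the fractional chain rule gives $\nabla d_0\in H^s(\mathbb R^3)$ with $\|\nabla d_0\|_{H^s}\lesssim\|\nabla v_0\|_{H^s}$ (with a constant depending on the small $L^\infty$-size). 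Hence $(u_0,d_0)$ satisfies the hypotheses of Theorem~\ref{t.gl-dec.ex.} after shrinking $\varepsilon_0$, and we obtain a solution $(u,p,d)$ of \eqref{EL.red.sys.} on $\mathbb R_+$ with $\|u\|_{X^s}+\|\nabla p\|_{L^2(\mathbb R_+;H^s)}+\|d\|_{\Theta^s}\le C\varepsilon$. Putting $v:=\cos d\,\eta+\sin d\,\omega$ we have $|v|\equiv1$, $v(0)=v_0$, $|\nabla v|^2=|\nabla d|^2$, and the identities recorded in the Introduction show $(u,p,v)$ solves \eqref{EL.sys.}.

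The technical heart is the estimate $\|v-\eta\|_{\Theta^s}\lesssim\|u_0\|_{H^s}+\|v_0-\eta\|_{L^\infty}+\|\nabla v_0\|_{H^s}$ (and its reverse, needed for uniqueness). The $L^\infty$ bound is elementary. For the $X^s$-part, write $\nabla v=(\cos d\,\omega-\sin d\,\eta)\nabla d$ and $\nabla^2 v=(\cos d\,\omega-\sin d\,\eta)\nabla^2 d-(\sin d\,\omega+\cos d\,\eta)\,\nabla d\otimes\nabla d$; each term is treated with the fractional Leibniz rule $\|gh\|_{\dot H^s}\lesssim\|g\|_{\dot H^s_6}\|h\|_{L^3}+\|g\|_{L^\infty}\|h\|_{\dot H^s}$, the fractional chain rule $\|F(d)\|_{\dot H^s_6}\lesssim\|F'\|_{L^\infty}\|d\|_{\dot H^s_6}$, and the embeddings that close precisely in three dimensions for $s>\tfrac12$: $H^s(\mathbb R^3)\hookrightarrow L^3$, $\|d\|_{\dot H^s_6}\lesssim\|\nabla d\|_{L^{6/(3-2s)}}\lesssim\|\nabla d\|_{H^s}$ and $H^{s+1}(\mathbb R^3)\hookrightarrow L^\infty$. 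The quadratic term is handled by the product law $H^s\cdot H^{s+1}\hookrightarrow H^s$ (valid since $s+1>\tfrac N2$ and $0\le s\le s+1$) together with $\nabla d\in L^\infty_tH^s\cap L^2_tH^{s+1}$, giving $\|\nabla d\otimes\nabla d\|_{L^2_tH^s}\lesssim\|d\|_{\Theta^s}^2$, the remaining low-exponent pieces being absorbed after interpolating $\nabla d$ in time between $L^\infty_tH^s$ and $L^2_tH^{s+1}$. \textbf{This fractional product-/chain-rule bookkeeping is the main obstacle}, and it is exactly what forces the restriction to $N=3$ and $s\in(\tfrac12,1)$: outside this window the Sobolev exponents above cease to close. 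The reverse estimate is obtained the same way from $\nabla d=\langle\nabla v,\cos d\,\omega-\sin d\,\eta\rangle$.

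Finally, uniqueness follows because any $\Theta^s$-solution $\tilde v$ of \eqref{EL.sys.} with $\|\tilde v-\eta\|_{L^\infty}$ small and datum $v_0$ must stay on the arc: for $\zeta\perp\mathrm{Span}\{\eta,\omega\}$, $\varphi_\zeta=\langle\tilde v,\zeta\rangle$ solves \eqref{s.perp.} with zero datum, and since $s>\tfrac12$ gives $|\nabla\tilde v|^2\in L^1_tL^\infty$ (via $H^{s+1}\hookrightarrow L^\infty$), a Gr\"onwall argument forces $\varphi_\zeta\equiv0$; writing $\tilde v=\cos\tilde d\,\eta+\sin\tilde d\,\omega$ the reduction shows $\tilde d$ solves \eqref{EL.red.sys.}, so $\tilde d=d$ by Theorem~\ref{t.gl-dec.ex.}, hence $\tilde v=v$ and $\tilde p=p$ up to $\rho(t)$. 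For the decay, $|\nabla v(t,x)|=|\nabla d(t,x)|$ pointwise gives $\|\nabla v(t)\|_{L^\infty}=\|\nabla d(t)\|_{L^\infty}$, so the Corollary with $N=3$ and $k=0$ (admissible since $s>\tfrac12=\tfrac N2-1$) yields $\|u(t)\|_{L^\infty}+\|\nabla v(t)\|_{L^\infty}\lesssim t^{-3/4}$, the claimed rate.
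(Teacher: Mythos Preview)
Your reduction strategy and the fractional product/chain-rule bookkeeping for the $d\leftrightarrow v$ transfer match the paper's: Lemmas~\ref{l.Hs.es.1}, \ref{l.Hs.es.2}, Proposition~\ref{p.reg.fr} package exactly the estimates you sketch, and the initial-data inversion (your ``reverse estimate'') is Lemma~\ref{l.Hs.IC.}, proved via the identity $\nabla d_0=\nabla v_0\cdot\omega-(1-\cos d_0)\nabla d_0$ and an absorption argument using the smallness. The decay via $|\nabla v|=|\nabla d|$ is also the paper's argument.

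The genuine difference is uniqueness. The paper does \emph{not} use orthogonality preservation: instead Proposition~\ref{p.un.fr} runs a direct contraction on two arbitrary $X^s_T\times\Theta^s_T$ solutions of the full system~\eqref{EL.sys.}, the new ingredient for $s\in(\tfrac12,1)$ being the trilinear bound~\eqref{stab.es.3}, $\|\nabla z\,\nabla w\,\theta\|_{L^2((0,T);H^s)}\le CT^\gamma\|\theta\|_{\Theta^s_T}\|\nabla z\|_{X^s_T}\|\nabla w\|_{X^s_T}$, obtained by splitting the Gagliardo seminorm and invoking Remark~\ref{rem.Hom.Sob.}. This gives uniqueness with no arc or smallness hypothesis on the competitor.

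Your route has two soft spots. First, you assume the rival $\tilde v$ satisfies $\|\tilde v-\eta\|_{L^\infty}$ small, but the theorem asserts uniqueness in all of $X^s\times\Theta^s$; that smallness is not given a priori and would need a separate continuity-in-time argument, otherwise you only get conditional uniqueness. Second, the Gr\"onwall step for $\varphi_\zeta$ is delicate: from $\tilde v\in\Theta^s$ you get $\varphi_\zeta\in L^\infty_{t,x}$ and $\nabla\varphi_\zeta\in L^\infty_tL^2$, but not $\varphi_\zeta\in L^2_x$, so the standard $L^2$ energy identity is unavailable and a substitute (Duhamel in $L^6$, or a maximum-principle variant) must be supplied. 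Both points are fixable, but the paper's contraction sidesteps them entirely.
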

Concerning the global theorems, we have proved a stability result: if $v_0\in B_\varepsilon(\eta)\subseteq\mathbb{R}^3$ with $\varepsilon\ll1$, then the solution $v$ remains in a ball with a comparable radius.

\vspace{2mm}

 The paper is organized as follows: in the second section we state and prove some linear estimates we will need for the proof of Theorem \ref{t.loc.ex.} and \ref{t.gl-dec.ex.}, which are proven respectively in Section 3 and 4. In the last section we turn back to the Ericksen-Leslie system and we prove Theorems \ref{t.ex.int.} and \ref{t.ex.fr.}.

\section{Linear Estimates}

In this section we consider the linear system
\begin{equation}
    \left\{\begin{array}{ll}
        (\partial_t-\Delta)u+\nabla p=f & \mathbb{R}_+\times \mathbb{R}^N \\
        {\rm div}u=0 & \mathbb{R}_+\times\mathbb{R}^N \\
        (\partial_t-\Delta)d=g & \mathbb{R}_+\times \mathbb{R}^N \\
        u(0)=u_0,\quad d(0)=d_0 & \mathbb{R}^N.
    \end{array}\right.
\end{equation}
If we call $\mathbb{P}\colon L^q(\mathbb{R}^N;\mathbb{R}^N)\to L^q(\mathbb{R}^N;\mathbb{R}^N)$ for $q\in(1,\infty)$ the Leray projection, then we can rewrite the system as follows:
\begin{equation}\label{proj.sys.}
    \left\{\begin{array}{ll}
        (\partial_t-\mathbb{P}\Delta)u=f & \mathbb{R}_+\times \mathbb{R}^N \\
        (\partial_t-\Delta)d=g & \mathbb{R}_+\times \mathbb{R}^N \\
        u(0)=u_0,\quad d(0)=d_0 & \mathbb{R}^N,
    \end{array}\right.
\end{equation}
with $g\in L^q(\mathbb{R}^N)$ and $f\in J_q(\mathbb{R}^N)$. The system \eqref{proj.sys.} is diagonal on the variables $(u,d)$. Precisely, we have a Stokes equation on $u$ and a heat equation on $d$, so we can write explicitly the solution for \eqref{proj.sys.} using Duhamel formula:
\begin{equation}\label{Duh.for.}
    \begin{aligned}
     & u(t)=e^{\mathbb{P}\Delta t}u_0+\int_0^te^{\mathbb{P}\Delta(t-\tau)}f(\tau)d\tau\quad \text{for a.e.}\:\:t>0, \\
     & d(t)=e^{\Delta t}d_0+\int_0^te^{\Delta(t-\tau)}g(\tau)d\tau\quad \text{for a.e.}\:\:t>0,
\end{aligned}
\end{equation}
where $\{e^{\Delta t}\}_t$ and $\{e^{\mathbb{P}\Delta}\}_t$ are respectively the heat and the Stokes semigroups. Our aim is to find some estimates for these functions: it is well-known that
$$ e^{\Delta t}f(x)=\int_{\mathbb{R}^N}\frac{e^{-\frac{|x-y|^2}{4t}}}{(4\pi t)^{N/2}}f(y)dy. $$
\begin{lem}\label{l.est.HS}
Let $q,r\in[1,\infty]$ with $r\le q$, let $k\in\mathbb{N}$, then
$$ \|\nabla^k e^{\Delta t}f\|_{L^q(\mathbb{R}^N)}\le C(N,q,r,k) t^{-\frac{N}{2}\left(\frac{1}{r}-\frac{1}{q}\right)-\frac{k}{2}}\|f\|_{L^r(\mathbb{R}^N)}\quad \forall t>0. $$
Moreover, if $1<r<q\le \infty$ or $1<r\le q<\infty$, then
$$ \|\nabla^k e^{\mathbb{P}\Delta t}\mathbb{P}f\|_{L^q(\mathbb{R}^N)}\le C(N,q,r,k) t^{-\frac{N}{2}\left(\frac{1}{r}-\frac{1}{q}\right)-\frac{k}{2}}\|f\|_{L^r(\mathbb{R}^N)}\quad \forall t>0. $$
\end{lem}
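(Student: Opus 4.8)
The plan is to treat the two semigroups separately, starting with the heat semigroup where everything is explicit. First I would write $\nabla^k e^{\Delta t}f = (\nabla^k G_t) * f$, where $G_t(x) = (4\pi t)^{-N/2} e^{-|x|^2/(4t)}$ is the Gaussian kernel, so that $\nabla^k G_t$ is a sum of terms of the form $t^{-k/2} (P_k(x/\sqrt t)) t^{-N/2} e^{-|x|^2/(4t)}$ for suitable polynomials $P_k$. By the scaling $G_t(x) = t^{-N/2} G_1(x/\sqrt t)$ and its derivatives scaling as $\nabla^k G_t(x) = t^{-(N+k)/2}(\nabla^k G_1)(x/\sqrt t)$, a change of variables gives $\|\nabla^k G_t\|_{L^p(\mathbb{R}^N)} = t^{-\frac{k}{2}-\frac{N}{2}(1-\frac1p)}\|\nabla^k G_1\|_{L^p(\mathbb{R}^N)}$ for every $p\in[1,\infty]$, and $\|\nabla^k G_1\|_{L^p} < \infty$ because a polynomial times a Gaussian lies in every $L^p$. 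Then Young's convolution inequality $\|g*f\|_{L^q} \le \|g\|_{L^p}\|f\|_{L^r}$ with the Hölder relation $1 + \frac1q = \frac1p + \frac1r$ (which is solvable with $p\in[1,\infty]$ exactly when $r\le q$) yields
$$ \|\nabla^k e^{\Delta t}f\|_{L^q} \le \|\nabla^k G_t\|_{L^p}\|f\|_{L^r} = C(N,q,r,k)\, t^{-\frac{N}{2}(\frac1r-\frac1q)-\frac{k}{2}}\|f\|_{L^r}, $$
using $1-\frac1p = \frac1r - \frac1q$. This is the first displayed estimate.

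For the Stokes semigroup, the point is that $e^{\mathbb{P}\Delta t}\mathbb{P} = \mathbb{P} e^{\Delta t}\mathbb{P}$ (the Leray projection commutes with the heat semigroup, both being Fourier multipliers), so I would write $\nabla^k e^{\mathbb{P}\Delta t}\mathbb{P} f = \mathbb{P}(\nabla^k e^{\Delta t}(\mathbb{P}f))$. The projection $\mathbb{P}$ is bounded on $L^q(\mathbb{R}^N)$ for every $q\in(1,\infty)$ — this is the classical Calderón–Zygmund bound on the Riesz transforms — which is precisely why the indices $q$ and $r$ must now be strictly between $1$ and $\infty$ (at least on the side where $\mathbb{P}$ is applied; the stated condition $1<r<q\le\infty$ or $1<r\le q<\infty$ just says no endpoint $1$ and no simultaneous endpoint $\infty$ with equality, matching the borderline cases of boundedness). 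Then I estimate
$$ \|\nabla^k e^{\mathbb{P}\Delta t}\mathbb{P}f\|_{L^q} \le \|\mathbb{P}\|_{L^q\to L^q}\,\|\nabla^k e^{\Delta t}(\mathbb{P}f)\|_{L^q} \le C\, t^{-\frac{N}{2}(\frac1r-\frac1q)-\frac{k}{2}}\|\mathbb{P}f\|_{L^r} \le C(N,q,r,k)\, t^{-\frac{N}{2}(\frac1r-\frac1q)-\frac{k}{2}}\|f\|_{L^r}, $$
where the middle inequality is the heat estimate already proved and the last one uses $\|\mathbb{P}\|_{L^r\to L^r} < \infty$.

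I expect the only genuine subtlety to be the careful bookkeeping of admissible exponent ranges: in the heat case one must check that the Young exponent $p$ determined by $\frac1p = 1 - \frac1r + \frac1q$ actually lies in $[1,\infty]$, which is equivalent to $r\le q$ (and to $\frac1r - \frac1q \le 1$, automatic, and $\ge 0$); in the Stokes case one must additionally avoid the endpoints where the Riesz transforms are unbounded, forcing $1<r$ and $q<\infty$ except that $q=\infty$ is tolerated when $r<q$ strictly, since then $\nabla^k e^{\Delta t}$ already maps $L^r \to L^\infty$ with a $p<\infty$ and one applies $\mathbb{P}$ only on the $L^r$ side. Everything else is routine scaling and the standard boundedness theorems, which I would cite rather than reprove.
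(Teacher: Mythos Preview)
The paper does not actually prove this lemma; it is stated immediately after recalling the explicit heat kernel and is treated as a well-known fact, with the text moving directly on to Lemma~\ref{l.sm.es.}. Your argument is the standard one and is correct: Young's inequality together with the scaling of $\nabla^k G_t$ gives the heat estimate, and on $\mathbb{R}^N$ the Stokes semigroup coincides with the heat semigroup on solenoidal fields, so that $e^{\mathbb{P}\Delta t}\mathbb{P}f = e^{\Delta t}\mathbb{P}f$ and the estimate reduces to the heat case plus the $L^r$-boundedness of the Leray projection for $1<r<\infty$.

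One small clean-up: writing $\nabla^k e^{\mathbb{P}\Delta t}\mathbb{P}f = \mathbb{P}(\nabla^k e^{\Delta t}(\mathbb{P}f))$ is slightly awkward, since $\nabla^k$ of a vector field is a higher-order tensor and $\mathbb{P}$ acts on vector fields. It is cleaner (and what you in effect use in your last paragraph) to note directly that $e^{\mathbb{P}\Delta t}\mathbb{P}f = e^{\Delta t}\mathbb{P}f$, so the outer $\mathbb{P}$ never appears and one only needs $\mathbb{P}$ bounded on $L^r$. This also makes the case $q=\infty$, $1<r<\infty$ transparent without any special pleading.
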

 If we come back to the Duhamel formula \eqref{Duh.for.}, the previous lemma gives us an estimate for the first term. For what concerns the other term, we can prove the following smoothing estimates:
\begin{lem}\label{l.sm.es.}
For any $m,s \geq 0$, $j=0,1$ and any $T \in (0,\infty]$ we have
\begin{equation}\label{sm.es.1}
   \left\| (-\Delta)^{m/2}  e^{\mathbb{P}^j\Delta t} \mathbb{P}^jw_0 \right\|_{L^\infty((0,T);H^s(\mathbb{R}^N))} \leq C_N \|(-\Delta)^{m/2}w_0\|_{H^s(\mathbb{R}^N)}
\end{equation}
\begin{equation}\label{sm.es.2}
    \begin{aligned}
        \left\|(-\Delta)^{m/2} \int_0^t e^{\mathbb{P}^j\Delta(t-\tau)}\mathbb{P}^jh(\tau,\cdot)d\tau \right\|_{L^\infty((0,T);H^s(\mathbb{R}^N))} \leq C_N\|(-\Delta)^{m/2-1/2}h\|_{L^2((0,T);H^s(\mathbb{R}^N))}
    \end{aligned}
\end{equation}
\begin{equation}\label{sm.es.3}
   \left\| (-\Delta)^{m/2+1/2}  e^{\mathbb{P}^j\Delta t} \mathbb{P}^jw_0 \right\|_{L^2((0,T);H^s(\mathbb{R}^N))} \leq C_N \|(-\Delta)^{m/2}w_0\|_{H^s(\mathbb{R}^N)}
\end{equation}
\begin{equation}\label{sm.es.4}
    \begin{aligned}
        \left\|(-\Delta)^{m/2+1/2} \int_0^t e^{\mathbb{P}^j\Delta(t-\tau)}\mathbb{P}^jh(\tau,\cdot)d\tau \right\|_{L^2((0,T);H^s(\mathbb{R}^N))} \leq C_N\|(-\Delta)^{m/2-1/2}h\|_{L^2((0,T);H^s(\mathbb{R}^N))}.
    \end{aligned}
\end{equation}
\end{lem}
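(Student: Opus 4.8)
The four estimates are all standard parabolic smoothing bounds, and the cleanest route is to work on the Fourier side. Observe first that since $\mathbb{P}$ is a Fourier multiplier with a matrix-valued symbol that is bounded (indeed with operator norm $\le 1$ pointwise in $\xi$) and commutes with $e^{\Delta t}$, it suffices to prove all four inequalities with $j=0$; the case $j=1$ follows by inserting $\mathbb{P}$ and using $\|\mathbb{P}g\|_{H^s}\le \|g\|_{H^s}$. Likewise, the operator $(-\Delta)^{m/2}$ is a Fourier multiplier commuting with the heat semigroup, so the only analytic content is the scalar kernel bound
\[
\bigl|\,|\xi|^a e^{-t|\xi|^2}\,\bigr|\le C_a\, t^{-a/2},\qquad a\ge 0,\ t>0,
\]
together with its $L^2_t$-counterpart
\[
\int_0^\infty |\xi|^{2}\,e^{-2t|\xi|^2}\,dt=\tfrac12,\qquad\text{more generally}\qquad \int_0^\infty |\xi|^{2}\,e^{-2(t-\tau)|\xi|^2}\,d(t-\tau)=\tfrac12 .
\]

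For \eqref{sm.es.1} and \eqref{sm.es.3}: take the spatial Fourier transform, multiply by $\langle\xi\rangle^s$, and reduce to controlling $\||\xi|^{m}e^{-t|\xi|^2}\langle\xi\rangle^s\widehat{w_0}(\xi)\|$ in $L^\infty_t L^2_\xi$ and $\||\xi|^{m+1}e^{-t|\xi|^2}\langle\xi\rangle^s\widehat{w_0}(\xi)\|$ in $L^2_t L^2_\xi$. The first is immediate from $e^{-t|\xi|^2}\le 1$; the second follows from the displayed time-integral identity applied pointwise in $\xi$ after writing $|\xi|^{2(m+1)}e^{-2t|\xi|^2}=|\xi|^{2m}\cdot|\xi|^2 e^{-2t|\xi|^2}$ and integrating in $t\in(0,T)\subseteq(0,\infty)$. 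For \eqref{sm.es.2} and \eqref{sm.es.4}: after Fourier transform and multiplication by $\langle\xi\rangle^s$, the Duhamel term becomes, for each fixed $\xi$, a convolution in time, $\int_0^t |\xi|^{m}e^{-(t-\tau)|\xi|^2}\,\bigl(\langle\xi\rangle^s\widehat h(\tau,\xi)\bigr)\,d\tau$. Writing $|\xi|^m e^{-(t-\tau)|\xi|^2}=|\xi|\,e^{-(t-\tau)|\xi|^2}\cdot|\xi|^{m-1}$ and estimating the convolution by Young's inequality in $t$ — using $\bigl\|\,|\xi|e^{-\cdot|\xi|^2}\bigr\|_{L^2_t(0,\infty)}=2^{-1/2}$ for \eqref{sm.es.2}, and $\bigl\|\,|\xi|e^{-\cdot|\xi|^2}\bigr\|_{L^1_t(0,\infty)}=|\xi|^{-1}$ for \eqref{sm.es.4} — one obtains, pointwise in $\xi$,
\[
\Bigl\|\int_0^t|\xi|^m e^{-(t-\tau)|\xi|^2}\langle\xi\rangle^s\widehat h(\tau,\xi)\,d\tau\Bigr\|_{L^\infty_t}\le 2^{-1/2}\bigl\||\xi|^{m-1}\langle\xi\rangle^s\widehat h(\cdot,\xi)\bigr\|_{L^2_t},
\]
and similarly with $\|\cdot\|_{L^2_t}$ on the left for \eqref{sm.es.4}, where the extra factor $|\xi|^{-1}$ from Young's $L^1$ bound cancels one power of $|\xi|$, leaving $|\xi|^{m-1}$ as required. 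Squaring and integrating in $\xi$ (using Minkowski's inequality to exchange the $L^2_\xi$ and $L^2_t$ norms in the $L^2_t$-valued case) gives the claimed $H^s$ bounds with constant $C_N$ independent of $T$.

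The only mild subtlety — and I'd flag it as the one point deserving a sentence rather than a one-liner — is the exchange of norms and the handling of the time convolution on the finite interval $(0,T)$ rather than $(0,\infty)$: one must note that extending $h$ by zero outside $(0,T)$ only enlarges the right-hand side, so all the $(0,\infty)$ kernel identities above apply verbatim and the resulting constants are $T$-independent. A secondary point is the $L^2$-boundedness of $\mathbb{P}$ on $H^s$ and its commutation with $e^{\Delta t}$ and $(-\Delta)^{m/2}$, which is transparent on the Fourier side since all three are multipliers; no Calderón–Zygmund theory is needed here because we only use $L^2$-based spaces (in contrast to Lemma \ref{l.est.HS}, where $L^q$ endpoints force the restriction $1<r,q<\infty$).
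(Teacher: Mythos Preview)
Your proof is correct and takes a genuinely different route from the paper's. The paper proceeds by the energy method: setting $w=e^{\Delta t}w_0+\int_0^t e^{\Delta(t-\tau)}h(\tau)\,d\tau$, multiplying the heat equation by $w$ and integrating in space--time gives the identity
\[
\tfrac12\|w(t)\|_{L^2}^2+\|\nabla w\|_{L^2((0,t);L^2)}^2=\tfrac12\|w_0\|_{L^2}^2+\int_0^t\langle h(\tau),w(\tau)\rangle\,d\tau,
\]
and the last term is bounded via the duality pairing $\|(-\Delta)^{-1/2}h\|_{L^2_tL^2_x}\,\|(-\Delta)^{1/2}w\|_{L^2_tL^2_x}$, the second factor being absorbed into the left-hand side; all four inequalities then drop out by setting either $h=0$ or $w_0=0$ and commuting with $(-\Delta)^{m/2}(1-\Delta)^{s/2}$. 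Your Fourier-side argument with Young's inequality in the time variable is arguably more direct on $\mathbb{R}^N$ and delivers explicit constants, but the energy argument is more portable: it carries over verbatim to domains or manifolds where no global Fourier transform is available, and indeed the paper flags in the introduction that it deliberately avoids methods tied to the explicit heat kernel on $\mathbb{R}^N$ so as to leave the door open for boundary-value problems.
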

\begin{proof}\hfill\\
It is sufficient to focus on the heat case with  $s=m=0$. By density, we can assume some regularity on the data, that is $w_0 \in \mathscr{S}(\mathbb{R}^N)$, $h \in \mathscr{S}(\mathbb{R} \times \mathbb{R}^N)$.
Then the function $v$ defined as
$$ w = e^{\Delta t} w_0 +  \int_0^t e^{\Delta(t-\tau)}h(\tau)d\tau$$
satisfies again
$$
\frac{1}{2}\|w(t)\|_{L^2(\mathbb{R}^N)}^2+\|\nabla w\|_{L^2((0,t);L^2(\mathbb{R}^N))}^2=\frac{1}{2}\|w_0\|_{L^2(\mathbb{R}^N)}^2+\int_0^t\left<h(\tau),w(\tau)\right>_{L^2_x}d\tau. $$
Now we notice that
$$ \int_0^t\left<h(\tau),w(\tau)\right>_{L^2_x}d\tau\le \int_0^t\|(-\Delta)^{-1/2}h(\tau)\|_{L^2(\mathbb{R}^N)}\|(-\Delta)^{1/2}w(\tau)\|_{L^2(\mathbb{R}^N)}d\tau\le $$
$$ \le \|(-\Delta)^{-1/2}h\|_{L^2((0,T);L^2(\mathbb{R}^N))}\|(-\Delta)^{1/2}w\|_{L^2((0,T);L^2(\mathbb{R}^N))}. $$
so we deduce
$$ \|w\|_{L^\infty((0,T);L^2(\mathbb{R}^N))}+\|(-\Delta)^{1/2} w\|_{L^2((0,T);L^2(\mathbb{R}^N))} \lesssim  \|w_0\|_{L^2(\mathbb{R}^N)} + \|(-\Delta)^{-1/2} h\|_{L^2((0,T);L^2(\mathbb{R}^N))}.
$$
In particular, when $h=0$ we get \eqref{sm.es.1} and \eqref{sm.es.3}. Otherwise, when $w_0=0$ we get \eqref{sm.es.2} and \eqref{sm.es.4}.
\end{proof}
Thanks to Lemma \ref{l.est.HS} and \ref{l.sm.es.} we can prove the linear estimate for  \eqref{proj.sys.}. Before, we state the following result from \cite{GN18}.
\begin{thm}\label{t.w.N.}\hfill\\
    Let $q\in(1,\infty)$ and let $f\in L^q(\mathbb{R}^N;\mathbb{R}^N)$, then we can find a unique solution $[\pi]\in\widehat{W}^{1,q}(\mathbb{R}^N)$ for the weak problem
    $$ \Delta\pi={\rm div}f\quad \text{on}\:\:\mathbb{R}^N, $$
    moreover
    $$ \|\nabla \pi\|_{L^q(\mathbb{R}^N)}\le C(q,N)\|f\|_{L^q(\mathbb{R}^N)}. $$
\end{thm}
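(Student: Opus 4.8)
\emph{Setup of the plan.} The plan is to construct $[\pi]$ via the Calderón–Zygmund representation $\nabla\pi=-(\nabla\otimes\nabla)(-\Delta)^{-1}f$, first on a dense class, then to extend by completeness of the homogeneous space, and finally to get uniqueness from a Liouville-type argument for harmonic functions in $L^q$.

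\emph{Existence and the estimate on a dense class.} First I would prove the statement for $f\in C^\infty_c(\mathbb{R}^N;\mathbb{R}^N)$. For such $f$ the function $\pi_f:=\mathcal{N}\ast\operatorname{div}f$ (with $\mathcal{N}$ the Newtonian potential) is smooth, solves $\Delta\pi_f=\operatorname{div}f$ pointwise, and on the Fourier side
\[
\widehat{\partial_j\pi_f}(\xi)=-\sum_{k=1}^N\frac{\xi_j\xi_k}{|\xi|^2}\,\widehat{f_k}(\xi),
\]
so that $\nabla\pi_f$ is a finite linear combination of the operators $R_jR_k$ (double Riesz transforms) applied to the components of $f$. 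Since $\xi_j\xi_k/|\xi|^2$ is a smooth multiplier, homogeneous of degree $0$ away from the origin, the Mikhlin multiplier theorem (equivalently, classical Calderón–Zygmund theory) gives that each $R_jR_k$ is bounded on $L^q(\mathbb{R}^N)$ for $1<q<\infty$, hence
\[
\|\nabla\pi_f\|_{L^q(\mathbb{R}^N)}\le C(q,N)\|f\|_{L^q(\mathbb{R}^N)}.
\]
In particular $[\pi_f]\in\widehat{W}^{1,q}(\mathbb{R}^N)$, and testing $\Delta\pi_f=\operatorname{div}f$ against $\varphi\in C^\infty_c(\mathbb{R}^N)$ yields $\int_{\mathbb{R}^N}\nabla\pi_f\cdot\nabla\varphi\,dx=\int_{\mathbb{R}^N}f\cdot\nabla\varphi\,dx$, i.e.\ $[\pi_f]$ solves the weak problem.

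\emph{Passage to the limit and uniqueness.} For general $f\in L^q(\mathbb{R}^N;\mathbb{R}^N)$ I would take $f_n\in C^\infty_c$ with $f_n\to f$ in $L^q$; applying the estimate above to $f_n-f_m$ shows $([\pi_{f_n}])_n$ is Cauchy in the Banach space $\widehat{W}^{1,q}(\mathbb{R}^N)$, and I call $[\pi]$ its limit. Then $\nabla\pi_{f_n}\to\nabla\pi$ and $f_n\to f$ in $L^q$, so passing to the limit in the weak identity (for each fixed $\varphi\in C^\infty_c$) shows $[\pi]$ solves the weak problem, while the norm bound $\|\nabla\pi\|_{L^q}\le C(q,N)\|f\|_{L^q}$ is inherited in the limit. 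For uniqueness, if $[\pi_1],[\pi_2]\in\widehat{W}^{1,q}$ are two weak solutions, then $\pi:=\pi_1-\pi_2$ satisfies $\Delta\pi=0$ in $\mathcal{D}'(\mathbb{R}^N)$; by Weyl's lemma a representative of $\pi$ is smooth and harmonic, hence so is each $\partial_j\pi$, which moreover belongs to $L^q(\mathbb{R}^N)$. The mean-value property gives $|\partial_j\pi(x)|\le C(N)\,r^{-N/q}\|\partial_j\pi\|_{L^q(B_r(x))}\to0$ as $r\to\infty$, so $\nabla\pi\equiv0$ and $[\pi_1]=[\pi_2]$ in $\widehat{W}^{1,q}(\mathbb{R}^N)$.

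\emph{Main obstacle.} The genuinely delicate point is the interface between the Calderón–Zygmund bound and the homogeneous space: one must make sure the limiting object is an honest element of $\widehat{W}^{1,q}(\mathbb{R}^N)$, i.e.\ an equivalence class modulo constants whose gradient is the prescribed $L^q$ vector field. This is exactly why the construction goes through the dense subset $C^\infty_c$ together with the completeness of $\widehat{W}^{1,q}$, rather than attempting to pin down a concrete representative of $\pi$ — the latter would need a Sobolev embedding unavailable when $q\ge N$. The remaining ingredients (boundedness of the $0$-homogeneous multiplier, Weyl's lemma, and the $L^q$-Liouville estimate) are standard.
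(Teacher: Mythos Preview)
Your argument is correct and is the standard route to this result: the key ingredients are the $L^q$-boundedness of the double Riesz transforms $R_jR_k$ (via Mikhlin or Calder\'on--Zygmund), the completeness of $\widehat{W}^{1,q}(\mathbb{R}^N)$ for the density argument, and the $L^q$-Liouville theorem for harmonic functions to settle uniqueness. Each step is sound, and the ``main obstacle'' you single out is indeed the only subtle point.

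There is, however, nothing to compare against: the paper does not prove this theorem at all. It is quoted verbatim as a known result from \cite{GN18} and then used as a black box in the proof of Corollary~\ref{c.lin.ex.} to recover the pressure estimate. So you have supplied a proof where the paper simply cites one; your approach is exactly the one a reader of \cite{GN18} (or of Galdi's book) would find.
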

By the definition of $\widehat{W}^{1,q}(\mathbb{R}^N)$ we get then the choice of $\pi$ in the previous theorem is unique up to a constant $c\in\mathbb{R}$.
\begin{cor}\label{c.lin.ex.}
Let $N\ge 2$, $s\ge 0$,
$$ u_0\in J_2\left(\mathbb{R}^N\right)\cap H^s\left(\mathbb{R}^N;\mathbb{R}^N\right), \quad d_0\colon\mathbb{R}^N\to\mathbb{R},\quad\nabla d_0\in H^{s}\left(\mathbb{R}^N;\mathbb{R}^N\right),  $$
let $T\in(0,\infty]$ and
$$ F\in L^2\left((0,T);H^s\left(\mathbb{R}^N;\mathbb{R}^{N^2}\right)\right), \quad g\in L^2\left((0,T);H^{s}\left(\mathbb{R}^N\right)\right), $$
then there is a solution $(u,p,d)$, unique up to additive functions $\rho(t)$ on the pressure term $p$, for the linear system
$$
\left\{\begin{array}{ll}
    (\partial_t-\Delta)u+\nabla p={\rm Div}F & (0,T)\times\mathbb{R}^N \\
    {\rm div}u=0 & (0,T)\times\mathbb{R}^N \\
    (\partial_t-\Delta)d=g & (0,T)\times \mathbb{R}^N \\
    u(0)=u_0,\quad d(0)=d_0 & \mathbb{R}^N,
\end{array}\right. $$
with
$$ u\in X^s_T,\quad \nabla d\in X^s_T,\quad \nabla p\in L^2\left((0,T);H^s\left(\mathbb{R}^N\right)\right),\quad p(t)\in\widehat{H}^1\left(\mathbb{R}^N\right)\quad \text{for a.e.}\:\:t\in(0,T) $$
where $X^s_T$ and $\Theta^s_T$ are defined in \eqref{def.X} and \eqref{def.T} and
$$  \|u\|_{X^s_T}+ \|\nabla p\|_{L^2((0,T);H^s(\mathbb{R}^N))}+ \| \nabla d\|_{X^s_T}\le $$
$$ \le C_N\left[\|u_0\|_{H^s(\mathbb{R}^N)}+\|\nabla d_0\|_{H^{s}(\mathbb{R}^N)} + \|F\|_{L^2((0,T);H^s(\mathbb{R}^N))} + \|g\|_{L^2((0,T);H^s(\mathbb{R}^N))}\right]. $$
\end{cor}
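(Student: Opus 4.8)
The idea is that, once the Leray projection $\mathbb{P}$ is applied, the linear system becomes diagonal in $(u,d)$ -- a Stokes equation for $u$ and a heat equation for $d$, as in \eqref{proj.sys.} -- so that a solution is produced directly by the Duhamel formulas \eqref{Duh.for.}, and one only has to read off the bounds from the smoothing estimates of Lemma~\ref{l.sm.es.} and to recover the pressure from Theorem~\ref{t.w.N.}. Since $u_0\in J_2(\mathbb{R}^N)$ we have $\mathbb{P}u_0=u_0$, and $\mathbb{P}$ applies to the forcing ${\rm Div}\,F$ as in \eqref{proj.sys.}, so the natural candidate is
\[
  u(t):=e^{\mathbb{P}\Delta t}u_0+\int_0^t e^{\mathbb{P}\Delta(t-\tau)}\mathbb{P}\,{\rm Div}\,F(\tau)\,d\tau,\qquad
  d(t):=e^{\Delta t}d_0+\int_0^t e^{\Delta(t-\tau)}g(\tau)\,d\tau .
\]
It then remains to bound $u$ and $\nabla d$ in $X^s_T$, to define and estimate $\nabla p$, and to prove uniqueness.

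For $d$ I would apply Lemma~\ref{l.sm.es.} with $j=0$ and $m=1$: estimates \eqref{sm.es.1} and \eqref{sm.es.2} control $\|(-\Delta)^{1/2}d\|_{L^\infty((0,T);H^s)}$ by $\|(-\Delta)^{1/2}d_0\|_{H^s}+\|g\|_{L^2((0,T);H^s)}$, and \eqref{sm.es.3} and \eqref{sm.es.4} control $\|(-\Delta)d\|_{L^2((0,T);H^s)}$ by the same quantity; using the elementary Fourier identities $\|(-\Delta)^{1/2}w\|_{H^s}=\|\nabla w\|_{H^s}$ and $\|(-\Delta)w\|_{H^s}=\|\nabla^2 w\|_{H^s}$ this is precisely the bound $\|\nabla d\|_{X^s_T}\lesssim\|\nabla d_0\|_{H^s}+\|g\|_{L^2((0,T);H^s)}$. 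For $u$ I would apply the same lemma with $j=1$ and $m=0$: \eqref{sm.es.1} and \eqref{sm.es.3} handle the linear part (with $w_0=u_0=\mathbb{P}u_0$) with bound $\|u_0\|_{H^s}$, while \eqref{sm.es.2} and \eqref{sm.es.4} bound the Duhamel term by $\|(-\Delta)^{-1/2}{\rm Div}\,F\|_{L^2((0,T);H^s)}$. The key observation is that $(-\Delta)^{-1/2}{\rm Div}$ is a matrix of Riesz transforms, hence a Fourier multiplier homogeneous of degree $0$, as is $\mathbb{P}$; both are therefore bounded on $H^s(\mathbb{R}^N)$, so $\|(-\Delta)^{-1/2}{\rm Div}\,F\|_{H^s}\lesssim\|F\|_{H^s}$. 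In other words the parabolic gain of one derivative from the Stokes semigroup exactly absorbs the derivative carried by ${\rm Div}$, and one obtains $\|u\|_{X^s_T}\lesssim\|u_0\|_{H^s}+\|F\|_{L^2((0,T);H^s)}$.

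To recover the pressure, take the divergence of the momentum equation; since ${\rm div}\,u=0$ (hence ${\rm div}\,\partial_t u={\rm div}\,\Delta u=0$) one gets, for a.e.\ $t$, the elliptic problem $\Delta p(t)={\rm div}\,({\rm Div}\,F(t))$. Theorem~\ref{t.w.N.} (with $q=2$, so $\widehat{W}^{1,2}=\widehat{H}^1$) yields a unique $[p(t)]\in\widehat{H}^1(\mathbb{R}^N)$ solving it, equivalently $\nabla p=(I-\mathbb{P})\,{\rm Div}\,F$; applying the same to the spatial derivatives of the equation (equivalently to Littlewood--Paley pieces) promotes this to the bound on $\nabla p$ in $L^2((0,T);H^s(\mathbb{R}^N))$ appearing in the statement, and since $[\,\cdot\,]$ annihilates exactly the constants, $p$ is determined only up to an additive $\rho(t)$. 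Uniqueness of $(u,\nabla d)$ then follows by linearity: the difference of two solutions in the stated class solves the homogeneous system, and testing the heat equation against $d_1-d_2$ and the $\mathbb{P}$--Stokes equation against $u_1-u_2$ gives, via the energy identity and Gronwall, $u_1=u_2$ and $\nabla d_1=\nabla d_2$ on $(0,T)$, whence also $\nabla(p_1-p_2)=0$.

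Since the linear estimates of Lemmas~\ref{l.est.HS} and~\ref{l.sm.es.} and Theorem~\ref{t.w.N.} are already in hand, the proof is essentially bookkeeping; the one point that requires care -- and that dictates the choices $m=1$ for $d$ and $m=0$ for $u$ -- is matching the fractional powers $(-\Delta)^{m/2}$ appearing in Lemma~\ref{l.sm.es.} with the single derivative produced by ${\rm Div}\,F$, together with the check that the zeroth-order multipliers $\mathbb{P}$ and $(-\Delta)^{-1/2}{\rm Div}$ act boundedly on $H^s(\mathbb{R}^N)$.
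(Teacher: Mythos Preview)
Your proposal is correct and follows essentially the same route as the paper: both write down the Duhamel formulas, apply Lemma~\ref{l.sm.es.} with $m=0$ for $u$ and $m=1$ for $d$, observe that $(-\Delta)^{-1/2}{\rm Div}$ is a zero-order multiplier so that the forcing ${\rm Div}\,F$ is absorbed, and then recover $p$ from the elliptic problem via Theorem~\ref{t.w.N.}. The only cosmetic differences are that the paper applies $(1-\Delta)^{s/2}$ before invoking Theorem~\ref{t.w.N.} (which is exactly your ``Littlewood--Paley pieces'' remark), and that it verifies ${\rm div}\,u=0$ by noting $w={\rm div}\,u$ solves $\partial_t w=0$ with $w(0)=0$, whereas you leave this implicit in the action of $\mathbb{P}$; conversely, you add an explicit uniqueness argument which the paper omits.
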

\begin{proof}\hfill\\
From the system \eqref{proj.sys.} we get the Duhamel formula satisfied by the solution $u$ and $d$:
$$ u=e^{\mathbb{P}\Delta t}u_0+\int_0^te^{\mathbb{P}\Delta (t-\tau)}\mathbb{P}{\rm Div}F(\tau)d\tau $$
$$ d=e^{\Delta t}d_0+\int_0^t e^{\Delta(t-\tau)}g(\tau)d\tau. $$
Let us start with the estimate for $u$: from Lemma \ref{l.sm.es.} with $m=0$ we have that
$$ \|u\|_{L^\infty((0,T);H^s(\mathbb{R}^N))}+\|\nabla u\|_{L^2((0,T);H^s(\mathbb{R}^N))}\lesssim \|u_0\|_{H^s(\mathbb{R}^N)}+\|(-\Delta)^{-1/2}{\rm Div}F\|_{L^2((0,T);H^s(\mathbb{R}^N))}. $$
So
$$ \|u\|_{X^s_T}\lesssim \|u_0\|_{H^s(\mathbb{R}^N)}+\|F\|_{L^2((0,T);H^s(\mathbb{R}^N))}. $$
Moreover, from \eqref{sm.es.1} and \eqref{sm.es.3} with $m=1$ we have that
$$ \|\nabla e^{\Delta t}d_0\|_{L^\infty((0,T);H^s(\mathbb{R}^N))}+\|\nabla^2e^{\Delta t}d_0\|_{L^2((0,T);H^s(\mathbb{R}^N))}\lesssim \|\nabla d_0\|_{H^s(\mathbb{R}^N)}. $$
On the other hand, thanks to \eqref{sm.es.2} and \eqref{sm.es.4} with $m=1$, we have that
$$ \left\|\nabla \int_0^te^{\Delta(t-\tau)}g(\tau)d\tau\right\|_{L^\infty((0,T);H^s(\mathbb{R}^N))}+\left\|\nabla^2\int_0^te^{\Delta(t-\tau)}g(\tau)d\tau\right\|_{L^2((0,T);H^s(\mathbb{R}^N))}\lesssim $$
$$ \lesssim \|g\|_{L^2((0,T);H^s(\mathbb{R}^N))}. $$
In conclusion
$$
    \|\nabla d\|_{X^s_T}\lesssim \|\nabla d_0\|_{H^{s}(\mathbb{R}^N)}+\|g\|_{L^2((0,T);H^s(\mathbb{R}^N))}. $$
On the other hand ${\rm div}u(t)=0$ for a.e. $t\in(0,T)$, because $w\coloneqq {\rm div}u$ solves the Cauchy problem
$$ \left\{\begin{array}{ll}
\partial_tw=0 & (0,T)\times \mathbb{R}^N \\
w(0)=0 & \mathbb{R}^N,
\end{array}\right.$$
so $w={\rm div}u=0$. Finally, if apply $(1-\Delta)^{s/2}$ in the first equation of \eqref{EL.red.sys.} and then we take the divergence, we get that $p$ solves weakly for a.e. $t\in(0,T)$ the problem
$$ \Delta \left[(1-\Delta)^{s/2}p(t)\right] ={\rm div}\left[(1-\Delta)^{s/2}{\rm Div}F(t)\right] \quad \text{on}\:\:\mathbb{R}^N\quad \text{for a.e.}\:\:t\in(0,T). $$
Since ${\rm Div}F(t)\in H^s(\mathbb{R}^N;\mathbb{R}^N)$ for a.e. $t\in (0,T)$, by Theorem \ref{t.w.N.} we get that $p(t)\in \widehat{H}^1(\mathbb{R}^N)$ for a.e. $t\in(0,T)$ and
$$ \|\nabla p(t)\|_{H^s(\mathbb{R}^N)}\lesssim \|{\rm Div}F\|_{H^s(\mathbb{R}^N)}. $$
Taking the $L^2((0,T))$-norm of the previous estimate we conclude.
\end{proof}
 We conclude this paragraph with some well-known decay estimates for the Stokes and the heat semigroups that will be helpful in the next sections:
\begin{lem}\label{l.decay}
Let $N\ge 2$ and $s>0$ then
    $$ \begin{aligned}
    & \|e^{\Delta t}f\|_{L^\infty(\mathbb{R}^N)}\le C(N,s)\left(t^{\alpha}+t^{N/4}\right)^{-1}\|f\|_{H^s(\mathbb{R}^N)} \\
    & \|e^{\mathbb{P}\Delta t}\mathbb{P}f\|_{L^\infty(\mathbb{R}^N)}\le C(N,s) \left(t^{\alpha}+t^{N/4}\right)^{-1}\|f\|_{H^s(\mathbb{R}^N)},
\end{aligned} $$
with
\begin{equation}\label{a.value}
    \alpha= \left\{\begin{array}{ll}
    \frac{N}{4}-\frac{s}{2} & s<\frac{N}{2} \\
    \delta & s=\frac{N}{2} \\
    0 & s>\frac{N}{2}
\end{array}\right.
\end{equation}
with $\delta\in \left(0,\frac{N}{4}\right)$ arbitrary small.
\end{lem}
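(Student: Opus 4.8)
\medskip
\noindent\emph{Proof proposal.}
The plan is to separate the regimes $t\le 1$ and $t\ge 1$, to reduce the Stokes bound to the heat bound, and then to glue the pieces using the elementary observation that, since $\alpha\le N/4$ in every case of \eqref{a.value}, the quantity $t^{\alpha}+t^{N/4}$ is comparable to $t^{\alpha}$ for $t\le 1$ and to $t^{N/4}$ for $t\ge 1$.

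First I would dispose of the Stokes semigroup. The Leray projection $\mathbb{P}$ is the Fourier multiplier with symbol $m(\xi)=\mathrm{Id}-\xi\otimes\xi/|\xi|^2$, which is a self-adjoint projection; hence the symbol of $\mathbb{P}\Delta$ is $-|\xi|^2 m(\xi)$ and that of $e^{t\mathbb{P}\Delta}$ is $e^{-t|\xi|^2 m(\xi)}=(\mathrm{Id}-m(\xi))+e^{-t|\xi|^2}m(\xi)$. Composing with $\mathbb{P}$ and using $m^2=m$ gives the operator identity $e^{t\mathbb{P}\Delta}\mathbb{P}=e^{t\Delta}\mathbb{P}$ on $L^2(\mathbb{R}^N)$, and since $m$ is bounded we have $\|\mathbb{P}f\|_{H^s}\le C_N\|f\|_{H^s}$. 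Thus the second inequality follows from the first applied to $g:=\mathbb{P}f$, and it suffices to bound $\|e^{t\Delta}g\|_{L^\infty}$ by $C(N,s)\,(t^{\alpha}+t^{N/4})^{-1}\|g\|_{H^s}$ for $g\in H^s(\mathbb{R}^N)$.

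For $t\ge 1$, Lemma \ref{l.est.HS} with $k=0$, $r=2$, $q=\infty$ gives $\|e^{t\Delta}g\|_{L^\infty}\le C t^{-N/4}\|g\|_{L^2}\le C t^{-N/4}\|g\|_{H^s}$, and since $t^{\alpha}\le t^{N/4}$ there, $t^{-N/4}\le 2(t^{\alpha}+t^{N/4})^{-1}$. For $0<t\le 1$ I would prove $\|e^{t\Delta}g\|_{L^\infty}\le C(N,s)\,t^{-\alpha}\|g\|_{H^s}$, which then dominates $(t^{\alpha}+t^{N/4})^{-1}$ up to a factor $2$ because $t^{\alpha}\ge t^{N/4}$ for $t\le 1$. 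This splits into the three cases of \eqref{a.value}. If $s>N/2$, the heat kernel is a probability density so $\|e^{t\Delta}g\|_{L^\infty}\le\|g\|_{L^\infty}\le C\|g\|_{H^s}$ by the Sobolev embedding $H^s\hookrightarrow L^\infty$, which is $\alpha=0$. If $s<N/2$, combine $H^s\hookrightarrow L^{2N/(N-2s)}$ with Lemma \ref{l.est.HS} ($k=0$, $r=\tfrac{2N}{N-2s}$, $q=\infty$) to obtain the power $t^{-\frac N2\cdot\frac{N-2s}{2N}}=t^{-(N/4-s/2)}$. If $s=N/2$, for the prescribed $\delta\in(0,N/4)$ choose $p=N/(2\delta)\in(2,\infty)$ and combine $H^{N/2}\hookrightarrow L^{p}$ with Lemma \ref{l.est.HS} ($k=0$, $r=p$, $q=\infty$) to get $t^{-N/(2p)}=t^{-\delta}$. (All invocations of Lemma \ref{l.est.HS} above are in the heat case, where no restriction on $r\le q$ is needed.)

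Taking the larger of the two constants and assembling the regimes proves both estimates. I do not expect a genuine obstacle; the only delicate point is the endpoint $s=N/2$, where $H^{N/2}$ fails to embed into $L^\infty$ (it embeds into every $L^p$, $p<\infty$), so one is forced to concede an arbitrarily small power $t^{-\delta}$ near $t=0$ — which is exactly the middle line of \eqref{a.value}. Everything else reduces to Lemma \ref{l.est.HS}, the Sobolev embedding, and the elementary comparison of $t^{\alpha}+t^{N/4}$ with $t^{\alpha}$ (for $t\le1$) and with $t^{N/4}$ (for $t\ge1$).
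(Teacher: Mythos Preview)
Your proof is correct and follows exactly the route the paper indicates (``follows from Lemma \ref{l.est.HS} and Sobolev embeddings''): you split $t\le 1$ versus $t\ge 1$, invoke the $L^r\to L^\infty$ smoothing of Lemma \ref{l.est.HS} together with the appropriate Sobolev embedding in each of the three regimes for $s$, and glue via the comparison $t^\alpha+t^{N/4}\asymp t^{\min\{\alpha,N/4\}}$. Your reduction of the Stokes estimate to the heat estimate via the identity $e^{t\mathbb{P}\Delta}\mathbb{P}=e^{t\Delta}\mathbb{P}$ and the $H^s$-boundedness of $\mathbb{P}$ is a clean way to handle the endpoint $s>N/2$ (where one cannot use $r=q=\infty$ directly in the Stokes part of Lemma \ref{l.est.HS}).
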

\begin{rem}\label{rem.alpha}
   It is useful to notice for the proceeding of the paper that $\alpha<\frac{1}{2}$ when $s>\frac{N}{2}-1$ for any choice of $\alpha$ from \eqref{a.value}.
\end{rem}
The proof of Lemma \ref{l.decay} follows from Lemma \ref{l.est.HS} and Sobolev embeddings.

\section{Local Existence}

Let us consider now the system
\begin{equation}\label{s.loc.nl.}
    \left\{\begin{array}{ll}
        (\partial_t-\Delta)u+\nabla p+u\cdot \nabla u=-{\rm Div}(\nabla d\otimes\nabla d) & (0,T)\times \mathbb{R}^N \\
        {\rm div}u=0 & (0,T)\times\mathbb{R}^N\\
        (\partial_t-\Delta)d+u\cdot \nabla d=0 & (0,T)\times \mathbb{R}^N \\
        u(0)=u_0,\quad d(0)=d_0 & \mathbb{R}^N.
    \end{array}\right.
\end{equation}
The aim of this section is to prove the existence of a solution for the system \eqref{s.loc.nl.}. We recall the statement:
\begin{thm}\label{t.loc.ex.+}\hfill\\
Let $N\ge 3$, $s>\frac{N}{2}-1$. For any $R>0$ there exists $T=T(R)>0$ so that for any initial data
$$ u_0\in J_2\left(\mathbb{R}^N\right)\cap H^s\left(\mathbb{R}^N;\mathbb{R}^N\right), \quad d_0\in L^\infty\left(\mathbb{R}^N\right),\quad \nabla d_0\in H^{s}\left(\mathbb{R}^N;\mathbb{R}^N\right) $$
and
$$ \|u_0\|_{H^s(\mathbb{R}^N)}+\|d_0\|_{L^\infty(\mathbb{R}^N)}+\|\nabla d_0\|_{H^{s}(\mathbb{R}^N)} \leq R$$
there exists a solution $(u,p,d)$, unique up to additive functions $\rho(t)$ on the pressure term $p$, for the reduced system \eqref{s.loc.nl.} with
$$ u\in X^s_T,\quad d\in\Theta^s_T,\quad \nabla p\in L^2\left((0,T);H^s\left(\mathbb{R}^N;\mathbb{R}^N\right)\right),\quad p(t)\in\widehat{H}^1\left(\mathbb{R}^N\right)\quad \text{for a.e.}\:\:t\in(0,T),$$
where $X^{s}_T$ and $\Theta^s_T$ are defined in \eqref{def.X} and \eqref{def.T}, and
$$ \|u\|_{X^s_T}+\|\nabla p\|_{L^2((0,T);H^s(\mathbb{R}^N))}+\|d\|_{\Theta^s_T}\le C(N,s)\left[\|u_0\|_{H^s(\mathbb{R}^N)}+\|d_0\|_{L^\infty(\mathbb{R}^N)}+\|\nabla d_0\|_{H^{s}(\mathbb{R}^N)}\right]. $$
\end{thm}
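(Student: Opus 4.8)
The plan is to set up a standard fixed-point / iteration scheme in the complete metric space $X^s_T \times \Theta^s_T$, using the linear theory from Corollary \ref{c.lin.ex.} as the solution operator and exploiting the smallness of $T$ to absorb the nonlinear terms. Concretely, given $(\bar u,\bar d) \in X^s_T \times \Theta^s_T$ I would define $(u,d)$ to be the solution provided by Corollary \ref{c.lin.ex.} of the linear system with right-hand sides $F = -\,\nabla \bar d \otimes \nabla \bar d$ and $g = -\,\bar u \cdot \nabla \bar d$ (the latter can be written as $-\,{\rm div}(\bar u\, \bar d)$ since ${\rm div}\,\bar u = 0$, which is convenient but not essential). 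This defines a map $\Phi$; a fixed point of $\Phi$ is the desired solution. The pressure $p$ and its estimate come for free from Corollary \ref{c.lin.ex.}, so the whole problem reduces to controlling $u$ and $d$.

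The key analytic input is a pair of product/multiplication estimates in $H^s$ with $s > \frac N2 - 1$. First, for the velocity source term one needs
$$ \|\nabla \bar d \otimes \nabla \bar d\|_{L^2((0,T);H^s)} \lesssim T^{\theta}\,\|\nabla\bar d\|_{X^s_T}^2 $$
for some $\theta > 0$; this follows by interpolating $H^s$ between $L^\infty_t H^s_x$ and $L^2_t H^{s+1}_x$ (the two ingredients of the $X^s_T$ norm) together with the fact that $H^s$ with $s>N/2-1$ is an algebra up to one derivative, i.e. $\|fg\|_{H^s}\lesssim \|f\|_{H^s}\|g\|_{H^{s+1}} + \|f\|_{H^{s+1}}\|g\|_{H^s}$, or more simply the Kato--Ponce / Moser estimate $\|fg\|_{H^s}\lesssim \|f\|_{H^s}\|g\|_{L^\infty}+\|f\|_{L^\infty}\|g\|_{H^s}$ combined with $H^{s+1}\hookrightarrow L^\infty$. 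Second, for the transport term one needs
$$ \|\bar u\cdot\nabla\bar d\|_{L^2((0,T);H^s)} \lesssim T^{\theta}\,\|\bar u\|_{X^s_T}\,\|\nabla\bar d\|_{X^s_T}, $$
proven the same way. In both cases the gain of a positive power $T^\theta$ comes from putting one factor in $L^\infty_t$ and the other in $L^2_t$ and then using H\"older in time to trade an $L^2_t$ for an $L^\infty_t$ at the cost of $T^{1/2}$, or from the interpolation that places the product in an intermediate space where the time integrability is better than $L^1$. One also needs the $L^\infty$ bound on $d$ itself: since $d$ solves a heat equation with source $g\in L^2_tH^s_x\hookrightarrow L^2_tL^\infty_x$ (as $s>N/2-1>0$... here one must be slightly careful and may instead use that $\nabla d \in X^s_T$ controls $\|d(t)-d_0\|$ via, e.g., $\|d(t)\|_{L^\infty}\le \|d_0\|_{L^\infty}+\int_0^t\|g(\tau)\|_{L^\infty}d\tau$), one gets $\|d\|_{L^\infty_tL^\infty_x}\le \|d_0\|_{L^\infty}+ C T^{1/2}\|g\|_{L^2_tH^s_x}$.

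Putting these together, on the ball $B_M = \{\|u\|_{X^s_T}+\|d\|_{\Theta^s_T}\le M\}$ with $M = 2C_N R$ one gets
$$ \|\Phi(\bar u,\bar d)\|_{X^s_T\times\Theta^s_T} \le C_N R + C_N T^{\theta} M^2, $$
and a similar Lipschitz estimate $\|\Phi(\bar u_1,\bar d_1)-\Phi(\bar u_2,\bar d_2)\|\le C_N T^\theta M\,\|(\bar u_1,\bar d_1)-(\bar u_2,\bar d_2)\|$ for the differences (the differences of the quadratic terms are bilinear in one argument fixed and one equal to the difference, so the same product estimates apply). Choosing $T = T(R)$ small enough that $C_N T^\theta M \le \tfrac12$ and $C_N T^\theta M^2\le C_N R$ makes $\Phi$ a contraction of $B_M$ into itself; the Banach fixed point theorem then yields a unique fixed point in $B_M$, which is the solution, with the asserted bound $\|u\|_{X^s_T}+\|\nabla p\|_{L^2_tH^s_x}+\|d\|_{\Theta^s_T}\le M = 2C_N R\le C(N,s)[\|u_0\|_{H^s}+\|d_0\|_{L^\infty}+\|\nabla d_0\|_{H^s}]$. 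Uniqueness up to the additive $\rho(t)$ in $p$ is exactly the uniqueness statement of Corollary \ref{c.lin.ex.} applied to the difference of two solutions.

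The main obstacle I anticipate is purely the bookkeeping of the product estimates at the borderline regularity $s>\frac N2-1$: one is not in an algebra for $H^s$ itself, only "one derivative better," so every nonlinear term must be split so that exactly one factor carries the extra derivative (which is where the $X^s_T$ structure — controlling $\nabla u, \nabla^2 d$ in $L^2_t H^s_x$ — is essential), and one must verify that in each such splitting the time exponents add up to strictly less than $1$ so that H\"older in time produces the crucial $T^\theta$ gain. The control of $\|d\|_{L^\infty}$ (as opposed to $\|\nabla d\|_{H^s}$, which is immediate from the linear theory) is the one place where the bound is not scale-invariant and genuinely uses the finiteness of $T$; for the global theorem \ref{t.gl-dec.ex.} this term will have to be handled differently, but for local existence the crude time-integral bound above suffices.
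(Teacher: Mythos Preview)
Your overall strategy --- fixed point in $X^s_T\times\Theta^s_T$ via the linear solution operator of Corollary~\ref{c.lin.ex.}, with the $T^\gamma$ gain coming from product estimates --- is exactly the paper's, and your bilinear bounds on $\|\nabla\bar d\otimes\nabla\bar d\|_{L^2_tH^s_x}$ and $\|\bar u\cdot\nabla\bar d\|_{L^2_tH^s_x}$ are essentially correct (the paper gets the same $T^\gamma$ via the fractional Leibniz rule with mixed Lebesgue exponents rather than the $L^\infty$ Kato--Ponce version, but either route works). Two remarks:

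First, a small omission: in your source term $F$ you dropped the convective piece $-\bar u\otimes\bar u$ from the Navier--Stokes equation; it needs the same product estimate as $\nabla\bar d\otimes\nabla\bar d$ and goes through identically, but it should be there.

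Second, and this is the genuine gap: your $L^\infty_tL^\infty_x$ control of $d$ is wrong as written. You claim $g\in L^2_tH^s_x\hookrightarrow L^2_tL^\infty_x$, but for $\tfrac N2-1<s\le\tfrac N2$ (which is the nontrivial range here) $H^s$ does \emph{not} embed into $L^\infty$, so neither $\|g\|_{L^2_tL^\infty_x}$ nor your fallback $\int_0^t\|g(\tau)\|_{L^\infty}\,d\tau$ is available from $g\in L^2_tH^s_x$ alone. The paper handles this by exploiting the smoothing of the heat kernel instead: one writes $g=\bar u\cdot\nabla\bar d$ as a product of two $L^\infty_tH^s_x$ functions, uses $H^s\hookrightarrow L^{2N/(N-2s)}$ to place $g(\tau)$ in $L^r$ with $r=\tfrac{N}{N-2s}>\tfrac N2$ (precisely because $s>\tfrac N2-1$), and then applies $\|e^{\Delta(t-\tau)}g(\tau)\|_{L^\infty}\lesssim(t-\tau)^{-N/(2r)}\|g(\tau)\|_{L^r}$ with $N/(2r)<1$ to make the Duhamel integral convergent. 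This yields
\[
\left\|\int_0^t e^{\Delta(t-\tau)}(\bar u\cdot\nabla\bar d)(\tau)\,d\tau\right\|_{L^\infty_tL^\infty_x}\lesssim T^\gamma\|\bar u\|_{X^s_T}\|\nabla\bar d\|_{X^s_T},
\]
which is the estimate \eqref{bil.e.2} in the paper. With this fix your contraction argument closes exactly as you describe.
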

The strategy is to use a contraction argument for the map $\phi\colon X^s_T\times\Theta^s_T\to X^s_T\times\Theta^s_T$ such that $\phi(w,\theta)=(u,d)$ solves
\begin{equation}\label{s.aux.loc.}
    \left\{\begin{array}{ll}
    (\partial_t-\mathbb{P}\Delta)u=-\mathbb{P}(w\cdot \nabla w -{\rm Div}(\nabla \theta\odot\nabla \theta)) & (0,T)\times\mathbb{R}^N \\
    (\partial_t-\Delta)d=-w\cdot \nabla \theta & (0,T)\times\mathbb{R}^N \\
    u(0)=u_0,\quad d(0)=d_0 & \mathbb{R}^N.
    \end{array}\right.
\end{equation}
The existence of such a function $\phi$ can be proven by the linear estimate we gained in Corollary \ref{c.lin.ex.}. On the other hand, in order to apply Corollary \ref{c.lin.ex.} we need to prove that the right hand side of the system \eqref{s.aux.loc.} satisfies the condition we gave in the statement. Firstly, we notice that, if we ask ${\rm div}w=0$, we have that
$$ w\cdot \nabla w +{\rm Div}(\nabla \theta\odot\nabla \theta)={\rm Div}\left(w\odot w+\nabla \theta\odot \nabla \theta\right), $$
where we recall that
$$ Div A=\sum_j\partial_j A^j\quad \forall A\colon\mathbb{R}^N\to\mathbb{R}^{N^2}. $$
Therefore, what it remains to do in order to apply Corollary \ref{c.lin.ex.} is to prove some estimates for the nonlinear terms of \eqref{s.aux.loc.}.

We will need the following fractional Leibniz rule in $H^s(\mathbb{R}^N)$:
\begin{thm}[Theorem 1.4 \cite{GK96}]\hfill\\
Let $s>0$, $r\in(1,\infty)$, let $1<p_1,q_2<\infty$ and $1<p_2,q_1\le  \infty$ such that
$$ \frac{1}{r}=\frac{1}{p_1}+\frac{1}{p_2}=\frac{1}{q_1}+\frac{1}{q_2}, $$
then
\begin{equation}\label{fr.L.}
    \|fg\|_{H^s_r(\mathbb{R}^N)}\le \|f\|_{H^s_{p_1}(\mathbb{R}^N)}\|g\|_{L^{p_2}(\mathbb{R}^N)}+\|f\|_{L^{q_1}(\mathbb{R}^N)}\|g\|_{H^s_{q_2}(\mathbb{R}^N)}.
\end{equation}
\end{thm}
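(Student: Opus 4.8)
The statement is the fractional Leibniz (Kato--Ponce) inequality, so the plan is a Littlewood--Paley paraproduct argument after reducing the $H^s_r$-norm to a square function. For $1<r<\infty$ and $s>0$ the Sobolev--Slobodeckij norm displayed in the definition of $H^s_r$ is comparable to the Bessel-potential norm $\|\cdot\|_{L^r}+\|(-\Delta)^{s/2}\cdot\|_{L^r}$ (see \cite{HRS20}) and hence to the Triebel--Lizorkin square-function norm
$$ \|v\|_{H^s_r}\simeq\left\|\Big(|S_0v|^2+\sum_{j\ge1}2^{2js}|\Delta_jv|^2\Big)^{1/2}\right\|_{L^r}, $$
where $\{\Delta_j\}$ are the standard Littlewood--Paley projections and $S_j=\sum_{k\le j}\Delta_k$. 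The plain $L^r$-part of $\|fg\|_{H^s_r}$ is immediate from Hölder, $\|fg\|_{L^r}\le\|f\|_{L^{p_1}}\|g\|_{L^{p_2}}$, and is absorbed into the right-hand side of \eqref{fr.L.}; everything thus reduces to controlling the weighted square function of the product $fg$.

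Next I would insert Bony's decomposition $fg=\Pi_f g+\Pi_g f+R(f,g)$ with $\Pi_g f=\sum_j S_{j-2}g\,\Delta_j f$, the symmetric $\Pi_f g$, and $R(f,g)=\sum_{|j-k|\le1}\Delta_j f\,\Delta_k g$. In the high--low term $\Pi_g f$ the $j$-th block is frequency-localized near $2^j$, so in the square function the weight $2^{js}$ effectively lands only on $\Delta_j f$; bounding the low-frequency factor pointwise by the Hardy--Littlewood maximal function, $|S_{j-2}g|\lesssim Mg$, lets me pull $Mg$ (independent of $j$) out of the $\ell^2$-sum, and Hölder together with $\|Mg\|_{L^{p_2}}\lesssim\|g\|_{L^{p_2}}$ for $p_2\in(1,\infty]$ and the square-function characterization at $p_1\in(1,\infty)$ gives $\|\Pi_g f\|_{H^s_r}\lesssim\|f\|_{H^s_{p_1}}\|g\|_{L^{p_2}}$. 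The symmetric term $\Pi_f g$, with the roles of the two exponent pairs exchanged, contributes $\lesssim\|f\|_{L^{q_1}}\|g\|_{H^s_{q_2}}$, i.e.\ the second summand of \eqref{fr.L.}.

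The resonant term $R(f,g)$ is where the hypothesis $s>0$ enters: both factors sit at a comparable frequency $2^j$, the product spreads to every scale $2^\ell\lesssim2^j$, and writing $2^{\ell s}=2^{(\ell-j)s}2^{js}$ transfers the weight onto $f$ at the cost of $2^{(\ell-j)s}$ with $\ell\le j$, whose sum over $j\ge\ell$ converges geometrically precisely because $s>0$. Young's inequality in the frequency index then reduces the $\ell^2_\ell$-norm to an $\ell^2_j$-norm of maximal functions, to which the Fefferman--Stein vector-valued maximal inequality ($1<r<\infty$) applies; Hölder and $\sup_j|\Delta_j g|\lesssim Mg$ finally bound this piece by $\|f\|_{H^s_{p_1}}\|g\|_{L^{p_2}}$ (or, moving the weight to $g$, by the symmetric expression). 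Collecting the three contributions and the Hölder $L^r$ bound yields \eqref{fr.L.}. The main obstacle is the endpoint bookkeeping when $p_2=\infty$ or $q_1=\infty$, where the maximal step degenerates and must be replaced by a direct $L^\infty$ estimate of the low-frequency factor, together with verifying that the maximal domination of the cut-offs $S_{j-2}$ is uniform in $j$, so that the vector-valued inequality runs with a constant independent of the Littlewood--Paley decomposition.
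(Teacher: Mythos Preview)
The paper does not prove this theorem at all: it is stated as a citation (Theorem 1.4 of \cite{GK96}) and used as a black-box tool in the subsequent bilinear estimates. There is therefore no ``paper's own proof'' to compare your argument against.

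That said, your outline is the standard and correct route to the Kato--Ponce inequality: square-function characterization of $H^s_r$, Bony's paraproduct decomposition into $\Pi_g f$, $\Pi_f g$, and $R(f,g)$, maximal-function domination of the low-frequency factors, and the geometric sum in the resonant term using $s>0$. The endpoint handling you flag (replacing the maximal inequality by a direct $L^\infty$ bound when $p_2=\infty$ or $q_1=\infty$) is indeed the only place requiring care, and your description of it is accurate. So your sketch is essentially the proof one finds in the original reference, and nothing is missing at the level of ideas.
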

 We are now ready to prove some bilinear estimates that we will use to bound the right hand side of \eqref{s.aux.loc.}.
\begin{lem}
Let $T>0$, $s>\frac{N}{2}-1$ and $z,w\in X^{s}_{T}$, then we can find $\gamma=\gamma(N,s)\in(0,1)$ such that
\begin{equation}\label{bil.e.1}
    \|zw\|_{L^2((0,T);H^s(\mathbb{R}^N))} \le C(N,s)  T^\gamma \|z\|_{X^{s}_{T}}\|w\|_{X^{s}_{T}}.
    \end{equation}
\end{lem}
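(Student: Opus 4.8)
The plan is to estimate the product $zw$ in $L^2((0,T);H^s(\mathbb{R}^N))$ by first controlling it pointwise in time through the fractional Leibniz rule \eqref{fr.L.}, and then inserting an integrable-in-time factor coming from the smoothing of the $X^s_T$ norm. Recall that membership in $X^s_T$ gives, for each fixed $t$, $z(t)\in H^s(\mathbb{R}^N)$ with $z\in L^\infty_tH^s$, and in addition $\nabla z\in L^2_tH^s$, so that $z(t)\in H^{s+1}(\mathbb{R}^N)$ for a.e. $t$ with $\|z(t)\|_{H^{s+1}}\in L^2(0,T)$. The first step is to fix $t$ and apply \eqref{fr.L.} with $r=2$, writing $\|zw\|_{H^s_2}\lesssim \|z\|_{H^s_{p_1}}\|w\|_{L^{p_2}}+\|z\|_{L^{q_1}}\|w\|_{H^s_{q_2}}$. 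The exponents should be chosen so that all the norms on the right are controlled by interpolation between $H^s$ and $H^{s+1}$ (equivalently, by $\|z\|_{H^s}^{1-\theta}\|z\|_{H^{s+1}}^{\theta}$ via Gagliardo–Nirenberg / Sobolev embedding). Since $s>\frac N2-1$, we have $s+1>\frac N2$, so $H^{s+1}\hookrightarrow L^\infty$ and there is genuine room: one can afford to spend only a small fractional power $\theta$ of the $H^{s+1}$ norm on each factor and still close the Sobolev bookkeeping.

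Concretely, I would bound $\|zw\|_{H^s_2}\lesssim \|z\|_{H^{s+1}}^{\theta}\|z\|_{H^s}^{1-\theta}\,\|w\|_{H^{s+1}}^{\theta}\|w\|_{H^s}^{1-\theta}$ for a suitable small $\theta=\theta(N,s)\in(0,1)$ — the point being that $2\theta<1$, which is where the condition $s>\frac N2-1$ enters (it guarantees enough regularity margin that $\theta$ can be taken strictly below $\frac12$). Then take the $L^2(0,T)$ norm in $t$: by Hölder with exponents $(1/\theta, 1/\theta, 1/(1-2\theta)\cdot\tfrac12,\dots)$ — more precisely, splitting $\|z\|_{H^{s+1}}^{\theta}\|w\|_{H^{s+1}}^{\theta}$ into $L^{1/\theta}_t$ each via the $L^2_tH^{s+1}$ control (valid since $\theta<\tfrac12$ so $1/\theta>2$, wait — we need $1/\theta\le 2$; choose $\theta\le\tfrac12$ and use $L^2\subseteq L^{1/\theta}$ on the finite interval $(0,T)$, which is exactly where the factor $T^\gamma$ is produced) and $\|z\|_{H^s}^{1-\theta}\|w\|_{H^s}^{1-\theta}$ in $L^\infty_t$ by the $X^s_T$ norm. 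Collecting the time integrations, the surplus of integrability on the interval $(0,T)$ yields a factor $T^\gamma$ with $\gamma=\gamma(N,s)>0$, and the spatial factors assemble into $\|z\|_{X^s_T}\|w\|_{X^s_T}$, giving \eqref{bil.e.1}.

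The main obstacle is the careful choice of the Lebesgue exponents $p_1,p_2,q_1,q_2$ in the Leibniz rule together with the interpolation exponents, so that (a) every factor is legitimately estimated by $\|z\|_{H^s}^{1-\theta}\|z\|_{H^{s+1}}^{\theta}$ (this uses Sobolev embedding $H^{s}\hookrightarrow L^{p}$ in the right range, which holds with room because $s>\frac N2-1$), and (b) the total power of the $H^{s+1}$ norms in $t$ stays $\le 2$, so that one may bound it by the $L^2_tH^{s+1}\subseteq L^2_tH^s\cap L^2_t\dot H^{s+1}$ content and extract a positive power of $T$ from Hölder on the bounded interval. Once the exponent bookkeeping is set up, the rest is a routine application of Hölder in time and space and Sobolev embedding; no delicate cancellation is involved, the subcriticality $s>\frac N2-1$ doing all the work.
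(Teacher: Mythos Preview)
Your approach is correct and follows the same line as the paper: apply the fractional Leibniz rule \eqref{fr.L.}, use Sobolev embedding/interpolation to produce factors of the form $\|\cdot\|_{H^s}^{1-\theta}\|\cdot\|_{H^{s+1}}^{\theta}$ with $\theta$ small thanks to $s>\tfrac{N}{2}-1$, and then H\"older in time on $(0,T)$ to extract $T^\gamma$. The only cosmetic difference is that the paper interpolates \emph{asymmetrically}---it places the factor $w$ entirely in $L^\infty_t H^s$ via the Sobolev embedding $H^s\hookrightarrow L^{q_1}$ and interpolates only the factor $z$ between $H^s_2$ and $H^s_{2N/(N-2)}$---which sidesteps the constraint $2\theta<1$ and makes the time-H\"older step (where you briefly hesitated) a one-line application.
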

\begin{proof} \hfill\\
By the fractional Leibniz rule \eqref{fr.L.}:
$$ \|z(\tau)w(\tau)\|_{H^s(\mathbb{R}^N)}\lesssim \|z(\tau)\|_{H^s_{p_1}(\mathbb{R}^N)}\|w(\tau)\|_{L^{q_1}(\mathbb{R}^N)} + \|z(\tau)\|_{L^{p_2}(\mathbb{R}^N)}\|w(\tau)\|_{H^s_{q_2}(\mathbb{R}^N)}\quad \forall \tau\in(0,T) $$
for
$$ \frac{1}{2}=\frac{1}{p_1}+\frac{1}{q_1}=\frac{1}{p_2}+\frac{1}{q_2}, \quad p_1,q_2<\infty. $$
We focus just on the first term, the other is analogous. We divide the proof in two cases: if $s<\frac{N}{2}$, the we can take $q_1=\frac{2N}{N-2s}$, so by Sobolev embedding we have that
$$ \|w(\tau)\|_{L^{q_1}(\mathbb{R}^N)}\lesssim \|w(\tau)\|_{H^s(\mathbb{R}^N)}\le \|w\|_{L^\infty((0,T);H^s(\mathbb{R}^N))}\le \|w\|_{X^s_T}. $$
On the other hand $p_1=\frac{N}{s}<\frac{2N}{N-2}$ as $s>\frac{N}{2}-1$. Therefore, we can find $\theta\in(0,1)$ such that
$$ \|z(\tau)\|_{L^{p_1}(\mathbb{R}^N)}\le \|z(\tau)\|_{L^2(\mathbb{R}^N)}^\theta\|z(\tau)\|_{L^\frac{2N}{N-2}(\mathbb{R}^N)}^{1-\theta} \lesssim $$
$$ \lesssim \|z\|_{L^\infty((0,T);L^2(\mathbb{R}^N))}^\theta\|\nabla z(\tau)\|_{L^2(\mathbb{R}^N)}^{1-\theta}\le \|z\|^\theta_{X^s_T}\|\nabla z(\tau)\|_{L^2(\mathbb{R}^N)}^{1-\theta}, $$
where in the last inequality we have used Sobolev embedding. Finally
$$ \|zw\|_{L^2((0,T);H^s(\mathbb{R}^N))} \lesssim T^\theta\|w\|_{X^s_T}\|z\|^\theta_{X^s_T}\|\nabla z\|_{L^2((0,T);L^2(\mathbb{R}^N))}^{1-\theta}\le T^\theta\|z\|_{X^s_T}\|w\|_{X^s_T}. $$
The case $s\ge \frac{N}{2}$ is easier: by Sobolev embedding we have that for any $q_1\in[2,\infty)$
$$ \|w(\tau)\|_{L^{q_1}(\mathbb{R}^N)}\lesssim \|w(\tau)\|_{H^s(\mathbb{R}^N)}\le \|w\|_{L^\infty((0,T);H^s(\mathbb{R}^N))}\le \|w\|_{X^s_T}. $$
Therefore, it is sufficient to take $p_1\in\left(2,\frac{2N}{N-2}\right)$ such that there is $\theta\in(0,1)$ as before which satisfies the inequality
$$ \|z(\tau)\|_{L^{p_1}(\mathbb{R}^N)}\le \|z(\tau)\|_{L^2(\mathbb{R}^N)}^\theta\|z(\tau)\|_{L^{\frac{2N}{N-2}}(\mathbb{R}^N)}^{1-\theta}. $$
Then we conclude as before.
\end{proof}
Now we want to prove the $L^\infty$ estimate for the $d$-term. In order to do so, we will use the Duhamel formula:
$$ d=e^{\Delta t}d_0+\int_0^te^{\Delta(t-\tau)}(u\cdot\nabla d)(\tau)d\tau. $$
Since $d_0\in L^\infty(\mathbb{R}^N)$, the estimate for the first term comes from Lemma \ref{l.est.HS}. Therefore, we can focus on the second term:
\begin{lem}
 Let $N\ge 3$, $s>\frac{N}{2}-1$, $T>0$, $z,w\in X^s_T$ then there is $\gamma=\gamma(N,s)>0$ such that
\begin{equation}\label{bil.e.2}
    \left\|\int_0^t e^{\Delta(t-\tau)} z(\tau)w(\tau)d\tau\right\|_{L^\infty((0,T);L^\infty(\mathbb{R}^N))}\le C(N,s)T^\gamma \|z\|_{X^s_T}\|w\|_{X^s_T};
\end{equation}
\end{lem}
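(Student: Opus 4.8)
The plan is to estimate the Duhamel integral by splitting the time interval into the ``near'' part $\tau\in(t-1,t)$ (or $(0,t)$ when $t<1$) and the ``far'' part $\tau\in(0,t-1)$, using in each regime the decay estimate of Lemma~\ref{l.est.HS} for $\nabla^k e^{\Delta t}$ against an appropriate Lebesgue norm of the product $z(\tau)w(\tau)$, and then absorbing the time weight into a power $T^\gamma$. The key point is that the map $h\mapsto \int_0^t e^{\Delta(t-\tau)}h(\tau)\,d\tau$ must gain enough integrability and decay to land in $L^\infty_x$, while we only control $z,w$ through $\|z\|_{X^s_T}=\|\nabla z\|_{L^2_tH^s_x}+\|z\|_{L^\infty_tH^s_x}$; since $s>\frac N2-1$, the product $zw$ is better than merely $L^2$ in both space and time, and this surplus is exactly what yields the positive power of $T$.

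Concretely, first I would fix an exponent $r\in(1,2)$ (for instance $r$ with $\frac1r=\frac12+\frac1N\cdot\kappa$ for a small $\kappa>0$, to be tuned) so that $-\frac N2\bigl(\frac1r-\frac1\infty\bigr)>-1$, i.e. $\frac Nr<2$ is violated — so instead one takes $r$ slightly above $\frac N2$ if $N\ge 3$; the precise choice is: pick $q=\infty$ in Lemma~\ref{l.est.HS} and $r$ with $\frac N{2r}<1$, which forces $r>\frac N2\ge \frac32$, hence $r\ge 2$ is allowed. Then Lemma~\ref{l.est.HS} with $k=0$ gives $\|e^{\Delta(t-\tau)}(zw)(\tau)\|_{L^\infty_x}\lesssim (t-\tau)^{-\frac N{2r}}\|(zw)(\tau)\|_{L^r_x}$ with $\frac N{2r}<1$. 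The product is controlled by Hölder and Sobolev: with $s>\frac N2-1$ we have the embedding $H^s_x\hookrightarrow L^{p}_x$ for $p$ up to $\frac{2N}{N-2s}$ (or all $p<\infty$ if $s\ge\frac N2$), so $\|(zw)(\tau)\|_{L^r_x}\lesssim\|z(\tau)\|_{L^{a}_x}\|w(\tau)\|_{L^{b}_x}$ with $\frac1a+\frac1b=\frac1r$, and interpolating each factor between $L^2_x$ (from $\|\cdot\|_{L^\infty_tH^s_x}$) and the endpoint Sobolev norm of $\nabla z,\nabla w$ (from $\|\nabla\cdot\|_{L^2_tH^s_x}$) exactly as in the proof of \eqref{bil.e.1}. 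This produces a factor $\|(zw)(\tau)\|_{L^r_x}\lesssim \|z\|_{X^s_T}^{\theta}\|w\|_{X^s_T}^{\theta'}\|\nabla z(\tau)\|_{L^2_x}^{1-\theta}\|\nabla w(\tau)\|_{L^2_x}^{1-\theta'}$ for suitable $\theta,\theta'\in(0,1)$ which are integrable in $\tau$ with a Hölder-in-time exponent strictly better than needed.

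Next I would combine the time singularity $(t-\tau)^{-\frac N{2r}}$ with the temporal integrability of $\|\nabla z(\tau)\|_{L^2_x}^{1-\theta}\|\nabla w(\tau)\|_{L^2_x}^{1-\theta'}$ (which lies in $L^{\sigma}_\tau$ for some $\sigma>2$ since $1-\theta,1-\theta'<1$) via a generalized Young / Hölder inequality on $(0,T)$: $\bigl\|\int_0^t(t-\tau)^{-\frac N{2r}}F(\tau)\,d\tau\bigr\|_{L^\infty_t(0,T)}\le \|(t-\tau)^{-\frac N{2r}}\mathbbm 1_{(0,T)}\|_{L^{\sigma'}_\tau}\|F\|_{L^\sigma_\tau(0,T)}$, and $\|(t-\tau)^{-\frac N{2r}}\|_{L^{\sigma'}(0,T)}\lesssim T^{1/\sigma'-\frac N{2r}}=T^\gamma$ is finite provided $\sigma'(\frac N{2r})<1$, which can be arranged by choosing $r$ close enough to $\frac N2$ and $\sigma$ close enough to $2$ simultaneously; the leftover exponent $\gamma=\frac1{\sigma'}-\frac N{2r}>0$ is the claimed positive power of $T$. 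Collecting, $\|\int_0^t e^{\Delta(t-\tau)}zw\,d\tau\|_{L^\infty_tL^\infty_x}\le C(N,s)T^\gamma\|z\|_{X^s_T}\|w\|_{X^s_T}$.

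The main obstacle is the bookkeeping of exponents: one must choose $r$ (hence the spatial Hölder split $a,b$ and the interpolation parameters $\theta,\theta'$) so that three constraints hold at once — the heat kernel is $L^r\to L^\infty$ smoothing with a time singularity integrable to a positive power ($\frac N{2r}<1$ with room to spare after the time-Hölder loss), the product $zw$ genuinely sits in $L^r_x$ via $H^s\hookrightarrow L^p$ (requiring $s>\frac N2-1$, used precisely as in the proof of \eqref{bil.e.1} to keep the ``bad'' factor's exponent below $\frac{2N}{N-2}$), and the residual time integrand has a subcritical $L^\sigma_\tau$ exponent so that the final $T$-power is strictly positive. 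The case distinction $s<\frac N2$ versus $s\ge\frac N2$ from the previous lemma reappears here and is handled identically. Once these are balanced — which is possible exactly because $s>\frac N2-1$ gives a strict surplus in every embedding — the estimate follows by Lemma~\ref{l.est.HS} and Hölder's inequality with no further subtlety.
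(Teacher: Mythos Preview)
Your overall strategy --- heat kernel $L^r\to L^\infty$ smoothing with $r>\tfrac N2$, then a product bound on $\|zw(\tau)\|_{L^r}$ --- is the right one, and matches the paper. But the way you propose to control $\|zw(\tau)\|_{L^r}$ has a genuine gap.

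You say you will interpolate each factor between $L^2_x$ (from $\|\cdot\|_{L^\infty_tH^s_x}$) and the Sobolev endpoint of $\nabla z,\nabla w$ ``exactly as in the proof of \eqref{bil.e.1}''. In that proof the endpoint is $L^{2N/(N-2)}$, controlled by $\|\nabla z(\tau)\|_{L^2}$. If you run your scheme with both factors interpolated this way, a direct computation gives $\theta+\theta'=\tfrac Nr-(N-2)$, so the final time exponent is
\[
\gamma=\frac{1}{\sigma'}-\frac{N}{2r}=\frac{\theta+\theta'}{2}-\frac{N}{2r}=\frac{2-N}{2}<0\quad\text{for }N\ge 3.
\]
The constraints you list therefore \emph{cannot} be balanced: this interpolation uses only the $X^0_T$ part of the norm (i.e.\ $L^\infty_tL^2\cap L^2_t\dot H^1$), never the hypothesis $s>\tfrac N2-1$, and that is too weak. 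The near/far splitting you mention at the start is also irrelevant here --- it belongs to the global-in-time decay estimate, not to a finite-$T$ bound with a positive power of $T$.

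The paper's argument is much simpler and avoids the gradient part of $X^s_T$ entirely. Since $s>\tfrac N2-1$, Sobolev gives $H^s\hookrightarrow L^{2r}$ with $r=\tfrac{N}{N-2s}>\tfrac N2$ (when $s<\tfrac N2$; for $s\ge\tfrac N2$ any $r>\tfrac N2$ works). Hence
\[
\|(zw)(\tau)\|_{L^r}\le\|z(\tau)\|_{L^{2r}}\|w(\tau)\|_{L^{2r}}\lesssim\|z(\tau)\|_{H^s}\|w(\tau)\|_{H^s}\le\|z\|_{X^s_T}\|w\|_{X^s_T}
\]
uniformly in $\tau$, using only the $L^\infty_tH^s$ component. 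Then
\[
\left\|\int_0^t e^{\Delta(t-\tau)}(zw)(\tau)\,d\tau\right\|_{L^\infty_x}\lesssim\int_0^t(t-\tau)^{-N/(2r)}\,d\tau\cdot\|z\|_{X^s_T}\|w\|_{X^s_T}\lesssim T^{1-N/(2r)}\|z\|_{X^s_T}\|w\|_{X^s_T},
\]
with $\gamma=1-\tfrac{N}{2r}>0$. No time interpolation, no splitting, and the role of $s>\tfrac N2-1$ is exactly to make $r>\tfrac N2$.
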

\begin{proof}\hfill\\
Let $r>\frac{N}{2}$, then from Lemma \ref{l.est.HS}
$$ \left|\int_0^t e^{\Delta(t-\tau)}z(\tau)w(\tau)d\tau\right|\lesssim \int_0^t(t-\tau)^{-\frac{N}{2r}}\|z(\tau)w(\tau)\|_{L^r(\mathbb{R}^N)}d\tau. $$
In order to conclude, it is then sufficient to prove that
$$ \|zw\|_{L^\infty((0,T);L^r(\mathbb{R}^N))}\lesssim \|z\|_{X^s_T}\|w\|_{X^s_T}, $$
for some $r>\frac{N}{2}$.
$$ \|z(\tau)w(\tau)\|_{L^r(\mathbb{R}^N)}\le\|z(\tau)\|_{L^{r_1}(\mathbb{R}^N)}\|w(\tau)\|_{L^{r_2}(\mathbb{R}^N)}, $$
with
$$ \frac{1}{r_1}+\frac{1}{r_2}=\frac{1}{r}. $$
If $s\ge \frac{N}{2}$ by Sobolev embedding any choice of $r_1,r_2\in\left(\frac{N}{2},\infty\right)$ works. Let us suppose then $s<\frac{N}{2}$. If we take $r_1=r_2=\frac{2N}{N-2s}$, then by Sobolev embedding
$$ \|z(\tau)\|_{L^{r_1}(\mathbb{R}^N)}\|w(\tau)\|_{L^{r_2}(\mathbb{R}^N)}\lesssim \|z(\tau)\|_{H^s(\mathbb{R}^N)}\|w(\tau)\|_{H^s(\mathbb{R}^N)}\le \|z\|_{X^s}\|w\|_{X^s}. $$
On the other hand, if $r_1=r_2=\frac{2N}{N-2s}$, then $r=\frac{N}{N-2s}>\frac{N}{2}$ since $s>\frac{N}{2}-1$.
\end{proof}

Thanks to this bilinear estimate, we can now prove the local existence theorem:
\begin{proof}[Proof of Theorem \ref{t.loc.ex.+}]\hfill\\
Let us define the space
$$ Y_T\coloneqq \{(w,\theta)\in X^s_T\times\Theta^s_T\mid {\rm div}(w(t))=0\:\:\text{for a.e.}\:\:t\in(0,T), \:\:\|(w,\theta)\|_{Y_T}<+\infty\}  $$
where
$$ \|(w,\theta)\|_{Y_T}\coloneqq\|w\|_{X^s_T}+\|\theta\|_{\Theta^s_T}. $$
Now we define the map $\phi\colon Y_T\to Y_T$ such that $\phi(w,\theta)=(u,d)$ solves
\begin{equation}
    \left\{\begin{array}{ll}
    (\partial_t-\mathbb{P}\Delta)u=-\mathbb{P}(w\cdot \nabla w +{\rm Div}(\nabla \theta\odot\nabla \theta)) & (0,T)\times\mathbb{R}^N \\
    (\partial_t-\Delta)d=-w\cdot \nabla \theta & (0,T)\times\mathbb{R}^N \\
    u(0)=u_0,\quad d(0)=d_0 & \mathbb{R}^N.
    \end{array}\right.
\end{equation}
Namely,
\begin{equation}\label{Duh.f.loc.}
    \begin{aligned}
        & u=e^{\mathbb{P}\Delta t}u_0+\int_0^te^{\mathbb{P}\Delta(t-\tau)}\mathbb{P}f(w,\theta)(\tau)d\tau \\
        & d=e^{\Delta t}d_0+\int_0^t e^{\Delta(t-\tau)}g(w,\theta)(\tau)d\tau,
    \end{aligned}
\end{equation}
where
$$ f(w,\theta)=-w\cdot \nabla w-{\rm Div}(\nabla \theta\odot\nabla\theta), $$
$$ g(w,\theta)=w\cdot \nabla \theta. $$
The plan of the proof is to prove that $\phi$ is a contraction in a suitable subspace of $Y_T$. Firstly, we will prove the following estimates:
\begin{equation}
    \|\phi(w,\theta)\|_{Y_T} \leq C \left( \|u_0\|_{H^s(\mathbb{R}^N)} + \|d_0\|_{L^\infty(\mathbb{R}^N)}+\|\nabla d_0\|_{H^{s}(\mathbb{R}^N)} \right) + C T^\gamma \|(w,\theta)\|_{Y_T}^2
\end{equation}
\begin{equation}
    \|\phi(w_1,\theta_1)- \phi(w_2,\theta_2)\|_{Y_T} \leq  MT^\gamma  \left[\|(w_1,\theta_1)\|_{Y_T} + \|(w_2,\theta_2)\|_{Y_T}\right]\|(w_1,\theta_1)-(w_2,\theta_2)\|_{Y_T}
\end{equation}
for some $\gamma,C,M>0$.

\textbf{Step 1}: Thanks to the divergence free condition for $w$, we have that
$$ w\cdot \nabla w={\rm Div}(w\odot w). $$
So by Corollary \ref{c.lin.ex.} we have
$$ \|u\|_{X^s_T}+\|\nabla d\|_{X^{s}_T}\lesssim $$
$$ \lesssim \|w\odot w\|_{L^2((0,T);H^s(\mathbb{R}^N))}+\|\nabla \theta\odot\nabla \theta\|_{L^2((0,T);H^s(\mathbb{R}^N))} +\|w\cdot \nabla \theta\|_{L^2((0,T);H^s(\mathbb{R}^N))}. $$
From \eqref{bil.e.1} we know that exists $\gamma>0$ such that
\begin{equation}\label{ww.loc.}
    \|w\odot w\|_{L^2((0,T);H^s(\mathbb{R}^N))}\lesssim T^\gamma \|w\|_{X^s_T}^2.
\end{equation}
\begin{equation}\label{DtDt.loc.}
    \|\nabla \theta\odot \nabla \theta\|_{L^2((0,T);H^s(\mathbb{R}^N))}\lesssim T^\gamma\|\nabla \theta\|_{X^s_T}^2,
\end{equation}
\begin{equation}\label{wDt.loc.}
    \|w\cdot \nabla \theta\|_{L^2((0,T);H^s(\mathbb{R}^N))}\lesssim T^\gamma\|w\|_{X^s_T}\|\nabla \theta\|_{X^s_T},
\end{equation}
On the other hand, from Lemma \ref{l.est.HS} and the estimate \eqref{bil.e.2} applied at the Duhamel formula \eqref{Duh.f.loc.} we also have that
\begin{equation}\label{Linf.loc.}
    \|d\|_{L^\infty((0,T);L^\infty(\mathbb{R}^N)}\lesssim \|d_0\|_{L^\infty(\mathbb{R}^N)}+T^\gamma \|w\|_{X^s_T}\|\nabla \theta\|_{X^s_T}.
\end{equation}

Thanks to \eqref{ww.loc.}, \eqref{DtDt.loc.},  \eqref{wDt.loc.} with Corollary \ref{c.lin.ex.} and from \eqref{Linf.loc.} we get that
\begin{equation}\label{nl.loc.1}
    \|\phi(w,\theta)\|_{Y_T}=\|u\|_{X^s_T}+\|d\|_{\Theta^{s}_T}\lesssim  \|u_0\|_{H^s(\mathbb{R}^N)}+\|d_0\|_{L^\infty(\mathbb{R}^N)}+\|\nabla d_0\|_{H^{s}(\mathbb{R}^N)} + T^\gamma\|(w,\theta)\|_{Y_T}^2
\end{equation}
for $\gamma>0$.
In a similar way it can be proven that
\begin{equation}\label{nl.loc.2}
    \|\phi(w_1,\theta_1)-\phi(w_2,\theta_2)\|_{Y_T}\lesssim   T^\gamma\left(\|(w_1,\theta_1)\|_{Y_T}+\|(w_2,\theta_2)\|_{Y_T}\right)\|(w_1,\theta_1)-(w_2,\theta_2)\|_{Y_T}.
\end{equation}

\textbf{Step 2}: Let us define now the space
$$ Z_\omega\coloneqq \{(w,\theta)\in Y_T\mid w(0)=u_0,\quad \theta(0)=d_0,\quad \|(w,\theta)\|_{Y_T}\le \omega\} $$
for $\omega>0$ to be defined. Let us see that $\phi\colon Z_\omega\to Z_\omega$ is a contraction for a suitable choice of $\omega$: thanks to \eqref{nl.loc.1} we know that exists $C>0$ such that
$$ \|\phi(w,\theta)\|_{Y_T}\le C\left[\|u_0\|_{H^s(\mathbb{R}^N)}+\|d_0\|_{L^\infty(\mathbb{R}^N)}+\|\nabla d_0\|_{H^{s}(\mathbb{R}^N)}+T^\gamma\omega^2\right]\le C[R+T^\gamma\omega^2], $$
so, if we choose $\omega$ such that
$$ CR\le \frac{\omega}{2} $$
and $T>0$ sufficiently small, then we have $\phi(w,\theta)\in Z_\omega$. On the other hand, thanks to \eqref{nl.loc.2} there is $M>0$ such that, for any $(w_1,\theta_1),(w_2,\theta_2)\in Z_\omega$, it holds
$$ \|\phi(w_1,\theta_1)-\phi(w_2,\theta_2)\|_{Y_T}\le 2M\omega T^\gamma \|(w_1,\theta_1)-(w_2,\theta_2)\|_{Y_T}. $$
Therefore, choosing $T>0$ sufficiently small, we get that $\phi\colon Z_\omega\to Z_\omega$ is a contraction and we get the thesis.
\end{proof}

\section{Global Existence and Decay in Time}
\medskip

We are now interested on the system on all $\mathbb{R}_+$:
\begin{equation}\label{s.gl.nl.}
    \left\{\begin{array}{ll}
        (\partial_t-\Delta)u+\nabla p+u\cdot \nabla u=-{\rm Div}(\nabla d\otimes\nabla d) & \mathbb{R}_+\times \mathbb{R}^N \\
        {\rm div}u=0 & \mathbb{R}_+\times\mathbb{R}^N \\
        (\partial_t-\Delta)d+u\cdot \nabla d=0 & \mathbb{R}_+\times \mathbb{R}^N \\
        u(0)=u_0,\quad d(0)=d_0 & \mathbb{R}^N.
    \end{array}\right.
\end{equation}
In the previous section we proved the existence of a solution on the interval $(0,T)$ for $T>0$ sufficiently small. Now we want to prove that, for small initial data, the solution can be found for every $t\in\mathbb{R}_+$ and, moreover, it decays in time. More precisely, we will prove the following result:
\begin{thm}\label{t.gl-dec.ex.+}
  Let $N\ge 3$ and $s>\frac{N}{2}-1$. Then there exists $\varepsilon_0>0$ so that for any $\varepsilon \in (0,\varepsilon_0]$ and any initial data
$$ u_0\in J_2\left(\mathbb{R}^N\right)\cap  H^s\left(\mathbb{R}^N;\mathbb{R}^N\right), \quad  d_0\in L^\infty\left(\mathbb{R}^N\right), \quad \nabla d_0\in H^s\left(\mathbb{R}^N;\mathbb{R}^N\right), $$
such that
$$ \|u_0\|_{H^s(\mathbb{R}^N)}+\|d_0\|_{L^\infty(\mathbb{R}^N)}+\|\nabla d_0\|_{H^{s}(\mathbb{R}^N)}\le \varepsilon, $$
 there is a  solution $(u,p,d)$, unique up to additive functions $\rho(t)$ on the pressure term $p$, for the reduced system \eqref{s.gl.nl.} with
 $$ u\in X^s,\quad d\in\Theta^s,\quad \nabla p\in L^2\left(\mathbb{R}_+;H^s\left(\mathbb{R}^N;\mathbb{R}^N\right)\right),\quad p(t)\in\widehat{H}^1\left(\mathbb{R}^N\right)\quad \text{for a.e.}\:\:t>0, $$
where $X^{s}$ and $\Theta^s$ are defined in \eqref{def.X} and \eqref{def.T}, and
$$ \|u\|_{X^s}+\|\nabla p\|_{L^2(\mathbb{R}_+;H^s(\mathbb{R}^N))}+\|d\|_{\Theta^s}\le C(N,s)\varepsilon. $$
Moreover, for any $k\in\mathbb{N}$ such that $s-k>\frac{N}{2}-1$, it holds
    $$ \|u(t)\|_{W^{k,\infty}(\mathbb{R}^N)}+\|\nabla d(t)\|_{W^{k,\infty}(\mathbb{R}^N)}\le C(N,s) t^{-N/4}\quad \text{as}\:\:t\to+\infty. $$
\end{thm}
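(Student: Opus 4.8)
The plan is to run the same contraction/fixed-point scheme used for Theorem \ref{t.loc.ex.+}, but this time on the full interval $(0,\infty)$, replacing the factors $T^\gamma$ (which go to zero for small $T$ but blow up for $T=\infty$) with smallness coming from the initial data. Concretely, I would work in the space $Y_\infty = X^s \times \Theta^s$ with the closed ball $Z_\varepsilon^{\sharp} = \{(w,\theta) \in Y_\infty : w(0)=u_0,\ \theta(0)=d_0,\ \|(w,\theta)\|_{Y_\infty} \le K\varepsilon\}$ for a constant $K$ to be fixed, and show that the map $\phi$ from \eqref{Duh.f.loc.} is a contraction there. The key point is that Corollary \ref{c.lin.ex.} and Lemma \ref{l.sm.es.} are already stated for $T \in (0,\infty]$, so the linear estimate
$$ \|u\|_{X^s} + \|\nabla d\|_{X^s} + \|\nabla p\|_{L^2(\mathbb{R}_+;H^s)} \lesssim \|u_0\|_{H^s} + \|\nabla d_0\|_{H^s} + \|w \odot w\|_{L^2(\mathbb{R}_+;H^s)} + \|\nabla\theta\odot\nabla\theta\|_{L^2(\mathbb{R}_+;H^s)} + \|w\cdot\nabla\theta\|_{L^2(\mathbb{R}_+;H^s)} $$
holds with a constant independent of $T$. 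What changes is the bilinear bound: instead of \eqref{bil.e.1}, I need the time-global estimate $\|zw\|_{L^2(\mathbb{R}_+;H^s)} \lesssim \|z\|_{X^s}\|w\|_{X^s}$ without any power of $T$. This is where the structure of $X^s$ matters — one interpolates $L^2_t H^s_x$ against $L^\infty_t H^s_x$ and uses the gain of one derivative in $\nabla z \in L^2_t H^s_x$, exactly as in the proof of \eqref{bil.e.1} but keeping track that the exponents combine to give a clean $L^2$ in time with no leftover integrability in $t$ (the Sobolev-embedding inequalities used there are time-pointwise, and the remaining time integral pairs an $L^2_t$ factor with an $L^\infty_t$ factor). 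The $L^\infty_t L^\infty_x$ bound on $d$ is handled as in \eqref{bil.e.2}: write $d = e^{\Delta t}d_0 + \int_0^t e^{\Delta(t-\tau)}(u\cdot\nabla d)\,d\tau$, use Lemma \ref{l.est.HS} with $r > N/2$ so that $(t-\tau)^{-N/(2r)}$ is integrable near $\tau = t$, and bound $\|u\cdot\nabla d\|_{L^\infty_t L^r_x} \lesssim \|u\|_{X^s}\|\nabla d\|_{X^s}$; here too the key is that the time convolution kernel $(t-\tau)^{-N/(2r)}$ with $N/(2r)<1$ does \emph{not} produce a growing factor when integrated against an $L^\infty_t$ function — one gets a uniform-in-$t$ bound by splitting the integral, or more simply because the relevant norm of $u\cdot\nabla d$ is already globally controlled and the kernel has integrable singularity with the remaining mass handled by the decay of $u\cdot \nabla d$ built into $X^s$.

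Putting these together yields the two nonlinear estimates
$$ \|\phi(w,\theta)\|_{Y_\infty} \le C\big(\|u_0\|_{H^s} + \|d_0\|_{L^\infty} + \|\nabla d_0\|_{H^s}\big) + C\|(w,\theta)\|_{Y_\infty}^2 \le C\varepsilon + C K^2\varepsilon^2, $$
$$ \|\phi(w_1,\theta_1) - \phi(w_2,\theta_2)\|_{Y_\infty} \le C\big(\|(w_1,\theta_1)\|_{Y_\infty} + \|(w_2,\theta_2)\|_{Y_\infty}\big)\|(w_1,\theta_1)-(w_2,\theta_2)\|_{Y_\infty} \le 2CK\varepsilon\,\|(w_1,\theta_1)-(w_2,\theta_2)\|_{Y_\infty}. $$
Now one fixes $K = 2C$ and chooses $\varepsilon_0$ so that $CK^2\varepsilon_0 \le 1/2$ and $2CK\varepsilon_0 \le 1/2$; then $\phi$ maps $Z_{K\varepsilon}^{\sharp}$ into itself and is a $\tfrac12$-contraction, so the Banach fixed-point theorem gives the unique solution with $\|u\|_{X^s} + \|\nabla p\|_{L^2(\mathbb{R}_+;H^s)} + \|d\|_{\Theta^s} \le C(N,s)\varepsilon$. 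The uniqueness up to additive $\rho(t)$ on $p$ is inherited from Corollary \ref{c.lin.ex.}, since $p$ solves weakly $\Delta[(1-\Delta)^{s/2}p] = \operatorname{div}[(1-\Delta)^{s/2}\operatorname{Div}(w\odot w + \nabla d\otimes\nabla d)]$ and Theorem \ref{t.w.N.} determines $\nabla p$ uniquely.

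For the decay statement, I would use the Duhamel representation together with Lemma \ref{l.decay} and the already-proved global bound $\|u\|_{X^s} + \|d\|_{\Theta^s} \le C\varepsilon$. Write $u(t) = e^{\mathbb{P}\Delta t/2}\big(e^{\mathbb{P}\Delta t/2}u_0\big) + \int_0^t e^{\mathbb{P}\Delta(t-\tau)}\mathbb{P}f(u,d)(\tau)\,d\tau$ and similarly for $d$; the homogeneous part decays like $(t^\alpha + t^{N/4})^{-1} \lesssim t^{-N/4}$ by Lemma \ref{l.decay}, and for the inhomogeneous part split $\int_0^t = \int_0^{t/2} + \int_{t/2}^t$. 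On $[0,t/2]$ the semigroup has been acting for time $\ge t/2$, so Lemma \ref{l.est.HS} (with a derivative count to cover $W^{k,\infty}$, using $s-k > N/2-1$ for the Sobolev embedding $H^{s-k}\hookrightarrow$ the needed Lebesgue space) gives a factor $(t/2)^{-N/4}$ times $\|f(u,d)\|_{L^1_\tau(\text{low norm})}$, which is finite because $f$ is bilinear in quantities lying in $X^s$; on $[t/2,t]$ one uses the local-in-time smoothing together with the decay of $f(u,d)(\tau)$ itself for $\tau \ge t/2$ — bootstrapping: one first shows $\|u(t)\|_{L^\infty} + \|\nabla d(t)\|_{L^\infty} \le C\varepsilon t^{-N/4}$ for $k=0$ by a continuity/bootstrap argument on the quantity $\sup_{\tau\le t}(1+\tau)^{N/4}(\|u(\tau)\|_{L^\infty} + \|\nabla d(\tau)\|_{L^\infty})$, showing it satisfies $\le C\varepsilon + C(\text{itself})^2$, hence stays $\lesssim \varepsilon$; then one repeats with $k$ derivatives, each time losing regularity but staying in the admissible range $s-k > N/2-1$. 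I expect the main obstacle to be precisely this decay bootstrap: one must arrange the time-weighted norm so that the convolution $\int_0^t (t-\tau)^{-a}(1+\tau)^{-b}\,d\tau$ with the exponents arising from Lemma \ref{l.est.HS} and from the assumed decay of $u, \nabla d$ reproduces the weight $(1+t)^{-N/4}$ rather than a slower rate — this requires $N \ge 3$ (so that $N/4$ is below the "critical" decay for the quadratic nonlinearity) and careful attention to the borderline integrability of the heat/Stokes kernels near $\tau = t$, which is exactly the role of choosing the auxiliary Lebesgue exponent $r > N/2$ as in the proof of \eqref{bil.e.2}. The global-existence part, by contrast, is essentially a routine repetition of Section 3 with $T = \infty$ and smallness traded for $T^\gamma$.
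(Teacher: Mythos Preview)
Your global-existence half is essentially the paper's argument: Corollary \ref{c.lin.ex.} with $T=\infty$ plus the time-global bilinear estimate $\|zw\|_{L^2(\mathbb{R}_+;H^s)}\lesssim\|z\|_{X^s}\|w\|_{X^s}$ (the paper states this as \eqref{bil.e.3}) and then a standard contraction in a small ball. One point is not right, though: your claimed uniform-in-$t$ bound on $\|d\|_{L^\infty_t L^\infty_x}$ via the Duhamel integral does not close with only the $X^s$ norms. The kernel $(t-\tau)^{-N/(2r)}$ has integrable singularity at $\tau=t$, yes, but against an $L^\infty_t$ or $L^2_t$ source it still produces a factor growing like a positive power of $t$; there is no ``decay of $u\cdot\nabla d$ built into $X^s$'' --- the $X^s$ norm contains no pointwise-in-time decay. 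You can repair this either by running the contraction only on $(u,\nabla d)\in X^s\times X^s$ (the nonlinearities never see $d$ itself) and recovering $\|d(t)\|_{L^\infty}\le\|d_0\|_{L^\infty}$ afterward from the transport-diffusion structure, or by doing what the paper does.

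The paper's route is genuinely different from your two-step plan: it does \emph{not} first prove existence in $X^s\times\Theta^s$ and then bootstrap the decay. Instead it enlarges the iteration space from the outset to $Y_k=X^{s,\alpha}_k\times\Theta^{s,\alpha}_k$, where the norm already carries the weight $\sup_{t>0}(t^\alpha+t^{N/4})\|\cdot\|_{W^{k,\infty}}$, and proves that $\phi_k$ is a contraction there. The work is then concentrated in the weighted bilinear Duhamel estimates (Lemmas \ref{l.bil.dec.1}--\ref{l.bil.dec.2} and their higher-$k$ versions), which in turn rest on the elementary convolution bounds of Lemmas \ref{l.tec.int.1}--\ref{l.tec.int.2}. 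This buys two things over your sketch: the $L^\infty_tL^\infty_x$ control on $d$ comes for free from Lemma \ref{l.bil.dec.2} (the weight makes the time integral convergent), and there is no separate bootstrap step for the decay --- existence and the $t^{-N/4}$ rate drop out of the same fixed point. Your bootstrap outline is plausible and identifies the right obstacle (reproducing the weight under the convolution), but it is exactly that obstacle that the paper absorbs into the choice of space; if you pursue the bootstrap you will end up reproving Lemmas \ref{l.bil.dec.1}--\ref{l.bil.dec.2} in a posteriori form.
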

As before, the strategy is to use a contraction argument. This time, we will consider the following Banach spaces:
\begin{defn}\hfill\\
Let $N\ge 3$, $s,\alpha\ge 0$ the we defined
$$ X^{s,\alpha}_k\coloneqq \{w\in X^s\mid \|w\|_{X^{s,\alpha}_k}<+\infty\} $$
$$ \Theta^{s,\alpha}_k\coloneqq \left\{\theta\in L^\infty\left(\mathbb{R}_+;L^\infty\left(\mathbb{R}^N\right)\cap\widehat{H}^1\left(\mathbb{R}^N\right)\right)\:\Big|\: \|\nabla \theta\|_{X^{s,\alpha}_k}<+\infty\right\} $$
with
$$ \|w\|_{X^{s,\alpha}_k}\coloneqq \|w\|_{X^s}+\sup_{t>0}(t^\alpha+t^{N/4})\|w(t)\|_{W^{k,\infty}(\mathbb{R}^N)},
$$
$$ \|\theta\|_{\Theta^{s,\alpha}_k}\coloneqq \|\theta\|_{L^\infty(\mathbb{R}_+;L^\infty(\mathbb{R}^N))}+\|\nabla \theta\|_{X^{s,\alpha}_k}. $$
\end{defn}
The strategy is to prove that, for some choice of $\alpha,s,k$, the map $\phi\colon X^{s,\alpha}_k\times \Theta^{s,\alpha}_k\to X^{s,\alpha}_k\times \Theta^{s,\alpha}_k$ is a contraction on a suitable subset of $X^{s,\alpha}_k\times \Theta^{s,\alpha}_k$, where $\phi(w,\theta)=(u,d)$ solves
\begin{equation}\label{s.aux.gl.}
    \left\{\begin{array}{ll}
    (\partial_t-\mathbb{P}\Delta)u=-\mathbb{P}(w\cdot \nabla w +{\rm Div}(\nabla \theta\odot\nabla \theta)) & \mathbb{R}_+\times\mathbb{R}^N \\
    (\partial_t-\Delta)d=-w\cdot \nabla \theta & \mathbb{R}_+\times\mathbb{R}^N \\
    u(0)=u_0,\quad d(0)=d_0 & \mathbb{R}^N.
    \end{array}\right.
\end{equation}
Again, we want to apply Corollary \ref{c.lin.ex.} with $T=\infty$, so we need to prove a bilinear estimate for the right hand side of \eqref{s.aux.gl.}:
\begin{lem}
Let $s\ge \frac{N}{2}-1$ and $z,w\in X^s$, then
\begin{equation}\label{bil.e.3}
    \|zw\|_{L^2(\mathbb{R}_+;H^s(\mathbb{R}^N))}\le C(N,s)\|z\|_{X^s}\|w\|_{X^s}.
\end{equation}
\end{lem}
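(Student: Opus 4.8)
The plan is to isolate one time-independent ingredient, namely the space-time embedding
$$ \|v\|_{L^2((0,\infty);L^\infty(\mathbb{R}^N))}\le C(N,s)\,\|v\|_{X^s}\qquad(v\in X^s), $$
valid for $N\ge 3$ and $s>\tfrac N2-1$, and then to deduce \eqref{bil.e.3} from it together with a fractional Leibniz rule and Hölder's inequality in time. Note this is exactly the step where the factor $T^\gamma$ of the local bilinear estimate \eqref{bil.e.1} can be dispensed with: there one pays a power of $T$ for estimating a bounded-in-time quantity over a finite interval, whereas here the $L^2$-in-time integrability will be supplied by the control of $\nabla z$, $\nabla w$ in $L^2_tH^s$ built into the $X^s$ norm.

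First I would prove the embedding above by a Littlewood--Paley decomposition $v=\sum_j P_j v$. For the low-frequency block ($2^j\lesssim 1$) Bernstein's inequality gives $\|P_jv\|_{L^\infty}\lesssim 2^{jN/2}\|P_jv\|_{L^2}$ and, since on these frequencies $\|P_j\nabla v\|_{H^s}\approx 2^j\|P_jv\|_{L^2}$, one gets $\|P_{\le 0}v(t)\|_{L^\infty}\lesssim\sum_{j\le 0}2^{j(N/2-1)}\|P_j\nabla v(t)\|_{H^s}$; Cauchy--Schwarz in $j$ then bounds this by $\|\nabla v(t)\|_{H^s}$ because $\sum_{j\le 0}2^{j(N-2)}<\infty$ exactly when $N\ge 3$. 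For the high-frequency block ($2^j\gtrsim 1$), using $\|P_j\nabla v\|_{H^s}\approx 2^{j(s+1)}\|P_jv\|_{L^2}$, the same computation yields $\|P_{>0}v(t)\|_{L^\infty}\lesssim\sum_{j>0}2^{j(N/2-s-1)}\|P_j\nabla v(t)\|_{H^s}$, and Cauchy--Schwarz in $j$ uses precisely the hypothesis $s>\tfrac N2-1$ to sum $\sum_{j>0}2^{2j(N/2-s-1)}$. Hence $\|v(t)\|_{L^\infty}\lesssim\|\nabla v(t)\|_{H^s}$ for a.e.\ $t$, and taking the $L^2$-norm in $t$ gives $\|v\|_{L^2_tL^\infty}\lesssim\|\nabla v\|_{L^2_tH^s}\le\|v\|_{X^s}$. (The borderline value $s=\tfrac N2-1$ is the one excluded by this $\ell^2$-summation; it is not needed for any of the theorems and can, if desired, be recovered with a Besov-space refinement, so I would not pursue it.)

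Next I would apply the fractional Leibniz rule \eqref{fr.L.} of \cite{GK96} with $r=2$, $p_1=q_2=2$ and $p_2=q_1=\infty$ — admissible since $s>\tfrac N2-1>0$ for $N\ge 3$ — to obtain, for a.e.\ $t$,
$$ \|z(t)w(t)\|_{H^s(\mathbb{R}^N)}\lesssim\|z(t)\|_{H^s(\mathbb{R}^N)}\|w(t)\|_{L^\infty(\mathbb{R}^N)}+\|z(t)\|_{L^\infty(\mathbb{R}^N)}\|w(t)\|_{H^s(\mathbb{R}^N)}. $$
Taking the $L^2((0,\infty))$-norm and applying Hölder in time to each term (pairing the $L^\infty_tH^s_x$ factor with the $L^2_tL^\infty_x$ factor), the embedding from the previous step controls $\|w\|_{L^2_tL^\infty}$ and $\|z\|_{L^2_tL^\infty}$ by $\|w\|_{X^s}$ and $\|z\|_{X^s}$, while $\|z\|_{L^\infty_tH^s}$ and $\|w\|_{L^\infty_tH^s}$ are bounded by the same norms directly from the definition of $X^s$. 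This yields $\|zw\|_{L^2(\mathbb{R}_+;H^s)}\le C(N,s)\|z\|_{X^s}\|w\|_{X^s}$.

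The only genuinely delicate point is the first step, the embedding $X^s\hookrightarrow L^2_tL^\infty_x$, and inside it the high-frequency summation, whose convergence is equivalent to $s>\tfrac N2-1$; everything after that is routine. I expect no difficulty with the low-frequency part beyond bookkeeping, since there the restriction $N\ge 3$ is exactly what makes the corresponding geometric series converge.
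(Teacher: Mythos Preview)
Your argument is correct for $s>\tfrac{N}{2}-1$, but the route differs from the paper's. You pair the factors as $(H^s_2,L^\infty)$ in the Leibniz rule and supply the $L^2_tL^\infty_x$ bound via the pointwise embedding $\|v(t)\|_{L^\infty}\lesssim\|\nabla v(t)\|_{H^s}$, which you prove by Littlewood--Paley. The paper instead pairs as $(H^s_{2N/(N-2)},L^N)$: Sobolev gives $\|z(\tau)\|_{H^s_{2N/(N-2)}}\lesssim\|\nabla z(\tau)\|_{H^s}$ (this is where $N\ge 3$ enters), and $\|w(\tau)\|_{L^N}\lesssim\|w(\tau)\|_{H^s}$ holds for all $s\ge\tfrac{N}{2}-1$; one then takes $L^2$ in time on the first factor and $L^\infty$ in time on the second. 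The paper's choice therefore reaches the endpoint $s=\tfrac{N}{2}-1$ stated in the lemma, whereas your high-frequency summation genuinely needs the strict inequality, as you note. Conversely, your approach makes the mechanism perhaps more transparent (one factor in $L^\infty_x$, the other in $H^s_x$) and yields the auxiliary embedding $X^s\hookrightarrow L^2_tL^\infty_x$, which is of independent interest; but if you want to match the lemma exactly you would need the Besov refinement you allude to, while the paper's $L^N$ route avoids that detour entirely.
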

\begin{proof}\hfill\\
Thanks to the fractional Leibniz rule \eqref{fr.L.} with $q_2=p_1=\frac{2N}{N-2}$ and $q_1=p_2=N$ we get
$$ \|(z w)(\tau)\|_{H^s(\mathbb{R}^N)}\lesssim \|z(\tau)\|_{H^s_\frac{2N}{N-2}(\mathbb{R}^N)}\|w(\tau)\|_{L^N(\mathbb{R}^N)}+\|w(\tau)\|_{H^s_\frac{2N}{N-2}(\mathbb{R}^N)}\|z(\tau)\|_{L^N(\mathbb{R}^N)}. $$
We see just the first term, the other is analogous. By Sobolev embedding we have that
$$ \|z(\tau)\|_{H^s_\frac{2N}{N-2}(\mathbb{R}^N)}\lesssim \|\nabla z(\tau)\|_{H^s(\mathbb{R}^N)}. $$
On the other hand, for any $s\ge \frac{N}{2}-1$, we have by Sobolev embedding that
$$ \|w(\tau)\|_{L^N(\mathbb{R}^N)}\lesssim \|w(\tau)\|_{H^s(\mathbb{R}^N)}\le \|w\|_{L^\infty(\mathbb{R}_+;H^s(\mathbb{R}^N))}\le \|w\|_{X^s}. $$
Finally, we have that
$$ \|zw\|_{L^2(\mathbb{R}_+;H^s(\mathbb{R}^N))}\lesssim \|w\|_{X^s}\|\nabla z\|_{L^2(\mathbb{R}_+;H^s(\mathbb{R}^N))}\le \|w\|_{X^s}\|z\|_{X^s}. $$
\end{proof}
\begin{rem}
It can be seen that the bilinear estimate works also for $s=\frac{N}{2}-1$. In fact the global existence for small initial data can be proven even in this case (but not the decay in time).
\end{rem}
As for the local existence, we need a bound on the $L^\infty(\mathbb{R}_+;L^\infty(\mathbb{R}^N))$-norm. Again, we start from the Duhamel formula for the \eqref{s.aux.gl.}:
\begin{equation}
    \begin{aligned}
        & u=e^{\mathbb{P}\Delta t}u_0+\int_0^te^{\mathbb{P}\Delta(t-\tau)}\mathbb{P}f(w,\theta)(\tau)d\tau \\
        & d=e^{\Delta t}d_0+\int_0^t e^{\Delta(t-\tau)}g(w,\theta)(\tau)d\tau,
    \end{aligned}
\end{equation}
where
$$ f(w,\theta)=-w\cdot \nabla w-{\rm Div}(\nabla \theta\odot\nabla\theta), $$
$$ g(w,\theta)=w\cdot \nabla \theta. $$
We already know the estimate for the firsts terms from Lemma \ref{l.decay}, so it is sufficient to study the integral term. Firstly, we will need some technical lemmas, whose proofs can be found in the Appendix:
\begin{lem}\label{l.tec.int.1}
Let $0\le\alpha_1,\alpha_2<1$ and $T>0$, then it holds
$$ \int_0^t (t-\tau)^{-\alpha_1}\tau^{-\alpha_2}d\tau\le C(\alpha_1,\alpha_2) t^{-\max\{\alpha_1,\alpha_2\}}T^{1-\min\{\alpha_1,\alpha_2\}} \quad \forall t\in(0,T). $$
\end{lem}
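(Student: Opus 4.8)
The plan is to evaluate (or cleanly bound) the integral by a scaling argument, and then to trade a power of $t$ for a power of $T$ using $t\le T$.

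First I would perform the change of variables $\tau=t\sigma$, $d\tau=t\,d\sigma$, which turns the integral into
$$ \int_0^t (t-\tau)^{-\alpha_1}\tau^{-\alpha_2}\,d\tau = t^{1-\alpha_1-\alpha_2}\int_0^1 (1-\sigma)^{-\alpha_1}\sigma^{-\alpha_2}\,d\sigma. $$
Since $0\le\alpha_1,\alpha_2<1$, both exponents $-\alpha_1,-\alpha_2$ lie in $(-1,0]$, so the remaining integral is the Beta function $B(1-\alpha_1,1-\alpha_2)$, a finite constant depending only on $\alpha_1,\alpha_2$; hence the left-hand side equals $C(\alpha_1,\alpha_2)\,t^{1-\alpha_1-\alpha_2}$. (Alternatively, one avoids special functions by splitting the integral at $\tau=t/2$: on $(0,t/2)$ use $(t-\tau)^{-\alpha_1}\le(t/2)^{-\alpha_1}$ and integrate $\tau^{-\alpha_2}$, on $(t/2,t)$ use $\tau^{-\alpha_2}\le(t/2)^{-\alpha_2}$ and integrate $(t-\tau)^{-\alpha_1}$; each piece is bounded by an explicit multiple of $t^{1-\alpha_1-\alpha_2}$.)

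Next, by the symmetry $\tau\mapsto t-\tau$ the integral is invariant under exchanging $\alpha_1$ and $\alpha_2$, and the claimed bound is likewise symmetric in $(\alpha_1,\alpha_2)$ through $\max$ and $\min$; so I may assume without loss of generality $\alpha_1\ge\alpha_2$, i.e. $\max\{\alpha_1,\alpha_2\}=\alpha_1$ and $\min\{\alpha_1,\alpha_2\}=\alpha_2$. Then I write $t^{1-\alpha_1-\alpha_2}=t^{-\alpha_1}\,t^{1-\alpha_2}$, and since $1-\alpha_2>0$ and $0<t\le T$ we have $t^{1-\alpha_2}\le T^{1-\alpha_2}$. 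Combining,
$$ \int_0^t (t-\tau)^{-\alpha_1}\tau^{-\alpha_2}\,d\tau = C(\alpha_1,\alpha_2)\,t^{-\alpha_1}t^{1-\alpha_2}\le C(\alpha_1,\alpha_2)\,t^{-\max\{\alpha_1,\alpha_2\}}T^{1-\min\{\alpha_1,\alpha_2\}}, $$
which is the assertion.

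There is no genuine obstacle here: the only points requiring a little care are checking convergence of the Beta integral (guaranteed by $\alpha_i<1$), noting that $1-\alpha_1-\alpha_2$ may be negative — the estimate still holds, since the final conversion uses only $t\le T$ together with $1-\min\{\alpha_1,\alpha_2\}>0$ — and keeping the exponent bookkeeping honest so that exactly the power $t^{1-\min}$, and no more, is turned into a power of $T$. The constant $C(\alpha_1,\alpha_2)$ in the conclusion is then precisely $B(1-\alpha_1,1-\alpha_2)$, or in the splitting argument an explicit combination of the factors $2^{\alpha_i}/(1-\alpha_i)$.
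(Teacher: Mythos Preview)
Your proof is correct. The scaling substitution $\tau=t\sigma$ reduces the integral exactly to $t^{1-\alpha_1-\alpha_2}B(1-\alpha_1,1-\alpha_2)$, and then writing $t^{1-\alpha_1-\alpha_2}=t^{-\max}\cdot t^{1-\min}$ together with $t\le T$ and $1-\min\{\alpha_1,\alpha_2\}>0$ gives the claim immediately.

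The paper proceeds slightly differently: it splits at $\tau=t/2$ (the alternative you sketch), but instead of evaluating each piece as a multiple of $t^{1-\alpha_1-\alpha_2}$, it extends the remaining one-variable integral from $[0,t/2]$ to $[0,T]$, obtaining directly $t^{-\alpha_1}T^{1-\alpha_2}+t^{-\alpha_2}T^{1-\alpha_1}$. Your route has the advantage of being an identity up to the very last step and of producing the explicit constant $B(1-\alpha_1,1-\alpha_2)$; the paper's version trades this sharpness for avoiding any special-function language and for making the appearance of the $T$-power transparent from the integration step itself. Both are entirely elementary.
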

\begin{lem}\label{l.tec.int.2}
Let $0\le\alpha_1,\alpha_2<1$ and $\beta_1,\beta_2> 1$, then it holds
$$ \int_0^t \frac{1}{((t-\tau)^{\alpha_1}+(t-\tau)^{\beta_1})}\frac{1}{(\tau^{\alpha_2}+\tau^{\beta_2})}d\tau\le C(\alpha_1,\alpha_2,\beta_1,\beta_2) \left(t^{\max\{\alpha_1,\alpha_2\}}+t^{\min\{\beta_1,\beta_2\}}\right)^{-1}\quad \forall t>0. $$
\end{lem}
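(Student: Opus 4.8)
The plan is to exploit the elementary bounds $\dfrac{1}{x^{\gamma}+x^{\delta}}\le x^{-\gamma}$ and $\dfrac{1}{x^{\gamma}+x^{\delta}}\le x^{-\delta}$, valid for all $x>0$ and $\gamma,\delta\ge 0$, together with two integrability facts that follow precisely from the hypotheses $\alpha_i<1<\beta_i$: a \emph{global} bound $\int_0^{\infty}\frac{d\sigma}{\sigma^{\gamma}+\sigma^{\delta}}=:C_{\gamma,\delta}<\infty$ for $0\le\gamma<1<\delta$ (integrable singularity at $0$ since $\gamma<1$, integrable tail at $\infty$ since $\delta>1$), and a \emph{local} bound $\int_0^{\rho}\frac{d\sigma}{\sigma^{\gamma}+\sigma^{\delta}}\le\int_0^{\rho}\sigma^{-\gamma}d\sigma=\frac{\rho^{1-\gamma}}{1-\gamma}$ for any $\rho>0$. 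I would then treat the regimes $t\le1$ and $t\ge1$ separately, matching respectively the two terms of the right-hand side: since $\max\{\alpha_1,\alpha_2\}<1<\min\{\beta_1,\beta_2\}$, one has $\bigl(t^{\max\{\alpha_1,\alpha_2\}}+t^{\min\{\beta_1,\beta_2\}}\bigr)^{-1}\ge\tfrac12 t^{-\max\{\alpha_1,\alpha_2\}}$ for $t\le1$ and $\ge\tfrac12 t^{-\min\{\beta_1,\beta_2\}}$ for $t\ge1$, so it suffices to bound the integral by $C\,t^{-\max\{\alpha_1,\alpha_2\}}$ in the first regime and by $C\,t^{-\min\{\beta_1,\beta_2\}}$ in the second.

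\textbf{Regime $t\le1$.} Here $t-\tau\le1$ and $\tau\le1$ for every $\tau\in(0,t)$, so $\frac{1}{(t-\tau)^{\alpha_1}+(t-\tau)^{\beta_1}}\le(t-\tau)^{-\alpha_1}$ and $\frac{1}{\tau^{\alpha_2}+\tau^{\beta_2}}\le\tau^{-\alpha_2}$; applying Lemma~\ref{l.tec.int.1} (say with $T=2$) gives $\int_0^t(t-\tau)^{-\alpha_1}\tau^{-\alpha_2}d\tau\le C(\alpha_1,\alpha_2)\,t^{-\max\{\alpha_1,\alpha_2\}}$, which is the claimed bound.

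\textbf{Regime $t\ge1$.} I would split $(0,t)=(0,t/2)\cup(t/2,t)$. On $(0,t/2)$ one has $t-\tau\ge t/2$, hence $\frac{1}{(t-\tau)^{\alpha_1}+(t-\tau)^{\beta_1}}\le(t-\tau)^{-\beta_1}\le(t/2)^{-\beta_1}$; pulling this constant factor out leaves $\int_0^{t/2}\frac{d\tau}{\tau^{\alpha_2}+\tau^{\beta_2}}\le C_{\alpha_2,\beta_2}$, so this piece is $\lesssim t^{-\beta_1}\le t^{-\min\{\beta_1,\beta_2\}}$. On $(t/2,t)$ one symmetrically uses $\tau\ge t/2$ to get $\frac{1}{\tau^{\alpha_2}+\tau^{\beta_2}}\le(t/2)^{-\beta_2}$, together with $\int_{t/2}^t\frac{d\tau}{(t-\tau)^{\alpha_1}+(t-\tau)^{\beta_1}}=\int_0^{t/2}\frac{ds}{s^{\alpha_1}+s^{\beta_1}}\le C_{\alpha_1,\beta_1}$, so this piece is $\lesssim t^{-\beta_2}\le t^{-\min\{\beta_1,\beta_2\}}$. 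Summing the two pieces and comparing with the right-hand side closes this regime.

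I do not expect a genuine obstacle here; the only subtlety is that the crude global bound $\int_0^{\infty}(\sigma^{\gamma}+\sigma^{\delta})^{-1}d\sigma<\infty$ is too lossy for small $t$ — it would only give a bounded integral, not the required blow-up $t^{-\max\{\alpha_1,\alpha_2\}}$ — which is exactly why one invokes the sharper Lemma~\ref{l.tec.int.1} in that regime, while for large $t$ one needs instead the convergence of the tail, i.e.\ the hypothesis $\beta_i>1$. Assembling $C(\alpha_1,\alpha_2,\beta_1,\beta_2)$ from the constant in Lemma~\ref{l.tec.int.1} and from $C_{\alpha_1,\beta_1},C_{\alpha_2,\beta_2}$ completes the argument.
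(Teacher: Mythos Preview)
Your proof is correct. The core of both your argument and the paper's is the same: split $(0,t)=(0,t/2)\cup(t/2,t)$ and use that $\int_0^\infty(\sigma^{\alpha}+\sigma^{\beta})^{-1}\,d\sigma<\infty$ whenever $0\le\alpha<1<\beta$. The difference is only in packaging. The paper runs this split for \emph{all} $t>0$ at once, observing that for $s\in[t/2,t]$ one has $s^{\alpha_i}+s^{\beta_i}\ge 2^{-\beta_i}(t^{\alpha_i}+t^{\beta_i})$, so the two halves are bounded respectively by constants times $(t^{\alpha_1}+t^{\beta_1})^{-1}$ and $(t^{\alpha_2}+t^{\beta_2})^{-1}$, each of which is $\lesssim(t^{\max\{\alpha_1,\alpha_2\}}+t^{\min\{\beta_1,\beta_2\}})^{-1}$ uniformly in $t$; no case distinction on $t$ and no appeal to Lemma~\ref{l.tec.int.1} are needed. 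You instead separate $t\le1$ from $t\ge1$ first, drop the $\beta$-terms for small $t$ and reduce to Lemma~\ref{l.tec.int.1}; this is slightly longer but perfectly valid. One small wording issue: your closing remark about the small-$t$ regime is phrased backwards---the difficulty with naively reusing the $t\ge1$ argument is not that it ``would only give a bounded integral'' but that it produces the too-large prefactor $(t/2)^{-\beta_1}$ as $t\to0$; since the claimed inequality is an upper bound and the right-hand side diverges as $t\to0$, a bounded left-hand side would in fact be fine.
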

 We are now ready to state the bilinear estimates we are going to use in the proof of Theorem \ref{t.gl-dec.ex.+}: this lemma and the next one are devoted to study the $W^{k,\infty}(\mathbb{R}^N;\mathbb{R}^N)$-norm of $u$.
\begin{lem}\label{l.bil.dec.1}
Let $N\ge 3$, $k\in\mathbb{N}$, $s>\frac{N}{2}-1$ and $z,w\in X^{s,\alpha}_k$ with ${\rm div}z(t)=0$ for a.e. $t>0$, then we can find $\alpha\in\left(0,\frac{1}{2}\right)$ such that
 \begin{equation}
     \left\|\int_0^t e^{\mathbb{P}\Delta(t-\tau)}\mathbb{P}(z\cdot \nabla w)(\tau)d\tau\right\|_{L^\infty(\mathbb{R}^N)}\le C(N,s)\left(t^\alpha+t^{N/4}\right)^{-1}\|z\|_{X^{s,\alpha}_k}\|w\|_{X^{s,\alpha}_k}.
 \end{equation}
\end{lem}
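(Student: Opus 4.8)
The plan is to use the incompressibility of $z$ to trade the derivative in $z\cdot\nabla w$ for smoothing of the Stokes semigroup, and then to reduce the $\tau$--integration to the scalar estimates of Lemmas \ref{l.tec.int.1}--\ref{l.tec.int.2}. Since ${\rm div}\,z=0$ we have $z\cdot\nabla w={\rm Div}(z\otimes w)$, and as $\mathbb{P}$, $e^{\mathbb{P}\Delta t}$ and the $\partial_j$ are mutually commuting Fourier multipliers, for $0<\tau<t$ the integrand equals $\sum_j\partial_j\,e^{\mathbb{P}\Delta(t-\tau)}\mathbb{P}\big((z\otimes w)(\tau)\big)^j$. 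Applying Lemma \ref{l.est.HS} with $k=1$, $q=\infty$ and an exponent $r\in(1,\infty)$ then gives
\[
 \left\|e^{\mathbb{P}\Delta(t-\tau)}\mathbb{P}(z\cdot\nabla w)(\tau)\right\|_{L^\infty(\mathbb{R}^N)}\lesssim (t-\tau)^{-\frac{N}{2r}-\frac12}\,\|(z\otimes w)(\tau)\|_{L^r(\mathbb{R}^N)}.
\]
The key structural point is that $s>\tfrac N2-1$ allows one to pick an exponent $r_1>N$ with $H^s(\mathbb{R}^N)\hookrightarrow L^{r_1}(\mathbb{R}^N)$, so that $a_1:=\tfrac{N}{2r_1}+\tfrac12\in(\tfrac12,1)$ and the corresponding kernel is integrable across the diagonal $\tau=t$; for the large--time regime one uses instead an exponent $r_2\in[2,N)$, giving $a_2:=\tfrac{N}{2r_2}+\tfrac12>1$.

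For the pointwise--in--$\tau$ factor, Hölder's inequality and Sobolev embedding give, for both $r_i$, $\|(z\otimes w)(\tau)\|_{L^{r_i}}\le\|z(\tau)\|_{L^\infty}\|w(\tau)\|_{L^{r_i}}\lesssim\|z(\tau)\|_{L^\infty}\|w(\tau)\|_{H^s}$, and by the very definition of the spaces $\|z(\tau)\|_{L^\infty}\le\|z(\tau)\|_{W^{k,\infty}}\le(\tau^\alpha+\tau^{N/4})^{-1}\|z\|_{X^{s,\alpha}_k}$ while $\|w(\tau)\|_{H^s}\le\|w\|_{X^s}\le\|w\|_{X^{s,\alpha}_k}$. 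Taking the minimum of the two kernel bounds, the claim is reduced to an estimate of the type $\int_0^t\big((t-\tau)^{a_1}+(t-\tau)^{a_2}\big)^{-1}(\tau^\alpha+\tau^{N/4})^{-1}\,d\tau\lesssim(t^\alpha+t^{N/4})^{-1}$; when extra decay near $\tau\to\infty$ is needed one instead puts the $L^\infty$ bound on \emph{both} $z$ and $w$ and interpolates back down to $L^{r}$, which replaces the $\tau$--weight by the heavier $(\tau^\alpha+\tau^{N/4})^{-(2-2/r)}$ with $2-2/r\in(1,2)$. For $N\ge5$ (so $\tfrac N4>1$) this last integral is controlled directly by Lemma \ref{l.tec.int.2}; for $N=3,4$, where $\tfrac N4\le1$ and Lemma \ref{l.tec.int.2} does not apply verbatim, one splits $\int_0^t=\int_0^{t/2}+\int_{t/2}^{t-1}+\int_{t-1}^t$ (for $t\ge2$; $t\le2$ being immediate) and applies Lemma \ref{l.tec.int.1} on each piece, using $r_1>N$ only on the innermost interval $(t-1,t)$ and $r_2<N$ elsewhere, so that the potential growth $t^{1-a_1}$ coming from the near--diagonal contribution does not occur.

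The main obstacle is precisely this final bookkeeping: one must choose $r_1>N>r_2$, the interpolation weight, and the value of $\alpha\in(0,\tfrac12)$ — the freedom $\alpha<\tfrac12$ being exactly Remark \ref{rem.alpha} — simultaneously so that \emph{(i)} the Sobolev embeddings $H^s\hookrightarrow L^{r_i}$ are valid, which is what forces $s>\tfrac N2-1$; \emph{(ii)} the two kernel exponents straddle the critical power $(t-\tau)^{-1}$ so the $\tau$--integral converges both at the diagonal and for $t-\tau$ large; and \emph{(iii)} the various powers of $t$ produced by Lemmas \ref{l.tec.int.1}--\ref{l.tec.int.2} combine to reproduce the \emph{sharp} decay $(t^\alpha+t^{N/4})^{-1}$ rather than a weaker one. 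Everything upstream — the rewriting through ${\rm Div}$, the application of Lemma \ref{l.est.HS}, and the Hölder/Sobolev steps — is routine.
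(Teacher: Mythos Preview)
Your approach is correct in spirit but takes a genuinely different route from the paper. The paper never reduces to a single scalar convolution; instead it keeps the $L^2_t$ part of the $X^s$ norm in play via Cauchy--Schwarz in $\tau$. Concretely, for $t\le 2$ it works with $z\cdot\nabla w$ directly (not the ${\rm Div}$ form), bounding $\|z\cdot\nabla w\|_{L^q}\le\|z\|_{L^\infty}\|\nabla w\|_{L^q}$ with $q>N$ and then pairing the remaining $\|\nabla w(\tau)\|_{L^q}$ against its $L^2_\tau$ norm; for $t>2$ it splits $I_1+I_2+I_3$ with exponents $r=2$, $r=\tfrac{2N}{N-2}$ (here using ${\rm Div}$), and $r=\tfrac{2N}{N-2s}$ respectively, and in each piece applies Cauchy--Schwarz to extract $\|\nabla w\|_{L^2_t L^2_x}$ or $\|\nabla w\|_{L^2_t H^s_x}$. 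Your argument instead uses only the $L^\infty_t$ bounds on $z$ and $w$, throws all of the $\tau$--dependence into the weight $(\tau^\alpha+\tau^{N/4})^{-1}$, and reduces everything to a pure kernel convolution. This is arguably cleaner conceptually, and the ``heavier weight'' interpolation you mention is in fact unnecessary: the basic bound $\|z\otimes w\|_{L^r}\lesssim(\tau^\alpha+\tau^{N/4})^{-1}\|w\|_{L^\infty_t H^s}$ already suffices for all $N\ge 3$ once the three--piece split is done.

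One caveat: your claim that for $N\ge 5$ Lemma~\ref{l.tec.int.2} controls the integral \emph{directly} is not quite right at small $t$. That lemma returns $(t^{\max\{a_1,\alpha\}}+t^{\min\{a_2,N/4\}})^{-1}$, and since $a_1>\tfrac12>\alpha$ this gives $t^{-a_1}$ rather than the required $t^{-\alpha}$ as $t\to 0$. The fix is trivial---treat $t\le 2$ separately via the Beta integral $\int_0^t(t-\tau)^{-a_1}\tau^{-\alpha}d\tau=Ct^{1-a_1-\alpha}\lesssim t^{-\alpha}$, exactly as you already do for $N=3,4$---but as written the $N\ge 5$ case has a small hole.
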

\begin{proof}\hfill\\
Firstly we consider the case $t\le 2$: thanks to Lemma \ref{l.est.HS}
$$ \int_0^te^{\mathbb{P}\Delta(t-\tau)}\mathbb{P}(z\cdot\nabla w)(\tau)d\tau\lesssim \int_0^t(t-\tau)^{-\frac{N}{2q}}\|z\cdot\nabla w(\tau)\|_{L^q(\mathbb{R}^N)}d\tau, $$
for some $q\in(1,\infty)$. We notice that
$$ \|z(\tau)\cdot\nabla w(\tau)\|_{L^q(\mathbb{R}^N)}\le \|z(\tau)\|_{L^\infty(\mathbb{R}^N)}\|\nabla w(\tau)\|_{L^q(\mathbb{R}^N)}\le \tau^{-\alpha}\|z\|_{X^{s,\alpha}_k}\|\nabla w(\tau)\|_{L^q(\mathbb{R}^N)}. $$
We need to ask two conditions over $q$: we need that $\frac{N}{2q}<\frac{1}{2}$ and that $L^q\hookrightarrow H^s$. In this case
$$ \int_0^te^{\mathbb{P}\Delta(t-\tau)}\mathbb{P}(z\cdot\nabla w)(\tau)d\tau\lesssim \|z\|_{X^{s,\alpha}_k}\int_0^t(t-\tau)^{-\frac{N}{2q}}\tau^{-\alpha}\|\nabla w(\tau)\|_{L^q(\mathbb{R}^N)}d\tau\le $$
$$ \le \|z\|_{X^{s,\alpha}_k}\|w\|_{X^{s,\alpha}_k}\left(\int_0^t(t-\tau)^{-\frac{N}{q}}\tau^{-2\alpha}d\tau\right)^\frac{1}{2}\lesssim t^{-\alpha}\|z\|_{X^{s,\alpha}_k}\|w\|_{X^{s,\alpha}_k}, $$
where the last inequality comes from Lemma \ref{l.tec.int.1} choosing $\alpha$ sufficiently near $\frac{1}{2}$. Such a choice of $q$ exists because
$$ \frac{N}{2q}<\frac{1}{2}\:\Leftrightarrow\: q>N $$
and we can find $q>N$ such that $L^q\hookrightarrow H^s$ when $s>\frac{N}{2}-1$ by Sobolev embeddings.

Let us suppose then $t>2$. In this case we can split the integral in three pieces:
$$ \int_0^te^{\mathbb{P}\Delta(t-\tau)}\mathbb{P}(z\cdot \nabla w)(\tau)d\tau= I_1+I_2+I_3, $$
where
$$ I_1\coloneqq \int_0^{t/2}e^{\mathbb{P}\Delta(t-\tau)}\mathbb{P}(z\cdot \nabla w)(\tau)d\tau, $$
$$ I_2\coloneqq \int_{t/2}^{t-1}e^{\mathbb{P}\Delta(t-\tau)}\mathbb{P}(z\cdot \nabla w)(\tau)d\tau, $$
$$ I_3\coloneqq \int_{t-1}^te^{\mathbb{P}\Delta(t-\tau)}\mathbb{P}(z\cdot \nabla w)(\tau)d\tau. $$
We will estimate these three integrals separately: from Lemma \ref{l.est.HS}, choosing $r=2$, we get
$$ \|I_1\|_{L^\infty(\mathbb{R}^N)}\le \int_0^{t/2}(t-\tau)^{-\frac{N}{4}}\|\nabla w (\tau)z(\tau)\|_{L^2(\mathbb{R}^N)}d\tau. $$
When $\tau<\frac{t}{2}$, then $t-\tau\ge \frac{t}{2}$, so
$$ \int_0^{t/2}(t-\tau)^{-\frac{N}{4}}\|\nabla w (\tau)z(\tau)\|_{L^2(\mathbb{R}^N)}d\tau\lesssim t^{-N/4}\int_0^{t/2}\|\nabla w (\tau)z(\tau)\|_{L^2(\mathbb{R}^N)}d\tau. $$
On the other hand
$$ \|\nabla w(\tau)z(\tau)\|_{L^2(\mathbb{R}^N)}\le \|z(\tau)\|_{L^\infty(\mathbb{R}^N)}\|\nabla w(\tau)\|_{L^2(\mathbb{R}^N)}\le (\tau^\alpha+\tau^{N/4})^{-1}\|z\|_{X^{s,\alpha}_k}\|\nabla w(\tau)\|_{L^2(\mathbb{R}^N)}. $$
So
$$ \|I_1\|_{L^\infty(\mathbb{R}^N)} \le t^{-N/4}\|z\|_{X^{s,\alpha}_k}\int_0^{t/2}(\tau^\alpha+\tau^{N/4})^{-1}\|\nabla w(\tau)\|_{L^2(\mathbb{R}^N)}d\tau\le $$
$$ \le t^{-N/4}\|z\|_{X^{s,\alpha}_k}\left(\int_0^\infty (\tau^{2\alpha}+\tau^{N/2})^{-1}d\tau\right)^\frac{1}{2}\|\nabla w\|_{L^2(\mathbb{R}_+;L^2(\mathbb{R}^N))}\lesssim t^{-\frac{N}{4}}\|z\|_{X^{s,\alpha}_k}\|w\|_{X^{s,\alpha}_k}, $$
where we have used that $2\alpha<1$ and $N\ge 3$. In order to estimate $I_2$ we notice as before that
$$ {\rm div }z=0\:\Rightarrow\: z\cdot \nabla w={\rm Div}(z\odot w). $$
Therefore, using Lemma \ref{l.est.HS} with $r=\frac{2N}{N-2}$ and $k=1$, we get
$$ \|I_2\|_{L^\infty(\mathbb{R}^N)}=\left\|\int_{t/2}^{t-1}{\rm Div}\left(e^{\mathbb{P}\Delta(t-\tau)}\mathbb{P}(z\odot w)(\tau)\right)d\tau\right\|_{L^\infty(\mathbb{R}^N)}\le $$
$$ \le \int_{t/2}^{t-1}(t-\tau)^{-\frac{1}{2}-\frac{N-2}{4}}\|z(\tau)w(\tau)\|_{L^\frac{2N}{N-2}(\mathbb{R}^N)}d\tau=\int_{t/2}^{t-1}(t-\tau)^{-\frac{N}{4}}\|z(\tau)w(\tau)\|_{L^\frac{2N}{N-2}(\mathbb{R}^N)}d\tau . $$
Since $\tau<t-1$, then $t-\tau\gtrsim 1+t-\tau$, so
$$ \int_{t/2}^{t-1}(t-\tau)^{-\frac{N}{4}}\|z(\tau)w(\tau)\|_{L^\frac{2N}{N-2}(\mathbb{R}^N)}d\tau\lesssim \int_{t/2}^{t-1}(1+t-\tau)^{-\frac{N}{4}}\|z(\tau)w(\tau)\|_{L^\frac{2N}{N-2}(\mathbb{R}^N)}d\tau. $$
Moreover
$$ \|z(\tau)w(\tau)\|_{L^\frac{2N}{N-2}(\mathbb{R}^N)}\le (\tau^\alpha+\tau^{N/4})^{-1}\|z\|_{X^{s,\alpha}_k}\|w(\tau)\|_{L^\frac{2N}{N-2}(\mathbb{R}^N)}\lesssim $$
$$ \lesssim (\tau^\alpha+\tau^{N/4})^{-1}\|z\|_{X^{s,\alpha}_k}\|\nabla w(\tau)\|_{L^2(\mathbb{R}^N)}, $$
where in the last inequality we have used Sobolev embedding. So
$$ \int_{t/2}^{t-1}(1+t-\tau)^{-\frac{N}{4}}\|z(\tau)w(\tau)\|_{L^\frac{2N}{N-2}(\mathbb{R}^N)}d\tau \lesssim  $$
$$ \lesssim \|z\|_{X^{s,\alpha}_k}\int_{t/2}^{t-1}(1+t-\tau)^{-\frac{N}{4}}(\tau^\alpha+\tau^{N/4})^{-1}\|\nabla w(\tau)\|_{L^2(\mathbb{R}^N)}d\tau\lesssim $$
$$ \lesssim \|z\|_{X^{s,\alpha}_k}\|\nabla w\|_{L^2(\mathbb{R}_+;L^2(\mathbb{R}^N))}\left(\int_0^t (1+t-\tau)^{-\frac{N}{2}}(\tau^{2\alpha}+\tau^{N/2})^{-1}d\tau\right)^\frac{1}{2}\lesssim $$
$$ \lesssim (t^\alpha+t^{N/4})^{-1}\|z\|_{X^{s,\alpha}_k}\|w\|_{X^{s,\alpha}_k}, $$
where in the last inequality we have used Lemma \ref{l.tec.int.2}. For the $I_3$ term, we suppose at the beginning that $s<\frac{N}{2}$. In this case, using Lemma \ref{l.est.HS} with $r=\frac{2N}{N-2s}$, we get
$$ \|I_3\|_{L^\infty(\mathbb{R}^N)}\le \int_{t-1}^t(t-\tau)^{-\frac{N-2s}{4}}\|z(\tau)\nabla w(\tau)\|_{L^{\frac{2N}{N-2s}}(\mathbb{R}^N)}d\tau\le $$
$$ \le \|z\|_{X^{s,\alpha}_k}\int_{t-1}^t(t-\tau)^{-\frac{N-2s}{4}}(\tau^\alpha+\tau^{N/4})^{-1}\|\nabla w(\tau)\|_{L^{\frac{2N}{N-2s}}(\mathbb{R}^N)}d\tau\lesssim
 $$
$$ \le \|z\|_{X^{s,\alpha}_k}\int_{t-1}^t(t-\tau)^{-\frac{N-2s}{4}}(\tau^\alpha+\tau^{N/4})^{-1}\|\nabla  w(\tau)\|_{H^s(\mathbb{R}^N)}d\tau\lesssim $$
$$ \lesssim (t^\alpha+t^{N/4})^{-1}\|z\|_{X^{s,\alpha}_k}\|\nabla w\|_{L^2(\mathbb{R}_+;H^s(\mathbb{R}^N))}\left(\int_{t-1}^t(t-\tau)^{-\frac{N-2s}{2}}d\tau\right)^\frac{1}{2}\simeq $$
$$ \simeq (t^\alpha+t^{N/4})^{-1}\|z\|_{X^{s,\alpha}_k}\|w\|_{X^{s,\alpha}_k}, $$
where in the last inequality we have used the fact that
$$ \frac{N-2s}{2}<1\:\Leftrightarrow\: s>\frac{N}{2}-1. $$
When $s\ge \frac{N}{2}$ it is sufficient to apply Lemma \ref{l.est.HS} with $r>N$ and proceed as before, so the proof is complete.
\end{proof}
\begin{lem}
 Let $k\in\mathbb{N}$ and $s\in\mathbb{R}$ such that $s-k>\frac{N}{2}-1$, let  $z,w\in X^{s,\alpha}_k$ with ${\rm div}z(t)=0$ for a.e. $t>0$, then we can find $\alpha\in\left(0,\frac{1}{2}\right)$ such that for any
$\beta\in\mathbb{N}^N$ with $|\beta|=k$ it holds
 \begin{equation}\label{bil.e.4}
     \left\|D^\beta\int_0^t e^{\mathbb{P}\Delta(t-\tau)}\mathbb{P}(z\cdot \nabla w)(\tau)d\tau\right\|_{L^\infty(\mathbb{R}^N)}\le C(N,s,k)\left(t^\alpha+t^{N/4}\right)^{-1}\|z\|_{X^{s,\alpha}_k}\|w\|_{X^{s,\alpha}_k}.
 \end{equation}
\end{lem}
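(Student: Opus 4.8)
The plan is to imitate the proof of Lemma~\ref{l.bil.dec.1} step by step, the only new ingredient being a Leibniz expansion of the outer derivative. Since ${\rm div}\,z=0$ one has $z\cdot\nabla w={\rm Div}(z\odot w)$ for a.e. $\tau$, so for $|\beta|=k$
$$ D^\beta(z\cdot\nabla w)=\sum_{\beta_1+\beta_2=\beta}\binom{\beta}{\beta_1}\,D^{\beta_1}z\cdot\nabla D^{\beta_2}w = {\rm Div}\!\left(\sum_{\beta_1+\beta_2=\beta}\binom{\beta}{\beta_1}\,D^{\beta_1}z\odot D^{\beta_2}w\right), $$
and we use whichever of the two forms is convenient in each region. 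In every summand one of the two factors carries at most $k$ spatial derivatives and is therefore controlled pointwise in $x$ by $\|z(\tau)\|_{W^{k,\infty}(\mathbb{R}^N)}$ or $\|w(\tau)\|_{W^{k,\infty}(\mathbb{R}^N)}$, hence by $(\tau^\alpha+\tau^{N/4})^{-1}$ times the corresponding $X^{s,\alpha}_k$-norm; the companion factor then carries $z$ or $w$ with at least one genuine spatial derivative (here we use $|\beta|=k\ge 1$, possibly interchanging which factor plays which role) and with at most $k+1$ derivatives in total, and since $s-k>\frac N2-1>0$ yields $k<s$, its $H^\sigma$-norm is bounded, for any $\sigma\le s-k$, by $\|\nabla z(\tau)\|_{H^s(\mathbb{R}^N)}$ or $\|\nabla w(\tau)\|_{H^s(\mathbb{R}^N)}$, which is square-integrable in $\tau$ with norm $\le\|z\|_{X^s}$ or $\|w\|_{X^s}$. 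With these two observations the case analysis of Lemma~\ref{l.bil.dec.1} goes through unchanged.

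For $t\le 2$ we use Lemma~\ref{l.est.HS} with a Lebesgue exponent $q$ slightly larger than $N$ (so that its time exponent $-\frac N{2q}>-\tfrac12$), bound the product by $\|\cdot\|_{W^{k,\infty}}\|\cdot\|_{L^q}$, invoke the Sobolev embedding $H^\sigma\hookrightarrow L^q$ with $\sigma=\frac N2-\frac Nq\in\big(\frac N2-1,\,s-k\big]$ (this choice is possible precisely because $s-k>\frac N2-1$), and close by Cauchy--Schwarz in $\tau$ together with Lemma~\ref{l.tec.int.1} with $T=2$, choosing $\alpha\in(0,\tfrac12)$ with $\frac Nq<2\alpha$; this reproduces the bound $t^{-\alpha}\simeq(t^\alpha+t^{N/4})^{-1}$. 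For $t>2$ we split the integral into $I_1=\int_0^{t/2}$, $I_2=\int_{t/2}^{t-1}$, $I_3=\int_{t-1}^{t}$ exactly as in Lemma~\ref{l.bil.dec.1}. In $I_1$ we apply Lemma~\ref{l.est.HS} with $r=2$ and no extra derivative, use $(t-\tau)^{-N/4}\lesssim t^{-N/4}$ (as $\tau<t/2$) and $\int_0^\infty(\tau^{2\alpha}+\tau^{N/2})^{-1}d\tau<\infty$ (valid since $2\alpha<1$ and $N\ge 3$). In $I_2$ we use the ${\rm Div}$ form, Lemma~\ref{l.est.HS} with $r=\frac{2N}{N-2}$ and one derivative on the semigroup (time exponent $-N/4$), the embedding $\widehat{H}^1(\mathbb{R}^N)\hookrightarrow L^{2N/(N-2)}(\mathbb{R}^N)$ of Remark~\ref{rem.Hom.Sob.} applied to the factor built from $w$, and Lemma~\ref{l.tec.int.2} after noting $(1+t-\tau)^{-N/2}\simeq\big((t-\tau)^0+(t-\tau)^{N/2}\big)^{-1}$. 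In $I_3$ we apply Lemma~\ref{l.est.HS} with $r$ slightly larger than $N$ and no extra derivative, use the embedding $H^\sigma\hookrightarrow L^r$ with $\sigma=\frac N2-\frac Nr\in\big(\frac N2-1,\,s-k\big]$, pull out $(t^\alpha+t^{N/4})^{-1}$ using $\tau\ge t-1>t/2$, and finish with Cauchy--Schwarz in $\tau$, the leftover power $(t-\tau)^{-N/r}$ being integrable on $(t-1,t)$ because $r>N$. Summing the finitely many Leibniz terms yields \eqref{bil.e.4}.

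The main obstacle --- and the only point where the stronger hypothesis $s-k>\frac N2-1$ (rather than merely $s>\frac N2-1$) is needed --- is the top-order Leibniz term in which all $k$ derivatives land on the factor $z$: the companion factor is then $\nabla D^{\beta_2}w$ with $|\beta_2|=k$, i.e. $w$ with $k+1$ spatial derivatives, and in the near-diagonal region $\tau\in(t-1,t)$ (and likewise for small $t$) this must be placed in $L^r$ with $r>N$, which by Sobolev embedding forces the availability of an exponent $\sigma\in\big(\frac N2-1,\,s-k\big]$ --- exactly the hypothesis. Away from the diagonal ($I_1$, $I_2$) only $\|\nabla w(\tau)\|_{H^k}$, that is $k\le s$, is used, which is weaker. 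Everything else is a bookkeeping repetition of the proof of Lemma~\ref{l.bil.dec.1}.
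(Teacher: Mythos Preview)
Your proof is correct and follows the same overall strategy as the paper (Leibniz expansion followed by the case analysis of Lemma~\ref{l.bil.dec.1}), but the paper's version is considerably more economical: instead of rerunning the $t\le 2$ and $I_1,I_2,I_3$ cases by hand, the paper simply observes that for every $\gamma\le\beta$ one has $D^\gamma z,\ D^{\beta-\gamma}w\in X^{s-k,\alpha}_0$ with $\|D^\gamma z\|_{X^{s-k,\alpha}_0}\lesssim\|z\|_{X^{s,\alpha}_k}$ (and similarly for $w$), and that ${\rm div}(D^\gamma z)=D^\gamma({\rm div}\,z)=0$; since $s-k>\frac N2-1$, Lemma~\ref{l.bil.dec.1} then applies verbatim to each Leibniz term, immediately giving \eqref{bil.e.4}. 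This packaging avoids the need to re-justify each region, and in particular makes your aside about ``$|\beta|=k\ge1$'' and ``interchanging which factor plays which role'' unnecessary: in every Leibniz term $D^\gamma z\cdot\nabla D^{\beta-\gamma}w$ the factor $D^\gamma z$ is the one that goes to $L^\infty$ (it has at most $k$ derivatives and is divergence-free), so no interchange is ever needed.
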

\begin{proof}\hfill\\
It is sufficient to apply Leibniz formula:
$$ D^\beta \mathbb{P}(z\cdot\nabla w)=\sum_{\gamma\le \beta} \binom{\beta}{\gamma}\mathbb{P}\left(D^\gamma z\cdot \nabla D^{\beta-\gamma}w\right), $$
where
$$ \gamma\le \beta \:\Leftrightarrow\: \gamma_i\le \beta_i\quad \forall i=1,\ldots, N\quad \forall \gamma,\beta\in\mathbb{N}^N. $$
On the other hand, for any $\gamma\le \beta$
$$ D^\gamma z, D^{\beta-\gamma}w\in X^{s-k,\alpha}_0 $$
with $s-k>\frac{N}{2}-1$,
so we can apply Lemma \ref{l.bil.dec.1}:
$$ \left\|D^\beta\int_0^t e^{\mathbb{P}\Delta(t-\tau)}\mathbb{P}(z\cdot \nabla w)(\tau)d\tau\right\|_{L^\infty(\mathbb{R}^N)}\lesssim $$
$$ \lesssim (t^\alpha+t^{N/4})^{-1} \sum_{\gamma\le \beta}\binom{\beta}{\gamma}\|D^\gamma z\|_{X^{s-k,\alpha}_0}\|D^{\beta-\gamma}w\|_{X^{s-k,\alpha}_0}\lesssim (t^\alpha+t^{N/4})^{-1}\|z\|_{X^{s,\alpha}_k}\|w\|_{X^{s,\alpha}_k} $$
\end{proof}
 The next two lemmas will be used to control the $W^{k,\infty}(\mathbb{R}^N;\mathbb{R}^N)$-norm of $\nabla d$:
\begin{lem}\label{l.bil.dec.2}
 Let $k\in\mathbb{N}$, $s>\frac{N}{2}-1$ and $z,w\in X^{s,\alpha}_k$, then we can find $\alpha\in\left(0,\frac{1}{2}\right)$ such that
 $$ \left\|\int_0^t e^{\Delta(t-\tau)}z(\tau)w(\tau)d\tau\right\|_{L^\infty(\mathbb{R}^N)}\le C(N,s)(1+t)^{-N/4}\|z\|_{X^{s,\alpha}_k}\|w\|_{X^{s,\alpha}_k} $$
and
$$ \left\|\nabla \int_0^t e^{\Delta(t-\tau)}z(\tau)w(\tau)d\tau\right\|_{L^\infty(\mathbb{R}^N)}\le C(N,s)\left(t^\alpha+t^{N/4}\right)^{-1}\|z\|_{X^{s,\alpha}_k}\|w\|_{X^{s,\alpha}_k} $$
\end{lem}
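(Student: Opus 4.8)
The plan is to run, region by region, the same scheme as in the proof of Lemma~\ref{l.bil.dec.1}. Write $I(t)\coloneqq\int_0^t e^{\Delta(t-\tau)}(zw)(\tau)\,d\tau$. For $t\le 2$ the two target rates are comparable, respectively, to a constant and to $t^{-\alpha}$, so only the short--time bounds are needed; I would obtain them exactly as in the $I_3$--step of Lemma~\ref{l.bil.dec.1}, using Lemma~\ref{l.est.HS} to estimate $e^{\Delta(t-\tau)}$ (resp. $\nabla e^{\Delta(t-\tau)}$) from $L^r$ into $L^\infty$ with an integrable singularity $(t-\tau)^{-N/(2r)}$ (resp. $(t-\tau)^{-\frac12-N/(2r)}$) — which is possible because $s>\frac N2-1$, the admissible $r$ coming from the Sobolev range $H^s\hookrightarrow L^r$ — bounding $\|(zw)(\tau)\|_{L^r}$ by $\|z(\tau)\|_{H^s}\|w(\tau)\|_{H^s}$ via Sobolev embedding and the fractional Leibniz rule~\eqref{fr.L.}, and absorbing the remaining singularity by pairing with the $W^{k,\infty}$--decay of $z$ and Lemma~\ref{l.tec.int.1}, which is what produces the factor $t^{-\alpha}$ in the gradient case.

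For $t>2$ I would split
$$ I(t)=\int_0^{t/2}+\int_{t/2}^{t-1}+\int_{t-1}^t=:I_1+I_2+I_3, $$
and likewise for $\nabla I$. On $I_3$ one has $\tau\ge t-1>1$, hence $\|z(\tau)\|_{L^\infty}\le\tau^{-N/4}\|z\|_{X^{s,\alpha}_k}$ and similarly for $w$, so $\|(zw)(\tau)\|_{L^\infty}\le\tau^{-N/2}\|z\|_{X^{s,\alpha}_k}\|w\|_{X^{s,\alpha}_k}$; since $e^{\Delta\sigma}$ is an $L^\infty$--contraction and $\|\nabla e^{\Delta\sigma}\|_{L^\infty\to L^\infty}\lesssim\sigma^{-1/2}$ (Lemma~\ref{l.est.HS}), with $\sigma^{-1/2}$ integrable on $(0,1)$, this gives $\|I_3\|_{L^\infty}+\|\nabla I_3\|_{L^\infty}\lesssim t^{-N/2}\|z\|\|w\|$, dominated by both target rates. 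On $I_2$, where $t-\tau\ge1$ and $\tau\ge t/2$, I would bound $\|e^{\Delta(t-\tau)}(zw)(\tau)\|_{L^\infty}\le(1+t-\tau)^{-N/4}\|(zw)(\tau)\|_{L^2}$ (and, for $\nabla I_2$, the sharper $(1+t-\tau)^{-\frac12-N/4}\|(zw)(\tau)\|_{L^2}$ from the derivative on the kernel, still integrable in $t-\tau$ because $\tfrac12+\tfrac N4>1$ for $N\ge3$), together with $\|(zw)(\tau)\|_{L^2}\le\|z(\tau)\|_{L^\infty}\|w(\tau)\|_{L^2}\le(\tau^\alpha+\tau^{N/4})^{-1}\|z\|\|w\|$; extending the time integral to $(0,t)$ and applying Lemma~\ref{l.tec.int.2} (or, in low dimensions, estimating the convolution $\int_0^t(1+t-\tau)^{-a}(\tau^\alpha+\tau^{N/4})^{-1}\,d\tau$ directly) returns $(1+t)^{-N/4}$, resp. $(t^\alpha+t^{N/4})^{-1}$.

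On $I_1$, where $t-\tau\ge t/2$, I would pull the factor $(t/2)^{-\frac j2-N/4}$ ($j=0$ for $I$, $j=1$ for $\nabla I$) out of $\|\nabla^j e^{\Delta(t-\tau)}\|_{L^2\to L^\infty}$ and then estimate, using $\|(zw)(\tau)\|_{L^2}\le\|z(\tau)\|_{L^N}\|w(\tau)\|_{L^{2N/(N-2)}}\lesssim\|z(\tau)\|_{H^s}\|\nabla w(\tau)\|_{L^2}$ (Sobolev, $s\ge\frac N2-1$ and $\|w(\tau)\|_{L^{2N/(N-2)}}\lesssim\|\nabla w(\tau)\|_{L^2}$) and Cauchy--Schwarz in $\tau$,
$$ \int_0^{t/2}\|(zw)(\tau)\|_{L^2}\,d\tau\ \lesssim\ \|z\|_{X^s}\sqrt t\,\|\nabla w\|_{L^2((0,\infty);L^2)}\ \lesssim\ \sqrt t\,\|z\|_{X^{s,\alpha}_k}\|w\|_{X^{s,\alpha}_k}. $$
For $\nabla I$ the extra $(t/2)^{-1/2}$ exactly cancels this $\sqrt t$ and yields $\|\nabla I_1\|_{L^\infty}\lesssim t^{-N/4}\|z\|\|w\|$; for $I$ itself (no derivative gain) one instead keeps the $\tau$--weight, replacing $\|z(\tau)\|_{H^s}$ by a power of $(\tau^\alpha+\tau^{N/4})^{-1}$ obtained by interpolating the $L^2$--bound of $z$ with its $L^\infty$--decay, and uses that the ensuing Cauchy--Schwarz weight $(\tau^{2\alpha}+\tau^{N/2})^{-1}$ is integrable on $(0,\infty)$ for $N\ge3$ (it is $\tau^{-2\alpha}$ near $0$, with $2\alpha<1$ by Remark~\ref{rem.alpha}, and $\tau^{-N/2}$ near infinity, with $N/2>1$); this is where the hypothesis $N\ge3$ is used.

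I expect the short--time case and the region $I_3$ to be routine. The main obstacle is the bookkeeping of decay exponents in $I_1$ and $I_2$: in each region one has to decide whether to charge the decay to $z$ (interpolating between its $L^2$--bound and the $W^{k,\infty}$--decay built into $X^{s,\alpha}_k$) or to $w$ (through $\nabla w\in L^2_tL^2$ and the Sobolev inequality), and then verify that the resulting products of negative powers of $t-\tau$ and of $\tau$ feed correctly into Lemmas~\ref{l.tec.int.1}--\ref{l.tec.int.2} so that the output is exactly the claimed rate. The gradient estimate is the more delicate of the two, precisely because there the single derivative must stay on the heat semigroup — so as to buy the extra $(t-\tau)^{-1/2}$ without losing integrability at $\tau=t$ — while enough $\tau$--decay must still be left over to close the Cauchy--Schwarz.
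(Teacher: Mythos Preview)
Your overall architecture is the same as the paper's---split the time integral and feed each piece into Lemma~\ref{l.est.HS} and the convolution Lemmas~\ref{l.tec.int.1}--\ref{l.tec.int.2}---but the execution differs in two places.

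For the \emph{gradient} estimate the paper does not keep $\nabla$ on the semigroup. It simply writes $\nabla(zw)=\nabla z\,w+z\,\nabla w$, observes that each summand has the shape ``(bounded factor)$\cdot\nabla$(factor)'' treated in Lemma~\ref{l.bil.dec.1}, and says the same three--piece argument applies verbatim (the divergence--free hypothesis there was only used to move a derivative onto the semigroup in the middle piece, which is automatic here). Your route---leaving $\nabla$ on the kernel and buying $(t-\tau)^{-1/2}$---also works, and your $I_3$ via the $L^\infty\to L^\infty$ contraction is in fact a bit slicker than the paper's $L^r\to L^\infty$ step; but the paper's reduction to the already--proven lemma is shorter.

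For the \emph{non--gradient} estimate the paper uses only a \emph{two}--piece split $[0,t-1]\cup[t-1,t]$. On $[0,t-1]$ it takes $L^2\to L^\infty$ and bounds $\|(zw)(\tau)\|_{L^2}\le\|z(\tau)\|_{L^\infty}\|w(\tau)\|_{L^2}\le(\tau^\alpha+\tau^{N/4})^{-1}\|z\|\,\|w\|$ with $\|w(\tau)\|_{L^2}\le\|w\|_{X^s}$ directly---no Cauchy--Schwarz, no $\nabla w$, no interpolation---and then cites Lemma~\ref{l.tec.int.2}. On $[t-1,t]$ it takes $L^N\to L^\infty$, giving the harmless $(t-\tau)^{-1/2}$, and uses $\|z\|_{L^\infty}\|w\|_{L^N}\lesssim\|z\|_{L^\infty}\|w\|_{H^s}$. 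Your three--piece split is more work than needed here, and your $I_1$ step is the weak spot: interpolating $\|z(\tau)\|_{L^N}$ between $L^2$ and $L^\infty$ only yields a power $(N-2)/N$ of the decay, so after Cauchy--Schwarz the weight is $(\tau^{2\alpha(N-2)/N}+\tau^{(N-2)/2})^{-1}$, not $(\tau^{2\alpha}+\tau^{N/2})^{-1}$ as you wrote; the former is not integrable at infinity for $N=3,4$. The paper sidesteps this entirely by not trying to pair with $\nabla w\in L^2_tL^2$ on that piece.
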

\begin{proof}\hfill\\
For what concerns the first estimate, we have already proved the boundness for finite intervals in \eqref{bil.e.2}, so we can suppose $t\ge 2$. We split the integral in two pieces:
$$ \int_0^t e^{\Delta(t-\tau)}z(\tau)w(\tau)d\tau=I_1+I_2,$$
where
$$ I_1=\int_{0}^{t-1}e^{\Delta(t-\tau)}z(\tau)w(\tau)d\tau $$
$$ I_2=\int_{t-1}^t e^{\Delta(t-\tau)}z(\tau)w(\tau)d\tau. $$
Let us estimate these terms separately: by Lemma \ref{l.est.HS}
$$ |I_1|\lesssim \int_0^{t-1}(t-\tau)^{-N/4}\|w(\tau)z(\tau)\|_{L^2(\mathbb{R}^N)}d\tau. $$
Since $\tau<t-1$, then $t-\tau\ge \frac{1}{2} (1+t-\tau)$:
$$ |I_1|\lesssim \|z\|_{X^{s,\alpha}_k}\int_0^t (1+t-\tau)^{-N/4}(\tau^\alpha+\tau^{N/4})^{-1}\|w(\tau)\|_{L^2(\mathbb{R}^N)}d\tau \le $$
$$ \le \|z\|_{X^{s,\alpha}_k}\|w\|_{X^{s,\alpha}_k}\int_0^t(1+t-\tau)^{-N/4} (\tau^{\alpha}+\tau^{N/4})^{-1}d\tau\lesssim t^{-N/4}\|z\|_{X^{s,\alpha}_k}\|w\|_{X^{s,\alpha}_k}, $$
where in the last inequality we have used Lemma \ref{l.tec.int.2}. For what concerns the second term we apply again Lemma \ref{l.est.HS}: let $r\in(0,\infty)$, then
$$ |I_2|\lesssim \int_{t-1}^t(t-\tau)^{-\frac{N}{2r}}\|z(\tau)w(\tau)\|_{L^r(\mathbb{R}^N)}d\tau\le $$
$$ \le \|z\|_{X^{s,\alpha}_k}\int_{t-1}^t(t-\tau)^{-\frac{N}{2r}}(\tau^\alpha+\tau^{N/4})^{-1}\|w(\tau)\|_{L^r(\mathbb{R}^N)}d\tau. $$
If we take $r=N$, we notice that $L^N\hookrightarrow H^s$ for $s>\frac{N}{2}-1$ by Sobolev embedding, so
$$ |I_2|\lesssim \|z\|_{X^{s,\alpha}_k}\|w\|_{X^{s,\alpha}_k}\int_{t-1}^t(t-\tau)^{-1/2}(\tau^\alpha+\tau^{N/4})^{-1}d\tau \lesssim $$
$$ \lesssim t^{-N/4} \|z\|_{X^{s,\alpha}_k}\|w\|_{X^{s,\alpha}_k}\int_{t-1}^t(t-\tau)^{-1/2}d\tau \lesssim t^{-N/4} \|z\|_{X^{s,\alpha}_k}\|w\|_{X^{s,\alpha}_k}, $$
where in the last inequality we have used that $\tau\ge t-1\ge \frac{t}{2}$ for $t\ge 2$. This concludes the first part of the thesis. The other one can be done as in Lemma \ref{l.bil.dec.1}, in fact
$$ \nabla \int_0^t e^{\Delta(t-\tau)}z(\tau)w(\tau)d\tau=I_1+I_2+I_3, $$
where
$$ I_1\coloneqq \nabla \int_0^{t/2}e^{\Delta(t-\tau)}z(\tau)w(\tau)d\tau, $$
$$ I_2\coloneqq \nabla \int_{t/2}^{t-1}e^{\Delta(t-\tau)}z(\tau)w(\tau)d\tau, $$
$$ I_3\coloneqq \nabla \int_{t-1}^te^{\Delta(t-\tau)}z(\tau)w(\tau)d\tau. $$
Since
$$ \nabla (zw)(\tau)=\nabla z(\tau)w(\tau)+z(\tau)\nabla w(\tau), $$
we can repeat the same argument for the proof of Lemma \ref{l.bil.dec.1} and we conclude.
\end{proof}
\begin{lem}
Let $k\in\mathbb{N}$ and $s\in\mathbb{R}$ such that $s-k>\frac{N}{2}-1$, let  $z,w\in X^{s,\alpha}_k$, then we can find $\alpha\in\left(0,\frac{1}{2}\right)$ such that for any $\beta\in\mathbb{N}^N$ with $|\beta|=k+1$ it holds
\begin{equation}\label{bil.e.5}
    \left\|D^\beta\int_0^t e^{\Delta(t-\tau)}z(\tau)w(\tau)d\tau\right\|_{L^\infty(\mathbb{R}^N)}\le C(N,s,k)\left(t^\alpha+t^{N/4}\right)^{-1}\|z\|_{X^{s,\alpha}_k}\|w\|_{X^{a,\alpha}_k}.
\end{equation}
\end{lem}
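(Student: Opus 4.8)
The plan is to reduce \eqref{bil.e.5} to the already-established $k=0$ case in Lemma \ref{l.bil.dec.2}, in exactly the same way \eqref{bil.e.4} was deduced from Lemma \ref{l.bil.dec.1}. Since $|\beta|=k+1\ge 1$, I would pick an index $i\in\{1,\dots,N\}$ with $\beta_i\ge 1$ and write $\beta=\beta'+e_i$ with $|\beta'|=k$. Using that the heat semigroup commutes with spatial derivatives,
$$ D^\beta\int_0^t e^{\Delta(t-\tau)}z(\tau)w(\tau)\,d\tau = \partial_i\int_0^t e^{\Delta(t-\tau)}D^{\beta'}\!\big(z(\tau)w(\tau)\big)\,d\tau. $$

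Next I would expand the inner derivative by the Leibniz formula,
$$ D^{\beta'}(zw)=\sum_{\gamma\le\beta'}\binom{\beta'}{\gamma}\,D^\gamma z\cdot D^{\beta'-\gamma}w, $$
so the left-hand side of \eqref{bil.e.5} is bounded by a finite sum of terms $\big\|\partial_i\int_0^t e^{\Delta(t-\tau)}(D^\gamma z)(\tau)\,(D^{\beta'-\gamma}w)(\tau)\,d\tau\big\|_{L^\infty(\mathbb{R}^N)}$. For each $\gamma\le\beta'$ one has $|\gamma|\le k$ and $|\beta'-\gamma|\le k$, hence $D^\gamma z,\,D^{\beta'-\gamma}w\in X^{s-k,\alpha}_0$ with $s-k>\frac{N}{2}-1$, which is precisely the regularity hypothesis of Lemma \ref{l.bil.dec.2}. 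Applying the second (gradient) estimate of that lemma to each summand produces the decay factor $(t^\alpha+t^{N/4})^{-1}$ times $\|D^\gamma z\|_{X^{s-k,\alpha}_0}\|D^{\beta'-\gamma}w\|_{X^{s-k,\alpha}_0}$.

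Finally I would absorb the lower-order norms: since $|\gamma|\le k$ we have $\|D^\gamma z(t)\|_{H^{s-k}}\le\|z(t)\|_{H^s}$, $\|\nabla D^\gamma z(t)\|_{H^{s-k}}\le\|\nabla z(t)\|_{H^s}$ and $\|D^\gamma z(t)\|_{L^\infty}\le\|z(t)\|_{W^{k,\infty}}$, so $\|D^\gamma z\|_{X^{s-k,\alpha}_0}\lesssim\|z\|_{X^{s,\alpha}_k}$, and symmetrically for $w$. Summing the finitely many terms then gives \eqref{bil.e.5} with a constant $C(N,s,k)$. The only mildly delicate point is bookkeeping — verifying that every derivative landing on $z$ or $w$ costs at most $k$ orders of Sobolev and of $W^{k,\infty}$ regularity, so that all the $X^{s-k,\alpha}_0$-norms appearing are indeed controlled by the $X^{s,\alpha}_k$-norms — but there is no analytic obstacle beyond what Lemma \ref{l.bil.dec.2} already supplies.
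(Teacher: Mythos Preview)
Your proof is correct and follows exactly the same approach as the paper: peel off one derivative $\beta=e_i+\beta'$, apply Leibniz to $D^{\beta'}(zw)$, and invoke the $k=0$ gradient estimate on each summand. In fact your reference to Lemma~\ref{l.bil.dec.2} is the right one; the paper's citation of Lemma~\ref{l.bil.dec.1} at that step appears to be a typo.
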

\begin{proof}\hfill\\
Let $\beta\in\mathbb{N}^N$ with $|\beta|=k+1$, then we can find $i=1,\ldots, N$ such that $\beta=e_i+\widetilde{\beta}$ with $\widetilde{\beta}\in\mathbb{N}^N$. So
$$ D^\beta\int_0^t e^{\Delta(t-\tau)}z(\tau)w(\tau)d\tau=\partial_{x_i}\int_0^t e^{\Delta(t-\tau)}D^{\widetilde{\beta}}(z(\tau)w(\tau))d\tau. $$
If we use Leibniz formula we get that
$$ \left\|\partial_{x_i}\int_0^t e^{\Delta(t-\tau)}D^{\widetilde{\beta}}(z(\tau)w(\tau))d\tau\right\|_{L^\infty(\mathbb{R}^N)}\le  \sum_{\gamma\le \widetilde{\beta}}\binom{\widetilde{\beta}}{\gamma}\left\|\nabla \int_0^t e^{\Delta(t-\tau)}\left(D^\gamma z(\tau)D^{\widetilde{\beta}-\gamma}w(\tau)\right)d\tau\right\|_{L^\infty(\mathbb{R}^N)}. $$
Now we notice that
$$ D^\gamma z,D^{\widetilde{\beta}-\gamma}w\in X^{s-k,\alpha}_0 $$
with $s-k>\frac{N}{2}-1$, so we can apply Lemma \ref{l.bil.dec.1}:
$$ \left\|\partial_{x_i}\int_0^t e^{\Delta(t-\tau)}D^{\widetilde{\beta}}(z(\tau)w(\tau))d\tau\right\|_{L^\infty(\mathbb{R}^N)} \lesssim $$
$$ \lesssim (t^\alpha+t^{N/4})^{-1} \sum_{\gamma\le \widetilde{\beta}} \|D^\gamma z\|_{X^{s-k,\alpha}_0}\|D^{\widetilde{\beta}-\gamma}w\|_{X^{s-k,\alpha}_0}\lesssim (t^\alpha+t^{N/4})^{-1}\|z\|_{X^{s,\alpha}_k}\|w\|_{X^{s,\alpha}_k}. $$
\end{proof}
 We can now prove the global existence and decay result:
\begin{proof}[Proof of Theorem \ref{t.gl-dec.ex.+}]\hfill\\
Since $s>\frac{N}{2}-1$, we can find $k\in\mathbb{N}$ such that $s-k>\frac{N}{2}-1$. As before, the strategy is to use a contraction argument: let
$$ Y_k\coloneqq \left\{(w,\theta)\in X^{s,\alpha}_k\times \Theta^{s,\alpha}_{k}\:\big|\: {\rm div}(w(t))=0\:\:\text{for a.e.}\:\:t>0, \:\:\|(w,\theta)\|_{Y_k}<+\infty\right\}  $$
where
$$ \|(w,\theta)\|_{Y_k}\coloneqq \|w\|_{X^{s,\alpha}_k}+\|\theta\|_{\Theta^{s,\alpha}_{k}} $$
and where $\alpha=\frac{1}{2}-\delta$ for $\delta>0$ sufficiently small. We define as before the map $\phi_k\colon Y_k\to Y_k$ such that $\phi_k(w,\theta)=(u_k,d_k)$ which solves
\begin{equation}
    \left\{\begin{array}{ll}
    (\partial_t-\mathbb{P}\Delta)u_k=-\mathbb{P}(w\cdot \nabla w +{\rm Div}(\nabla \theta\odot\nabla \theta)) & \mathbb{R}_+\times\mathbb{R}^N \\
    (\partial_t-\Delta)d_k=-w\cdot \nabla \theta & \mathbb{R}_+\times\mathbb{R}^N \\
    u_k(0)=u_0,\quad d_k(0)=d_0 & \mathbb{R}^N.
    \end{array}\right.
\end{equation}
Namely,
\begin{equation}\label{Duh.f.gl.}
    \begin{aligned}
        & u_k=e^{\mathbb{P}\Delta t}u_0+\int_0^te^{\mathbb{P}\Delta(t-\tau)}\mathbb{P}f(w,\theta)(\tau)d\tau \\
        & d_k=e^{\Delta t}d_0+\int_0^t e^{\Delta(t-\tau)}g(w,\theta)(\tau)d\tau,
    \end{aligned}
\end{equation}
where
$$ f(w,\theta)=-w\cdot \nabla w-{\rm Div}(\nabla \theta\odot\nabla\theta), $$
$$ g(w,\theta)=w\cdot \nabla \theta. $$
The plan is to prove that $\phi_k$ is a contraction on a suitable subspace of $Y_k$. Firstly, we need to verify the following estimates:
\begin{equation}
    \|\phi_k(w,\theta)\|_{Y_k} \leq C \left( \|u_0\|_{H^s(\mathbb{R}^N)} + \|d_0\|_{L^\infty(\mathbb{R}^N)}+\|\nabla d_0\|_{H^{s}(\mathbb{R}^N)} + \|(w,\theta)\|_{Y_k}^2\right)
\end{equation}
\begin{equation}
    \|\phi_k(w_1,\theta_1)- \phi_k(w_2,\theta_2)\|_{Y_k} \leq  M\left[\|(w_1,\theta_1)\|_{Y_k} + \|(w_2,\theta_2)\|_{Y_k}\right]\|(w_1,\theta_1)-(w_2,\theta_2)\|_{Y_k}
\end{equation}
for some $C,M>0$.

\textbf{Step 1}: Thanks to the divergence free condition for $w$, we have that
$$ w\cdot \nabla w={\rm Div}(w\odot w). $$
So, thanks to Corollary \ref{c.lin.ex.} we have that
$$  \|u_k\|_{X^s}+\|\nabla d_k\|_{X^{s}}\lesssim \|w\odot w\|_{L^2(\mathbb{R}_+;H^s(\mathbb{R}^N))}+\|\nabla \theta\odot\nabla \theta\|_{L^2(\mathbb{R}_+;H^{s}(\mathbb{R}^N))} + \|w\cdot \nabla \theta\|_{L^2(\mathbb{R}_+;H^s(\mathbb{R}^N))}, $$
where $(u_k,d_k)=\phi_k(w,\theta)$. Since $w,\nabla \theta\in X^{s}$ we can apply \eqref{bil.e.3} to get the estimates:
\begin{equation}\label{ww.gl.}
    \|w\odot w\|_{L^2(\mathbb{R}_+;H^s(\mathbb{R}^N))}\lesssim \|w\|_{X^{s}}^2,
\end{equation}
\begin{equation}\label{DtDt.gl.}
    \|\nabla \theta\odot \nabla \theta\|_{L^2(\mathbb{R}_+;H^s(\mathbb{R}^N))}\lesssim  \|\nabla \theta\|_{X^{s}}^2,
\end{equation}
\begin{equation}\label{wDt.gl.}
    \|w\cdot \nabla \theta\|_{L^2(\mathbb{R}_+;H^s(\mathbb{R}^N))}\lesssim \|w\|_{X^{s}}\|\nabla \theta\|_{X^{s}}.
\end{equation}
So, thanks to Corollary \ref{c.lin.ex.} and \eqref{ww.gl.}, \eqref{DtDt.gl.} and \eqref{wDt.gl.} we get
\begin{equation}\label{nl.gl.1}
    \|u_k\|_{X^s}+\|\nabla d_k\|_{X^{s}}\le C\left[\|u_0\|_{H^s(\mathbb{R}^N)}+\|\nabla d_0\|_{H^{s}(\mathbb{R}^N)} + \|(w,\theta)\|_{Y_k}^2\right].
\end{equation}

\textbf{Step 2}: For what concerns the $L^\infty$ estimates, we start from the Duhamel formula \eqref{Duh.f.gl.}: from Lemma \ref{l.decay} we have that for any $m\ge0$ such that $\frac{N}{4}-\frac{m}{2}<\frac{1}{2}$, it holds
$$ \|e^{\mathbb{P}^j\Delta t}w\|_{L^\infty(\mathbb{R}^N)}\lesssim (t^\alpha+t^{N/4})^{-1}\|w\|_{H^m(\mathbb{R}^N)} $$
with $\alpha=\frac{1}{2}-\delta$ and $\delta>0$ sufficiently small. Therefore
\begin{equation}\label{dec.u.1}
 \sup_{t>0}(t^\alpha+t^{N/4})\sum_{|\beta|\le k} \|D^\beta e^{\mathbb{P}\Delta t}u_0\|_{L^\infty(\mathbb{R}^N)}\lesssim \sum_{|\beta|\le k}\|D^\beta u_0\|_{H^{s-k}(\mathbb{R}^N)}\lesssim \|u_0\|_{H^s(\mathbb{R}^N)}
\end{equation}
\begin{equation}\label{Linf.d}
\|e^{\Delta t}d_0\|_{L^\infty(\mathbb{R}^N)}\le \|d_0\|_{L^\infty(\mathbb{R}^N)},
\end{equation}
\begin{equation}\label{dec.d.1}
    \begin{aligned}
        & \sup_{t>0}(t^\alpha+t^{N/4})\sum_{|\beta|\le k}\|D^\beta e^{\Delta t}\nabla d_0\|_{L^\infty(\mathbb{R}^N)} \lesssim \sum_{|\beta|\le k}\|D^\beta \nabla d_0\|_{H^{s-k}(\mathbb{R}^N)}\lesssim \|\nabla d_0\|_{H^{s}(\mathbb{R}^N)},
    \end{aligned}
\end{equation}
where we are using the fact that
$$ \frac{N}{4}-\frac{s-k}{2}<\frac{1}{2}\:\Leftrightarrow\:s-k>\frac{N}{2}-1. $$
On the other hand, we have from the bilinear estimates \eqref{bil.e.3} and \eqref{bil.e.4} and Lemma \ref{l.bil.dec.2} that
\begin{equation}\label{nl.dec.1}
    \sup_{t>0}(t^\alpha+t^{N/4})\sum_{|\beta|\le k}\left\|D^\beta\int_0^t e^{\mathbb{P}\Delta t}\mathbb{P} f(w,\theta)(\tau)d\tau\right\|_{L^\infty(\mathbb{R}^N)} \lesssim \|(w,\theta)\|_{Y_k}^2,
\end{equation}
\begin{equation}\label{nl.dec.2}
    \sup_{t>0}(1+t)^{N/4}\left\|\int_0^t e^{\Delta(t-\tau)}g(w,\theta)(\tau)d\tau\right\|_{L^\infty(\mathbb{R}^N)}\lesssim \|(w,\theta)\|_{Y_k},
\end{equation}
\begin{equation}\label{nl.dec.3}
    \sup_{t>0}(t^\alpha+t^{N/4})\sum_{|\beta|\le k}\left\|D^\beta\nabla \int_0^t e^{\Delta t}D^\beta g(w,\theta)(\tau)d\tau\right\|_{L^\infty(\mathbb{R}^N)} \lesssim \|(w,\theta)\|_{Y_k}^2,
\end{equation}
where we recall
$$ f(w,\theta)=-w\cdot \nabla w-{\rm Div}(\nabla \theta\odot\nabla\theta), $$
$$ g(w,\theta)=w\cdot \nabla \theta. $$
Finally we have then from \eqref{nl.gl.1}, \eqref{dec.u.1}, \eqref{Linf.d}, \eqref{dec.d.1}, \eqref{nl.dec.1}, \eqref{nl.dec.2} and \eqref{nl.dec.3} that exists $C>0$ such that
\begin{equation}\label{nl.gl.2}
    \|\phi_k(w,\theta)\|_{Y_k}\le C\left[\|u_0\|_{H^s(\mathbb{R}^N)}+\|d_0\|_{L^\infty(\mathbb{R}^N)}+\|\nabla d_0\|_{H^{s}(\mathbb{R}^N)}+\|(w,\theta)\|_{Y_k}^2\right].
\end{equation}
With the same approach it can be proven that exists $M>0$ such that
\begin{equation}\label{nl.gl.3}
\begin{aligned}
    & \|\phi_k(w_1,\theta_1)-\phi_k(w_2,\theta_2)\|_{Y_k}\le \\
    & \le M\left(\|(w_1,\theta_1)\|_{Y_k}+\|(w_2,\theta_2)\|_{Y_k}\right)\|(w_1,\theta_1)-(w_2,\theta_2)\|_{Y_k}.
\end{aligned}
\end{equation}

\textbf{Step 3}: Let us define now
$$ Z_\varepsilon^k\coloneqq \{(w,\theta)\in Y_k\mid w(0)=u_0,\quad \theta(0)=d_0,\quad \|(w,\theta)\|_{Y_k}\le 2C\varepsilon\}, $$
where $C>0$ comes from \eqref{nl.gl.2}. We want to prove that $\phi_k$ is a contraction on $Z_\varepsilon^k$. Firstly, we notice that $\phi_k\colon Z_\varepsilon^k\to Z_\varepsilon^k$ for a suitable choice of $\varepsilon$: from \eqref{nl.gl.2} we have
$$ \|\phi_k(w,\theta)\|_{Y_k}\le C\varepsilon\left(1+4C^2\varepsilon\right). $$
Therefore, if we choose $\varepsilon\in(0,1)$ sufficiently small such that
$$ 4C^2\varepsilon\le 1, $$
then $\phi_k(w,\theta)\in Z_\varepsilon^k$. Finally, we notice that $\phi_k\colon Z_\varepsilon^k\to Z_\varepsilon^k$ is a contraction: from \eqref{nl.gl.3} we have
$$ \|\phi_k(w_1,\theta_1)-\phi_k(w_2,\theta_2)\|_{Y_k}\le 4CM\varepsilon \|(w_1,\theta_1)-(w_2,\theta_2)\|_{Y_k}, $$
so for a $\varepsilon>0$ sufficiently small we get that $\phi_k\colon Z_\varepsilon^k\to Z_\varepsilon^k$ is a contraction and therefore we conclude the existence of a solution in $Y_k$.
\end{proof}
\begin{rem}
In the proof we have considered $\alpha\sim \frac{1}{2}$ as a value for the spaces $X^{s,\alpha}_k$ and $\Theta^{s,\alpha}_k$. Actually, the choice of $\alpha$ can be more precise: with a more detailed proof it can be proven that, when $s\in\left(\frac{N}{2}-1,\frac{N}{2}\right]$
$$ \sup_{t>0} (t^\alpha+t^{N/4}) \left[\|u(t)\|_{L^\infty(\mathbb{R}^N)} + \|\nabla d(t)\|_{L^\infty(\mathbb{R}^N)}\right]<+\infty $$
with $\alpha$ as in \eqref{a.value}. Moreover, if $s>\frac{N}{2}$
$$ \sup_{t>0} (1+t)^{N/4} \|u(t)\|_{W^{k,\infty}(\mathbb{R}^N)}+ \sup_{t>0} (t^\alpha+t^{N/4}) \|D^{k+1}u(t)\|_{L^\infty(\mathbb{R}^N)} + $$
$$ + \sup_{t>0} (1+t)^{N/4} \|\nabla d(t)\|_{W^{k,\infty}(\mathbb{R}^N)} + \sup_{t>0} (t^\alpha+t^{N/4}) \| D^{k+1}\nabla d(t)\|_{L^\infty(\mathbb{R}^N)}<+\infty, $$
where $\alpha$ is taken as in \eqref{a.value} with $s-k-1$ in place of $s$
and where $k\in\mathbb{N}$ is defined as
$$ k=\max\left\{h\in\mathbb{N}\:\Bigg|\:  s-h>\frac{N}{2}\right\}. $$
\end{rem}

\section{Stability Results for the Erickson-Leslie system}
\medskip

We finally turn back to the Erickson-Leslie system:
\begin{equation}\label{EL.sys.+}
     \left\{\begin{array}{ll}
       (\partial_t-\Delta)u+\nabla p+u\cdot \nabla u=-{\rm Div}\left(\nabla v\odot\nabla v\right)  & \mathbb{R}_+\times \mathbb{R}^N \\
       {\rm div}u=0 & \mathbb{R}_+\times\mathbb{R}^N \\
       (\partial_t-\Delta)v +u\cdot \nabla v = |\nabla v|^2v & \mathbb{R}_+\times\mathbb{R}^N \\
       |v|=1 & \mathbb{R}_+\times\mathbb{R}^N \\
       u(0)=u_0,\quad v(0)=v_0 & \mathbb{R}^N,
    \end{array}\right.
\end{equation}
Let $\eta,\omega\in \mathcal{S}^{N-1}$ with $\eta\perp\omega$. As we have seen in the introduction, if $v_0(x)\in \text{Span}\{\eta,\omega\}$ for a.e. $x\in\mathbb{R}^N$, i.e.
$$ v_0(x)=\cos d_0(x)\eta+\sin d_0(x) \omega \quad \text{for a.e.}\:x\in\mathbb{R}^N $$
for $d_0\colon\mathbb{R}^N\to[-\pi,\pi]$ and if $(u,p,d)$ solves
\begin{equation}\label{EL.red.sys.+}
    \left\{\begin{array}{ll}
        (\partial_t-\Delta)u+\nabla p+u\cdot \nabla u=-{\rm Div}(\nabla d\otimes\nabla d) & \mathbb{R}_+\times \mathbb{R}^N \\
        {\rm div}u=0 & \mathbb{R}_+\times\mathbb{R}^N \\
        (\partial_t-\Delta)d+u\cdot \nabla d=0 & \mathbb{R}_+\times \mathbb{R}^N \\
        u(0)=u_0,\quad d(0)=d_0 & \mathbb{R}^N
    \end{array}\right.
\end{equation}
then $(u,p,v)$ solves the Ericksen-Leslie system \eqref{EL.sys.+} with
\begin{equation}\label{v.par.}
    v(t,x)=\cos d(t,x)\eta+\sin d(t,x)\omega \quad \text{for a.e.}\:\:(t,x)\in\mathbb{R}_+\times\mathbb{R}^N.
\end{equation}
For this reason, we will assume from now on $v$ to be of the form of \eqref{v.par.}. Before proving local and global existence, we need to understand how the regularity of $d$ and the one of $v$ are connected. Here, the parametrization \eqref{v.par.} makes the things more complicated: let $g(x)\coloneqq \cos(x)\eta+\sin(x)\omega $, then by Faà di Bruno's formula
$$ D^\alpha v(x)=\sum_{\ell=1}^{|\alpha|}g^{(\ell)}(d(x))\sum_{\alpha_1+\ldots+\alpha_\ell=\alpha,\:\:|\alpha_j|\ge 1}D^{\alpha_1}d(x)\cdots D^{\alpha_\ell}d(x) $$
for any $\alpha\in\mathbb{N}^N$ with $|\alpha|\ge 1$. Therefore, it is not easy to see that, if $\nabla d\in H^s(\mathbb{R}^N;\mathbb{R}^N)$ for some $s\in\mathbb{R}$, then also $\nabla v\in H^s(\mathbb{R}^N;\mathbb{R}^{N^2})$. In particular, the case $s\not\in\mathbb{N}$ is hard to study in general. For this reason, we will discuss the case $s\in\mathbb{N}$ in every dimension $N\ge 3$, while we will study the fractional case only for $N=3$ and $s\in\left(\frac{1}{2},1\right)$.

\vspace{3mm}

In order to get the local existence and the global existence for small initial data we will divide the proof in two steps:
\begin{itemize}
    \item Firstly, we need to prove that, when $v_0\in \text{Span}\{\eta,\omega\}$ with $\eta,\omega\in \mathcal{S}^{N-1}$ and $\eta\perp\omega$, then the solution $v(t,x)$ remains on the same plane for a.e. $(t,x)\in\mathbb{R}_+\times\mathbb{R}^N$. Since we have already built a solution $v$ like this by the parametrization \eqref{v.par.}, it is sufficient to prove the uniqueness of the solution for \eqref{EL.sys.+}.
    \item As we have said before, we need to understand the regularity of $v$ in dependence of the function $d$ as in \eqref{v.par.}. In particular, we need to prove that, if $d\in\Theta^s$, then $v\in \Theta^s$ too.
\end{itemize}

\subsection{The case $s\in\mathbb{N}$ and proof of Theorem \ref{t.ex.int.}}

As we have said previously, we want to prove that the system
$$
     \left\{\begin{array}{ll}
       (\partial_t-\mathbb{P}\Delta)u=-\mathbb{P}\left(u\cdot \nabla u-{\rm Div}\left(\nabla v\odot\nabla v\right)\right)  & (0,T)\times \mathbb{R}^N \\
       (\partial_t-\Delta)v +u\cdot \nabla v = |\nabla v|^2v & (0,T)\times\mathbb{R}^N \\
       |v|=1 & (0,T)\times\mathbb{R}^N \\
       u(0)=u_0,\quad v(0)=v_0 & \mathbb{R}^N,
    \end{array}\right.    $$
for $T\in(0,\infty]$ admits a unique solution $(u,v)\in X^s_T\times\Theta^s_T$. The idea will be, again, to use a contraction argument. Therefore, we will need some estimates on the nonlinear part $|\nabla v|^2v$ which was not included in the reduced system \eqref{EL.red.sys.+}.
\begin{lem}
    Let $N\ge 3$, $s\in\mathbb{N}$ with $s>\frac{N}{2}-1$, $T>0$, let $\theta,w,z\in \Theta^s_T$, then
    \begin{equation}\label{stab.es.1}
        \|\nabla z\nabla w \theta\|_{L^2((0,T);H^s(\mathbb{R}^N))}\le C(N,s) T^\gamma\|\theta\|_{\Theta^s_T}\|\nabla z\|_{X^s_T}\|\nabla w\|_{X^s_T}
    \end{equation}
    for some $\gamma=\gamma(N,s)>0$.
\end{lem}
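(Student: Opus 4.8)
The plan is to use that, for integer $s$, the norm $\|f\|_{H^s(\mathbb{R}^N)}$ is equivalent to $\sum_{|\alpha|\le s}\|D^\alpha f\|_{L^2(\mathbb{R}^N)}$, so that the Leibniz rule reduces the estimate to a sum of trilinear monomials, each of which can be controlled by the already-proven bilinear estimate \eqref{bil.e.1} applied to the pair $\nabla z,\nabla w$, together with Hölder's inequality and Sobolev embeddings. Note first that $\nabla z,\nabla w,\nabla\theta\in X^s_T$ and $\theta\in L^\infty((0,T);L^\infty(\mathbb{R}^N))$ since $z,w,\theta\in\Theta^s_T$.

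Concretely, for fixed $t$ and $|\alpha|\le s$ I would write $\nabla z\,\nabla w\,\theta=(\nabla z\,\nabla w)\,\theta$ and expand
$$ D^\alpha\big((\nabla z\,\nabla w)\theta\big)=\sum_{\gamma\le\alpha}\binom{\alpha}{\gamma}\,D^{\alpha-\gamma}(\nabla z\,\nabla w)\,D^\gamma\theta . $$
For the term $\gamma=0$ one has $\theta\,D^\alpha(\nabla z\,\nabla w)$, so $\|\theta\,D^\alpha(\nabla z\,\nabla w)\|_{L^2_x}\le\|\theta\|_{L^\infty_x}\|\nabla z\,\nabla w\|_{H^s_x}$; taking the $L^2$–norm in time and invoking \eqref{bil.e.1} with $z,w$ replaced by $\nabla z,\nabla w\in X^s_T$ yields a bound $C T^\gamma\,\|\theta\|_{\Theta^s_T}\|\nabla z\|_{X^s_T}\|\nabla w\|_{X^s_T}$.

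For $|\gamma|\ge1$ I would pick $i$ with $\gamma_i\ge1$ and write $D^\gamma\theta=D^{\gamma'}(\partial_i\theta)$ with $|\gamma'|=|\gamma|-1$, observing that $\partial_i\theta$ is a component of $\nabla\theta\in X^s_T$. Setting $\beta=\alpha-\gamma$, so that $|\beta|+|\gamma'|=|\alpha|-1\le s-1$, Hölder in $x$ with $\tfrac1a+\tfrac1b=\tfrac12$ followed by Sobolev embedding gives
$$ \big\|D^\beta(\nabla z\,\nabla w)\,D^{\gamma'}(\partial_i\theta)\big\|_{L^2_x}\le\|D^\beta(\nabla z\,\nabla w)\|_{L^a_x}\,\|D^{\gamma'}\partial_i\theta\|_{L^b_x}\lesssim\|\nabla z\,\nabla w\|_{H^s_x}\,\|\nabla\theta\|_{H^s_x}. $$
The one arithmetic point to check is the existence of admissible $a,b$: with $m_1=s-|\beta|\ge1$ and $m_2=s-|\gamma'|\ge1$, hence $m_1+m_2\ge s+1$, one needs $\big(\tfrac12-\tfrac{m_1}{N}\big)_+ + \big(\tfrac12-\tfrac{m_2}{N}\big)_+\le\tfrac12$; in the only nontrivial case (both terms positive) this is at most $1-\tfrac{s+1}{N}$, and $1-\tfrac{s+1}{N}\le\tfrac12\iff s\ge\tfrac N2-1$, which is the hypothesis. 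Then, taking $L^2$ in time and using $\|\nabla z\,\nabla w\|_{L^2((0,T);H^s)}\lesssim T^\gamma\|\nabla z\|_{X^s_T}\|\nabla w\|_{X^s_T}$ from \eqref{bil.e.1} together with $\|\nabla\theta\|_{L^\infty((0,T);H^s)}\le\|\theta\|_{\Theta^s_T}$, one closes the estimate with the same $\gamma$ as in \eqref{bil.e.1}.

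The main obstacle is precisely this exponent bookkeeping in the subcritical range $\tfrac N2-1<s\le\tfrac N2$: there $H^{s-|\beta|}$ and $H^{s-|\gamma'|}$ do not embed into $L^\infty$, so one cannot simply pull factors out in $L^\infty$, and it is the trade $D^\gamma\theta=D^{\gamma'}\partial_i\theta$ — which transfers a derivative onto the $X^s_T$–quantity $\nabla\theta$ — combined with the sharp threshold $s>\tfrac N2-1$ that makes the Hölder triple close.
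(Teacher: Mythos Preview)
Your proof is correct and follows essentially the same route as the paper: Leibniz expansion of $D^\alpha((\nabla z\,\nabla w)\theta)$, the top term handled by pulling $\theta$ out in $L^\infty$ and invoking \eqref{bil.e.1}, and the cross terms handled by shifting one derivative onto $\nabla\theta$ and closing via H\"older plus Sobolev, with the exponent check reducing to $s\ge\frac N2-1$. The paper carries this out only for $|\alpha|=s$ (and restricts explicitly to $s<\frac N2$, the harder subcase), choosing the concrete exponents $p=\frac{2N}{N-2(s-|\beta|)}$, $q=\frac{2N}{N-2(|\beta|+1)}$, whereas your $(\cdot)_+$ bookkeeping treats all cases at once; the content is the same.
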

\begin{proof}\hfill\\
The case $s\ge \frac{N}{2}$ is easier, so we consider just the case $s<\frac{N}{2}$. Firstly we notice that
$$ \|\nabla z(\tau)\nabla w(\tau)\theta(\tau)\|_{L^2(\mathbb{R}^N)}\le \|\nabla z(\tau)\nabla w(\tau)\|_{L^2(\mathbb{R}^N)}\|\theta(\tau)\|_{L^\infty(\mathbb{R}^N)}. $$
We can apply then the bilinear estimate \eqref{bil.e.1} in order to get that
$$ \| \nabla z \nabla w\theta\|_{L^2((0,T);L^2(\mathbb{R}^N))}\lesssim T^\gamma \|\theta\|_{L^\infty((0,T);L^\infty(\mathbb{R}^N))}\|\nabla z\|_{X^s_T}\|\nabla w\|_{X^s_T} $$
for some $\gamma>0$. Let us take now $\alpha\in\mathbb{N}^N$ with $|\alpha|=s$, then
$$ D^\alpha(\nabla z\nabla w \theta)=D^\alpha(\nabla z\nabla w)\theta+\sum_{\beta<\alpha}\binom{\alpha}{\beta}D^\beta(\nabla z\nabla w) D^{\alpha-\beta}\theta. $$
For what concerns the first term, we can apply the same argument as before using \eqref{bil.e.1}:
$$ \| D^\alpha(\nabla z \nabla w)\theta\|_{L^2((0,T);L^2(\mathbb{R}^N))}\le \|\theta\|_{L^\infty((0,T);L^\infty(\mathbb{R}^N))}\|D^\alpha(\nabla z\nabla w)\|_{L^2((0,T);L^2(\mathbb{R}^N))}\lesssim $$
$$ \lesssim T^\gamma \|\theta\|_{\Theta^s_T}\|\nabla z\|_{X^s_T}\|\nabla w\|_{X^s_T}. $$
Let us pass to the last term: for any $\beta<\alpha$ we have that
$$ D^{\alpha-\beta}\theta\in L^\infty((0,T);H^{|\beta|+1}(\mathbb{R}^N)), $$
$$ D^\beta(\nabla z\nabla w)\in L^2((0,T);H^{s-|\beta|}(\mathbb{R}^N)), $$
where the last inclusion again comes from the estimate \eqref{bil.e.1}.
$$ \|D^\beta(\nabla z\nabla w)D^{\alpha-\beta}\theta\|_{L^2((0,T);L^2(\mathbb{R}^N))}\le\left\|\|D^\beta(\nabla z\nabla w)\|_{L^p(\mathbb{R}^N)}\|D^{\alpha-\beta}\theta\|_{L^q(\mathbb{R}^N)}\right\|_{L^2((0,T))} $$
with
$$ \frac{1}{2}=\frac{1}{p}+\frac{1}{q}. $$
If we take $p=\frac{2N}{N-2(s-|\beta|)}$ and $q=\frac{2N}{N-2(|\beta|+1)}$, it is sufficient to verify that
$$ \frac{1}{p}+\frac{1}{q}\le \frac{1}{2}, $$
namely
$$ N-2(s-|\beta|)+N-2(|\beta|+1)\le N\:\Leftrightarrow\: s\ge \frac{N}{2}-1. $$
Finally, by Sobolev embedding and \eqref{bil.e.1} we get
$$ \left\|\|D^\beta(\nabla z\nabla w)\|_{L^p(\mathbb{R}^N)}\|D^{\alpha-\beta}\theta\|_{L^q(\mathbb{R}^N)}\right\|_{L^2((0,T))}\lesssim $$
$$ \lesssim \|\nabla \theta\|_{X^s_T}\|\nabla z\nabla w\|_{L^2((0,T);H^s(\mathbb{R}^N))}\lesssim T^\gamma \|\nabla \theta\|_{X^s_T}\|\nabla z\|_{X^s_T}\|\nabla w\|_{X^s_T}, $$
for some $\gamma>0$.
\end{proof}
Let us pass to the $L^\infty$-norms in time and space:
\begin{lem}
 Let $N\ge 3$, $s>\frac{N}{2}-1$, $T>0$, $z,w,\theta\in \Theta^s_T$, then there is $\gamma=\gamma(N,s)>0$ such that
\begin{equation}\label{stab.es.2}
    \left\|\int_0^te^{\Delta(t-\tau)} \nabla z(\tau)\nabla w(\tau)\theta(\tau)d\tau\right\|_{L^\infty((0,T);L^\infty(\mathbb{R}^N))}\le C(N,s)T^\gamma \|\theta\|_{\Theta^s_T}\|\nabla z\|_{X^s_T}\|\nabla w\|_{X^s_T}.
\end{equation}
\end{lem}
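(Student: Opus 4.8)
The plan is to follow the proof of \eqref{bil.e.2} almost verbatim, simply peeling off the scalar factor $\theta$ in $L^\infty_x$ before estimating the remaining bilinear object $\nabla z\,\nabla w$. First I would fix an exponent $r>N/2$ (to be pinned down below) and apply Lemma \ref{l.est.HS} with $k=0$, $q=\infty$, which gives, pointwise in $x$ and for every $t\in(0,T)$,
$$ \left|\int_0^t e^{\Delta(t-\tau)}(\nabla z\,\nabla w\,\theta)(\tau)\,d\tau\right| \lesssim \int_0^t (t-\tau)^{-\frac{N}{2r}}\,\|(\nabla z\,\nabla w\,\theta)(\tau)\|_{L^r(\mathbb{R}^N)}\,d\tau . $$
By Hölder, $\|(\nabla z\,\nabla w\,\theta)(\tau)\|_{L^r}\le \|\theta(\tau)\|_{L^\infty}\,\|(\nabla z\,\nabla w)(\tau)\|_{L^r}$, and since $\theta\in\Theta^s_T$ we have $\|\theta(\tau)\|_{L^\infty}\le\|\theta\|_{L^\infty((0,T);L^\infty(\mathbb{R}^N))}\le\|\theta\|_{\Theta^s_T}$; so the whole matter reduces to bounding $\|(\nabla z\,\nabla w)(\tau)\|_{L^r}$ uniformly in $\tau$.

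For the last bound I would reuse exactly the splitting already made inside the proof of \eqref{bil.e.2}: write $\|(\nabla z\,\nabla w)(\tau)\|_{L^r}\le\|\nabla z(\tau)\|_{L^{r_1}}\|\nabla w(\tau)\|_{L^{r_2}}$ with $\tfrac1{r_1}+\tfrac1{r_2}=\tfrac1r$, and choose, when $s<N/2$, $r_1=r_2=\tfrac{2N}{N-2s}$, so that $r=\tfrac{N}{N-2s}>\tfrac N2$ precisely because $s>\tfrac N2-1$; then the Sobolev embedding $H^s\hookrightarrow L^{r_1}$ together with $\|\nabla z\|_{L^\infty((0,T);H^s)}\le\|\nabla z\|_{X^s_T}$ gives $\|\nabla z(\tau)\|_{L^{r_1}}\lesssim\|\nabla z\|_{X^s_T}$ and likewise for $w$. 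When $s\ge N/2$ one instead picks $r_1=r_2$ to be any finite exponent larger than $N$ (admissible since $H^s\hookrightarrow L^{r_i}$ for all $r_i\in[2,\infty)$ in that range), so again $r=r_1/2>N/2$. With this choice $\tfrac{N}{2r}<1$, so $\int_0^t(t-\tau)^{-N/(2r)}\,d\tau\le C_{N,r}\,T^{1-N/(2r)}$ for $t\le T$, and collecting the factors yields \eqref{stab.es.2} with $\gamma:=1-\tfrac{N}{2r}>0$ (equivalently $\gamma=\tfrac{2s-N+2}{2}$ in the case $s<N/2$).

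Since the statement is just \eqref{bil.e.2} with one extra factor that is bounded, I do not expect any genuine obstacle; the only point requiring a little care is the bookkeeping of the exponents — one needs a single $r$ that is simultaneously $>N/2$ (so the time singularity $(t-\tau)^{-N/(2r)}$ is integrable and produces a positive power of $T$) and compatible with the Sobolev embedding $H^s\hookrightarrow L^{r_i}$ used to absorb $\nabla z,\nabla w$ — and, as in \eqref{bil.e.2}, both requirements collapse to the standing hypothesis $s>\tfrac N2-1$. Note also that it is essential that the two derivatives fall on $z$ and $w$ (whose gradients lie in $X^s_T$, hence in $L^\infty((0,T);H^s)$), not on $\theta$, for which only the weaker control $\theta\in L^\infty((0,T);L^\infty\cap\widehat H^1)$ is available; that is exactly why $\theta$ is taken out in $L^\infty$.
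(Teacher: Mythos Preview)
Your proposal is correct and matches the paper's own argument essentially word for word: the paper's proof consists of the single observation that $\|\theta\,\nabla z\,\nabla w\|_{L^\infty((0,T);L^r(\mathbb{R}^N))}\lesssim \|\theta\|_{\Theta^s_T}\|\nabla z\,\nabla w\|_{L^\infty((0,T);L^r(\mathbb{R}^N))}$ for any $r>N/2$, after which one invokes the proof of \eqref{bil.e.2} verbatim. Your write-up simply unfolds that reference, with the same choice of exponents and the same Sobolev embeddings.
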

To be noticed that in this lemma $s$ may not be an integer.
\begin{proof}\hfill\\
The proof is the same of the one of \eqref{bil.e.2} noticing that
$$ \|\theta\nabla z\nabla w\|_{L^\infty((0,T);L^r(\mathbb{R}^N))}\lesssim \|\theta\|_{\Theta^s_T}\|\nabla z\nabla w\|_{L^\infty((0,T);L^r(\mathbb{R}^N))}, $$
for any $r>\frac{N}{2}$.
\end{proof}
We are now ready to prove the uniqueness result:
\begin{prop}\label{p.un.int.}
    Let $N\ge 3$, $s>\frac{N}{2}-1$, let
    $$ v_0\colon\mathbb{R}^N\to \mathcal{S}^{N-1}, \quad \nabla v_0\in H^s\left(\mathbb{R}^N;\mathbb{R}^{N^2}\right), $$
    let $T\in(0,\infty]$ and let
    $(u_1,v_1)$ and $(u_2,v_2)$ be two solutions  for the system \eqref{EL.sys.+} in $X^s_T\times\Theta^s_T$, then $v_1=v_2$ for a.e. $(t,x)\in(0,T)\times\mathbb{R}^N$.
\end{prop}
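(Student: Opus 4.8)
The plan is to close a Gronwall inequality on the energy of the difference of the two solutions. Put $U\coloneqq u_1-u_2$, $V\coloneqq v_1-v_2$, $P\coloneqq p_1-p_2$, and expand the quadratic terms as $u_1\cdot\nabla u_1-u_2\cdot\nabla u_2=U\cdot\nabla u_1+u_2\cdot\nabla U$, $\nabla v_1\odot\nabla v_1-\nabla v_2\odot\nabla v_2=\nabla V\odot\nabla v_1+\nabla v_2\odot\nabla V$ and $|\nabla v_1|^2v_1-|\nabla v_2|^2v_2=|\nabla v_1|^2V+\left(\nabla(v_1+v_2):\nabla V\right)v_2$, so that $(U,V,P)$ solves
\begin{equation}
\left\{\begin{array}{l}
(\partial_t-\Delta)U+\nabla P=-U\cdot\nabla u_1-u_2\cdot\nabla U-{\rm Div}\left(\nabla V\odot\nabla v_1+\nabla v_2\odot\nabla V\right), \\
{\rm div}\,U=0, \\
(\partial_t-\Delta)V=-U\cdot\nabla v_1-u_2\cdot\nabla V+|\nabla v_1|^2V+\left(\nabla(v_1+v_2):\nabla V\right)v_2, \\
U(0)=0,\qquad V(0)=0,
\end{array}\right.
\end{equation}
with $U\in X^s_T$ and $\nabla V\in X^s_T$ (since $v_1,v_2\in\Theta^s_T$). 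A preliminary remark I would make is that, although $V$ itself need not be square-integrable, it satisfies $V(t)\in L^{\frac{2N}{N-2}}(\mathbb{R}^N)$ with $\|V(t)\|_{L^{2N/(N-2)}}\lesssim\|\nabla V(t)\|_{L^2}$: by Remark \ref{rem.Hom.Sob.} the class $[V(t)]\in\widehat H^1(\mathbb{R}^N)$ has a representative with this property, and the spatial constant by which it differs from $V(t)$ must vanish, since $V(0)=0$ and in $\partial_tV=\Delta V+h$ the forcing $h$ lies in $L^1_{loc}((0,T);L^1(\mathbb{R}^N))$, so no constant in $x$ is produced along the evolution.

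The heart of the argument is the energy estimate for $E(t)\coloneqq\|U(t)\|_{L^2}^2+\|\nabla V(t)\|_{L^2}^2$. Testing the $U$-equation against $U$ (the pressure and $u_2\cdot\nabla U$ drop out because ${\rm div}\,U={\rm div}\,u_2=0$) and the $V$-equation against $-\Delta V$, one obtains an identity
\[
\frac{d}{dt}E(t)+2\|\nabla U(t)\|_{L^2}^2+2\|\Delta V(t)\|_{L^2}^2=\mathcal{N}(t),
\]
where $\mathcal{N}$ collects the transport terms $\int(U\cdot\nabla u_1)\cdot U$, $\int(U\cdot\nabla v_1)\cdot\Delta V$, $\int(u_2\cdot\nabla V)\cdot\Delta V$, the stress term $\int\left(\nabla V\odot\nabla v_1+\nabla v_2\odot\nabla V\right):\nabla U$ (after integrating by parts in the $U$-equation), and the harmonic-map terms $\int|\nabla v_1|^2V\cdot\Delta V$ and $\int\left(\nabla(v_1+v_2):\nabla V\right)v_2\cdot\Delta V$. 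I would estimate each of them by $\varepsilon\left(\|\nabla U\|_{L^2}^2+\|\Delta V\|_{L^2}^2\right)+C(t)E(t)$ with $C\in L^1_{loc}((0,T))$, exploiting: $u_i,\nabla v_i\in L^2((0,T);H^{s+1})\hookrightarrow L^2((0,T);L^\infty)$ since $s+1>\frac{N}{2}$; $\nabla u_i\in L^2((0,T);L^q)$ for some $q>N$ since $s>\frac{N}{2}-1$; the bound $\|V\|_{L^{2N/(N-2)}}\lesssim\|\nabla V\|_{L^2}$ together with $\|\nabla v_1\|_{L^N}\lesssim\|\nabla v_1\|_{H^s}$ for the term $|\nabla v_1|^2V$; the Gagliardo--Nirenberg inequality to interpolate the critical Lebesgue norms of $U$ and $\nabla V$ between their $L^2$ norms and the parabolic dissipation; and $|v_i|\equiv1$, i.e.\ $\|v_i\|_{L^\infty}=1$, for the last term. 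Absorbing the dissipative contributions leaves $\frac{d}{dt}E\le C(t)E$ on every $(0,T')$ with $T'<T$.

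Since $U(0)=0$ and $\nabla V(0)=0$ force $E(0)=0$, Gronwall's lemma then yields $E\equiv0$ on $(0,T)$, hence $U\equiv0$ and $\nabla V\equiv0$; combined with $V(t)\in L^{\frac{2N}{N-2}}(\mathbb{R}^N)$ from the preliminary remark, this gives $V(t)=0$ for a.e.\ $t$, that is $v_1=v_2$ a.e.\ on $(0,T)\times\mathbb{R}^N$, which is the claim. The main obstacle is the trilinear bound at the borderline regularity $s>\frac{N}{2}-1$: the coefficients $u_i$ and $\nabla v_i$ are not bounded in $L^\infty_x$ uniformly in time, so one must balance their $L^2$-in-time membership in $H^{s+1}\hookrightarrow L^\infty$ (and the $L^q$-in-time membership of $\nabla u_i$) against the parabolic smoothing through Gagliardo--Nirenberg and Young's inequality, in the same spirit as the bilinear lemmas of Sections~3 and~4; the homogeneous-Sobolev bookkeeping for $V$ is a secondary technicality, neutralised by the vanishing initial datum.
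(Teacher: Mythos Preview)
Your argument is correct and takes a genuinely different route from the paper. The paper does not run an $L^2$--energy estimate on $(U,\nabla V)$; instead it measures the difference in the full solution norm $\|(u_1,v_1)-(u_2,v_2)\|_{X^s_{T_0}\times\Theta^s_{T_0}}$ and shows, via Corollary~\ref{c.lin.ex.} together with the bilinear/trilinear bounds \eqref{bil.e.1}, \eqref{bil.e.2}, \eqref{stab.es.1}, \eqref{stab.es.2}, that this norm is bounded by $CT_0^\gamma(1+\|(u_1,v_1)\|_{Y_T}^2+\|(u_2,v_2)\|_{Y_T}^2)$ times itself, hence vanishes for $T_0$ small, and then iterates. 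Your Gronwall approach is more elementary: it stays entirely at the $L^2$ level, uses only the embeddings $H^{s+1}\hookrightarrow L^\infty$ and $H^s\hookrightarrow L^N$ (both valid for any $s>\tfrac{N}{2}-1$), and therefore does not rely on the $H^s$--trilinear estimate \eqref{stab.es.1}, which in the paper is established only for integer $s$. The paper's route has the advantage of recycling the lemmas already in place and of controlling the difference in the same norm as the existence theorem. One small point: your ``preliminary remark'' that $V(t)\in L^{2N/(N-2)}$ is true but the justification via ``no constant is produced'' is a bit informal. A clean way to get it is to observe that the forcing $h$ in the $V$--equation lies in $L^2_{loc}((0,T);L^2(\mathbb{R}^N))$ (pair each $\nabla v_i$ in $L^\infty_x$ with the remaining factor in $L^2_x$, using $\nabla v_i\in L^2_tH^{s+1}_x\hookrightarrow L^2_tL^\infty_x$ and $|v_i|\le 1$), so the Duhamel integral $\int_0^t e^{\Delta(t-\tau)}h(\tau)\,d\tau$ belongs to $C([0,T'];L^2)$, coincides with $V$ by uniqueness for the linear heat equation with zero data, and hence $V(t)\in H^1\hookrightarrow L^{2N/(N-2)}$.
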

\begin{proof}\hfill\\
The proof again is based on a contraction argument: let $T_0\in(0,T]$ and
$$ Y_{T_0}\coloneqq X^s_{T_0}\times\Theta^s_{T_0}. $$
We want to prove that exists $\gamma,C>0$ such that
$$ \|(u_1,v_1)-(u_2,v_2)\|_{Y_{T_0}}\le CT_0^\gamma\left(1+\|(u_1,v_1)\|_{Y_{T}}^2+\|(u_2,v_2)\|_{Y_T}^2\right)\|(u_1,v_1)-(u_2,v_2)\|_{Y_{T_0}}. $$
From Corollary \ref{c.lin.ex.} we have that
$$ \|(u_1,\nabla v_1)-(u_2,\nabla v_2)\|_{X^s_{T_0}}\lesssim \|\left(u_1\odot u_1+\nabla v_1\odot\nabla v_1\right)-\left(u_2\odot u_2-\nabla v_2\odot\nabla v_2\right)\|_{L^2((0,T_0);H^s(\mathbb{R}^N))} +  $$
$$ + \left\| \left(-u_1\cdot \nabla v_1 +|\nabla v_1|^2v_1\right) - \left( -u_2\cdot \nabla v_2 + |\nabla v_2|^2v_2\right)\right\|_{L^2((0,T_0);H^s(\mathbb{R}^N))}. $$
Using \eqref{bil.e.2} and \eqref{stab.es.1} we get that
$$ \|(u_1,\nabla v_1)-(u_2,\nabla v_2)\|_{X^s_{T_0}}\lesssim T_0^\gamma \left(1+\|(u_1,v_1)\|_{Y_T}^2+\|(u_2,v_2)\|_{Y_T}^2\right)\|(u_1,v_1)-(u_2,v_2)\|_{Y_{T_0}}. $$
On the other hand,
$$ v_j=e^{\Delta t}v_0+\int_0^t e^{\Delta(t-\tau)}g(u_j,\nabla v_j)(\tau)d\tau \quad j=1,2, $$
where
$$ g(u_j,\nabla v_j)\coloneqq -u_j\cdot \nabla v_j +|\nabla v_j|^2v_j\quad j=1,2. $$
So
$$ \|v_1-v_2\|_{L^\infty((0,T_0);L^\infty(\mathbb{R}^N))}= \left\|\int_0^t e^{\Delta(t-\tau)}g(u_j,\nabla v_j)(\tau)d\tau\right\|_{L^\infty((0,T_0);L^\infty(\mathbb{R}^N))}\lesssim $$
$$ \lesssim T_0^\gamma\left(1+\|(u_1,v_1)\|_{Y_T}^2+\|(u_2,v_2)\|_{Y_T}^2\right)\|(u_1,v_1)-(u_2,v_2)\|_{Y_{T_0}},  $$
where in the last inequality we have used \eqref{bil.e.2} and \eqref{stab.es.2}. Finally we have that
$$ \|(u_1,v_1)-(u_2,v_2)\|_{Y_{T_0}}\le C T_0^\gamma \left(1+\|(u_1,v_1)\|_{Y_T}^2+\|(u_2,v_2)\|_{Y_T}^2\right)\|(u_1,v_1)-(u_2,v_2)\|_{Y_{T_0}}.$$
for some $\gamma,C>0$. Therefore, for $T_0=T_0(\|(u_1,v_1)\|_{Y_{T}},\|(u_2,v_2)\|_{Y_{T}})\ll1$ we get that
$$ \|(u_1,v_1)-(u_2,v_2)\|_{Y_{T_0}}\le \frac{1}{2}\|(u_1,v_1)-(u_2,v_2)\|_{Y_{T_0}}, $$
so $u_1=u_2$ and $v_1=v_2$ a.e. on $(0,T_0)\times\mathbb{R}^N$. On the other hand, $T_0$ depends only on the norm in all $(0,T)$ of $v_1$ and $v_2$, so it is easy to see that the extension in $(0,T)$ of the solutions $v_1$ and $v_2$ is unique.
\end{proof}
Let us pass to the regularity of $v$ in dependence of $d$, for
$$ v(t,x)=\cos d(t,x) \eta+\sin d(t,x) \omega\quad \text{for a.e.}\:\:(t,x)\in\mathbb{R}_+\times\mathbb{R}^N. $$
Since $d$ has to be the solution of the reduced system \eqref{EL.red.sys.+}, we want to prove that, when $d\in \Theta^s_T$, then also $v\in \Theta^s_T$ for every $T\in(0,\infty]$. Firstly, we state some technical lemma we will use in this section, whose proofs can be found in the Appendix:
\begin{lem}\label{l.reg.1}
Let $N\ge  3$, $\ell\in\mathbb{N}$ with $\ell\ge 1$, let $s_1,\ldots,s_\ell\in\mathbb{N}$ and $s\in\mathbb{N}$ with $s> \frac{N}{2}-1$ such that
$$ s_1+\ldots+s_\ell\le s-(\ell-1), $$
let $w\in H^s(\mathbb{R}^N)$, then
$$ \|\nabla^{s_1}w\cdots\nabla^{s_\ell}w\|_{L^2(\mathbb{R}^N)}\lesssim \|w\|_{H^s(\mathbb{R}^N)}^\ell. $$
\end{lem}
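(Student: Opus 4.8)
The plan is to obtain the bound from a single application of H\"older's inequality followed by $\ell$ Sobolev embeddings; the only substantive point is that the H\"older exponents can be chosen compatibly, and this is exactly where the two hypotheses $s>\frac{N}{2}-1$ and $\sigma:=s_1+\cdots+s_\ell\le s-(\ell-1)$ enter. I argue for a scalar $w$; the vector-valued case $w=\nabla d$ that we actually need follows componentwise. Note first that $s_j\le\sigma\le s-(\ell-1)\le s$ for every $j$, so each factor $\nabla^{s_j}w$ is a derivative of $w\in H^s(\mathbb{R}^N)$ of order at most $s$; set $a_j:=\max\{0,\frac12-\frac{s-s_j}{N}\}\in[0,\tfrac12]$.

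The first step is the choice of exponents: I claim one can pick $p_1,\dots,p_\ell\in[2,\infty)$ with $\frac1{p_1}+\cdots+\frac1{p_\ell}=\frac12$ and $\frac1{p_j}\ge a_j$ for each $j$. Indeed,
$$ \sum_{j=1}^\ell a_j\le\sum_{j=1}^\ell\left(\frac12-\frac{s-s_j}{N}\right)=\frac{\ell}{2}-\frac{\ell s-\sigma}{N}, $$
and from $\sigma\le s-(\ell-1)$ one gets $\ell s-\sigma\ge(\ell-1)(s+1)$, so the right-hand side is at most $\frac12$ as soon as $(\ell-1)(s+1)\ge\frac{N(\ell-1)}{2}$, i.e.\ as soon as $s\ge\frac{N}{2}-1$ (the inequality being strict for $\ell\ge2$ since $s>\frac{N}{2}-1$; for $\ell=1$ the constraint is just $\frac1{p_1}=\frac12\ge a_1$, trivial since $s_1\le s$). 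Since moreover each $a_j\le\frac12$ and $\sum_j a_j\le\frac12\le\frac{\ell}{2}$, the box $\prod_j[a_j,\frac12]$ meets the hyperplane $\sum_j x_j=\frac12$, so the exponents exist, and the strict slack for $\ell\ge2$ lets us keep all $p_j$ finite.

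With such $p_j$ fixed, H\"older gives $\|\nabla^{s_1}w\cdots\nabla^{s_\ell}w\|_{L^2(\mathbb{R}^N)}\le\prod_{j=1}^\ell\|\nabla^{s_j}w\|_{L^{p_j}(\mathbb{R}^N)}$, and for each $j$ the conditions $s\ge s_j$, $2\le p_j<\infty$ and $s-\frac{N}{2}\ge s_j-\frac{N}{p_j}$ (the last equivalent to $\frac1{p_j}\ge\frac12-\frac{s-s_j}{N}$, which holds since $\frac1{p_j}\ge a_j$) give the standard Sobolev embedding $H^s(\mathbb{R}^N)\hookrightarrow W^{s_j,p_j}(\mathbb{R}^N)$, hence $\|\nabla^{s_j}w\|_{L^{p_j}(\mathbb{R}^N)}\lesssim\|w\|_{H^s(\mathbb{R}^N)}$; multiplying the $\ell$ estimates yields the assertion, the implicit constant depending only on $N$, $s$, $\ell$ through the chosen $p_j$. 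I expect the only real obstacle to be the numerology in the first step, namely the inequality $\ell s-\sigma\ge\frac{N(\ell-1)}{2}$ that makes the exponents compatible; once that is secured, everything after is routine H\"older and Sobolev. A minor point to keep an eye on is the regime where some $s_j$ is close to $s$ (forcing $p_j$ near $2$) or where $s-s_j>\frac{N}{2}$ (so $a_j=0$ and the scaling constraint is inactive), but both are absorbed by the standard Sobolev embeddings and by the strict slack computed above.
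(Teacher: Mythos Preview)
Your strategy---a single H\"older split followed by Sobolev embeddings---is correct and is in fact the core of the paper's argument; but the displayed inequality
\[
\sum_{j=1}^\ell a_j\le\sum_{j=1}^\ell\Bigl(\tfrac12-\tfrac{s-s_j}{N}\Bigr)
\]
points the wrong way. Since $a_j=\max\{0,\tfrac12-\tfrac{s-s_j}{N}\}$ one has $a_j\ge\tfrac12-\tfrac{s-s_j}{N}$, with strict inequality exactly when $s-s_j>N/2$; and that regime does occur (take $N=3$, $s=4$, $\ell=5$, all $s_j=0$, where your right-hand side is $-25/6$). Your closing remark flags this regime but defers to ``the strict slack computed above'', which is circular since that slack rests on the very inequality that fails.

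The repair is immediate: sum only over the active set $J=\{j:a_j>0\}$, where the max is inactive. With $k=|J|$ and $\sum_{j\in J}s_j\le\sigma\le s-(\ell-1)$ one gets
\[
\sum_j a_j=\sum_{j\in J}\Bigl(\tfrac12-\tfrac{s-s_j}{N}\Bigr)\le\tfrac{k}{2}-\tfrac{(k-1)s+(\ell-1)}{N},
\]
and the requirement $\sum_j a_j<\tfrac12$ becomes $(k-1)\bigl(\tfrac{N}{2}-s\bigr)<\ell-1$, which for $\ell\ge2$ follows from $s>\tfrac{N}{2}-1$ and $k\le\ell$ (the left side is $\le0$ if $k\le1$ or $s\ge N/2$, and $<k-1\le\ell-1$ otherwise). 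With this in hand your box/hyperplane argument and the rest go through unchanged.

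For comparison, the paper proceeds by induction on $\ell$: when some $s-s_{j_0}\ge N/2$ it peels that factor off (it embeds into every $L^p$, $p<\infty$) and reduces to $\ell-1$ factors via an $L^p$--$L^{2N/(N-2)}$ H\"older step; only when all $s-s_j<N/2$---i.e.\ all $a_j>0$, so your inequality \emph{is} an equality---does it run the direct H\"older with $p_j=\tfrac{2N}{N-2(s-s_j)}$. Your single-shot version is cleaner once the sum over $J$ is made explicit; the paper's case split is the same numerology wrapped in an induction.
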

\begin{lem}\label{l.reg.2}
Let $N\ge  3$, $\ell\in\mathbb{N}$ with $\ell\ge 2$, let $s_1,\ldots,s_\ell\in\mathbb{N}$ and $s\in\mathbb{N}$ with $s>\frac{N}{2}-1$ such that
$$ s_1+\ldots+s_\ell\le s-(\ell-2), $$
let $w\in H^{s+1}(\mathbb{R}^N)$, then
$$ \left\|\nabla^{s_1}w\cdots\nabla^{s_\ell}w\right\|_{L^2(\mathbb{R}^N)}\lesssim \|\nabla w\|_{H^s(\mathbb{R}^N)}\|w\|_{H^s(\mathbb{R}^N)}^{\ell-1} $$
\end{lem}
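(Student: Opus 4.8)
The plan is to follow the same strategy as the proof of Lemma \ref{l.reg.1}, splitting the factors into one "rough" factor that carries one derivative less than the others and a collection of "smooth" factors. The key point is that the hypothesis $s_1+\ldots+s_\ell\le s-(\ell-2)$ is one derivative weaker than in Lemma \ref{l.reg.1}, which is exactly compensated by assuming $w\in H^{s+1}$ rather than $w\in H^s$; morally one of the $\nabla^{s_j}w$ can be treated as $\nabla^{s_j}w=\nabla^{s_j-1}(\nabla w)$ with $\nabla w\in H^s$, so that factor "behaves like" having budget $s_j-1$ and the total budget constraint becomes $s_1+\ldots+s_\ell-1\le s-(\ell-1)$, i.e. the constraint of Lemma \ref{l.reg.1} with $\nabla w$ playing a role.

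First I would reorder the indices so that $s_1=\max_j s_j$, and distinguish the factor $\nabla^{s_1}w$ from the remaining $\ell-1$ factors. The idea is to bound the product in $L^2$ by putting the single factor with the most derivatives in a Lebesgue space $L^{p_1}$ obtained from $H^{s-?}$ by Sobolev embedding, and each of the other $\ell-1$ factors in $L^{p_j}$ with $\frac{1}{p_1}+\sum_{j\ge 2}\frac{1}{p_j}=\frac12$; each such exponent is produced from the embedding $H^{s_j+\sigma}\hookrightarrow L^{p_j}$ (or $H^{s_j}\hookrightarrow L^{p_j}$ for the smooth factors) for an appropriate real $\sigma$, and one then checks that the exponent sum adds up to $\le \frac12$ precisely because $s>\frac N2-1$ together with the budget condition. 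Concretely, for the factor $\nabla^{s_1}w$ one uses $\|\nabla^{s_1}w\|_{L^{p_1}}\lesssim \|\nabla w\|_{H^s}$ whenever $s_1-1\le s$ and $\frac{1}{p_1}\ge \frac12-\frac{s+1-s_1}{N}$; for each factor $\nabla^{s_j}w$ with $j\ge 2$ one uses $\|\nabla^{s_j}w\|_{L^{p_j}}\lesssim \|w\|_{H^s}$ whenever $\frac{1}{p_j}\ge\frac12-\frac{s-s_j}{N}$ (and one must keep each $p_j<\infty$, handled as in the earlier lemmas by the usual trick when $s-s_j\ge N/2$). Summing these lower bounds for $\frac{1}{p_1}+\sum_{j\ge2}\frac{1}{p_j}$ gives $\tfrac{\ell}{2}-\tfrac{(\ell-1)s+s+1-(s_1+\ldots+s_\ell)}{N}$, which is $\le\frac12$ iff $(\ell-1)s+1+s-(s_1+\ldots+s_\ell)\ge \frac{(\ell-1)N}{2}$; using $s_1+\ldots+s_\ell\le s-(\ell-2)$ the left side is $\ge (\ell-1)s+\ell-1=(\ell-1)(s+1)$, and since $s+1>\frac N2$ this is $>\frac{(\ell-1)N}{2}$, so the constraint holds, possibly after slightly enlarging the $p_j$. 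By Hölder this yields $\|\nabla^{s_1}w\cdots\nabla^{s_\ell}w\|_{L^2}\lesssim \|\nabla w\|_{H^s}\|w\|_{H^s}^{\ell-1}$.

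One small subtlety to address is the boundary cases where one of the target Lebesgue exponents would be $\infty$ (this happens when some $s-s_j\ge \frac N2$, or for the distinguished factor when $s+1-s_1\ge\frac N2$): there I would replace the endpoint embedding by $H^{s-s_j}\hookrightarrow L^{p}$ for a large but finite $p$, which is harmless since we have strict inequality $s+1>\frac N2$ and hence a little room to spare in the exponent count — exactly as in the proofs of the bilinear estimates \eqref{bil.e.1} and in Lemma \ref{l.reg.1}. The main obstacle, such as it is, is purely bookkeeping: choosing the Hölder exponents and verifying the exponent inequality in all the cases (in particular when several $s_j$ are large); there is no genuinely new analytic difficulty beyond what already appears in Lemma \ref{l.reg.1}, the only new input being to peel off one gradient from the largest factor and feed it the extra regularity $\|\nabla w\|_{H^s}$.
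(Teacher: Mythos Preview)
Your plan is correct and rests on the same key observation as the paper: peel one gradient off a single factor so that this factor is controlled by $\|\nabla w\|_{H^s}$, and then the remaining budget matches the hypothesis of Lemma~\ref{l.reg.1}. The difference is only in execution. The paper does not redo the H\"older/Sobolev exponent count: when some $s_{j_0}\ge 1$ it simply writes $\nabla^{s_{j_0}}w=\nabla^{s_{j_0}-1}(\nabla w)$, sets $z_{j_0}=\nabla w$, $\gamma_{j_0}=s_{j_0}-1$, $z_j=w$, $\gamma_j=s_j$ otherwise, notes that $\sum\gamma_j\le s-(\ell-1)$, and invokes Lemma~\ref{l.reg.1} directly for the mixed product $\prod\nabla^{\gamma_j}z_j$. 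This is cleaner than repeating the exponent bookkeeping you sketch.

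One point deserves more care in your write-up: the case $s_1=\max_j s_j=0$, i.e.\ all factors equal to $w$. Your claimed bound $\|w\|_{L^{p_1}}\lesssim\|\nabla w\|_{H^s}$ is a \emph{homogeneous} estimate and does not follow from the inhomogeneous Sobolev condition $\tfrac1{p_1}\ge\tfrac12-\tfrac{s+1}{N}$ alone (that condition would allow $p_1=2$, for which the bound is false). You need in addition $p_1\ge\tfrac{2N}{N-2}$ so that $\|w\|_{L^{p_1}}\lesssim\|\nabla w\|_{L^q}$ via Gagliardo--Nirenberg--Sobolev. The paper singles this case out and handles it directly: for $\ell=2$ one uses $\|w^2\|_{L^2}\le\|w\|_{L^N}\|w\|_{L^{2N/(N-2)}}\lesssim\|w\|_{H^s}\|\nabla w\|_{L^2}$, and for $\ell>2$ one first splits off one factor $\|w\|_{L^N}\lesssim\|w\|_{H^s}$ and bounds $\|w^{\ell-1}\|_{L^{2N/(N-2)}}\lesssim\|\nabla w\cdot w^{\ell-2}\|_{L^2}$, after which Lemma~\ref{l.reg.1} applies to the latter product. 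Your strict inequality $s+1>\tfrac N2$ does give enough room to make the exponents work in your unified scheme, but you should flag this constraint explicitly rather than folding it into the general Sobolev condition.
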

When $s\in\mathbb{N}$ the behaviour of the $H^s$-norm in space is clear:
\begin{lem}\label{l.Hk.eq.}
Let $N\ge 3$, $s\in\mathbb{N}$ with $s>\frac{N}{2}-1$, let $\eta,\omega\in \mathcal{S}^{N-1}$ with $\eta\perp\omega$, let
$$ v(x)=\cos d(x)\eta+\sin d(x)\omega $$
for some $d\in L^\infty(\mathbb{R}^N)$, then $\nabla v\in H^s(\mathbb{R}^N;\mathbb{R}^{N^2})$ if and only if $\nabla d\in H^s(\mathbb{R}^N;\mathbb{R }^N)$, with
$$ \|\nabla v\|_{H^s(\mathbb{R}^N)}\le C(N,s) \|\nabla d\|_{H^s(\mathbb{R}^N)}\left(1+\|\nabla d\|_{H^s(\mathbb{R}^N)}^{s}\right), $$
$$ \|\nabla d\|_{H^s(\mathbb{R}^N)}\le C(N,s) \|\nabla v\|_{H^s(\mathbb{R}^N)}\left(1+\|\nabla v\|_{H^s(\mathbb{R}^N)}^{s}\right). $$
Moreover, if $\nabla d\in H^{s+1}(\mathbb{R}^N;\mathbb{R}^{N^2})$, then $\nabla v\in H^{s+1}(\mathbb{R}^N;\mathbb{R}^{N^2})$ with
 $$ \|\nabla^2 v\|_{H^s(\mathbb{R}^N)}\le C(N,s) \|\nabla^2 d\|_{H^{s}(\mathbb{R}^N)}\left(1+\|\nabla d\|_{H^s(\mathbb{R}^N)}^{s+1}\right). $$
\end{lem}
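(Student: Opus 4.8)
The plan is to reduce everything to the smooth profile $g(t)=\cos t\,\eta+\sin t\,\omega$. Since $\eta\perp\omega$ and $|\eta|=|\omega|=1$, each derivative $g^{(\ell)}(t)$ is again of the form $\pm\cos t\,\eta\pm\sin t\,\omega$ (with $\eta,\omega$ possibly interchanged), so $|g^{(\ell)}(t)|\equiv 1$ for every $\ell$; in particular every function $g^{(\ell)}(d(\cdot))$ below is bounded by $1$ in $L^\infty(\mathbb{R}^N)$, uniformly in $\ell$. For the implication $\nabla d\in H^s(\mathbb{R}^N)\Rightarrow\nabla v\in H^s(\mathbb{R}^N)$, since $s\in\mathbb{N}$ it is enough to estimate $\|D^\alpha v\|_{L^2}$ for $1\le|\alpha|\le s+1$. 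The case $|\alpha|=1$ is trivial because $D^\alpha v=g'(d)D^\alpha d$. For $2\le|\alpha|\le s+1$ I would expand $D^\alpha v$ by the Fa\`a di Bruno formula recalled before the statement (valid for the distributional derivatives by a standard approximation argument, using $d\in L^\infty$ and $D^\gamma d\in L^2$ for $1\le|\gamma|\le s+1$), obtaining a finite sum of terms $g^{(\ell)}(d)\,D^{\alpha_1}d\cdots D^{\alpha_\ell}d$ with $\sum_j|\alpha_j|=|\alpha|$, $|\alpha_j|\ge1$, $1\le\ell\le|\alpha|$. Using $\|g^{(\ell)}(d)\|_{L^\infty}\le1$, writing each factor as a component of $\nabla^{|\alpha_j|-1}(\nabla d)$, and setting $s_j=|\alpha_j|-1\ge0$, one has $\sum_j s_j=|\alpha|-\ell\le s-(\ell-1)$, so Lemma~\ref{l.reg.1} applied to $w=\nabla d\in H^s$ gives $\|D^{\alpha_1}d\cdots D^{\alpha_\ell}d\|_{L^2}\lesssim\|\nabla d\|_{H^s}^{\ell}$ (for $\ell=1$ simply $\lesssim\|\nabla d\|_{H^s}$). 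Summing over $\alpha$, with $\ell$ ranging in $\{1,\ldots,s+1\}$, yields $\|\nabla v\|_{H^s}\lesssim\sum_{\ell=1}^{s+1}\|\nabla d\|_{H^s}^{\ell}\lesssim\|\nabla d\|_{H^s}\bigl(1+\|\nabla d\|_{H^s}^{s}\bigr)$.

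For the converse the crucial remark is the algebraic identity
\[
\nabla d=(v\cdot\eta)(\nabla v\cdot\omega)-(v\cdot\omega)(\nabla v\cdot\eta),
\]
which follows from $v\cdot\eta=\cos d$, $v\cdot\omega=\sin d$, $\nabla v\cdot\eta=-\sin d\,\nabla d$, $\nabla v\cdot\omega=\cos d\,\nabla d$ and $\cos^2 d+\sin^2 d=1$. Thus $\nabla d$ is a constant-coefficient bilinear expression in $(v,\nabla v)$, and by the Leibniz rule $D^\alpha(\nabla d)$, for $1\le|\alpha|\le s$, is a finite sum of products (component of $D^\beta v$)$\cdot$(component of $D^{\alpha-\beta}\nabla v$) with $\beta\le\alpha$. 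For $\beta=0$ such a term is bounded in $L^2$ by $\|v\|_{L^\infty}\|D^\alpha\nabla v\|_{L^2}\le\|\nabla v\|_{H^s}$, where $|v|=1$ is used; for $|\beta|\ge1$, $D^\beta v$ is a component of $\nabla^{|\beta|-1}(\nabla v)$, so the term is a product of two derivatives of $\nabla v$ of orders $s_1=|\beta|-1$ and $s_2=|\alpha-\beta|$ with $s_1+s_2=|\alpha|-1\le s-1$, and Lemma~\ref{l.reg.1} with $\ell=2$ and $w=\nabla v$ bounds it by $\|\nabla v\|_{H^s}^2$. Summing over $\alpha$ gives $\|\nabla d\|_{H^s}\lesssim\|\nabla v\|_{H^s}\bigl(1+\|\nabla v\|_{H^s}\bigr)\le C\,\|\nabla v\|_{H^s}\bigl(1+\|\nabla v\|_{H^s}^{s}\bigr)$, and together with the first part this also yields the qualitative equivalence $\nabla v\in H^s\Leftrightarrow\nabla d\in H^s$.

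Finally, the estimate for $\nabla^2 v$ is the first argument run one order higher: for $2\le|\alpha|\le s+2$ the Fa\`a di Bruno expansion again writes $D^\alpha v$ as a sum of $g^{(\ell)}(d)\,D^{\alpha_1}d\cdots D^{\alpha_\ell}d$ with $\sum_j|\alpha_j|=|\alpha|$; the $\ell=1$ term contributes $\le\|\nabla^2 d\|_{H^s}$, while for $\ell\ge2$ the exponents $s_j=|\alpha_j|-1$ now satisfy $\sum_j s_j=|\alpha|-\ell\le s-(\ell-2)$, which is exactly the hypothesis of Lemma~\ref{l.reg.2}; applied to $w=\nabla d\in H^{s+1}$ it gives $\|D^{\alpha_1}d\cdots D^{\alpha_\ell}d\|_{L^2}\lesssim\|\nabla^2 d\|_{H^s}\|\nabla d\|_{H^s}^{\ell-1}$ with $\ell-1\le s+1$, and summation over $\alpha$ produces $\|\nabla^2 v\|_{H^s}\lesssim\|\nabla^2 d\|_{H^s}\bigl(1+\|\nabla d\|_{H^s}^{s+1}\bigr)$. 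I do not expect any single deep obstacle here: once Lemmas~\ref{l.reg.1}--\ref{l.reg.2} are available the whole proof is careful multi-index bookkeeping, and the one genuinely non-mechanical point is the bilinear identity for $\nabla d$ above, which is what lets the reverse implication be proved with the same tools and without any loss due to the nonlinearity of $g$.
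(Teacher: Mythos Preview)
Your argument for the forward estimate $\|\nabla v\|_{H^s}\lesssim\|\nabla d\|_{H^s}(1+\|\nabla d\|_{H^s}^{s})$ and for the $\|\nabla^2 v\|_{H^s}$ bound is essentially the paper's: Fa\`a di Bruno plus Lemma~\ref{l.reg.1} (respectively Lemma~\ref{l.reg.2}) with $w=\nabla d$, and the bookkeeping you give matches.

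Where you genuinely diverge is the converse bound. The paper does \emph{not} use the bilinear identity
\[
\nabla d=(v\cdot\eta)(\nabla v\cdot\omega)-(v\cdot\omega)(\nabla v\cdot\eta);
\]
instead it proves by induction on $k$ the pointwise inequality
\[
|\nabla^k\nabla d|\lesssim\sum_{j=1}^{k+1}\sum_{h_1+\cdots+h_j\le k-(j-1)}|\nabla^{h_1}\nabla v|\cdots|\nabla^{h_j}\nabla v|,
\]
obtained by squaring the Fa\`a di Bruno expansion of $D^\alpha v$, exploiting $\sin^2d+\cos^2d=1$ to isolate $|D^\alpha d|^2$, and then feeding the inductive hypothesis back into the lower-order terms; only afterwards is Lemma~\ref{l.reg.1} invoked. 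Your route is cleaner and shorter: once the bilinear identity is noticed, a single application of Leibniz together with Lemma~\ref{l.reg.1} (case $\ell=2$) suffices, and you even get the sharper bound $\|\nabla d\|_{H^s}\lesssim\|\nabla v\|_{H^s}(1+\|\nabla v\|_{H^s})$ rather than the polynomial $(1+\|\nabla v\|_{H^s}^{s})$ that the inductive scheme produces. The paper's approach, on the other hand, is purely pointwise and never requires writing $d$ explicitly in terms of $v$, which is perhaps closer in spirit to the fractional case treated later.
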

\begin{proof}\hfill\\
For what concerns the estimates of $\|\nabla v\|_{H^s}$, we recall that $|\nabla v|=|\nabla d|$, so $\|\nabla v\|_{L^2}=\|\nabla d\|_{L^2}$. On the other hand, by Faà di Bruno's formula, we have that for any $\alpha\in\mathbb{N}^N$ with $|\alpha|=s+1$
$$ D^\alpha v=\sum_{\ell=1}^{|\alpha|}\left(f_1^{(\ell)}(d)\eta+f_2^{(\ell)}(d)\omega\right)\sum_{\alpha_1+\ldots+\alpha_\ell=\alpha,\:\:|\alpha_j|\ge 1}D^{\alpha_1}d\cdots D^{\alpha_\ell}d, $$
where $f_1=\cos$ and $f_2=\sin$. In particular
$$ \|D^\alpha v\|_{L^2(\mathbb{R}^N)}\lesssim \|D^\alpha d\|_{L^2(\mathbb{R}^N)}+\sum_{\ell=2}^{|\alpha|}\sum_{\alpha_1+\ldots+\alpha_\ell=\alpha,\:\:|\alpha_j|\ge 1}\|D^{\alpha_1}d\cdots D^{\alpha_\ell}d\|_{L^2(\mathbb{R}^N)}. $$
Now we notice that
$$ |D^{\alpha_1}d\cdots D^{\alpha_\ell}d|\le|\nabla^{s_1}\nabla d\cdots\nabla^{s_\ell}\nabla d|, $$
with
$$ s_1+\cdots+s_\ell=\sum_{j=1}^\ell|\alpha_j|-\ell=|\alpha|-\ell=s+1-\ell. $$
So, using Lemma \ref{l.reg.1} we get that
$$ \|D^\alpha v\|_{L^2(\mathbb{R}^N)}\lesssim \sum_{\ell=1}^{|\alpha|}\|\nabla d\|_{H^s(\mathbb{R}^N)}^\ell, $$
which gives us the first estimate.

\vspace{2mm}

In order to get the second estimate, we need to prove the following fact:
$$ |\nabla^k\nabla d|\lesssim  \sum_{j=1}^{k+1}\sum_{h_1+\ldots+h_j\le k-(j-1)}|\nabla^{h_1}\nabla v|\cdots|\nabla^{h_j}\nabla v| \quad \forall k\in\mathbb{N}. $$
We will prove it by induction over $k$: if $k=0$ we have already seen that $|\nabla v|=|\nabla d|$ so the estimate is clear. On the other hand, by the Faà di Bruno's formula we have used before, for any $|\alpha|=k+1$ it holds
$$ |D^\alpha v|^2=|D^\alpha d|^2\sin^2d-\sin dD^\alpha d\cdot \sum_{\ell=2}^{|\alpha|}f_1^{(\ell)}(d)\mathcal{D}_\ell +\left|\sum_{\ell=2}^{|\alpha|}f_1^{(\ell)}(d)\mathcal{D}_\ell\right|^2+ $$
$$ + |D^\alpha d|^2\cos^2d+\cos dD^\alpha d\cdot \sum_{\ell=2}^{|\alpha|}f_2^{(\ell)}(d)\mathcal{D}_\ell +\left|\sum_{\ell=2}^{|\alpha|}f_2^{(\ell)}(d)\mathcal{D}_\ell\right|^2,  $$
where
$$ \mathcal{D}_\ell\coloneqq \sum_{\alpha_1+\ldots+\alpha_\ell=\alpha,\:\:|\alpha_j|\ge 1} D^{\alpha_1}d\cdots D^{\alpha_\ell}d. $$
In particular
$$    |D^\alpha v|^2=|D^\alpha d|^2+(\cos d -\sin d )D^\alpha d \cdot \sum_{\ell=2}^{|\alpha|}\left(f_1^{(\ell)}(d)+f_2^{(\ell)}(d)\right)\mathcal{D}_\ell+ \sum_{j=1}^2\left|\sum_{\ell=2}f_j^{(\ell)}(d)\mathcal{D}_\ell\right|^2. $$
Therefore we get as before that
$$ |\nabla^k\nabla d|\lesssim |\nabla^k\nabla v|+\sum_{\ell=2}^{k+1}\sum_{k_1+\cdots+k_\ell=k-(\ell-1)}|\nabla^{k_1}\nabla d|\cdots|\nabla^{k_\ell}\nabla d|. $$
Now we use the induction: since $\ell\ge 2$, $k_j<k$ for any $j=1,\ldots,\ell$ so
$$ \sum_{\ell=2}^{k+1}\sum_{k_1+\cdots+k_\ell=k-(\ell-1)}|\nabla^{k_1}\nabla v|\cdots|\nabla^{k_\ell}\nabla v|\lesssim $$
$$ \lesssim \sum_{\ell=2}^{k+1}\prod_{i=1}^\ell \left(\sum_{j_i=1}^{k_i+1}\sum_{h_1^i+\cdots+h_{j_i}^i\le k_i-(j_i-1)}|\nabla^{h_1^i}\nabla v|\cdots|\nabla^{h_{j_i}^i}\nabla v|\right). $$
A generic element of the previous term is the following:
$$ \prod_{i=1}^\ell |\nabla^{h_1^i}\nabla v|\cdots|\nabla^{h_{j_i}^i}\nabla v|. $$
If we call $\ell_{tot}=j_1+\cdots+j_\ell$, then
$$ \left(h^1_1+\cdots+h^1_{j_1}\right)+\cdots+\left(h^\ell_1+\cdots+h^\ell_{j_\ell}\right)\le \sum_{i=1}^\ell [k_i-(j_i-1)]= $$
$$ = \sum_{i=1}^\ell k_i-\ell_{tot}+\ell= k-(\ell-1)-\ell_{tot}+\ell=k-(\ell_{tot}-1). $$
Therefore, we have the thesis of the induction:
$$ |\nabla^k\nabla d|\lesssim |\nabla^k\nabla v|+\sum_{\ell=2}^{k+1}\sum_{k_1+\cdots+k_\ell\le k-(\ell-1)}|\nabla^{k_1}\nabla v|\cdots|\nabla^{k_\ell}\nabla v|\le $$
$$ \le \sum_{\ell=1}^{k+1}\sum_{k_1+\cdots+k_\ell\le k-(\ell-1)}|\nabla^{k_1}\nabla v|\cdots|\nabla^{k_\ell}\nabla v|. $$
Finally, as before, we get from Lemma \ref{l.reg.1} that
$$ \|\nabla d\|_{H^s(\mathbb{R}^N)}\lesssim \sum_{\ell=1}^{s+1}\|\nabla v\|_{H^s(\mathbb{R}^N)}^\ell. $$
The estimate for $\nabla^2 v$ is similar: we need to prove that
    $$ \|\nabla^2v\|_{L^2(\mathbb{R}^N)}+\|\nabla^{s+2}v\|_{H^k(\mathbb{R}^N)}\lesssim \|\nabla^2 d\|_{H^{s}(\mathbb{R}^N)}\left(1+\|\nabla d\|_{H^s(\mathbb{R}^N)}^{s+1}\right). $$
    The first estimate is easy:
    $$ \|\nabla^2 v\|_{L^2(\mathbb{R}^N)}\lesssim \|\nabla^2d\|_{L^2(\mathbb{R}^N)}+\|\nabla d\nabla d\|_{L^2(\mathbb{R}^N)}. $$
    and
    $$ \|\nabla d\nabla d\|_{L^2(\mathbb{R}^N)}\le \|\nabla d\|_{L^N(\mathbb{R}^N)}\|\nabla d\|_{L^{\frac{2N}{N-2}}(\mathbb{R}^N)}\lesssim \|\nabla d\|_{H^s(\mathbb{R}^N)}\|\nabla^2d\|_{L^2(\mathbb{R}^N)}, $$
    where in the last inequality we have used Sobolev inequality combined with the hypothesis $s>\frac{N}{2}-1$. Let us pass to the second term: let $\alpha\in\mathbb{N}^N$ with $|\alpha|=s+2$, then we have as before that
    $$ \|\nabla^\alpha v\|_{L^2(\mathbb{R}^N)}\lesssim \sum_{\ell=1}^{|\alpha|} \sum_{\alpha_1+\ldots\alpha_\ell=\alpha,\:|\alpha_j|\ge 1}\|D^{\alpha_1}d\cdots D^{\alpha_\ell}d\|_{L^2(\mathbb{R}^N)}. $$
    This time
    $$ |D^{\alpha_1}d\cdots D^{\alpha_\ell}d|\le |\nabla^{s_1}\nabla d\cdots \nabla^{s_\ell}\nabla d|, $$
    with
    $$ s_1+\ldots s_\ell=|\alpha_1|+\cdots+|\alpha_\ell|-\ell=|\alpha|-\ell=s+2-\ell. $$
    Then, using Lemma \ref{l.reg.2} we conclude.
\end{proof}
Thanks to the previous lemma we get the regularity result we were looking for:
\begin{prop}\label{p.reg.int.}
    Let $N\ge 3$, $T\in(0,\infty]$, $s\in\mathbb{N}$ with $s>\frac{N}{2}-1$, let $\eta,\omega\in \mathcal{S}^{N-1}$ with $\eta\perp\omega$, let $v\colon\mathbb{R}^N\to \mathcal{S}^{N-1}$ with
    $$ v(t,x)=\cos d(t,x)\eta+\sin d(t,x) \omega\quad \text{for a.e.}\:\:(t,x)\in(0,T)\times \mathbb{R}^N, $$
    with $d\in \Theta^s_T$, then $v\in\Theta^s_T$.
\end{prop}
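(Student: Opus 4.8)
The plan is to unwind the definition of $\Theta^s_T$ and reduce the whole statement to the pointwise-in-time estimates of Lemma \ref{l.Hk.eq.}. Recall that, componentwise, $v\in\Theta^s_T$ means $v\in L^\infty((0,T);L^\infty(\mathbb{R}^N)\cap\widehat H^1(\mathbb{R}^N))$ together with $\nabla v\in X^s_T$, i.e. $\nabla v\in L^\infty((0,T);H^s(\mathbb{R}^N))$ and $\nabla^2 v\in L^2((0,T);H^s(\mathbb{R}^N))$. The bound $v\in L^\infty((0,T);L^\infty(\mathbb{R}^N))$ is immediate since $|v(t,x)|=1$ a.e.; and because $|\nabla v|=|\nabla d|$ with each component of $v$ bounded, we get $\nabla v(t)\in L^2(\mathbb{R}^N)$ and $v_j(t)\in L^1_{loc}(\mathbb{R}^N)$, hence $[v_j(t)]\in\widehat H^1(\mathbb{R}^N)$ for a.e. $t$ and every component $j$. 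So the statement comes down to controlling $\nabla v$ in the norm of $X^s_T$.

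Next I would fix a time $t\in(0,T)$ for which $\nabla d(t)\in H^s(\mathbb{R}^N)$ and $\nabla^2 d(t)\in H^s(\mathbb{R}^N)$; this holds for a.e. $t$ since $d\in\Theta^s_T$ forces $\nabla d\in L^\infty((0,T);H^s)$ and $\nabla^2 d\in L^2((0,T);H^s)$. The first estimate of Lemma \ref{l.Hk.eq.} applied at this time gives
$$ \|\nabla v(t)\|_{H^s(\mathbb{R}^N)}\le C(N,s)\,\|\nabla d(t)\|_{H^s(\mathbb{R}^N)}\bigl(1+\|\nabla d(t)\|_{H^s(\mathbb{R}^N)}^{s}\bigr), $$
and taking the supremum over $t$ yields $\nabla v\in L^\infty((0,T);H^s(\mathbb{R}^N))$, with a bound depending only on $\|\nabla d\|_{L^\infty((0,T);H^s)}\le\|d\|_{\Theta^s_T}$. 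Likewise, the last estimate of Lemma \ref{l.Hk.eq.} applied at a.e. fixed $t$ gives
$$ \|\nabla^2 v(t)\|_{H^s(\mathbb{R}^N)}\le C(N,s)\,\|\nabla^2 d(t)\|_{H^s(\mathbb{R}^N)}\bigl(1+\|\nabla d(t)\|_{H^s(\mathbb{R}^N)}^{s+1}\bigr). $$
It then remains to take the $L^2$ norm in time of this last inequality; the key point is that the factor $1+\|\nabla d(t)\|_{H^s}^{s+1}$ can be pulled out of the time integral using the uniform-in-time bound $\sup_t\|\nabla d(t)\|_{H^s}\le\|d\|_{\Theta^s_T}$, leaving
$$ \|\nabla^2 v\|_{L^2((0,T);H^s(\mathbb{R}^N))}\le C(N,s)\bigl(1+\|d\|_{\Theta^s_T}^{s+1}\bigr)\,\|\nabla^2 d\|_{L^2((0,T);H^s(\mathbb{R}^N))}<+\infty. $$
Combined with the $L^\infty_t H^s_x$ bound this gives $\nabla v\in X^s_T$, hence $v\in\Theta^s_T$, with a quantitative estimate of the form $\|v-\text{const}\|_{\Theta^s_T}\lesssim\|d\|_{\Theta^s_T}\bigl(1+\|d\|_{\Theta^s_T}^{s+1}\bigr)$.

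The main content is Lemma \ref{l.Hk.eq.} itself, which is already available; everything else is a routine repackaging. The only remaining point I would be careful about is the measure-theoretic one: that $t\mapsto\nabla v(t)$ and $t\mapsto\nabla^2 v(t)$ are strongly measurable as maps into $H^s(\mathbb{R}^N)$. This follows from the continuity of the map $d\mapsto v$ — explicit through $\cos$, $\sin$ and the Faà di Bruno expansion used in Lemma \ref{l.Hk.eq.} — together with the measurability of $t\mapsto d(t)$; I would dispatch it by density, approximating $d$ by functions that are simple in $t$, as is standard and as is already used implicitly in the linear estimates of Section 2.
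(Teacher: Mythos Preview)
Your proof is correct and follows essentially the same approach as the paper: apply Lemma \ref{l.Hk.eq.} pointwise in time and then take the $L^\infty_t$ and $L^2_t$ norms, pulling the factor $1+\|\nabla d(t)\|_{H^s}^{s+1}$ out via the uniform-in-time bound. Your version is in fact more detailed, explicitly checking the $L^\infty$ and $\widehat H^1$ parts of the $\Theta^s_T$ norm and flagging the measurability issue, which the paper's proof leaves implicit.
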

\begin{proof}\hfill\\
    From Lemma \ref{l.Hk.eq.} we have that
    $$ \|\nabla v\|_{L^\infty((0,T);H^s(\mathbb{R}^N))}\lesssim \left\|\|\nabla d\|_{H^s(\mathbb{R}^N)}\left(1+\|\nabla d\|_{H^s(\mathbb{R}^N)}^{s}\right) \right\|_{L^\infty((0,T))}\lesssim $$
    $$ \lesssim \|\nabla d\|_{L^\infty((0,T);H^s(\mathbb{R}^N))}\left(1+\|\nabla d\|_{L^\infty((0,T);H^s(\mathbb{R}^N))}^{s}\right), $$
    $$ \|\nabla^2 v\|_{L^2((0,T);H^s(\mathbb{R}^N))}\lesssim \left\| \|\nabla^2 d\|_{H^{s}(\mathbb{R}^N)}\left(1+\|\nabla d\|_{H^s(\mathbb{R}^N)}^{s+1}\right)\right\|_{L^2((0,T))}\lesssim $$
    $$ \lesssim \|\nabla^2 d\|_{L^2((0,T);H^{s}(\mathbb{R}^N))}\left(1+\|\nabla d\|_{L^\infty((0,T);H^s(\mathbb{R}^N))}^{s+1}\right). $$
\end{proof}
Now we are ready to prove local and global existence for the case $s\in\mathbb{N}$:
\begin{proof}[Proof of Theorem \ref{t.ex.int.}, Local existence]\hfill\\
Since $v_0\in\text{Span}\{\eta,\omega\}$, we can find a function $d_0\colon\mathbb{R}^N\to[-\pi,\pi]$ such that
$$ v_0(x)=\cos d_0(x)\eta+\sin d_0(x)\omega\quad \text{for a.e.}\:\:x\in\mathbb{R}^N. $$
Since $\nabla v_0\in H^s(\mathbb{R}^N;\mathbb{R}^{N^2})$ we have from Lemma \ref{l.Hk.eq.} that $\nabla d_0\in H^s(\mathbb{R}^N;\mathbb{R}^N)$ too. Moreover
$$ |d_0|^2\lesssim |1-\cos(d_0)|\le |1-\cos(d_0)|+|\sin(d_0)|=|v_0-\eta|, $$
so we can find $T=T(R)>0$ and $(u,p,d)$ which solves the reduced system \eqref{EL.red.sys.+} with $(u,d)\in X^s_T\times \Theta^s_T$. If we define
$$ v(t,x)=\cos d(t,x) \eta+ \sin d(t,x)\omega \quad \text{for a.e.}\:\:(t,x)\in(0,T)\times\mathbb{R}^N, $$
then, by construction, $(u,p,v)$ solves the Ericksen-Leslie system \eqref{EL.sys.+}. Moreover, $v\in\Theta_T^s$ by Proposition \ref{p.reg.int.} and the choice of $(u,v)$ is unique by Proposition \ref{p.un.int.}, so we conclude.
\end{proof}
\begin{proof}[Proof of Theorem \ref{t.ex.int.}, Global existence and decay]\hfill\\
As before, we can find $d_0\colon\mathbb{R}^N\to[-\pi,\pi]$ such that
$$ v_0(x)=\cos d_0(x)\eta+\sin d_0(x)\omega\quad\text{for a.e.}\:\:x\in\mathbb{R}^N, $$
with $d_0\in L^\infty(\mathbb{R}^N)$,  $\nabla d_0\in H^s(\mathbb{R}^N;\mathbb{R}^N)$ with
$$ \|d_0\|_{L^\infty(\mathbb{R}^N)}^2+\|\nabla d_0\|_{H^s(\mathbb{R}^N)}\lesssim \varepsilon. $$
So, choosing $\varepsilon$ sufficiently small we get the existence of a solution $(u,p,d)$ which solves the reduced system \eqref{EL.red.sys.+}. Uniqueness and regularity follows by Proposition \ref{p.un.int.} and Proposition \ref{p.reg.int.}. In particular we have that
$$ \|u\|_{X^s}+\|\nabla p\|_{L^2((0,T);H^s(\mathbb{R}^N))}+\|\nabla v\|_{X^s}\lesssim \varepsilon. $$
Moreover,
$$ |v-\eta|=|1-\cos (d)|+|\sin (d)|\lesssim |d|^2+|d|\lesssim \varepsilon^2+\varepsilon\lesssim \varepsilon $$
for $\varepsilon<1$. The decay estimates for $u$ and $\nabla d$ again follows from Lemma \ref{l.Hk.eq.}.
\end{proof}

\subsection{The case $s\not\in\mathbb{N}$ and proof of Theorem \ref{t.ex.fr.}}

As before, we have to prove firstly a uniqueness result for the system \eqref{EL.sys.+}. For this reason, we need a bilinear estimate as the one of \eqref{stab.es.1}:
\begin{lem}
    Let $s\in\left(\frac{1}{2},1\right)$, $T>0$, let $\theta,w,z\in \Theta^s_T$, then
    \begin{equation}\label{stab.es.3}
        \|\nabla z\nabla w \theta\|_{L^2((0,T);H^s(\mathbb{R}^3))}\le C_s T^\gamma\|\theta\|_{\Theta^s_T}\|\nabla z\|_{X^s_T}\|\nabla w\|_{X^s_T}
    \end{equation}
    for some $\gamma=\gamma_s>0$.
\end{lem}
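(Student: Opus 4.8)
The plan is to reduce the trilinear bound \eqref{stab.es.3} to the bilinear estimate \eqref{bil.e.1} by treating $\theta$ as a multiplier. Since $z,w\in\Theta^s_T$ we have $\nabla z,\nabla w\in X^s_T$, so \eqref{bil.e.1} (legitimate here because $N=3$ and $s>\tfrac12$) already yields
$$ \|\nabla z\cdot\nabla w\|_{L^2((0,T);H^s(\mathbb{R}^3))}\le C_sT^{\gamma}\|\nabla z\|_{X^s_T}\|\nabla w\|_{X^s_T}. $$
Thus it suffices to show that, for a.e.\ fixed $\tau\in(0,T)$, multiplication by $\theta(\tau)$ is bounded on $H^s(\mathbb{R}^3)$ with operator norm $\lesssim\|\theta(\tau)\|_{L^\infty(\mathbb{R}^3)}+\|\nabla\theta(\tau)\|_{H^s(\mathbb{R}^3)}$. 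Applying this with $f=(\nabla z\,\nabla w)(\tau)$ and taking the $L^2$-norm in $\tau$ then closes the estimate, because $\sup_{\tau}\big(\|\theta(\tau)\|_{L^\infty}+\|\nabla\theta(\tau)\|_{H^s}\big)\le\|\theta\|_{\Theta^s_T}$.

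To prove the multiplier bound I would first dispose of the obstruction that $\theta(\tau)$ is only $L^\infty$, hence has no finite inhomogeneous Sobolev norm of its own. Since $\theta(\tau)\in L^\infty(\mathbb{R}^3)\cap\widehat{H}^1(\mathbb{R}^3)$, Remark~\ref{rem.Hom.Sob.} gives a constant $c=c(\tau)\in\mathbb{R}$ such that $\widetilde\theta:=\theta(\tau)-c\in L^6(\mathbb{R}^3)$ with $\|\widetilde\theta\|_{L^6}\lesssim\|\nabla\theta(\tau)\|_{L^2}$; moreover $|c|\le\|\theta(\tau)\|_{L^\infty}$, because $\widetilde\theta$ lies below any prescribed level off a set of finite measure, so also $\|\widetilde\theta\|_{L^\infty}\le 2\|\theta(\tau)\|_{L^\infty}$. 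Writing $f\,\theta(\tau)=cf+f\widetilde\theta$ and using $\|cf\|_{H^s}=|c|\,\|f\|_{H^s}$, it remains to bound $\|f\widetilde\theta\|_{H^s}$ for $\widetilde\theta\in L^6\cap L^\infty$ with $\nabla\widetilde\theta=\nabla\theta(\tau)\in H^s$.

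Here I would apply the fractional Leibniz rule \eqref{fr.L.} on $\mathbb{R}^3$ with $r=2$, taking $(p_1,p_2)=(2,\infty)$ in the first term and $(q_1,q_2)=(3,6)$ in the second, to get $\|f\widetilde\theta\|_{H^s}\lesssim\|f\|_{H^s}\|\widetilde\theta\|_{L^\infty}+\|f\|_{L^3}\|\widetilde\theta\|_{H^s_6}$. The factor $\|\widetilde\theta\|_{L^\infty}$ is controlled above, and $\|f\|_{L^3}\lesssim\|f\|_{H^s}$ by the Sobolev embedding $H^s(\mathbb{R}^3)\hookrightarrow L^3$, which holds precisely because $s\ge\tfrac12$ in dimension $3$. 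For $\|\widetilde\theta\|_{H^s_6}$ I use that, since $s\in(0,1)$, this norm is comparable to $\|\widetilde\theta\|_{L^6}+\|(-\Delta)^{s/2}\widetilde\theta\|_{L^6}$: the first piece is $\lesssim\|\nabla\theta(\tau)\|_{L^2}$ by Sobolev, and for the second I would write $(-\Delta)^{s/2}\widetilde\theta=(-\Delta)^{-(1-s)/2}\big((-\Delta)^{1/2}\widetilde\theta\big)$ and chain three facts: boundedness of the Riesz transforms on $L^{6/(3-2s)}(\mathbb{R}^3)$ (note $\tfrac{6}{3-2s}\in(3,6)$ since $s\in(\tfrac12,1)$), the Hardy--Littlewood--Sobolev estimate $\|(-\Delta)^{-(1-s)/2}g\|_{L^6}\lesssim\|g\|_{L^{6/(3-2s)}}$ (admissible because $s<1$ makes $\tfrac{6}{3-2s}<6$), and the endpoint embedding $H^s(\mathbb{R}^3)\hookrightarrow L^{6/(3-2s)}(\mathbb{R}^3)$ applied to $g$, which has the size of $\nabla\theta(\tau)\in H^s$. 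This gives $\|\widetilde\theta\|_{H^s_6}\lesssim\|\nabla\theta(\tau)\|_{H^s}$, hence the multiplier bound, and then plugging $f=(\nabla z\,\nabla w)(\tau)$, taking $L^2$ in $\tau$, and using the bilinear bound above yields \eqref{stab.es.3} with the same $\gamma$ as in \eqref{bil.e.1}.

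The main obstacle is exactly the treatment of $\theta$: it carries no integrability by itself, so it cannot be fed directly into \eqref{fr.L.}; one is forced to split off the additive constant through the $\widehat{H}^1$-structure and then to spend the whole gradient to control $(-\Delta)^{s/2}\theta$ in $L^6$. That last step is where the standing assumptions are genuinely used — $N=3$ and $s<1$ make the Hardy--Littlewood--Sobolev step legitimate, while $s>\tfrac12$ is exactly what is needed for $H^s\hookrightarrow L^3$ — which is why the fractional analysis is carried out only on $\mathbb{R}^3$ with $s\in(\tfrac12,1)$.
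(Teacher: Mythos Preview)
Your argument is correct and reaches the same conclusion, but the route differs from the paper's. The paper does not pass through a multiplier bound: it works directly with the Gagliardo seminorm of $\nabla z\,\nabla w\,\theta$, splitting the difference quotient into three pieces $I_1,I_2,I_3$ according to which factor carries the increment. For $I_1$ and $I_2$ it applies H\"older in $x$ with the pair $(p,q)=(\tfrac{6}{3-2s},\tfrac{3}{s})$ and uses the $H^s_q$ seminorm together with an interpolation between $L^2$ and $L^6$ to extract the $T^\gamma$. For $I_3$ it uses H\"older with $(3,6)$ and then the key step --- the same $\widehat H^1$ trick you invoke --- but at the level of \emph{differences}: from Remark~\ref{rem.Hom.Sob.} one gets $\|\theta(\cdot+h)-\theta(\cdot)\|_{L^6}\lesssim\|\nabla\theta(\cdot+h)-\nabla\theta(\cdot)\|_{L^2}$, and integrating in $h$ recovers $\|\nabla\theta\|_{H^s}$; then $\|\nabla z\,\nabla w\|_{L^2_tL^3_x}$ is handled via Sobolev embedding and the bilinear bound~\eqref{bil.e.1}.

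Your approach is more modular: you isolate once and for all that $\theta(\tau)$ is an $H^s$-multiplier with norm $\lesssim\|\theta(\tau)\|_{L^\infty}+\|\nabla\theta(\tau)\|_{H^s}$, and then recycle~\eqref{bil.e.1} wholesale; the paper's approach is more hands-on but stays entirely within the Slobodeckij framework and avoids the Bessel-potential machinery (Riesz transforms, HLS) that you bring in for $\|\widetilde\theta\|_{H^s_6}$. Note, incidentally, that your control of $\|(-\Delta)^{s/2}\widetilde\theta\|_{L^6}$ can be obtained in one line via $\dot H^1(\mathbb{R}^3)\hookrightarrow L^6$ applied to $(-\Delta)^{s/2}\widetilde\theta$, since $\|\nabla(-\Delta)^{s/2}\widetilde\theta\|_{L^2}=\|(-\Delta)^{s/2}\nabla\theta\|_{L^2}\le\|\nabla\theta\|_{H^s}$; this bypasses the HLS/Riesz chain and makes your multiplier bound just as self-contained as the paper's direct computation.
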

\begin{proof}\hfill\\
    We already know from \eqref{stab.es.1} that
    $$ \|\nabla z\nabla w \theta\|_{L^2((0,T);L^2(\mathbb{R}^3))}\lesssim  T^\gamma\|\theta\|_{\Theta^s_T}\|\nabla z\|_{X^s_T}\|\nabla w\|_{X^s_T} $$
    for some $\gamma>0$, so we focus on the seminorm
    $$ \left\|\left\|\frac{(\nabla z\nabla w \theta)(\tau,x+h)-(\nabla z\nabla w \theta)(\tau, x)}{|h|^s}\right\|_{L^2\left(\frac{dh}{|h|^3}\right)}\right\|_{L^2(\mathbb{R}^3)}. $$
    It is easy to see that
    $$ \left\|\left\|\frac{(\nabla z\nabla w \theta)(\tau, x+h)-(\nabla z\nabla w \theta)(\tau, x)}{|h|^s}\right\|_{L^2\left(\frac{dh}{|h|^3}\right)}\right\|_{L^2(\mathbb{R}^3)}\lesssim I_1+I_2+I_3, $$
    where

    $$ I_1(\tau)=\left\|\left\|\frac{\nabla w(\tau, x+h) \theta(\tau, x+h)(\nabla z(\tau, x+h)-\nabla z(\tau, x))}{|h|^s}\right\|_{L^2\left(\frac{dh}{|h|^3}\right)}\right\|_{L^2(\mathbb{R}^3)}  $$
    $$ I_2(\tau)= \left\|\left\|\frac{\nabla z(\tau, x)\theta(\tau, x+h)(\nabla w(\tau, x+h)-\nabla w(\tau, x))}{|h|^s}\right\|_{L^2\left(\frac{dh}{|h|^3}\right)}\right\|_{L^2(\mathbb{R}^3)} $$
    $$ I_3(\tau)= \left\|\left\|\frac{\nabla z(\tau, x)\nabla w(\tau, x) (\theta(\tau, x+h)-\theta(\tau, x))}{|h|^s}\right\|_{L^2\left(\frac{dh}{|h|^3}\right)}\right\|_{L^2(\mathbb{R}^3)}. $$
    For what concerns the first term
    $$ I_1(\tau)^2\le \|\theta(\tau)\|_{L^\infty(\mathbb{R}^3)}^2\int_{\mathbb{R}^3}\left(\int_{\mathbb{R}^3}|\nabla w(\tau,x+h)|^2|\nabla z(\tau,x+h)-\nabla z(\tau,x)|^2dx\right)\frac{dh}{|h|^{2s+3}}. $$
    Now we notice that
    $$ \int_{\mathbb{R}^3}\left(\int_{\mathbb{R}^3}|\nabla w(\tau,x+h)|^2|\nabla z(\tau,x+h)-\nabla z(\tau,x)|^2dx\right)\frac{dh}{|h|^{2s+3}} = $$
    $$ = \int_{\mathbb{R}^3}|\nabla w(\tau,x)|^2\left(\int_{\mathbb{R}^3}\frac{|\nabla z(\tau,x)-\nabla z(\tau,x-h)|^2}{|h|^{2s+3}}dh\right)dx\le $$
    $$ \le \|\nabla w(\tau)\|_{L^p(\mathbb{R}^3)}^2\left\|\left\|\nabla z(x+h)-\nabla z(x)\right\|_{L^2\left(\frac{dh}{|h|^3}\right)}\right\|_{L^q(\mathbb{R}^3)}^2\le   \|\nabla w(\tau)\|_{L^p(\mathbb{R}^3)}^2\|\nabla z(\tau)\|_{H^s_q(\mathbb{R}^3)}^2,  $$
    for
    $$ \frac{1}{2}=\frac{1}{p}+\frac{1}{q}. $$
    If we take $p=\frac{6}{3-2s}$, then $q=\frac{3}{s}\in\left[2,6\right)$ for $s\in\left(\frac{1}{2},1\right)$ so by Sobolev embedding we can find $\gamma\in(0,1)$ such that
    $$ \left\|I_1\right\|_{L^2((0,T))} \lesssim \|\theta\|_{\Theta^s_T}\|\nabla z\|_{L^\infty((0,T);H^s(\mathbb{R}^N))}\|\nabla w\|^\gamma_{L^\infty((0,T);H^s(\mathbb{R}^3))}\|\nabla^2w\|_{L^{2(1-\gamma)}((0,T);H^s(\mathbb{R}^3))} \lesssim $$
    $$ \lesssim T^\gamma \|\theta\|_{\Theta^s_T}\|\nabla z\|_{X^s_T}\|\nabla w\|_{X^s_T}. $$
    The second term is analogous so we consider just the third one:
    $$ I_3(\tau)^2= \int_{\mathbb{R}^3}\left(\int_{\mathbb{R}^3}|\nabla z(\tau,x)|^2|\nabla w(\tau,x)|^2|\theta(\tau,x+h)-\theta(\tau,x)|^2dx\right)\frac{dh}{|h|^{2s+3}}\le $$
    $$ \le \|\nabla z(\tau)\nabla w(\tau)\|_{L^3(\mathbb{R}^3) }^2\int_{\mathbb{R}^3}\|\theta(\tau,x+h)-\theta(\tau,x)\|_{L^6(\mathbb{R}^3)}^2\frac{dh}{|h|^{2s+3}}.  $$
    Since $\theta\in \widehat{H}^1(\mathbb{R}^3)$, we know from Remark \ref{rem.Hom.Sob.} that there exists $c\in\mathbb{R}$ constant such that
    $$ \|\theta(\tau)-c\|_{L^6(\mathbb{R}^3)}\lesssim \|\nabla \theta(\tau)\|_{L^2(\mathbb{R}^3)}\quad \text{for a.e.}\:\:\tau\in(0,T). $$
    Therefore,
    $$ \|\theta(\tau,x+h)-\theta(\tau,x)\|_{L^6(\mathbb{R}^3)}\lesssim \|\nabla \theta(\tau, x+h)-\nabla \theta(\tau, x)\|_{L^2(\mathbb{R}^3)} $$
    and
    $$ \int_{\mathbb{R}^3}\|\theta(\tau,x+h)-\theta(\tau,x)\|_{L^6(\mathbb{R}^3)}^2\frac{dh}{|h|^{2s+3}}\lesssim \|\nabla \theta(\tau)\|_{H^s(\mathbb{R}^3)}^2\le \|\nabla \theta\|_{X^s_T}^2. $$
    Finally, by Sobolev embedding and the estimate \eqref{bil.e.1} we get
    $$ \left\|I_3\right\|_{L^2((0,T))}\lesssim \|\nabla \theta\|_{X^s_T}\|\nabla z\nabla w\|_{L^2((0,T);H^s(\mathbb{R}^3))}\lesssim T^\gamma\|\nabla \theta\|_{X^s_T}\|\nabla z\|_{X^s_T}\|\nabla w\|_{X^s_T} $$
    for some $\gamma>0$, so we conclude.
\end{proof}
As before, applying \eqref{stab.es.2} and \eqref{stab.es.3} it can be proven the uniqueness result:
\begin{prop}\label{p.un.fr}
    Let $s\in\left(\frac{1}{2},1\right)$, let
    $$ v_0\colon\mathbb{R}^3\to \mathcal{S}^2,\quad \nabla v_0\in H^s\left(\mathbb{R}^3;\mathbb{R}^{9}\right), $$
    let $T\in(0,\infty]$ and let
    $(u_1,v_1)$ and $(u_2,v_2)$ be two solutions  for the system \eqref{EL.sys.+} in $X^s_T\times\Theta^s_T$, then $v_1=v_2$ for a.e. $(t,x)\in(0,T)\times\mathbb{R}^3$.
\end{prop}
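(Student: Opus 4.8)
The plan is to repeat the contraction-on-the-difference argument of Proposition \ref{p.un.int.}, using the fractional product bound \eqref{stab.es.3} in place of \eqref{stab.es.1}. Fix $T_0\in(0,T]$, to be chosen later, and set $Y_{T_0}\coloneqq X^s_{T_0}\times\Theta^s_{T_0}$. First I would write down the Duhamel identities for the two solutions: using ${\rm div}\,u_j=0$ to recast $u_j\cdot\nabla u_j+{\rm Div}(\nabla v_j\odot\nabla v_j)={\rm Div}(u_j\odot u_j+\nabla v_j\odot\nabla v_j)$, one has
$$ u_j=e^{\mathbb{P}\Delta t}u_0+\int_0^t e^{\mathbb{P}\Delta(t-\tau)}\mathbb{P}\,{\rm Div}\big(u_j\odot u_j+\nabla v_j\odot\nabla v_j\big)(\tau)\,d\tau, $$
$$ v_j=e^{\Delta t}v_0+\int_0^t e^{\Delta(t-\tau)}\big(-u_j\cdot\nabla v_j+|\nabla v_j|^2 v_j\big)(\tau)\,d\tau,\qquad j=1,2. $$
Subtracting the $j=1$ and $j=2$ identities removes the free evolutions of the common data $u_0$ and $v_0$, leaving only integral terms that are quadratic or cubic in the differences $u_1-u_2$ and $v_1-v_2$.

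\noindent\textbf{The two estimates.} For the $H^s$-part I would feed the linear system solved by $(u_1-u_2,v_1-v_2)$ into Corollary \ref{c.lin.ex.}, which controls $\|(u_1-u_2,\nabla v_1-\nabla v_2)\|_{X^s_{T_0}}$ by the $L^2((0,T_0);H^s(\mathbb{R}^3))$-norms of the differences of the right-hand sides. Each quadratic difference I would telescope, e.g. $u_1\odot u_1-u_2\odot u_2=(u_1-u_2)\odot u_1+u_2\odot(u_1-u_2)$, and likewise for $\nabla v_j\odot\nabla v_j$ and for $u_j\cdot\nabla v_j$, and then estimate by \eqref{bil.e.1}, which produces a factor $T_0^\gamma$. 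For the cubic difference I would split
$$ |\nabla v_1|^2 v_1-|\nabla v_2|^2 v_2=\big(|\nabla v_1|^2-|\nabla v_2|^2\big)\,v_1+|\nabla v_2|^2\,(v_1-v_2), $$
write $|\nabla v_1|^2-|\nabla v_2|^2$ as a sum of products $\partial_i(v_1-v_2)\,\partial_i(v_1+v_2)$, and apply the fractional estimate \eqref{stab.es.3} to each summand (with $\theta=v_1$ in the first group and $\theta=v_1-v_2$ in the second), again gaining $T_0^\gamma$; here $v_1-v_2\in\Theta^s_{T_0}$, so its $\Theta^s_{T_0}$-norm is one of the quantities already contained in $\|(u_1,v_1)-(u_2,v_2)\|_{Y_{T_0}}$. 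For the $L^\infty((0,T_0);L^\infty(\mathbb{R}^3))$-norm of $v_1-v_2$ I would use the heat Duhamel formula, in which the free term cancels, and estimate the transport part by \eqref{bil.e.2} and the cubic part by \eqref{stab.es.2} after the same telescoping and splitting, once more with a $T_0^\gamma$ factor.

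\noindent\textbf{Closing the argument.} Collecting the estimates gives, for some $\gamma=\gamma_s>0$ and $C=C_s>0$,
$$ \|(u_1,v_1)-(u_2,v_2)\|_{Y_{T_0}}\le C\,T_0^{\gamma}\big(1+\|(u_1,v_1)\|_{Y_T}^2+\|(u_2,v_2)\|_{Y_T}^2\big)\,\|(u_1,v_1)-(u_2,v_2)\|_{Y_{T_0}}. $$
Choosing $T_0=T_0(\|(u_1,v_1)\|_{Y_T},\|(u_2,v_2)\|_{Y_T})$ small enough that the prefactor is at most $1/2$ forces $u_1=u_2$ and $v_1=v_2$ a.e. on $(0,T_0)\times\mathbb{R}^3$. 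Since this $T_0$ depends only on the norms of $v_1,v_2$ over all of $(0,T)$, one restarts from $t=T_0$ and iterates with the same step length, covering $(0,T)$ and yielding $v_1=v_2$ a.e. on $(0,T)\times\mathbb{R}^3$. I do not expect a genuine obstacle: the whole analytic difficulty of the fractional regime was already absorbed into \eqref{stab.es.3}, and the only point requiring care is to keep all constants depending on the two solutions only through their norms on the full interval $(0,T)$, which is exactly what makes the continuation step legitimate.
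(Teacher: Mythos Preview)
Your proposal is correct and follows exactly the route the paper indicates: the paper merely says ``As before, applying \eqref{stab.es.2} and \eqref{stab.es.3} it can be proven the uniqueness result,'' and your argument is precisely the proof of Proposition~\ref{p.un.int.} with \eqref{stab.es.3} substituted for \eqref{stab.es.1}, together with the same use of Corollary~\ref{c.lin.ex.}, \eqref{bil.e.1}, \eqref{bil.e.2}, and \eqref{stab.es.2}. Your telescoping of the cubic term and the observation that the uniform step size $T_0$ depends only on the full-interval norms are exactly what is needed to iterate.
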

Let us pass to the regularity result. Here the main problem is that we are not able to prove the equivalence of the $H^s$-norms of $\nabla v$ and $\nabla d$ in the sense of Lemma \ref{l.Hk.eq.}. As we will see, we have just one inequality.
\begin{lem}\label{l.Hs.es.1}
    Let $N\ge 3$, let $d\colon\mathbb{R}^N\to\mathbb{R}$, $f\colon \mathbb{R}^N\to\mathbb{R}^N$, $g\in C^1(\mathbb{R})$ and $w\colon\mathbb{R}^N\to\mathbb{R}^N$ defined as
    $$ w(x)=f(x)g(d(x))\quad \text{a.e.}\:\:x\in\mathbb{R}^N, $$
    then for any $s\in[0,1)$, if $f\in H^s(\mathbb{R}^N;\mathbb{R}^N)$, $g,g^\prime\in L^\infty(\mathbb{R})$ and $\nabla d\in H^s(\mathbb{R}^N;\mathbb{R}^N)$, then $w\in H^s(\mathbb{R}^N;\mathbb{R}^N)$ with
    $$ \|w\|_{H^s(\mathbb{R}^N)}\le C(N,s)\left[\|g\circ d\|_{L^\infty(\mathbb{R}^N)}\|f\|_{H^s(\mathbb{R}^N)} + \|g^\prime\|_{L^\infty(\mathbb{R})}\|f\|_{L^N(\mathbb{R}^N)}\|\nabla d\|_{H^s(\mathbb{R}^N)}\right]. $$
\end{lem}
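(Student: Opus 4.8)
The plan is to reduce everything to the Gagliardo description of $H^s$. The case $s=0$ is immediate ($H^0=L^2$ and $\|w\|_{L^2}\le\|g\circ d\|_{L^\infty}\|f\|_{L^2}$), so I would assume $s\in(0,1)$ and use $\|w\|_{H^s}\simeq\|w\|_{L^2}+[w]_s$ with
$$ [w]_s\coloneqq\left\|\left\|\frac{w(x+h)-w(x)}{|h|^s}\right\|_{L^2\left(\frac{dh}{|h|^N}\right)}\right\|_{L^2(\mathbb{R}^N)}. $$
The $L^2$ part is bounded at once by $\|g\circ d\|_{L^\infty}\|f\|_{L^2}$. For the seminorm I would split the increment as
$$ w(x+h)-w(x)=\bigl(f(x+h)-f(x)\bigr)g(d(x))+f(x+h)\bigl(g(d(x+h))-g(d(x))\bigr). $$
In the first term $g(d(x))$ is independent of $h$ and $|g(d(x))|\le\|g\circ d\|_{L^\infty}$, so it contributes at most $\|g\circ d\|_{L^\infty}[f]_s\le\|g\circ d\|_{L^\infty}\|f\|_{H^s}$, which is one of the two terms in the claim.

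Next I would handle the second term, which carries the whole weight of the lemma. Using $|g(d(x+h))-g(d(x))|\le\|g'\|_{L^\infty}|d(x+h)-d(x)|$, Tonelli's theorem, and the translation $x\mapsto x-h$ at fixed $h$, its square is bounded by
$$ \|g'\|_{L^\infty}^2\int_{\mathbb{R}^N}\int_{\mathbb{R}^N}\frac{|f(x+h)|^2|d(x+h)-d(x)|^2}{|h|^{N+2s}}\,dh\,dx=\|g'\|_{L^\infty}^2\int_{\mathbb{R}^N}|f(x)|^2\,\mathcal{G}(x)^2\,dx, $$
where $\mathcal{G}(x)^2\coloneqq\int_{\mathbb{R}^N}|d(x+h)-d(x)|^2|h|^{-N-2s}\,dh$ is the Gagliardo square function of $d$ (it only sees increments of $d$). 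Hölder with exponents $\tfrac N2$ and $\tfrac N{N-2}$ — here $N\ge3$ enters — gives
$$ \int_{\mathbb{R}^N}|f|^2\mathcal{G}^2\le\|f\|_{L^N(\mathbb{R}^N)}^2\,\|\mathcal{G}\|_{L^{2N/(N-2)}(\mathbb{R}^N)}^2, $$
so the lemma reduces to the single inequality $\|\mathcal{G}\|_{L^{2N/(N-2)}}\lesssim\|\nabla d\|_{H^s}$.

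For this I would argue on smooth $d$ (the general case following by mollification, since $\mathcal{G}$ depends on $d$ only through $\nabla d$), write $d(x+h)-d(x)=\int_0^1\nabla d(x+th)\cdot h\,dt$, apply Minkowski's integral inequality in $L^2(dh/|h|^{N+2s-2})$ and rescale $h\mapsto th$ to get the pointwise bound
$$ \mathcal{G}(x)\le\Bigl(\int_0^1 t^{s-1}\,dt\Bigr)\left(\int_{\mathbb{R}^N}\frac{|\nabla d(x-h)|^2}{|h|^{N-2(1-s)}}\,dh\right)^{1/2}=C_{N,s}\,\bigl(I_{2(1-s)}(|\nabla d|^2)(x)\bigr)^{1/2}, $$
with $I_\alpha$ the Riesz potential of order $\alpha=2(1-s)\in(0,2)$. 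Hardy--Littlewood--Sobolev with exponents $p=\tfrac N{N-2s}$ and $q=\tfrac N{N-2}$ then yields
$$ \|\mathcal{G}\|_{L^{2N/(N-2)}}^2=\bigl\|\mathcal{G}^2\bigr\|_{L^{N/(N-2)}}\lesssim\bigl\||\nabla d|^2\bigr\|_{L^{N/(N-2s)}}=\|\nabla d\|_{L^{2N/(N-2s)}}^2, $$
and since $\tfrac{N-2s}{2N}=\tfrac12-\tfrac sN$ the Sobolev embedding $H^s(\mathbb{R}^N)\hookrightarrow L^{2N/(N-2s)}(\mathbb{R}^N)$ bounds the right side by $\|\nabla d\|_{H^s}^2$. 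Adding the three contributions gives the assertion. The main obstacle is exactly this last chain: one must check that $(p,q)$ is an admissible Hardy--Littlewood--Sobolev pair and that $2N/(N-2s)$ is precisely the Sobolev exponent of $H^s$; both amount once more to the standing assumptions $N\ge3$ and $s<1$ (the integral $\int_0^1 t^{s-1}dt$ also requiring $s>0$), after which the rest is routine bookkeeping with the Gagliardo norm.
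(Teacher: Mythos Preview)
Your argument is correct. The paper handles the second term differently and more directly, though. After the same split and the Lipschitz bound on $g$, the paper applies H\"older in $x$ \emph{first} (for each fixed $h$) with the same exponents $N$ and $\tfrac{2N}{N-2}$, obtaining
\[
\|g'\|_{L^\infty}^2\|f\|_{L^N}^2\int_{\mathbb{R}^N}\|d(\cdot+h)-d(\cdot)\|_{L^{2N/(N-2)}}^2\frac{dh}{|h|^{N+2s}},
\]
and then uses the Sobolev inequality on the increment, $\|d(\cdot+h)-d(\cdot)\|_{L^{2N/(N-2)}}\lesssim\|\nabla d(\cdot+h)-\nabla d(\cdot)\|_{L^2}$ (this is the content of Remark~\ref{rem.Hom.Sob.} applied to $d(\cdot+h)-d(\cdot)$, whose constant part vanishes). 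The remaining $h$-integral is then \emph{exactly} $[\nabla d]_s^2\le\|\nabla d\|_{H^s}^2$, and the proof ends there.

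So the difference is the order of operations: you integrate in $h$ first to produce the square function $\mathcal G$ and then need the chain FTC\,$+$\,Minkowski\,$\to$\,Riesz potential\,$\to$\,HLS\,$\to$\,Sobolev to control $\|\mathcal G\|_{L^{2N/(N-2)}}$; the paper applies H\"older before integrating in $h$, so that one Sobolev inequality on the increment immediately reconstitutes the Gagliardo seminorm of $\nabla d$. Your route is a legitimate alternative (and your exponent checks for HLS and Sobolev are right), but the paper's is shorter and avoids the Riesz-potential machinery entirely.
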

\begin{proof}\hfill\\
    Firstly, we notice that
    $$ \|w\|_{L^2(\mathbb{R}^N)}\le \|g\circ d\|_{L^\infty(\mathbb{R}^N)}\|f\|_{L^2(\mathbb{R}^N)}. $$
    For what concerns the seminorm of $H^s$, we can see as in the proof of the estimate \eqref{stab.es.3} that
    $$ \left\|\left\|\frac{f(x+h)g(d(x+h))-f(x)g(d(x))}{|h|^s}\right\|_{L^2\left(\frac{dh}{|h|^N}\right)}\right\|_{L^2(\mathbb{R}^N)}^2\lesssim $$
    $$ \lesssim  \|g\circ d\|_{L^\infty(\mathbb{R}^N)}^2\|f\|_{H^s(\mathbb{R}^N)}^2+\|f\|_{L^N(\mathbb{R}^N)}^2\int_{\mathbb{R}^N}\|g(d(x+h))-g(d(x))\|_{L^\frac{2N}{N-2}(\mathbb{R}^N)}^2\frac{dh}{|h|^{2s+N}}. $$
    For the regularity of $g$ we have that
    $$ \|g(d(x+h))-g(d(x))\|_{L^\frac{2N}{N-2}(\mathbb{R}^N)}\le \|g^\prime\|_{L^\infty(\mathbb{R})}\|d(x+h)-d(x)\|_{L^\frac{2N}{N-2}(\mathbb{R}^N)},  $$
    so we can conclude using again Remark \ref{rem.Hom.Sob.}.
\end{proof}
\begin{lem}\label{l.Hs.es.2}
Let $s\in\left(\frac{1}{2},1\right)$, $\eta,\omega\in S^{2}$ with $\eta\perp\omega$, let
$$ v(x)=\cos d(x)\eta+\sin d(x)\omega\quad \text{for a.e.}\:\:x\in\mathbb{R}^3 $$
for some $d\in L^\infty(\mathbb{R}^3)$ with $\nabla d\in H^s(\mathbb{R}^3;\mathbb{R}^3)$, then $\nabla v\in H^s(\mathbb{R}^3;\mathbb{R}^9)$ with
$$ \|\nabla v\|_{H^s(\mathbb{R}^3)}\le C_s\|\nabla d\|_{H^s(\mathbb{R}^3)}\left(1+\|\nabla d\|_{H^s(\mathbb{R}^3)}\right). $$
Moreover, if $\nabla d\in H^{s+1}(\mathbb{R}^3;\mathbb{R}^3)$, then $\nabla v\in H^{s+1}(\mathbb{R}^3;\mathbb{R}^{9})$ with
$$ \|\nabla^2v\|_{H^s(\mathbb{R}^3)}\le C_s \|\nabla^2d\|_{H^s(\mathbb{R}^3)}\left(1+\|\nabla d\|_{H^s(\mathbb{R}^3)}^2\right). $$
\end{lem}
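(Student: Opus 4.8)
The plan is to differentiate the representation $v=\cos d\,\eta+\sin d\,\omega$ directly and reduce everything to Lemma \ref{l.Hs.es.1}. Writing $e(d):=-\sin d\,\eta+\cos d\,\omega$ one has $|e(d)|=1$, $e'(d)=-v$ and $\nabla v=\nabla d\, e(d)$, so each scalar entry of $\nabla v$ is of the form $\partial_j d\cdot g(d)$ with $g\in\{\pm\sin,\pm\cos\}$, i.e. $g,g'\in L^\infty(\mathbb{R})$. Applying Lemma \ref{l.Hs.es.1} with $f=\partial_j d\in H^s(\mathbb{R}^3)$ and using the Sobolev embedding $H^s(\mathbb{R}^3)\hookrightarrow L^3(\mathbb{R}^3)$ for $s>\tfrac12$ (so that $\|\partial_j d\|_{L^3}\lesssim\|\partial_j d\|_{H^s}$), one obtains $\|\partial_j v\|_{H^s}\lesssim\|\partial_j d\|_{H^s}\bigl(1+\|\nabla d\|_{H^s}\bigr)$; summing over $j$ and recalling that $|\nabla v|=|\nabla d|$ gives $\|\nabla v\|_{L^2}=\|\nabla d\|_{L^2}$, which yields the first estimate.

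For the second estimate I would differentiate once more. From $\nabla v=\nabla d\, e(d)$ and $e'(d)=-v$,
\[
  \partial_k\partial_j v=(\partial_k\partial_j d)\,e(d)-(\partial_j d\,\partial_k d)\,v .
\]
The term $(\partial_k\partial_j d)\,e(d)$ is again of the form $f\,g(d)$ with $f=\partial_k\partial_j d\in H^s$ (here one uses the hypothesis $\nabla d\in H^{s+1}$) and $g\in\{\pm\sin,\pm\cos\}$, so Lemma \ref{l.Hs.es.1} together with $\|f\|_{L^3}\lesssim\|f\|_{H^s}$ produces a contribution $\lesssim\|\nabla^2 d\|_{H^s}\bigl(1+\|\nabla d\|_{H^s}\bigr)$. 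The quadratic term $(\partial_j d\,\partial_k d)\,v$ is component-wise of the form $F\,g(d)$ with $F=\partial_j d\,\partial_k d$ and $g\in\{\pm\cos,\pm\sin\}$, so by Lemma \ref{l.Hs.es.1} it suffices to bound $\|F\|_{H^s}$ and $\|F\|_{L^3}$. The $L^2$ part is clean: by Gagliardo--Nirenberg, $\|\nabla d\,\nabla d\|_{L^2}\lesssim\|\nabla d\|_{L^4}^2\lesssim\|\nabla d\|_{L^3}\|\nabla^2 d\|_{L^2}\lesssim\|\nabla d\|_{H^s}\|\nabla^2 d\|_{L^2}$, already linear in $\|\nabla^2 d\|$. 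For the $H^s$-seminorm of $F$ and for $\|F\|_{L^3}$ I would invoke the fractional Leibniz rule \eqref{fr.L.} on $\mathbb{R}^3$ (exactly as in the proofs of \eqref{bil.e.1} and \eqref{stab.es.3}), putting the "extra" derivative on one of the two factors — which after Sobolev embedding costs $\|\nabla d\|_{H^{s+1}}\lesssim\|\nabla d\|_{H^s}+\|\nabla^2 d\|_{H^s}$, using crucially $\nabla d\in H^{s+1}(\mathbb{R}^3)\hookrightarrow L^\infty(\mathbb{R}^3)$ — and keeping the other factor in $L^3$, controlled by $\|\nabla d\|_{H^s}$; an interpolation of the intermediate norms between $\|\nabla d\|_{H^s}$ and $\|\nabla^2 d\|_{H^s}$ followed by Young's inequality converts the resulting fractional power of $\|\nabla^2 d\|_{H^s}$ into a single linear factor, the remainder being absorbed into $1+\|\nabla d\|_{H^s}^2$. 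Collecting the two contributions and adding the $L^2$ bound gives $\|\nabla^2 v\|_{H^s}\lesssim\|\nabla^2 d\|_{H^s}\bigl(1+\|\nabla d\|_{H^s}^2\bigr)$.

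The main obstacle is precisely that last point: controlling the quadratic term $\nabla d\otimes\nabla d$ in $H^s(\mathbb{R}^3)$ with $s\in(\tfrac12,1)$. Since $H^s(\mathbb{R}^3)$ is not a Banach algebra for $s<\tfrac32$, one cannot multiply the two $H^s$-factors naively, and the estimate has to be extracted by carefully exploiting the extra regularity $\nabla d\in H^{s+1}$ together with the fractional Leibniz rule, keeping track of which factor carries the top-order norm so that the final bound remains linear in $\|\nabla^2 d\|_{H^s}$ (this is the analogue, in the non-integer range, of Lemma \ref{l.reg.2} used in Lemma \ref{l.Hk.eq.}). The remaining ingredients — the reduction to Lemma \ref{l.Hs.es.1}, the $L^2$ bounds, and the application of Remark \ref{rem.Hom.Sob.} inside Lemma \ref{l.Hs.es.1} — are routine and parallel the integer case of Lemma \ref{l.Hk.eq.}.
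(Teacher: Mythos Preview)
Your reduction to Lemma \ref{l.Hs.es.1} and the treatment of the first estimate, as well as of the term $(\partial_k\partial_j d)\,e(d)$ in the second estimate, are correct and coincide with the paper's proof. The gap lies in your handling of the quadratic piece $F=\nabla d\otimes\nabla d$. As you describe it, the fractional Leibniz rule together with the embedding $H^{s+1}(\mathbb{R}^3)\hookrightarrow L^\infty(\mathbb{R}^3)$ (or, equivalently, bounding the $H^s_6$ factor by $\|\nabla d\|_{H^{s+1}}$) yields $\|F\|_{H^s}\lesssim\|\nabla d\|_{H^s}\,\|\nabla d\|_{H^{s+1}}\lesssim\|\nabla d\|_{H^s}^2+\|\nabla d\|_{H^s}\|\nabla^2 d\|_{H^s}$. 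The term $\|\nabla d\|_{H^s}^2$ contains no factor of $\|\nabla^2 d\|_{H^s}$ whatsoever, so no amount of ``interpolation followed by Young's inequality'' can convert it into something linear in $\|\nabla^2 d\|_{H^s}$; the required form $\|\nabla^2v\|_{H^s}\le C_s\|\nabla^2 d\|_{H^s}(1+\|\nabla d\|_{H^s}^2)$ does not follow from this route.

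The paper closes this cleanly and without any interpolation or Young step: in the fractional Leibniz rule \eqref{fr.L.} with $r=2$ take $p_1=6$, $p_2=3$, so that $\|\nabla d\,\nabla d\|_{H^s}\lesssim\|\nabla d\|_{H^s_6}\|\nabla d\|_{L^3}$. Now apply the \emph{homogeneous} Sobolev embedding (one derivative, $L^2\to L^6$ in $\mathbb{R}^3$) to the first factor: $\|\nabla d\|_{H^s_6}\lesssim\|\nabla^2 d\|_{H^s}$, which is already linear in $\|\nabla^2 d\|_{H^s}$ --- in contrast to the inhomogeneous $\|\nabla d\|_{H^{s+1}}$ you use, no spurious lower-order term $\|\nabla d\|_{H^s}$ appears. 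Combined with $\|\nabla d\|_{L^3}\lesssim\|\nabla d\|_{H^s}$ this gives $\|F\|_{H^s}\lesssim\|\nabla^2 d\|_{H^s}\|\nabla d\|_{H^s}$ directly, and then Lemma \ref{l.Hs.es.1} (using $\|F\|_{L^3}\lesssim\|F\|_{H^s}$) yields the claimed bound. The $L^\infty$ embedding you invoke is never needed.
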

\begin{proof}\hfill\\
It is easy to see that
$$ \nabla v=\nabla d \left(\cos d \eta+\sin d \omega\right), $$
then using Lemma \ref{l.Hs.es.1}
$$ \|\nabla v\|_{H^s(\mathbb{R}^3)}\lesssim \|\nabla d\|_{H^s(\mathbb{R}^3)}+\|\nabla d\|_{L^3(\mathbb{R}^3)}\|\nabla d\|_{H^s(\mathbb{R}^3)}. $$
Then, since $L^3\hookrightarrow H^s$ for $s>\frac{1}{2}$, we get the first estimate.

\vspace{2mm}

For what concerns the estimate of $\nabla^2 v$, we notice that
$$ \nabla^2 v =\nabla^2d\left(-\sin d \eta+\cos d \omega\right)-[\nabla d]^2\left(\cos d \eta +\sin d \omega\right). $$
By Lemma \ref{l.Hs.es.1}
$$ \|\nabla^2v\|_{H^s(\mathbb{R}^3)}\lesssim \left(\|\nabla^2 d\|_{H^s(\mathbb{R}^3)}+\|\nabla d \nabla d\|_{H^s(\mathbb{R}^3)}\right)\left(1+\|\nabla d\|_{H^s(\mathbb{R}^3)}\right), $$
where we have used again that $L^3\hookrightarrow H^s$. Now it is sufficient to notice that, by the fractional Leibniz rule \eqref{fr.L.}
$$ \|\nabla d\nabla d\|_{H^s(\mathbb{R}^3)}\lesssim \|\nabla d\|_{H^s_6(\mathbb{R}^3)}\|\nabla d\|_{L^3(\mathbb{R}^3)}\lesssim \|\nabla^2 d\|_{H^s(\mathbb{R}^3)}\|\nabla d\|_{H^s(\mathbb{R}^3)}. $$
This concludes the proof.
\end{proof}
\begin{prop}\label{p.reg.fr.}
Let $s\in\left(\frac{1}{2},1\right)$, $T\in(0,\infty]$, $\eta,\omega\in \mathcal{S}^2$ with $\eta\perp\omega$, let
$$ v(x)=\cos d(x) \eta+\sin d(x) \omega\quad \text{for a.e.}\:\:x\in\mathbb{R}^3, $$
with $d\in\Theta^s_T$, then $v\in\Theta^s_T$.
\end{prop}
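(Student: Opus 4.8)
The plan is to unwind the definition of $\Theta^s_T$ and to feed it the two pointwise-in-time bounds of Lemma~\ref{l.Hs.es.2}. Recall that $v\in\Theta^s_T$ amounts to three things: (i) $v\in L^\infty((0,T);L^\infty(\mathbb{R}^3))$; (ii) $[v(t)]\in\widehat H^1(\mathbb{R}^3)$ for a.e.\ $t\in(0,T)$; and (iii) $\nabla v\in X^s_T$, i.e.\ $\nabla v\in L^\infty((0,T);H^s(\mathbb{R}^3))$ together with $\nabla^2 v\in L^2((0,T);H^s(\mathbb{R}^3))$.

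First I would dispose of (i) and (ii), which are elementary. Since $v$ takes values in $\mathcal S^2$ we have $\|v(t)\|_{L^\infty(\mathbb{R}^3)}=1$, so (i) holds. For (ii), the pointwise identity $|\nabla v|=|\nabla d|$ (already observed for the parametrization of $v$) gives
$$ \|\nabla v(t)\|_{L^2(\mathbb{R}^3)}=\|\nabla d(t)\|_{L^2(\mathbb{R}^3)}\le \|\nabla d(t)\|_{H^s(\mathbb{R}^3)}<+\infty\qquad\text{for a.e.\ }t\in(0,T), $$
because $d\in\Theta^s_T$ forces $\nabla d\in X^s_T\subset L^\infty((0,T);H^s(\mathbb{R}^3))$; hence $[v(t)]\in\widehat H^1(\mathbb{R}^3)$ for a.e.\ $t$.

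It remains to check (iii). Applying the first estimate of Lemma~\ref{l.Hs.es.2} at each $t\in(0,T)$,
$$ \|\nabla v(t)\|_{H^s(\mathbb{R}^3)}\le C_s\|\nabla d(t)\|_{H^s(\mathbb{R}^3)}\bigl(1+\|\nabla d(t)\|_{H^s(\mathbb{R}^3)}\bigr)\le C_s\bigl(1+\|d\|_{\Theta^s_T}\bigr)\|\nabla d(t)\|_{H^s(\mathbb{R}^3)}, $$
so taking the supremum in $t$ and using $\|\nabla d\|_{L^\infty((0,T);H^s)}\le\|d\|_{\Theta^s_T}<+\infty$ yields $\nabla v\in L^\infty((0,T);H^s(\mathbb{R}^3))$. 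Likewise, the second estimate of Lemma~\ref{l.Hs.es.2} gives, for a.e.\ $t$,
$$ \|\nabla^2 v(t)\|_{H^s(\mathbb{R}^3)}\le C_s\|\nabla^2 d(t)\|_{H^s(\mathbb{R}^3)}\bigl(1+\|\nabla d(t)\|_{H^s(\mathbb{R}^3)}^2\bigr)\le C_s\bigl(1+\|d\|_{\Theta^s_T}^2\bigr)\|\nabla^2 d(t)\|_{H^s(\mathbb{R}^3)}, $$
where the nonlinear factor has been controlled uniformly in $t$ via $\nabla d\in L^\infty((0,T);H^s)$. Taking the $L^2((0,T))$-norm and using $\|\nabla^2 d\|_{L^2((0,T);H^s)}\le\|\nabla d\|_{X^s_T}\le\|d\|_{\Theta^s_T}<+\infty$ yields $\nabla^2 v\in L^2((0,T);H^s(\mathbb{R}^3))$. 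Combined with the previous display this gives $\nabla v\in X^s_T$, which completes the verification that $v\in\Theta^s_T$.

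There is no genuine obstacle here beyond Lemma~\ref{l.Hs.es.2} itself; the only point to watch is that the nonlinear factors $1+\|\nabla d(t)\|_{H^s}$ and $1+\|\nabla d(t)\|_{H^s}^2$ must be bounded \emph{uniformly} in $t$ before one integrates in time. This is exactly what the $L^\infty_tH^s_x$ control contained in $d\in\Theta^s_T$ supplies, and it is why the argument uses the full $X^s_T$ information on $\nabla d$ rather than, say, only the $L^2_tH^{s+1}_x$ part.
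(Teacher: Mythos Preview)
Your proof is correct and follows exactly the route the paper intends: it applies the pointwise-in-$t$ estimates of Lemma~\ref{l.Hs.es.2}, bounds the nonlinear factor $\|\nabla d(t)\|_{H^s}$ uniformly via $d\in\Theta^s_T$, and then takes the $L^\infty_t$ and $L^2_t$ norms respectively. The paper simply says ``The proof is the same of Proposition~\ref{p.reg.int.}'', which is precisely what you have written out (with Lemma~\ref{l.Hs.es.2} in place of Lemma~\ref{l.Hk.eq.}).
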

The proof is the same of Proposition \ref{p.reg.int.}. We have already noticed that Lemma \ref{l.Hs.es.2} does not include an estimate from $\|\nabla v\|_{H^s}$ to $\|\nabla d\|_{H^s}$ as in Lemma \ref{l.Hk.eq.} for $s\in\mathbb{N}$. This makes us unable to prove the local existence result: if we take $v_0\colon\mathbb{R}^3\to \mathcal{S}^2$ with $\nabla v_0\in H^s(\mathbb{R}^3;\mathbb{R}^9)$, even if $v_0\in\text{Span}\{\eta,\omega\}$, we are not able to prove that $\nabla d_0\in H^s(\mathbb{R}^3;\mathbb{R}^3)$. This is no longer true for the global existence, thanks to the following lemma:
\begin{lem}\label{l.Hs.IC.}
    Let $s\in\left(\frac{1}{2},1\right)$, let $\eta,\omega\in \mathcal{S}^2$ with $\eta\perp\omega$, let $v_0\colon\mathbb{R}^3\to \mathcal{S}^2$ with
    $$ v_0(x)=\cos d_0(x)\eta+\sin d_0(x)\omega\quad \text{for a.e.}\:\:x\in\mathbb{R}^3, $$
    then we can find $\varepsilon_0>0$ sufficiently small such that, for any $\varepsilon\in[0,\varepsilon_0)$, if
    $$ \|v_0-\eta\|_{L^\infty(\mathbb{R}^3)}+\|\nabla v_0\|_{H^s(\mathbb{R}^3)}\le \varepsilon, $$
    then $\nabla d_0\in H^s(\mathbb{R}^3;\mathbb{R}^3)$ with
    $$ \|d_0\|_{L^\infty(\mathbb{R}^3)}^2+\|\nabla d_0\|_{H^s(\mathbb{R}^3)}\lesssim \varepsilon. $$
\end{lem}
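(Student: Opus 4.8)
The plan is to avoid working with $d_0$ directly — whose gradient is exactly what we must control — and instead to reconstruct $\nabla d_0$ from the scalar $\varphi_\omega:=\langle v_0,\omega\rangle$, whose gradient is a genuine component block of the given datum $\nabla v_0$. First I would record the $L^\infty$ bound on $d_0$: using $\eta\perp\omega$ and $|\eta|=|\omega|=|v_0|=1$ one has $|v_0-\eta|^2=2(1-\langle v_0,\eta\rangle)=2(1-\cos d_0)=4\sin^2(d_0/2)$, so $2|\sin(d_0/2)|=|v_0-\eta|\le\varepsilon$ a.e.; replacing $d_0$ by its (a.e.\ unique) representative with values in $(-\pi,\pi]$ — which leaves $v_0$ unchanged — and using $|d_0|/\pi\le|\sin(d_0/2)|$ on that range, we get $\|d_0\|_{L^\infty(\mathbb{R}^3)}\le\pi\varepsilon/2$. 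For $\varepsilon_0$ small enough we then have, a.e., $\cos d_0\ge \tfrac12$ and $\varphi_\omega=\sin d_0$ with values in $[-\tfrac12,\tfrac12]$; in particular $\|d_0\|_{L^\infty}^2\lesssim\varepsilon^2\le\varepsilon$, which already takes care of that term in the conclusion.

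Next, since $v_0\in\mathrm{Span}\{\eta,\omega\}$ the gradient $\nabla\varphi_\omega=\sum_i\omega_i\nabla(v_0)_i$ lies in $H^s(\mathbb{R}^3;\mathbb{R}^3)$ with $\|\nabla\varphi_\omega\|_{H^s}\le\|\nabla v_0\|_{H^s}\le\varepsilon$. From $\varphi_\omega=\sin d_0$ and $\cos d_0=\sqrt{1-\varphi_\omega^2}>0$ the chain rule for Sobolev compositions gives $\nabla d_0=g(\varphi_\omega)\,\nabla\varphi_\omega$ with $g(t):=(1-t^2)^{-1/2}$. Choosing $\tilde g\in C^1(\mathbb{R})$ with $\tilde g,\tilde g'\in L^\infty(\mathbb{R})$ and $\tilde g\equiv g$ on $[-\tfrac12,\tfrac12]$, and using that $\varphi_\omega$ has range in $[-\tfrac12,\tfrac12]$, we also have $\nabla d_0=\tilde g(\varphi_\omega)\,\nabla\varphi_\omega$. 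Now Lemma \ref{l.Hs.es.1} applies verbatim with $N=3$, $w=\nabla d_0$, $f=\nabla\varphi_\omega\in H^s$, $d=\varphi_\omega$ (so $\nabla d=\nabla\varphi_\omega\in H^s$) and $g=\tilde g$, giving
\[ \|\nabla d_0\|_{H^s(\mathbb{R}^3)}\le C_s\Big(\|\tilde g\|_{L^\infty}\|\nabla\varphi_\omega\|_{H^s(\mathbb{R}^3)}+\|\tilde g'\|_{L^\infty}\|\nabla\varphi_\omega\|_{L^3(\mathbb{R}^3)}\|\nabla\varphi_\omega\|_{H^s(\mathbb{R}^3)}\Big). \]
Since $s>\tfrac12$, $H^s(\mathbb{R}^3)\hookrightarrow L^3(\mathbb{R}^3)$, so $\|\nabla\varphi_\omega\|_{L^3}\lesssim\|\nabla\varphi_\omega\|_{H^s}\le\varepsilon$, and with the fixed constants $\|\tilde g\|_{L^\infty},\|\tilde g'\|_{L^\infty}$ this yields $\|\nabla d_0\|_{H^s}\lesssim\varepsilon+\varepsilon^2\lesssim\varepsilon$ for $\varepsilon\le1$; combined with Step~1 this gives $\|d_0\|_{L^\infty}^2+\|\nabla d_0\|_{H^s}\lesssim\varepsilon$, the asserted bound.

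The only delicate point — and what might look like the main obstacle — is the apparent circularity of "differentiating $g\circ d_0$" while $\nabla d_0\in H^s$ is still unknown; this is precisely why one passes to $\varphi_\omega=\langle v_0,\omega\rangle$, whose gradient is literally a block of the given $\nabla v_0$, so that in Lemma \ref{l.Hs.es.1} it is $\varphi_\omega$, not $d_0$, that plays the role of the inner function and no a priori control of $\nabla d_0$ is needed. The truncation of $g$ outside $[-\tfrac12,\tfrac12]$ is harmless by the $L^\infty$-smallness of $d_0$ (equivalently of $v_0-\eta$) established at the outset, and this same smallness is what forbids the pathological multi-valued choices of $d_0$ that would otherwise make the statement false.
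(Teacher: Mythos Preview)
Your proof is correct and takes a genuinely different route from the paper. The paper projects the identity $\nabla v_0=\nabla d_0(-\sin d_0\,\eta+\cos d_0\,\omega)$ onto $\omega$ to obtain $\nabla d_0=\nabla v_0\cdot\omega-(1-\cos d_0)\nabla d_0$, then applies Lemma~\ref{l.Hs.es.1} to the term $(1-\cos d_0)\nabla d_0$ (with $f=\nabla d_0$, inner function $d_0$) and absorbs the resulting $\varepsilon\|\nabla d_0\|_{H^s}$ term on the left for $\varepsilon$ small. You instead invert the relation $\varphi_\omega=\sin d_0$ directly to write $\nabla d_0=\tilde g(\varphi_\omega)\nabla\varphi_\omega$, and apply Lemma~\ref{l.Hs.es.1} with $\varphi_\omega$ as the inner function; since $\nabla\varphi_\omega$ is a component of the given datum $\nabla v_0$, all inputs to the lemma are controlled a priori and no absorption is needed. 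Your approach thus sidesteps the qualitative circularity that the paper's argument leaves implicit (the paper's inequality $\|\nabla d_0\|_{H^s}\lesssim\varepsilon+\varepsilon\|\nabla d_0\|_{H^s}$ presupposes $\|\nabla d_0\|_{H^s}<\infty$ to be meaningful, which strictly speaking requires an approximation step), at the modest cost of introducing the truncation $\tilde g$. Both arguments rely on the same ingredients (Lemma~\ref{l.Hs.es.1}, the embedding $H^s\hookrightarrow L^3$ for $s>\tfrac12$, and the $L^\infty$-smallness of $d_0$), but yours is more self-contained.
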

\begin{proof}\hfill\\
Let us start from the $L^\infty$-norm:
$$ |d_0(x)|^2\lesssim |1-\cos d_0(x)|+|\sin d_0(x)|=|v_0(x)-\eta|\le \varepsilon\quad \text{for a.e.}\:\:x\in\mathbb{R}^3. $$


\noindent On the other hand
 \begin{equation}\label{der.id.}
     \nabla v_0=\nabla d_0(-\sin d_0\eta+\cos d_0\omega),
 \end{equation}
 so it can be seen easily that
 $$ |\nabla v_0|=|\nabla d_0|. $$
 Therefore
 $$ \|\nabla d_0\|_{L^q(\mathbb{R}^3)}=\|\nabla v_0\|_{L^q(\mathbb{R}^3)}\quad \forall q\ge 1. $$
 Now we notice that
 $$ \nabla d_0=\nabla v_0\cdot\omega-(1-\cos d_0)\nabla d_0, $$
 where we have multiplied \eqref{der.id.} by $\omega$ using the fact that $\omega\perp\eta$ and $|\omega|=1$. Now thanks to Lemma \ref{l.Hs.es.1}
 $$ \|\nabla d_0\|_{H^s(\mathbb{R}^3)}\le \|\nabla v_0\cdot \omega\|_{H^s(\mathbb{R}^3)}+\|(1-\cos d_0)\nabla d_0\|_{H^s(\mathbb{R}^3)}\lesssim $$
 $$ \lesssim \|\nabla v_0\|_{H^s(\mathbb{R}^3)}+\|1-\cos d_0\|_{L^\infty(\mathbb{R}^3)}\|\nabla d_0\|_{H^s(\mathbb{R}^3)} + \|\nabla d_0\|_{L^{3}(\mathbb{R}^3)}\|\nabla d_0\|_{H^s(\mathbb{R}^3)}. $$
On the other hand
 $$ \|1-\cos d_0\|_{L^\infty(\mathbb{R}^3)}\lesssim \|d_0\|_{L^\infty(\mathbb{R}^3)}^2 $$
 and
 $$ \|\nabla d_0\|_{L^3(\mathbb{R}^3)}=\|\nabla v_0\|_{L^3(\mathbb{R}^3)}\lesssim \|\nabla v_0\|_{H^s(\mathbb{R}^3)}, $$
 thanks to the condition $s\ge \frac{1}{2}$. Finally,
 $$ \|\nabla d_0\|_{H^s(\mathbb{R}^3)}\lesssim \varepsilon + \varepsilon\|\nabla d_0\|_{H^s(\mathbb{R}^3)}, $$
 so for $\varepsilon\ll1$ we conclude.
\end{proof}
Finally, thanks to Lemma \ref{l.Hs.IC.}, the proof of Theorem \ref{t.ex.fr.} is the same of Theorem \ref{t.ex.int.}.

\newpage

\section{Appendix}

In this section, we prove the technical lemma we have used in the previous sections.

\subsection{Proofs of Lemma from Sections 3 and 4}

\begin{lem}
Let $0\le\alpha_1,\alpha_2<1$ and $T>0$, then it holds
$$ \int_0^t (t-\tau)^{-\alpha_1}\tau^{-\alpha_2}d\tau\le C(\alpha_1,\alpha_2) t^{-\max\{\alpha_1,\alpha_2\}}T^{1-\min\{\alpha_1,\alpha_2\}} \quad \forall t\in(0,T). $$
\end{lem}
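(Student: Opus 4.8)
The plan is to reduce the integral to an Euler Beta function by rescaling, and then rewrite the resulting power of $t$ in the form demanded by the statement.

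First I would perform the change of variables $\tau=t\sigma$, $d\tau=t\,d\sigma$, which gives
\begin{equation}
\int_0^t (t-\tau)^{-\alpha_1}\tau^{-\alpha_2}\,d\tau = t^{1-\alpha_1-\alpha_2}\int_0^1 (1-\sigma)^{-\alpha_1}\sigma^{-\alpha_2}\,d\sigma = t^{1-\alpha_1-\alpha_2}\, B\bigl(1-\alpha_2,\,1-\alpha_1\bigr),
\end{equation}
where $B$ denotes the Beta function. Since $0\le \alpha_1,\alpha_2<1$, the exponents $1-\alpha_1$ and $1-\alpha_2$ are strictly positive, so the integral $\int_0^1(1-\sigma)^{-\alpha_1}\sigma^{-\alpha_2}\,d\sigma$ converges and equals a finite constant $C(\alpha_1,\alpha_2)$ depending only on $\alpha_1,\alpha_2$.

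It then remains only to estimate the factor $t^{1-\alpha_1-\alpha_2}$. Here I would use the elementary identity
\begin{equation}
1-\alpha_1-\alpha_2 = -\max\{\alpha_1,\alpha_2\} + \bigl(1-\min\{\alpha_1,\alpha_2\}\bigr),
\end{equation}
so that $t^{1-\alpha_1-\alpha_2} = t^{-\max\{\alpha_1,\alpha_2\}}\, t^{\,1-\min\{\alpha_1,\alpha_2\}}$. Because $\min\{\alpha_1,\alpha_2\}<1$, the exponent $1-\min\{\alpha_1,\alpha_2\}$ is positive, hence $t^{\,1-\min\{\alpha_1,\alpha_2\}}\le T^{\,1-\min\{\alpha_1,\alpha_2\}}$ for every $t\in(0,T)$. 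Combining with the previous display yields
\begin{equation}
\int_0^t (t-\tau)^{-\alpha_1}\tau^{-\alpha_2}\,d\tau \le C(\alpha_1,\alpha_2)\, t^{-\max\{\alpha_1,\alpha_2\}}\,T^{\,1-\min\{\alpha_1,\alpha_2\}},
\end{equation}
which is the claim.

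If one prefers to avoid invoking the Beta function, the same bound follows by splitting the integral at $\tau=t/2$: on $(0,t/2)$ one has $t-\tau\ge t/2$, so $(t-\tau)^{-\alpha_1}\le 2^{\alpha_1}t^{-\alpha_1}$ while $\int_0^{t/2}\tau^{-\alpha_2}\,d\tau\le \frac{(t/2)^{1-\alpha_2}}{1-\alpha_2}$; on $(t/2,t)$ one has $\tau\ge t/2$, so $\tau^{-\alpha_2}\le 2^{\alpha_2}t^{-\alpha_2}$ while $\int_{t/2}^{t}(t-\tau)^{-\alpha_1}\,d\tau\le\frac{(t/2)^{1-\alpha_1}}{1-\alpha_1}$. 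Both pieces are therefore bounded by $C(\alpha_1,\alpha_2)\,t^{1-\alpha_1-\alpha_2}$, and one concludes exactly as above. There is no genuine obstacle in this lemma: the only point requiring a bit of care is the exponent bookkeeping $1-\alpha_1-\alpha_2=-\max\{\alpha_1,\alpha_2\}+\bigl(1-\min\{\alpha_1,\alpha_2\}\bigr)$, which, together with $t\le T$ and $1-\min\{\alpha_1,\alpha_2\}>0$, produces precisely the asserted form of the bound.
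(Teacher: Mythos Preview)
Your proof is correct. Your primary argument via the Beta function is actually more direct than the paper's: the paper does not compute the integral exactly but instead uses only the splitting at $\tau=t/2$ that you describe as an alternative. The one minor difference in that splitting is that the paper bounds $\int_0^{t/2}\tau^{-\alpha_2}\,d\tau\le\int_0^{T}\tau^{-\alpha_2}\,d\tau\simeq T^{1-\alpha_2}$ (and symmetrically on the other half), obtaining $t^{-\alpha_1}T^{1-\alpha_2}+t^{-\alpha_2}T^{1-\alpha_1}$ directly, whereas you first reduce both halves to $C\,t^{1-\alpha_1-\alpha_2}$ and then use the exponent identity $1-\alpha_1-\alpha_2=-\max\{\alpha_1,\alpha_2\}+(1-\min\{\alpha_1,\alpha_2\})$ together with $t\le T$. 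Your Beta-function route has the advantage of giving an exact evaluation and making the exponent bookkeeping transparent; the paper's version avoids invoking the Beta function but needs the additional observation that $t^{-\alpha_2}T^{1-\alpha_1}\le t^{-\alpha_1}T^{1-\alpha_2}$ when $\alpha_1\ge\alpha_2$ (which follows from $t\le T$) to collapse the two terms into the stated bound.
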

\begin{proof}\hfill\\
We split the integral in two pieces:
$$ \int_0^t (t-\tau)^{-\alpha_1}\tau^{-\alpha_2}d\tau= \int_0^{t/2} (t-\tau)^{-\alpha_1}\tau^{-\alpha_2}d\tau+\int_{t/2}^t (t-\tau)^{-\alpha_1}\tau^{-\alpha_2}d\tau. $$
For what concerns the first part, we notice that $t-\tau\ge \frac{t}{2}$, so
$$ \int_0^{t/2} (t-\tau)^{-\alpha_1}\tau^{-\alpha_2}d\tau\lesssim t^{-\alpha_1}\int_0^T\tau^{-\alpha_2}d\tau\simeq t^{-\alpha_1}T^{1-\alpha_2}. $$
On the other hand,
$$ \int_{t/2}^t (t-\tau)^{-\alpha_1}\tau^{-\alpha_2}d\tau\lesssim t^{-\alpha_2}\int_0^T\tau^{-\alpha_1}d\tau\simeq t^{-\alpha_2}T^{1-\alpha_1}. $$
So,
$$ \int_0^t (t-\tau)^{-\alpha_1}\tau^{-\alpha_2}d\tau\lesssim t^{-\alpha_1}T^{1-\alpha_2}+t^{-\alpha_2}T^{1-\alpha_1}. $$
\end{proof}

\begin{lem}
Let $0\le\alpha_1,\alpha_2<1$ and $\beta_1,\beta_2> 1$, then it holds
$$ \int_0^t \frac{1}{((t-\tau)^{\alpha_1}+(t-\tau)^{\beta_1})}\frac{1}{(\tau^{\alpha_2}+\tau^{\beta_2})}d\tau\le C(\alpha_1,\alpha_2,\beta_1,\beta_2) (t^{\max\{\alpha_1,\alpha_2\}}+t^{\min\{\beta_1,\beta_2\}})^{-1}\quad \forall t>0. $$
\end{lem}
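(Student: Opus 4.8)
The plan is to reduce everything to two elementary facts and then separate the regimes $t\lesssim 1$ and $t\gtrsim 1$. First I would record the auxiliary observations: for $a>0$ and exponents $0\le\mu<1<\nu$ one has $a^\mu+a^\nu\ge a^\mu$ and $a^\mu+a^\nu\ge a^\nu$ always, while $a^\mu+a^\nu\le 2a^\mu$ if $a\le 1$ and $a^\mu+a^\nu\le 2a^\nu$ if $a\ge 1$; consequently the quantity $t^{\max\{\alpha_1,\alpha_2\}}+t^{\min\{\beta_1,\beta_2\}}$ is comparable to $t^{\max\{\alpha_1,\alpha_2\}}$ for $0<t\le 1$ and to $t^{\min\{\beta_1,\beta_2\}}$ for $t\ge 1$ (here one uses $\max\{\alpha_1,\alpha_2\}<1<\min\{\beta_1,\beta_2\}$). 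I would also use the Beta-function identity
\[ \int_0^t (t-\tau)^{-\alpha_1}\tau^{-\alpha_2}\,d\tau = B(1-\alpha_1,1-\alpha_2)\,t^{1-\alpha_1-\alpha_2}, \]
which is finite because $\alpha_1,\alpha_2<1$, together with the convergence of $\int_0^\infty (\tau^{\alpha_i}+\tau^{\beta_i})^{-1}\,d\tau$ (integrable near $0$ since $\alpha_i<1$ and near $\infty$ since $\beta_i>1$).

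For $0<t\le 2$ I would simply discard the higher powers in the denominators, using $(t-\tau)^{\alpha_1}+(t-\tau)^{\beta_1}\ge (t-\tau)^{\alpha_1}$ and $\tau^{\alpha_2}+\tau^{\beta_2}\ge\tau^{\alpha_2}$, so that the integral is bounded by $B(1-\alpha_1,1-\alpha_2)\,t^{1-\alpha_1-\alpha_2}$. Since $1-\min\{\alpha_1,\alpha_2\}\ge 0$ we have $1-\alpha_1-\alpha_2\ge -\max\{\alpha_1,\alpha_2\}$; hence for $0<t\le 1$, $t^{1-\alpha_1-\alpha_2}\le t^{-\max\{\alpha_1,\alpha_2\}}\lesssim (t^{\max\{\alpha_1,\alpha_2\}}+t^{\min\{\beta_1,\beta_2\}})^{-1}$, while for $1\le t\le 2$ both $t^{1-\alpha_1-\alpha_2}$ and $(t^{\max\{\alpha_1,\alpha_2\}}+t^{\min\{\beta_1,\beta_2\}})^{-1}$ are pinched between two positive constants. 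This settles $t\le 2$.

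For $t\ge 2$ I would split $\int_0^t=\int_0^{t/2}+\int_{t/2}^t$; the change of variables $\tau\mapsto t-\tau$ turns the second integral into the first with $(\alpha_1,\beta_1)$ and $(\alpha_2,\beta_2)$ interchanged, so it suffices to estimate $\int_0^{t/2}$. On $(0,t/2)$ one has $t-\tau\ge t/2\ge 1$, hence $(t-\tau)^{\alpha_1}+(t-\tau)^{\beta_1}\ge (t-\tau)^{\beta_1}\ge (t/2)^{\beta_1}\gtrsim t^{\beta_1}$, and therefore
\[ \int_0^{t/2}\frac{d\tau}{\big((t-\tau)^{\alpha_1}+(t-\tau)^{\beta_1}\big)\big(\tau^{\alpha_2}+\tau^{\beta_2}\big)}\lesssim t^{-\beta_1}\int_0^{\infty}\frac{d\tau}{\tau^{\alpha_2}+\tau^{\beta_2}}\lesssim t^{-\beta_1}\le t^{-\min\{\beta_1,\beta_2\}}, \]
the last step using $t\ge 1$. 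Symmetrically $\int_{t/2}^t\lesssim t^{-\beta_2}\le t^{-\min\{\beta_1,\beta_2\}}$. Adding the two pieces and using $t^{-\min\{\beta_1,\beta_2\}}\lesssim (t^{\max\{\alpha_1,\alpha_2\}}+t^{\min\{\beta_1,\beta_2\}})^{-1}$ for $t\ge 1$ finishes the proof.

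There is no genuine obstacle here; the only point requiring attention is that for $t\ge 1$ the integrand changes its asymptotic behaviour as $\tau$ (respectively $t-\tau$) crosses the value $1$, where the powers $\tau^{\alpha_i}$ and $\tau^{\beta_i}$ exchange dominance. The dyadic cut at $t/2$ is precisely what separates the region where $\tau$ is small from the region where $t-\tau$ is small, so that in each piece one factor can be replaced by a single power of $t$ and the remaining factor integrated against its full, finite mass on $(0,\infty)$.
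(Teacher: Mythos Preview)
Your proof is correct. The core idea in the regime $t\ge 2$ --- split at $t/2$, freeze one factor at its minimum on the corresponding half-interval, and integrate the remaining factor over its full mass on $(0,\infty)$ --- is exactly what the paper does. The difference is that the paper runs this argument uniformly for all $t>0$: on $(0,t/2)$ one has $t-\tau\ge t/2$, hence $(t-\tau)^{\alpha_1}+(t-\tau)^{\beta_1}\ge 2^{-\beta_1}(t^{\alpha_1}+t^{\beta_1})$, giving the bound $(t^{\alpha_1}+t^{\beta_1})^{-1}$ directly, with no need to distinguish small and large $t$ or to invoke the Beta identity. Your separate treatment of $t\le 2$ via $B(1-\alpha_1,1-\alpha_2)\,t^{1-\alpha_1-\alpha_2}$ is perfectly valid but unnecessary; keeping the full sum $t^{\alpha_i}+t^{\beta_i}$ in the bound from the outset makes the argument shorter and avoids the case split.
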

\begin{proof}\hfill\\
We split the integral in two pieces:
$$ \int_0^t \frac{1}{((t-\tau)^{\alpha_1}+(t-\tau)^{\beta_1})}\frac{1}{(\tau^{\alpha_2}+\tau^{\beta_2})}d\tau= $$
$$ = \int_0^{t/2} \frac{1}{((t-\tau)^{\alpha_1}+(t-\tau)^{\beta_1})}\frac{1}{(\tau^{\alpha_2}+\tau^{\beta_2})}d\tau + \int_{t/2}^t \frac{1}{((t-\tau)^{\alpha_1}+(t-\tau)^{\beta_1})}\frac{1}{(\tau^{\alpha_2}+\tau^{\beta_2})}d\tau. $$
For what concerns the first part, we notice that $t-\tau\ge \frac{t}{2}$, so
$$ \int_0^{t/2} \frac{1}{((t-\tau)^{\alpha_1}+(t-\tau)^{\beta_1})}\frac{1}{(\tau^{\alpha_2}+\tau^{\beta_2})}d\tau\lesssim (t^{\alpha_1}+t^{\beta_1})^{-1}\int_0^\infty(\tau^{\alpha_2}+\tau^{\beta_2})^{-1}d\tau. $$
Now, since $\alpha_2<1$ and $\beta_2>1$, we have that
$$ \int_0^1 (\tau^{\alpha_2}+\tau^{\beta_2})^{-1}d\tau\le \int_0^1\tau^{-\alpha_2}d\tau\simeq 1; $$
$$ \int_1^\infty (\tau^{\alpha_2}+\tau^{\beta_2})^{-1}d\tau\le \int_1^\infty (1+\tau^{\beta_2})^{-1}d\tau\simeq 1. $$
So
$$ \int_0^{t/2} \frac{1}{((t-\tau)^{\alpha_1}+(t-\tau)^{\beta_1})}\frac{1}{(\tau^{\alpha_2}+\tau^{\beta_2})}d\tau\lesssim (t^{\alpha_1}+t^{\beta_1})^{-1}. $$
On the other hand,
$$ \int_{t/2}^t \frac{1}{((t-\tau)^{\alpha_1}+(t-\tau)^{\beta_1})}\frac{1}{(\tau^{\alpha_2}+\tau^{\beta_2})}d\tau\lesssim (t^{\alpha_2}+t^{\beta_2})^{-1}\int_{t/2}^t((t-\tau)^{\alpha_1}+(t-\tau)^{\beta_1})^{-1}d\tau\le $$
$$ \le (t^{\alpha_2}+t^{\beta_2})^{-1}\int_0^\infty (\tau^{\alpha_1}+\tau^{\beta_1})^{-1}d\tau\simeq (t^{\alpha_2}+t^{\beta_2})^{-1}. $$
\end{proof}

\subsection{Proofs of Lemma from Section 5}

\begin{lem}\label{l.reg.a.}
Let $N\ge  3$, $\ell\in\mathbb{N}$ with $\ell\ge 1$, let $s_1,\ldots,s_\ell\in\mathbb{N}$ and $s\in\mathbb{N}$ with $s\ge \frac{N}{2}-1$ such that
$$ s_1+\ldots+s_\ell\le s-(\ell-1), $$
let $v_j\in H^s(\mathbb{R}^N)$ for $j=1,\ldots,\ell$, then
$$ \|\nabla^{s_1}v_1\cdots\nabla^{s_\ell}v_\ell\|_{L^2(\mathbb{R}^N)}\lesssim \prod_{j=1}^\ell\|v_j\|_{H^s(\mathbb{R}^N)}. $$
\end{lem}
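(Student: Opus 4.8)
The plan is to estimate the product pointwise by H\"older's inequality and then to bound each factor $\nabla^{s_j}v_j$ in a well chosen Lebesgue space by $\|v_j\|_{H^s(\mathbb{R}^N)}$ via Sobolev embedding. First I would observe that $s_j\le s_1+\dots+s_\ell\le s-(\ell-1)\le s$ (using $\ell\ge 1$), so $s-s_j\ge \ell-1\ge 0$ and therefore $\nabla^{s_j}v_j\in H^{s-s_j}(\mathbb{R}^N)$ with $\|\nabla^{s_j}v_j\|_{H^{s-s_j}(\mathbb{R}^N)}\le\|v_j\|_{H^s(\mathbb{R}^N)}$. By the Sobolev embedding $H^{s-s_j}(\mathbb{R}^N)\hookrightarrow L^{q_j}(\mathbb{R}^N)$, this gives
$$ \|\nabla^{s_j}v_j\|_{L^{q_j}(\mathbb{R}^N)}\lesssim\|v_j\|_{H^s(\mathbb{R}^N)} $$
for every finite $q_j\ge 2$ with $\tfrac{1}{q_j}\ge\tfrac12-\tfrac{s-s_j}{N}$ (when $s-s_j\ge N/2$ every finite $q_j\ge 2$ works, and the endpoint $q_j=\infty$ will not be needed).

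Next I would choose exponents $q_1,\dots,q_\ell\in[2,\infty)$ satisfying $\sum_{j=1}^\ell\tfrac{1}{q_j}=\tfrac12$ together with $\tfrac1{q_j}\ge\tfrac12-\tfrac{s-s_j}{N}$ for each $j$, and apply H\"older's inequality:
$$ \bigl\|\nabla^{s_1}v_1\cdots\nabla^{s_\ell}v_\ell\bigr\|_{L^2(\mathbb{R}^N)}\le\prod_{j=1}^\ell\bigl\|\nabla^{s_j}v_j\bigr\|_{L^{q_j}(\mathbb{R}^N)}\lesssim\prod_{j=1}^\ell\|v_j\|_{H^s(\mathbb{R}^N)}, $$
which is exactly the claimed inequality.

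The only genuine step is to check that such exponents exist, i.e. that $\tfrac12$ lies between $\sum_{j=1}^\ell\max\{0,\tfrac12-\tfrac{s-s_j}{N}\}$ and $\sum_{j=1}^\ell\tfrac12=\tfrac{\ell}{2}$. The upper bound $\tfrac12\le\tfrac\ell2$ is just $\ell\ge 1$; once it is known, the remaining ``room'' can be distributed among the $q_j$ while keeping each in $[2,\infty)$. For the lower bound it suffices to verify $\sum_{j=1}^\ell\left(\tfrac12-\tfrac{s-s_j}{N}\right)\le\tfrac12$, i.e. $\tfrac{\ell-1}{2}\le\tfrac{\ell s-\sum_j s_j}{N}$; feeding in the hypothesis $\sum_j s_j\le s-(\ell-1)$ and rearranging, this reduces to $(\ell-1)\left(s+1-\tfrac{N}{2}\right)\ge 0$, which holds because $\ell\ge 1$ and $s\ge\tfrac{N}{2}-1$. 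I expect this elementary arithmetic — recognizing that the constraint $s_1+\dots+s_\ell\le s-(\ell-1)$ is precisely what permits the splitting $\tfrac12=\sum_j\tfrac1{q_j}$ within the admissible Sobolev ranges — to be the main (and essentially only) difficulty; everything else is H\"older plus Sobolev as above.
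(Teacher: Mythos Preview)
Your proof is correct and more direct than the paper's. The paper argues by induction on $\ell$: for $\ell\ge 2$ it distinguishes two cases. If some $s-s_{j_0}\ge N/2$, it peels off that factor in $L^p$ for an arbitrary finite $p$, uses the Sobolev embedding into $L^{2N/(N-2)}$ to trade the remaining $(\ell-1)$-fold product for one carrying an extra derivative, and invokes the inductive hypothesis. If instead every $s-s_j<N/2$, it applies H\"older with the critical exponents $p_j=2N/(N-2(s-s_j))$ and checks $\sum 1/p_j\le 1/2$, which is exactly your arithmetic $(\ell-1)(s+1-N/2)\ge 0$. Your argument unifies both cases by allowing sub-critical exponents from the start, eliminating the induction and the case split.

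One cosmetic point: the claim ``it suffices to verify $\sum_j\bigl(\tfrac12-\tfrac{s-s_j}{N}\bigr)\le\tfrac12$'' is not literally a sufficient condition for the inequality with $\max\{0,\cdot\}$, since replacing negative summands by zero can only increase the total. However, the same computation restricted to the subset $J$ of indices with $s-s_j<N/2$ (using $|J|\le\ell$ and $\sum_{j\in J}s_j\le s-(\ell-1)$) yields $\sum_{j\in J}\bigl(\tfrac12-\tfrac{s-s_j}{N}\bigr)\le\tfrac12$, and strictly so when $J$ is a proper subset, which guarantees that all $q_j$ can indeed be chosen finite. So the gap is purely in the wording, not in the substance.
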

\begin{proof}\hfill\\
We proceed by induction on $\ell$: if $\ell=1$, then $s_1\le k$ so
$$ \|\nabla^{s_1}v\|_{L^2(\mathbb{R}^N)}\le \|v\|_{H^s(\mathbb{R}^N)}. $$
Let us take now $\ell\ge 2$. We consider two cases:
\begin{itemize}
    \item Let us suppose there is $j_0=1,\ldots,\ell$ such that $s-s_{j_0}\ge \frac{N}{2}$. In this case for any $p\in[2,\infty)$
    $$ (1-\Delta)^{\widetilde{s}/2}\nabla^{s_{j_0}}v_{j_0}\in H^{s-s_{j_0}}(\mathbb{R}^N)\hookrightarrow L^p(\mathbb{R}^N). $$
    Then
    $$ \left\|\prod_{j=1}^\ell \nabla^{s_j}v_j\right\|_{L^2(\mathbb{R}^N)}\le \|\nabla^{s_{j_0}}v_{j_0}\|_{L^p(\mathbb{R}^N)}\left\|\prod_{j\neq j_0}\nabla^{s_j}v_j\right\|_{L^q(\mathbb{R}^N)} $$
    with
    $$ \frac{1}{p}+\frac{1}{q}=\frac{1}{2}. $$
    Thanks to the previous remark, we have that for any $p\in[2,\infty)$
    $$ \|\nabla^{s_{j_0}}v_{j_0}\|_{L^p(\mathbb{R}^N)}\lesssim \|\nabla^{s_{j_0}}v_{j_0}\|_{H^{s-s_{j_0}}(\mathbb{R}^N)}\le\|v_{j_0}\|_{H^s(\mathbb{R}^N)}. $$
    On the other hand, if we consider $q=\frac{2N}{N-2}>2$, we have by the Sobolev embedding that
    $$ \left\|\prod_{j\neq j_0}\nabla^{s_j}v_j\right\|_{L^q(\mathbb{R}^N)}\lesssim \left\|\prod_{j\neq j_0}\nabla^{\gamma_j}v_j\right\|_{L^2(\mathbb{R}^N)}, $$
    with
    $$ \sum_{j\neq j_0}\gamma_j=\sum_{j\neq j_0}s_j+1\le s+(\ell-1)+1-s_{j_0}=s-(\ell-2+s_{j_0})\le s-(\ell-1). $$
    So, by inductive hypothesis, we have that
    $$ \left\|\prod_{j\neq j_0}\nabla^{\gamma_j}v_j\right\|_{L^2(\mathbb{R}^N)}\lesssim \prod_{j=1}^{\ell-1}\|v_j\|_{H^s(\mathbb{R}^N)}, $$
    which concludes the thesis in this case.
    \item Let us suppose now $s-s_j<\frac{N}{2}$ for any $j=1,\ldots,\ell$. In this case
    $$ \|\nabla^{s_1}v_1\cdots\nabla^{s_\ell}v_\ell\|_{L^2(\mathbb{R}^N)}\le \prod_{j=1}^\ell \|\nabla^{s_j}v_j\|_{L^{p_j}(\mathbb{R}^N)} $$
    for
    $$ \frac{1}{p_1}+\cdots+\frac{1}{p_\ell}=\frac{1}{2}. $$
    By Sobolev embeddings, we have that
    $$ \nabla^{s_j}v_j\in H^{s-s_j}(\mathbb{R}^N)\hookrightarrow L^p(\mathbb{R}^N)\quad \forall p\in\left[2,\frac{2N}{N-2(s-s_j)}\right],\quad j=1,\ldots,\ell. $$
    So, if we take $p_j=\frac{2N}{N-2(s-s_j)}$ for $j=1,\ldots,\ell$, it is sufficient to see that
    $$ \frac{1}{p_1}+\cdots+\frac{1}{p_\ell}\le \frac{1}{2}. $$
    Precisely
    $$ \frac{1}{p_1}+\cdots+\frac{1}{p_\ell}\le \frac{1}{2}\:\Leftrightarrow\: \sum_{j=1}^{\ell}[N-2(s-s_j)]\le N. $$
    By definition
    $$ \sum_{j=1}^\ell s_j\le s-(\ell-1), $$
    so
    $$ \sum_{j=1}^{\ell}[N-2(s-s_j)]\le \ell(N-2s)+2[s-(\ell-1)]=N+(\ell-1)[N-2s-2] $$
    On the other hand, by hypothesis we have that $s\ge \frac{N}{2}-1$ and $\ell\ge 1$, so we conclude.
\end{itemize}
\end{proof}

\begin{lem}
Let $N\ge  3$, $\ell\in\mathbb{N}$ with $\ell\ge 2$, let $s_1,\ldots,s_\ell\in\mathbb{N}$ and $s\in\mathbb{N}$ with $s\ge \frac{N}{2}-1$ such that
$$ s_1+\ldots+s_\ell\le s-(\ell-2), $$
let $w\in H^{s+1}(\mathbb{R}^N)$, then
$$ \left\|\prod_{j=1}^\ell \nabla^{s_j}w\right\|_{L^2(\mathbb{R}^N)}\lesssim \|\nabla w\|_{H^s(\mathbb{R}^N)}\|w\|_{H^s(\mathbb{R}^N)}^{\ell-1} $$
\end{lem}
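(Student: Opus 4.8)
The plan is to reduce this statement to the previous Lemma \ref{l.reg.a.} (the one with $\ell$ arbitrary functions $v_1,\dots,v_\ell$) by carefully distributing the one extra derivative among the factors. The hypothesis here is $s_1+\cdots+s_\ell\le s-(\ell-2)$, which is exactly one unit weaker than the hypothesis $s_1+\cdots+s_\ell\le s-(\ell-1)$ needed to apply Lemma \ref{l.reg.a.} with $v_j=\nabla w$; so the whole point is that one of the factors is allowed to carry an additional derivative, and we must put that derivative where it does the least harm.

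First I would set $w_j := \nabla w$ for $j = 1,\dots,\ell$, so that $\nabla^{s_j}w = \nabla^{s_j - 1}w_j$ when $s_j\ge 1$, and note $w_j\in H^s(\mathbb{R}^N)$ with $\|w_j\|_{H^s}=\|\nabla w\|_{H^s}$. If every $s_j \ge 1$, rewrite
$$
\prod_{j=1}^\ell \nabla^{s_j}w = \prod_{j=1}^\ell \nabla^{s_j-1}w_j = \nabla^{s_1-1}w_1 \cdots \nabla^{s_\ell-1}w_\ell,
$$
and observe that $(s_1-1)+\cdots+(s_\ell-1)=\big(\sum_j s_j\big)-\ell\le s-(\ell-2)-\ell = s - (2\ell - 2)\le s-(\ell-1)$ since $\ell\ge 1$; hence Lemma \ref{l.reg.a.} applies with the $\ell$ functions $w_j$ and gives the bound $\prod_j\|w_j\|_{H^s}=\|\nabla w\|_{H^s}^\ell\le \|\nabla w\|_{H^s}\|w\|_{H^s}^{\ell-1}$, using $\|\nabla w\|_{H^s}\lesssim\|w\|_{H^s}$ — actually this is wrong in general, so instead I would keep it as $\|\nabla w\|_{H^s}^\ell$ and absorb $\ell-1$ of the factors using $\|\nabla w\|_{H^s}\le\|w\|_{H^{s+1}}\simeq \|w\|_{H^s}+\|\nabla w\|_{H^s}$; more simply, since $\|\nabla w\|_{H^s}\le \|w\|_{H^{s+1}}$ and the target norm on the right is $\|\nabla w\|_{H^s}\|w\|_{H^s}^{\ell-1}$, I would note $\|\nabla w\|_{H^s}^{\ell-1}\lesssim\|w\|_{H^s}^{\ell-1}$ is false; the honest route is: one factor contributes $\|\nabla w\|_{H^s}$ directly, and for the remaining $\ell-1$ factors $\nabla^{s_j-1}w_j=\nabla^{s_j}w$ with $s_j\ge 1$ we use $\|\nabla^{s_j}w\|$ controlled by $\|w\|_{H^s}$ since $s_j\le s$. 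So the correct bookkeeping is to apply Lemma \ref{l.reg.a.} with a mix of $w$'s and one $\nabla w$.

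The clean way to organize this: pick the index $j_0$ with $s_{j_0}$ maximal (so $s_{j_0}\ge 1$ whenever $\sum s_j\ge 1$; if all $s_j=0$ the statement is the trivial $\|\nabla w\|_{L^2}^\ell$-type bound via $L^\infty$ embeddings and is handled separately). Write $\nabla^{s_{j_0}}w = \nabla^{s_{j_0}-1}(\nabla w)$ and, for $j\ne j_0$, leave $\nabla^{s_j}w$ as a derivative of $w$. Then apply Lemma \ref{l.reg.a.} to the $\ell$ functions $\{\nabla w\}\cup\{w : j\ne j_0\}$ with exponents $(s_{j_0}-1)$ and $(s_j)_{j\ne j_0}$, whose sum is $\big(\sum_j s_j\big)-1\le s-(\ell-1)$, exactly the admissible range. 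This yields
$$
\Big\|\prod_{j=1}^\ell\nabla^{s_j}w\Big\|_{L^2(\mathbb{R}^N)}\lesssim \|\nabla w\|_{H^s(\mathbb{R}^N)}\prod_{j\ne j_0}\|w\|_{H^s(\mathbb{R}^N)} = \|\nabla w\|_{H^s(\mathbb{R}^N)}\|w\|_{H^s(\mathbb{R}^N)}^{\ell-1},
$$
which is the claim. The main obstacle is purely organizational: one must verify the edge case where several $s_j$ vanish (so that "shifting one derivative off of $w$" is only legitimate on a factor with $s_{j_0}\ge 1$, and when all $s_j=0$ one instead writes $\prod(\nabla w) $ and uses $L^\infty\cap L^2$ interpolation directly, noting $\ell\ge 2$ and $s>N/2-1$ guarantee enough integrability), and to double-check that the shifted exponent sum $\sum_j s_j - 1$ still meets the hypothesis of Lemma \ref{l.reg.a.}, which it does precisely because the hypothesis here was relaxed by exactly one unit. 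No genuinely new analytic input beyond Lemma \ref{l.reg.a.} and Sobolev embedding is needed.
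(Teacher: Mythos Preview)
Your main argument---selecting an index $j_0$ with $s_{j_0}\ge 1$, writing $\nabla^{s_{j_0}}w=\nabla^{s_{j_0}-1}(\nabla w)$, and then applying the previous lemma to the mixed family $\{\nabla w\}\cup\{w:j\ne j_0\}$ with exponents summing to $\big(\sum_j s_j\big)-1\le s-(\ell-1)$---is exactly the paper's proof of the generic case, and your bookkeeping there is correct.

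The gap is in the edge case where every $s_j=0$. There the product is $w^\ell$, not $\prod(\nabla w)$ as you wrote; so the task is to bound $\|w^\ell\|_{L^2}$ by $\|\nabla w\|_{H^s}\|w\|_{H^s}^{\ell-1}$, and no derivative of $w$ appears on the left-hand side at all. Pure $L^p$ interpolation on $w$ alone gives at best $\|w\|_{H^s}^\ell$, which is not the stated estimate (the factor $\|\nabla w\|_{H^s}$ is genuinely stronger than $\|w\|_{H^s}$ on low frequencies). The paper manufactures the missing gradient via the critical Sobolev embedding $\dot H^1(\mathbb{R}^N)\hookrightarrow L^{2N/(N-2)}(\mathbb{R}^N)$: one writes
\[
\|w^\ell\|_{L^2}\le \|w\|_{L^N}\,\|w^{\ell-1}\|_{L^{2N/(N-2)}}\lesssim \|w\|_{H^s}\,\|\nabla(w^{\ell-1})\|_{L^2},
\]
and since $\nabla(w^{\ell-1})=(\ell-1)\,w^{\ell-2}\nabla w$, the right-hand factor is now a product of $\ell-1$ terms, one of which is $\nabla w$, to which the previous lemma applies directly. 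For $\ell=2$ this collapses to $\|w^2\|_{L^2}\le\|w\|_{L^N}\|w\|_{L^{2N/(N-2)}}\lesssim\|w\|_{H^s}\|\nabla w\|_{L^2}$. This Sobolev step is the one ingredient beyond a straight appeal to the previous lemma, and it is precisely what your treatment of the $s_j\equiv 0$ case is missing.
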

\begin{proof}\hfill\\
If there is $s_{j_0}\ge 1$, then
$$ \prod_{j=1}^\ell \nabla^{s_j}w=\prod_{j=1}^\ell \nabla^{\gamma_j}z_j $$
with
$$ \gamma_j=\left\{\begin{array}{ll}
    s_j & j\neq j_0 \\
    s_{j_0}-1 & j=j_0
\end{array}\right. \quad z_j=\left\{\begin{array}{ll}
    w & j\neq j_0 \\
    \nabla w & j=j_0.
\end{array}\right.  $$
In this case we have that
$$ \gamma_1+\ldots \gamma_\ell=s_1+\ldots+s_\ell-1=s-(\ell-1). $$
So, we can apply directly Lemma \ref{l.reg.a.} and we get
$$ \left\|\prod_{j=1}^\ell \nabla^{\gamma_j}z_j\right\|_{L^2(\mathbb{R}^N)}\lesssim \prod_{j=1}^\ell \|z_j\|_{H^s(\mathbb{R}^N)}=\|\nabla w\|_{H^s(\mathbb{R}^N)}\|w\|_{H^s(\mathbb{R}^N)}^{\ell-1}. $$
Let us suppose now $s_j=0$ for any $j=1,\ldots, \ell$. In this case $s=\ell-2$. When $\ell=2$, By Sobolev embedding we have
$$ \|w^2\|_{L^2(\mathbb{R}^N)}\le\|w\|_{L^N(\mathbb{R}^N)}\|w\|_{L^\frac{2N}{N-2}(\mathbb{R}^N)}\lesssim \|w\|_{H^s(\mathbb{R}^N)}\|\nabla w\|_{L^2(\mathbb{R}^N)}. $$
Let us suppose now $\ell>2$:
$$ \|w^\ell\|_{L^2(\mathbb{R}^N)}\le \|w\|_{L^p(\mathbb{R}^N)}\|w^{\ell-1}\|_{L^q(\mathbb{R}^N)} $$
with
$$ \frac{1}{p}+\frac{1}{q}=\frac{1}{2}. $$
If we take $p=N$ and $q=\frac{2N}{N-2}$, by Sobolev embedding we get
$$ \|w\|_{L^p(\mathbb{R}^N)}\|w^{\ell-1}\|_{L^q(\mathbb{R}^N)}\lesssim \|w\|_{H^s(\mathbb{R}^N)}\|\nabla w w^{\ell-2}\|_{L^2(\mathbb{R}^N)}.  $$
Now we notice that
$$ \nabla w w^{\ell-2} = \prod_{j=1}^{\ell-1} z_j, $$
with
$$  z_j=\left\{\begin{array}{ll}
    w & j<\ell \\
    \nabla w & j=\ell.
\end{array}\right.   $$
So we can apply Lemma \ref{l.reg.a.} so that
$$ \|w^\ell\|_{L^2(\mathbb{R}^N)}\lesssim \|\nabla w\|_{H^s(\mathbb{R}^N)}\|w\|_{H^s(\mathbb{R}^N)}^\ell. $$
\end{proof}


\newpage

\clearpage
\bibliographystyle{plain}
 \bibliography{EL_BG}

\begin{thebibliography}{10}

\bibitem{DQ12}
Mimi Dai, Jie Qing, and Maria Schonbek.
\newblock Asymptotic behavior of solutions to liquid crystal systems in
  {$\mathbb{R}^3$}.
\newblock {\em Comm. Partial Differential Equations}, 37(12):2138--2164, 2012.

\bibitem{DH22}
Hengrong Du, Tao Huang, and Changyou Wang.
\newblock Weak compactness property of simplified nematic liquid crystal flows
  in dimension two.
\newblock {\em Math. Z.}, 302(4):2111--2130, 2022.

\bibitem{E61}
Jerald~LaVerne Ericksen.
\newblock Conservation laws for liquid crystals.
\newblock {\em Trans. Soc. Rheol.}, 5:23--34, 1961.

\bibitem{FT09}
Jishan Fan and Tohru Ozawa.
\newblock Regularity criteria for a simplified {E}ricksen-{L}eslie system
  modeling the flow of liquid crystals.
\newblock {\em Discrete Contin. Dyn. Syst.}, 25(3):859--867, 2009.

\bibitem{FP22}
Kamil Fedorowicz and Robert Prosser.
\newblock On the simulation of nematic liquid crystalline flows in a 4:1 planar
  contraction using the {L}eslie-{E}ricksen and {B}eris-{E}dwards models.
\newblock {\em J. Non-Newton. Fluid Mech.}, 310:Paper No. 104949, 17, 2022.

\bibitem{G11}
G.~P. Galdi.
\newblock {\em An introduction to the mathematical theory of the
  {N}avier-{S}tokes equations}.
\newblock Springer Monographs in Mathematics. Springer, New York, second
  edition, 2011.
\newblock Steady-state problems.

\bibitem{GN18}
Yoshikazu Giga and Antonín Novotn\'y.
\newblock {\em Handbook of Mathematical Analysis in Mechanics of Viscous
  Fluids}.
\newblock Springer Cham, 1 edition, 2018.

\bibitem{GK96}
Archil Gulisashvili and Mark~A. Kon.
\newblock Exact smoothing properties of {S}chr\"{o}dinger semigroups.
\newblock {\em Amer. J. Math.}, 118(6):1215--1248, 1996.

\bibitem{HRS20}
Matthias Hieber, James~C. Robinson, and Yoshihiro Shibata.
\newblock {\em Mathematical analysis of the {N}avier-{S}tokes equations},
  volume 2254 of {\em Lecture Notes in Mathematics}.
\newblock Springer, Cham; Fondazione C.I.M.E., Florence, [2020] \copyright
  2020.
\newblock Cetraro, Italy 2017, Selected lectures from the CIME school on
  Mathematical Analysis of the Navier-Stokes Equations: Foundations and
  Overview of Basic Open Problems held September 4--8, 2017, Edited by Giovanni
  P. Galdi and Shibata, Fondazione CIME/CIME Foundation Subseries.

\bibitem{L79}
Frank~Matthews Leslie.
\newblock Theory of flow phenomena in liquid crystals.
\newblock {\em Advances in Liquid Crystals}, 4:1 -- 81, 1979.

\bibitem{LL00}
Z.~Li and B.~Liu.
\newblock On threshold solutions of equivariant
  {C}hern-{S}imons-{S}chr\"{o}dinger equation, 2020.
\newblock Preprint arXiv:2010.09045.

\bibitem{L89}
Fang-Hua Lin.
\newblock Nonlinear theory of defects in nematic liquid crystals; phase
  transition and flow phenomena.
\newblock {\em Comm. Pure Appl. Math.}, 42(6):789--814, 1989.

\bibitem{LL96}
Fang-Hua Lin and Chun Liu.
\newblock Partial regularity of the dynamic system modeling the flow of liquid
  crystals.
\newblock {\em Discrete Contin. Dynam. Systems}, 2(1):1--22, 1996.

\bibitem{LX15}
Shengquan Liu and Xinying Xu.
\newblock Global existence and temporal decay for the nematic liquid crystal
  flows.
\newblock {\em J. Math. Anal. Appl.}, 426(1):228--246, 2015.

\bibitem{OS12}
Christoph Ortner and Endre Suli.
\newblock A note on linear elliptic systems on $\mathbb{R}^d$, 2012.

\bibitem{PZ11}
Marius Paicu and Arghir Zarnescu.
\newblock Global existence and regularity for the full coupled
  {N}avier-{S}tokes and {$Q$}-tensor system.
\newblock {\em SIAM J. Math. Anal.}, 43(5):2009--2049, 2011.

\bibitem{PZ12}
Marius Paicu and Arghir Zarnescu.
\newblock Energy dissipation and regularity for a coupled {N}avier-{S}tokes and
  {$Q$}-tensor system.
\newblock {\em Arch. Ration. Mech. Anal.}, 203(1):45--67, 2012.

\bibitem{W11}
Changyou Wang.
\newblock Well-posedness for the heat flow of harmonic maps and the liquid
  crystal flow with rough initial data.
\newblock {\em Arch. Ration. Mech. Anal.}, 200(1):1--19, 2011.

\bibitem{XZ17}
Fuyi Xu, Xinguang Zhang, Yonghong Wu, and Lishan Liu.
\newblock Global existence and the optimal decay rates for the three
  dimensional compressible nematic liquid crystal flow.
\newblock {\em Acta Appl. Math.}, 150:67--80, 2017.

\end{thebibliography}

\end{document}